\newtheorem{theorem}{Theorem}[section]
\newtheorem*{theorem*}{Theorem}
\newtheorem{definition}[theorem]{Definition}
\newtheorem{proposition}[theorem]{Proposition}
\newtheorem{conjecture}[theorem]{Conjecture}
\newtheorem{corollary}[theorem]{Corollary}
\newtheorem{exercise}[theorem]{Exercise}
\numberwithin{equation}{section}
\renewcommand{\P}{\mathbb P}
\newcommand{\E}{\mathbb E}
\newcommand{\Z}{\mathbb Z}
\newcommand{\R}{\mathbb R}
\newcommand{\Conf}{{\rm Conf}}
\newcommand\sgn{{\operatorname{sgn}}}
\newcommand{\X}{\mathfrak X}
\newcommand{\Y}{\mathbb Y}
\renewcommand{\i}{{\mathbf i}}
\title{Lectures on integrable probability}
\author{Alexei Borodin\thanks{Department of Mathematics, Massachusetts Institute of Technology,
USA, and
 Institute for Information Transmission Problems of Russian Academy of Sciences,  Russia}
\footnote{e-mail: borodin@math.mit.edu} \quad Vadim Gorin$^*$\footnote{e-mail:
vadicgor@math.mit.edu}}
\date{}
\begin{document}
\maketitle
\begin{abstract}
 These are lecture notes for a mini-course given at the St.\ Petersburg School in Probability and Statistical Physics
 in June 2012. Topics include integrable models of random growth,
 determinantal point processes, Schur processes and Markov dynamics on them,
 Macdonald processes and their application to asymptotics of directed polymers in random media.
\end{abstract}

\section*{Preface}

These lectures are about probabilistic systems that can be analyzed by essentially algebraic
methods.

The historically first example of such a system goes back to De Moivre (1738) and Laplace (1812)
who considered the problem of finding the asymptotic distribution of a sum of i.i.d. random
variables for Bernoulli trials, when the pre-limit distribution is explicit, and took the limit of
the resulting expression. While this computation may look like a simple exercise when viewed from
the heights of modern probability, in its time it likely served the role of a key stepping stone
--- first rigorous proofs of central limit theorems appeared only in the beginning of the XXth
century.

At the moment we are arguably in a ``De Moivre-Laplace stage'' for a certain class of stochastic
systems which is often referred to as the \emph{KPZ universality class}, after an influential work
of Kardar-Parisi-Zhang in mid-80's. We will be mostly interested in the case of one space
dimension. The class includes models of random growth that have built-in mechanisms of smoothing
and lateral growth, as well as directed polymers in space-time uncorrelated random media and
driven diffusive lattice gases.

While the class and some of its members have been identified by physicists, the first examples of
convincing (actually, rigorous) analysis were provided by mathematicians, who were also able to
identify the distributions that play the role of the Gaussian law. Nowadays, they are often
referred to as the \emph{Tracy-Widom type distributions} as they had previously appeared in
Tracy-Widom's work on spectra of large random matrices.

The reason for mathematicians' success was that there is an unusually extensive amount of algebra
and combinatorics required to gain access to suitable pre-limit formulas that admit large time
limit transitions. As we will argue below, the ``solvable'' or \emph{integrable} members of the
class should be viewed as projections of much more powerful objects whose origins lie in
representation theory. In a way, this is similar to integrable systems that can also be viewed as
projections of representation theoretic objects; and this is one reason we use the words
\emph{integrable probability} to describe the phenomenon. There are also much more direct links
between integrable systems and integrable probability some of which we mention below.

The goal of these notes is not to give a survey of a variety of integrable probabilistic models
that are known by now (there are quite a few, and \emph{not all} of them are members of the KPZ
universality class), but rather to give a taste of them by outlining some of the algebraic
mechanisms that lead to their solution.

The notes are organized as follows.

In Section \ref{Section_Intro} we give a brief and non-exhaustive overview of the integrable
members of the KPZ universality class in (1+1) dimensions.

 In Section \ref{Section_symmetric} we
provide the basics of the theory of symmetric functions that may be seen as a language of the
classical representation theory.

In Section \ref{Section_determinantal} we discuss determinantal random point processes --- a
fairly recent class of point processes that proved very effective for the analysis of growth
models and also for the analysis of integrable probabilistic models of random matrix type.

Section \ref{Section_RSK} explains a link between a particular random growth model (known as the
polynuclear growth process or PNG) and the so-called Plancherel measures on partitions that
originates from representation theory of symmetric groups.

In Section \ref{Section_Schur_measures} we introduce a general class of the Schur measures that
includes the Plancherel measures; members of these class can be viewed as determinantal point
processes, which provides a key to their analysis. We also perform such an analysis in the case of
the Plancherel measure, thus providing a proof of the celebrated Baik-Deift-Johansson theorem on
asymptotics of longest increasing subsequences of random permutations.

Section \ref{Section_Schur_process} explains how integrable models of stochastic growth can be
constructed with representation theoretic tools, using the theory of symmetric functions developed
earlier.

In Section \ref{Section_Macdonald} we show how one can use celebrated Macdonald symmetric
functions to access the problem of asymptotic behavior of certain directed random polymers in
(1+1) dimensions (known as the O'Connell-Yor polymers). The key feature here is that the formalism
of determinantal processes does not apply, and one needs to use other tools that here boil down to
employing Macdonald-Ruijsenaars difference operators.

\section{Introduction}

\label{Section_Intro}

Suppose that you are building a tower out of unit blocks. Blocks are falling from the sky, as
shown at Figure \ref{Fig_falling} (left picture) and the tower slowly grows. If you introduce
randomness here by declaring the times between arrivals of blocks to be independent identically
distributed (i.i.d.) random variables, then you get the simplest 1d random growth model. The kind
of question we would like to answer here is what the height $h(t)$ of tower at time $t$ is?

The classical central limit theorem (see e.g.\ \cite[Chapter 5]{Bi} or \cite[Chapter 4]{Kal})
provides the answer:
$$
 h(t)\approx  c_1^{-1} t  + \xi c_2 c_1^{-\frac32} t^{\frac12},
$$
where $c_1$ and $c_2$ are the mean and standard deviation of the times between arrivals of the
blocks, respectively, and $\xi$ is a standard normal random variable $N(0,1)$.

\begin{figure}[h]
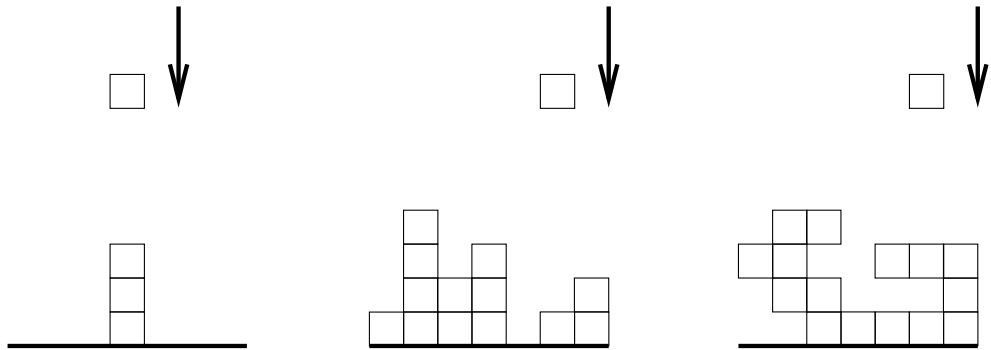

\begin{center}
\hfill{\scalebox{0.9}{\includegraphics{1d_growth.pdf}}}\hfill
{\scalebox{0.9}{\includegraphics{2d_ind_growth.pdf}}}\hfill
{\scalebox{0.9}{\includegraphics{2d_bal_growth.pdf}}}\hfill\, \caption{Growth models based on the
falling boxes: 1d model, 2d model without interactions, 2d model with sticky boxes (ballistic
deposition)  \label{Fig_falling} }
\end{center}

\end{figure}

If blocks fall independently in different columns, then we get a 2d growth model, as shown at
Figure \ref{Fig_falling} (middle picture). When there are no interactions between blocks and the
blocks are aligned, the columns grow independently and fluctuations remain of order $t^{1/2}$. But
what happens if we make blocks sticky so that they get glued to the boxes of adjacent columns, as
shown at Figure \ref{Fig_falling} (right picture)? This model is known as \emph{ballistic
deposition} and, in general, the answer for it is unknown. However, computer simulations (see
e.g.\ \cite{BaSt}) show that the height fluctuations in this model are of order $t^{1/3}$,
 and the same happens when the interaction is introduced in various other ways. Perhaps, there is also some form of a
Central Limit Theorem for this model, but nobody knows how to prove it.

\smallskip

Coming back to the 1d case, one attempt to guess the central limit theorem would be through
choosing certain very special random variables. If the times between arrivals are geometrically
distributed random variables, then $h(t)$ becomes the sum of independent Bernoulli random
variables and the application of the Stirling's formula proves the convergence of rescaled $h(t)$
to the standard Gaussian. (This is the famous De Moivre--Laplace theorem.)

In the 2d case it is also possible to introduce particular models for which we can prove
something. Consider the interface which is a broken line with slopes $\pm 1$, as shown at Figure
\ref{Fig_broken} (left picture) and suppose that a new unit box is added at each local minimum
independently after an exponential waiting time.

\begin{figure}[h]
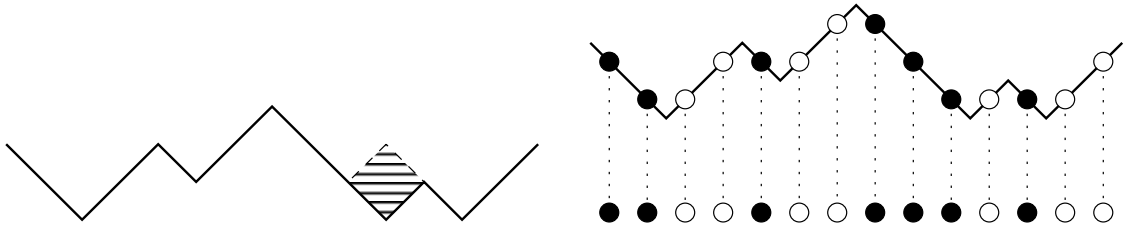

\begin{center}
\hfill{\scalebox{1.0}{\includegraphics{broken_simple.pdf}}}\hfill
{\scalebox{1.0}{\includegraphics{broken_particles.pdf}}}\hfill\,
 \caption{Broken line with slopes $\pm 1$, local minimum where a box can be added, and correspondence with particle
 configurations on $\mathbb Z$.
   \label{Fig_broken} }
\end{center}

\end{figure}

There is also an equivalent formulation of this growth model. Project the interface to a straight
line and put ``particles'' at projections of unit segments of slope $-1$ and ``holes'' at
projections of segments of slope $+1$, see Figure \ref{Fig_broken} (right picture). Now each
particle independently jumps to the right after an exponential waiting time (put it otherwise,
each particle jumps with probability $dt$ in each very small time interval $[t,t+dt]$) except for
the exclusion constraint: Jumps to the already occupied spots are prohibited. This is a simplified
model of a one-lane highway which is known under the name of Totally Asymmetric Simple Exclusion
Process (TASEP), cf.\ \cite{Spitzer}, \cite{L1}, \cite{L2}.

\begin{figure}[h]
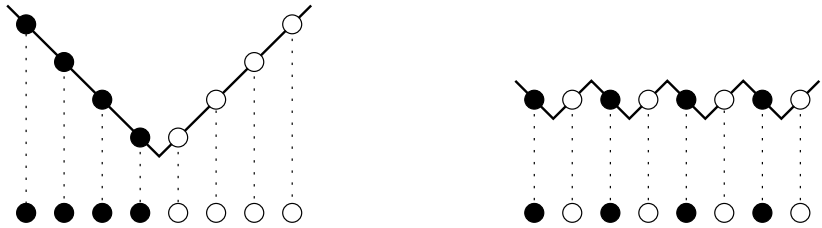

\begin{center}
\hfill{\scalebox{1.0}{\includegraphics{broken_step.pdf}}}\hfill
{\scalebox{1.0}{\includegraphics{broken_flat.pdf}}}\hfill\,
 \caption{Wedge and flat initial conditions: broken lines and corresponding particle configurations.
   \label{Fig_broken_IC} }
\end{center}

\end{figure}

\begin{theorem}[\cite{J-TASEP}]
\label{Theorem_TW_in_TASEP_step}
 Suppose that at time $0$ the interface $h(x;t)$ is a wedge ($h(x,0)=|x|$) as shown at Figure \ref{Fig_broken_IC} (left
 picture).
 Then for every $x\in(-1,1)$
$$
 \lim_{t\to\infty}\P\left(\frac{h(t,tx)-c_1(x)t}{c_2(x) t^{1/3}}\ge -s\right) = F_2(s),
$$
where $c_1(x)$, $c_2(x)$ are certain (explicit) functions of $x$.
\end{theorem}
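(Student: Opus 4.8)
The plan is to pass through the now-standard dictionary between TASEP with the wedge (equivalently, step) initial condition, the exponential corner growth model / last passage percolation, and the Schur measure, and then to perform a steepest-descent analysis on the resulting determinantal point process, exactly in the spirit of the proof of the Baik--Deift--Johansson theorem given in Section~\ref{Section_Schur_measures}.

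First I would set up the last passage percolation (LPP) representation. The wedge interface corresponds to particles occupying all negative integer sites and holes at all non-negative sites. Using the graphical (``basic coupling'') construction of TASEP one checks that the successive times at which boxes are added satisfy the corner-growth recursion $G(i,j)=\max\bigl(G(i-1,j),G(i,j-1)\bigr)+w_{ij}$, with $G(i,0)=G(0,j)=0$ and $w_{ij}$ i.i.d.\ exponential of rate $1$, so that $G(N,M)=\max_{\pi}\sum_{(i,j)\in\pi}w_{ij}$ over up-right lattice paths from $(1,1)$ to $(N,M)$. The key combinatorial identity is that, for an explicit affine change of variables $(y,m)\mapsto(N(y,m),M(y,m))$, the event $\{h(t,y)\ge m\}$ coincides with $\{G(N,M)\le t\}$. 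After the rescaling $y=tx$, $m=c_1(x)t-c_2(x)t^{1/3}s$, the integers $N,M$ grow linearly in $t$ with a fixed ratio $\gamma=\gamma(x)\in(0,\infty)$, and the theorem reduces to a Tracy--Widom limit theorem for $\bigl(G(N,M)-\mu t\bigr)/(\sigma t^{1/3})$ for LPP in a linearly growing rectangle.

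Next I would obtain an exact formula for the distribution of $G(N,M)$. Exponential weights arise as a (rescaled) degeneration of geometric weights; for geometric weights the RSK correspondence sends the array $(w_{ij})$ to a pair of semistandard tableaux of a common shape $\lambda$ with $G(N,M)=\lambda_1$, where $\lambda$ is distributed according to the Schur measure with the two specializations consisting of $N$, resp.\ $M$, equal variables. By Section~\ref{Section_Schur_measures} the point configuration $\{\lambda_i-i\}$ is then a determinantal point process whose correlation kernel $K_{N,M}$ admits an explicit double contour integral representation (the Meixner, and in the exponential limit the Laguerre, kernel), and $\P\bigl(G(N,M)\le t\bigr)$ equals the gap probability $\det\bigl(I-K_{N,M}\bigr)$ over the half-line to the right of the image of $t$.

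Finally comes the asymptotic analysis, which I expect to be the main obstacle. Writing $K_{N,M}$ as $\frac{1}{(2\pi i)^2}\oint\oint \exp\bigl(t\,(f(z)-f(w))\bigr)\,\frac{dz\,dw}{w-z}\cdot(\cdots)$ with an action $f$ depending on $\gamma$, I would locate the real double critical point $z_c$ of $f$, deform the $z$- and $w$-contours so as to pass through $z_c$ along steepest-descent directions, and rescale $z-z_c$, $w-z_c$ on the $t^{-1/3}$ scale; the local contribution then converges to the Airy kernel, and the constants $c_1(x),c_2(x)$ are read off from $f(z_c)$ and $f'''(z_c)$, yielding the known explicit algebraic functions of $x$. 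The delicate points are the uniform exponential decay estimates away from $z_c$ needed to truncate the contours and to upgrade pointwise kernel convergence to convergence of the Fredholm determinant to $F_2(s)$, the Tracy--Widom GUE distribution, together with keeping the geometric-to-exponential degeneration under control at the level of kernels rather than merely one-point marginals. The entire argument rests on the algebraic input --- RSK, the Schur-measure determinantal structure --- assembled in the earlier sections.
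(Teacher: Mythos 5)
The paper does not actually prove this theorem: it is stated in Section \ref{Section_Intro} as a citation to Johansson's work, and the closest the text comes is the proof of the Baik--Deift--Johansson theorem via the Plancherel measure in Section \ref{Section_Schur_measures} together with the remark in Section \ref{Section_Schur_process} that the same machinery (RSK, Schur processes, steepest descent on the double contour integral kernel) yields the TASEP results. Your outline --- step initial condition $\leftrightarrow$ exponential corner growth/LPP $\leftrightarrow$ Schur measure with two finite specializations $\leftrightarrow$ determinantal kernel $\leftrightarrow$ Airy limit and Fredholm determinant --- is exactly the route this machinery is built for and is essentially Johansson's original argument, so the approach is correct and consistent with the paper. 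Be aware that what you have is a sketch: the steps you flag as delicate (uniform decay off the critical point, trace-class control for the Fredholm determinant, and the geometric-to-exponential degeneration, which for the one-point marginal can in fact be handled by weak convergence of $G(N,M)$ alone rather than at the level of kernels) are precisely the ones the paper also elides in its Plancherel analysis.
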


\begin{theorem}[\cite{S-TASEP}, \cite{BFPS-TASEP}]
\label{Theorem_TW_in_TASEP_flat}
 Suppose that at time $0$ the interface $h(x;t)$ is \emph{flat} as shown at Figure \ref{Fig_broken_IC} (right
 picture).
 Then for every $x\in\mathbb R$
$$
 \lim_{t\to\infty}\P\left(\frac{h(t,x)-c_3 t}{c_4 t^{1/3}}\ge -s\right) = F_1(s),
$$
where $c_3$, $c_4$ are certain (explicit) positive constants.
\end{theorem}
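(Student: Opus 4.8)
The plan is to follow the route of Sasamoto and of Borodin--Ferrari--Pr\"ahofer--Sasamoto: produce an exact, pre-limit Fredholm-determinant formula for the one-point distribution of $h(t,x)$ and then analyze its kernel by steepest descent. The first step is to pass to the particle picture of Figure~\ref{Fig_broken}, in which the flat interface corresponds to the density-$1/2$ configuration with particles at the even integers. Via the standard dictionary (height $\leftrightarrow$ integrated current $\leftrightarrow$ labelled particle positions), the event $\{h(t,x)\ge c_3 t - s\,c_4 t^{1/3}\}$ becomes an event of the form $\{x_m(t)\ge b\}$, where $x_1(t)>x_2(t)>\cdots$ are the ordered particle positions and $m=m(t,x,s)$, $b=b(t,x,s)$ are fixed by the scaling, with the hydrodynamic growth and the $t^{1/3}$-order correction encoded in these parameters. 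Because TASEP propagates information at finite speed, the law of $x_m(t)$ is, up to a negligible error, the same as in the finite system obtained by truncating the flat data to the particles in an $O(t)$-window, and Sch\"utz's transition-probability formula applies to that finite system.

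Next I would invoke Sch\"utz's formula, which expresses the probability that $N$ particles started from $y_1<\cdots<y_N$ occupy positions $x_1<\cdots<x_N$ at time $t$ as
\[
\det\bigl[F_{i-j}(x_i-y_j,\,t)\bigr]_{i,j=1}^{N},
\]
with $F_n(\cdot,t)$ an explicit contour integral, and then carry out the biorthogonalization of Borodin--Ferrari--Pr\"ahofer--Sasamoto: summing this determinant over the final configurations compatible with the event, and reorganizing the sum by means of a biorthogonal pair of function families adapted to the truncated flat data, one arrives at a formula of the type
\[
\P\bigl(x_m(t)\ge b\bigr)=\det\bigl(I-\bar K_{m}\bigr)_{\ell^2(\{b,b+1,\dots\})},
\]
where $\bar K_m$ is a double contour integral in which the flat initial condition enters through a product over the occupied even sites. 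Equivalently: for the flat (periodic, density-$1/2$) initial condition the shifted configuration $\{x_i(t)-i\}$ is, at a fixed time $t$, a determinantal point process and the event above is one of its gap probabilities --- a determinantal structure special to flat (and step) data that is absent for general initial conditions.

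The asymptotic step is then standard in outline. Insert the KPZ scaling into $m$ and $b$, find the coalescing (double) critical point of the exponent appearing in $\bar K_m$, deform the two contours so that they cross it along steepest-descent rays, and rescale the local coordinate by $t^{1/3}$. One expects $\bar K_m$, after this rescaling and a suitable conjugation, to converge to the equal-time Airy$_1$ kernel $\operatorname{Ai}(u+v)$ --- the one-spatial-point restriction of the extended kernel of the Airy$_1$ process --- and one must also establish the exponential tail estimates that upgrade this kernel convergence to convergence of the Fredholm determinants. Finally I would apply the identity $\det\bigl(I-\mathbf 1_{(s,\infty)}\,\operatorname{Ai}(u+v)\,\mathbf 1_{(s,\infty)}\bigr)=F_1(\mathrm{const}\cdot s)$ of Ferrari--Spohn and of Sasamoto, absorbing the constant into the definition of $c_4$, which identifies the limit with $F_1(s)$ and completes the proof.

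The principal difficulty is this steepest-descent analysis of $\bar K_m$. For the wedge initial condition of Theorem~\ref{Theorem_TW_in_TASEP_step} the integrated current is governed by a Schur measure, so one can use the determinantal-process machinery of Section~\ref{Section_Schur_measures} almost directly; in the flat case $\bar K_m$ carries the extra product over the initial sites, the choice and deformation of the two contours must be done with care, and obtaining uniform bounds strong enough to justify passing to the limit in the Fredholm determinant --- as opposed to merely in the kernel --- is where the real work lies. A conceptually cleaner route, outside the tools developed here, would be to realize flat TASEP through point-to-line last-passage percolation and to recognize the resulting symmetrized random tableau as a Pfaffian Schur process, in which $F_1$ appears directly as a Fredholm Pfaffian; carrying this out would require Pfaffian analogues of the results of Section~\ref{Section_Schur_measures}.
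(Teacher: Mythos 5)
There is nothing in the paper to compare your argument against: Theorem \ref{Theorem_TW_in_TASEP_flat} is stated as a quoted result from \cite{S-TASEP} and \cite{BFPS-TASEP}, and the lecture notes never prove it --- indeed the machinery they develop (Schur measures and processes, the determinantal analysis of Section \ref{Section_Schur_measures}) only reaches the wedge/step-type situations of Theorems \ref{Theorem_TW_in_TASEP_step}--\ref{Theorem_longest_increasing}, and the flat initial condition is explicitly outside what those tools handle. Judged on its own terms, your outline is a faithful reconstruction of the route actually taken in the cited works: Sch\"utz's determinantal transition probability, the Borodin--Ferrari--Pr\"ahofer--Sasamoto biorthogonalization adapted to the periodic density-$1/2$ data, a Fredholm determinant for the one-point law, steepest descent to the Airy$_1$ kernel $\operatorname{Ai}(u+v)$, and the Ferrari--Spohn/Sasamoto identity expressing that Fredholm determinant as $F_1$ (up to the constant you absorb into $c_4$). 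Your closing remark about the Pfaffian Schur-process route via point-to-line last-passage percolation is also an accurate description of the alternative approach, and correctly flagged as requiring Pfaffian analogues of Section \ref{Section_Schur_measures}.

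What you have, however, is a roadmap rather than a proof, and the steps you defer are precisely the substance of the cited papers. First, the biorthogonalization for flat data is not a routine application of a general recipe: producing the explicit biorthogonal family (and hence the kernel $\bar K_m$) for the periodic initial condition is the main technical achievement of \cite{BFPS-TASEP}, and without exhibiting it your Fredholm formula is an assertion, not a derivation. Second, the reduction from the infinite flat configuration to a finite truncated system via finite speed of propagation needs a quantitative argument (the error must be shown negligible at the $t^{1/3}$ scale), and the passage from kernel convergence to convergence of Fredholm determinants requires uniform exponential bounds on the rescaled kernel along the deformed contours --- you name this as ``where the real work lies'' but supply none of it. Third, the identity $\det\bigl(\mathbf 1-\mathbf 1_{(s,\infty)}\operatorname{Ai}(u+v)\mathbf 1_{(s,\infty)}\bigr)=F_1(\mathrm{const}\cdot s)$ is itself a nontrivial input that must either be proved or precisely cited, including the correct normalizing constant, since the statement of the theorem hinges on identifying the limit law as $F_1$ and not merely as ``some'' Fredholm determinant. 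So: correct strategy, consistent with the literature the theorem is quoted from, but the proposal as written leaves the decisive computations and estimates unexecuted.
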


Here $F_1(s)$ and $F_2(s)$ are distributions from \emph{random matrix theory}, known under the
name of Tracy-Widom distributions. They are the limiting distributions for the largest eigenvalues
in Gaussian Orthogonal Ensemble and Gaussian Unitary Ensemble of random matrices (which are the
probability measures with density proportional to $\exp(-Trace(X^2))$ on real symmetric and
Hermitian matrices, respectively), see \cite{TW-U}, \cite{TW-O}.

These two theorems give the conjectural answer for the whole ``universality class'' of 2d random
growth models, which is usually referred to as the KPZ (Kardar-Parisi-Zhang) universality class.
Comparing to the answer in the 1d case we see that the asymptotic behavior becomes more delicate
--- while scaling by $t^{1/3}$ is always the same, the resulting distribution may also depend on
the ``subclass'' of our model. Also, conjecturally, the only two
generic subclasses are
the ones we have seen. They are distinguished by whether the global surface profile is locally
curved or flat near the observation location.

\bigskip

 Let us concentrate on the wedge initial
condition. In this case there is yet another reformulation of the model. Write in each box $(i,j)$
of the positive quadrant a random ``waiting time'' $w_{(i,j)}$. Once our random interface (of type
pictured in Figure \ref{Fig_broken}) reaches the box $(i,j)$ it takes time $w_{(i,j)}$ for it to
absorb the box. Now the whole quadrant is filled with nonnegative i.i.d. random variables. How to
reconstruct the growth of the interface from these numbers? More precisely, at what time $T(i,j)$
a given box $(i,j)$ is absorbed by the growing interface? A simple argument shows that
\begin{equation}
\label{eq_LPP}
 T(i,j) =\max\limits_{(1,1)=b[1]\to b[2]\to\dots b[i+j-1]=(i,j)} \sum_{k=1}^{i+j-1} w_{b[k]},
\end{equation}
where the sum is taken over all directed (leading away from the origin) paths joining $(1,1)$ and
$(i,j)$, see Figure \ref{Fig_directed_paths}. The quantity \eqref{eq_LPP} is known as the
(directed) Last Passage Percolation time. Indeed, if you think about numbers $w_{(i,j)}$ as of
times needed to percolate into a given box, then \eqref{eq_LPP} gives the time to percolate from
$(1,1)$ in $(i,j)$ in the worst case scenario.

\begin{figure}[h]
\begin{center}
{\scalebox{1.2}{\includegraphics{table_paths.pdf}}}
 \caption{The quadrant filled with waiting times and two (out of $ {4\choose 1} = 4$ possibilities) directed paths joining $(1,1)$ and $(4,1)$.
   \label{Fig_directed_paths} }
\end{center}
\end{figure}

Universality considerations make one believe that the limit behavior of the Last Passage
Percolation time should not depend on the distribution of $w_{(i,j)}$ (if this distribution is not
too wild), but we are very far from proving this at the moment. However, again, the Last Passage
Percolation time asymptotics has been computed for certain distributions, e.g.\ for the
exponential distribution in the context of Theorem \ref{Theorem_TW_in_TASEP_step}.

Let us present another example, where the (conjecturally, universal) result can be rigorously
proven. Consider the homogeneous, density 1 Poisson point process in the first quadrant, and let
$L(\theta)$ be the maximal number of points one can collect along a North-East path from $(0,0)$
to $(\theta,\theta)$, as shown at Figure \ref{Fig_directed_Path_in_poisson}.

\begin{figure}[h]
\begin{center}
{\scalebox{1.0}{\includegraphics{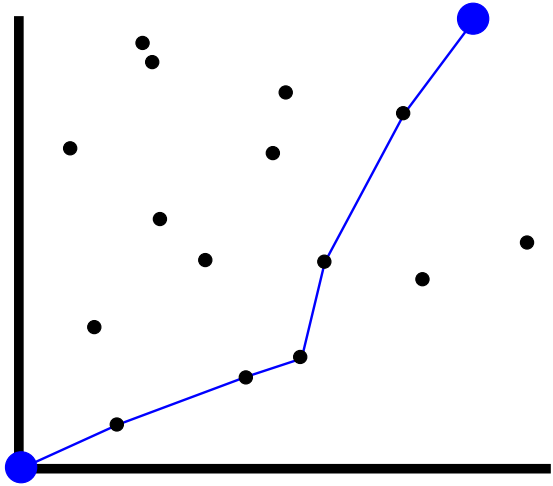}}}
 \caption{The Poisson point process in the first quadrant and a North--East path joining $(0,0)$ and $(\theta,\theta)$ and
 collecting maximal number of points, which is $5$ here.
   \label{Fig_directed_Path_in_poisson} }
\end{center}

\end{figure}

 This quantity can be
seen as a limit of the LPP times when $w_{(i,j)}$ takes only two values $0$ and $1$, and the
probability of $1$ is very small. Such considerations explain that $L(\theta)$ should be also in
the KPZ universality class. And, indeed, this is true.

\begin{theorem}[\cite{BDJ}]\label{Theorem_first_row_plancherel}
$$
\lim_{\theta\to\infty} \P\left(\frac{L(\theta)-2\theta}{\theta^{1/3}}\le s \right) = F_2(s).
$$
\end{theorem}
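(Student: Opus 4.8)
The plan is to connect $L(\theta)$, the longest North-East path in a Poisson process, to the longest increasing subsequence of a random permutation, then to the Plancherel measure on partitions via the Robinson--Schensted correspondence, and finally to analyze the resulting Schur measure as a determinantal point process whose correlation kernel has an explicit asymptotic behavior governed by $F_2$. This is precisely the circle of ideas the excerpt promises to develop in Sections~\ref{Section_RSK} and~\ref{Section_Schur_measures}, so I would build the proof in that order.

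First I would Poissonize in the number of points: condition on there being exactly $n$ points in the square $[0,\theta]^2$ (so $n$ is Poisson with mean $\theta^2$), and observe that their coordinates, after sorting by the first coordinate, give a uniformly random permutation $\sigma\in S_n$ whose longest increasing subsequence has the same distribution as $L(\theta)\mid n$. By the Robinson--Schensted correspondence, the length of the longest increasing subsequence of $\sigma$ equals $\lambda_1$, the first row of the partition $\lambda\vdash n$ obtained from $\sigma$; since $\sigma$ is uniform, $\lambda$ is distributed according to the Plancherel measure $(\dim\lambda)^2/n!$. Averaging over the Poisson number of points turns this into the \emph{poissonized Plancherel measure} on all partitions, which is exactly a Schur measure $\mathrm{Prob}(\lambda)\propto s_\lambda(x)s_\lambda(y)$ with the principal specialization corresponding to a single exponential-type variable --- the content will be recalled in Section~\ref{Section_Schur_measures}. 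The key structural input, to be established there, is that the point configuration $\{\lambda_i - i\}\subset\Z$ under a Schur measure is a determinantal point process with an explicitly computable correlation kernel $K(x,y)$ of (discrete) Bessel type.

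Next I would carry out the asymptotic analysis. The event $\{L(\theta)\le s\theta+2\theta\cdot\text{(rescaling)}\}$ translates into a gap probability for the determinantal process: no points of $\{\lambda_i-i\}$ lie to the right of a moving boundary. By the general Fredholm determinant formula for determinantal processes, this probability equals $\det(1-K)$ restricted to the relevant half-line. One then performs a steepest-descent / saddle-point analysis of the discrete Bessel kernel in the scaling window $\lambda_1 \approx 2\theta + s\,\theta^{1/3}$, showing that the rescaled kernel converges to the Airy kernel, hence the Fredholm determinant converges to the Airy-kernel Fredholm determinant, which is by definition $F_2(s)$. Finally, a de-Poissonization lemma (the standard monotonicity argument of Johansson, using that $L(\theta)$ is monotone in $\theta$ and that the Poisson number of points concentrates around $\theta^2$) removes the Poissonization and yields the stated limit for the original quantity.

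The main obstacle is the asymptotic analysis of the discrete Bessel kernel leading to the Airy kernel: one must control the Fredholm determinant uniformly, justifying the exchange of limit and infinite sum/integral, and handle the contour deformation for the contour-integral representation of the kernel carefully enough that the $\theta^{1/3}$ scaling and the correct constants ($2\theta$ centering, unit scale in $\theta^{1/3}$) emerge. The algebraic steps --- Poissonization, RSK, identification with a Schur measure, determinantal structure --- are clean and will follow from the machinery of Sections~\ref{Section_RSK}--\ref{Section_Schur_measures}; it is the analytic saddle-point estimate and the de-Poissonization tail bounds that require real work.
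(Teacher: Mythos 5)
Your proposal follows essentially the same route as the paper: Poisson points $\to$ RSK/Viennot $\to$ Poissonized Plancherel measure $\to$ Schur measure as a determinantal process with a double-contour-integral kernel $\to$ steepest descent to the Airy kernel $\to$ Fredholm determinant $=F_2(s)$. The one superfluous step is the final de-Poissonization: $L(\theta)$ is defined directly on the Poisson process, so the Poissonized Plancherel measure already gives its exact law with $\lambda_1=L(\theta)$, and the monotonicity/concentration argument is only needed to pass from this statement to the fixed-$n$ longest-increasing-subsequence version (Theorem \ref{Theorem_longest_increasing}).
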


It is not hard to show that Theorem \ref{Theorem_first_row_plancherel} is equivalent to

\begin{theorem}[\cite{BDJ}]
\label{Theorem_longest_increasing}
 Let $\sigma$ be a uniformly distributed permutation of the set $\{1,\dots,n\}$, and let
 $\ell_n(\sigma)$ be the length of the longest increasing subsequence of $\sigma$. Then
$$
\lim_{n\to\infty} \P\left(\frac{\ell_n-2\sqrt{n}}{n^{1/6}}\le s \right) = F_2(s).
$$
\end{theorem}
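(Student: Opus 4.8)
The plan is to obtain Theorem~\ref{Theorem_longest_increasing} from the Poissonized statement, Theorem~\ref{Theorem_first_row_plancherel}, by a de-Poissonization argument; essentially no new asymptotic analysis is required. The first step is an exact distributional identity. Restrict a density-$1$ Poisson point process in the quadrant to the square $[0,\sqrt t\,]^2$: the number $N$ of points it contains is Poisson-distributed with mean equal to the area $t$, and, conditionally on $\{N=n\}$, these points are $n$ independent uniform samples from the square. Almost surely all their first coordinates are distinct and so are all their second coordinates, so ordering the points by increasing first coordinate and recording the ranks of the corresponding second coordinates yields a uniformly random permutation $\sigma\in S_n$; under this bijection a north-east path through a subset of the points corresponds precisely to an increasing subsequence of $\sigma$, so the maximal number of points collectable on such a path equals $\ell_n(\sigma)$. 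Since only points inside $[0,\sqrt t\,]^2$ can lie on a monotone path from $(0,0)$ to $(\sqrt t,\sqrt t)$, this shows that $L(\sqrt t)$ has the same distribution as $\ell_{N}$ with $N\sim\mathrm{Poisson}(t)$. Consequently, writing $g_t(\ell):=\P(\ell_{N_t}\le\ell)$ with $N_t\sim\mathrm{Poisson}(t)$ independent of the family $\{\ell_k\}_{k\ge0}$, Theorem~\ref{Theorem_first_row_plancherel} applied with $\theta=\sqrt t$ says precisely that $g_t\big(2\sqrt t+u\,t^{1/6}\big)\to F_2(u)$ for every fixed $u\in\R$.

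The second step removes the Poisson randomization using monotonicity. Deleting the entry of largest value from a uniformly random permutation of $\{1,\dots,n+1\}$ and standardizing gives a uniformly random permutation of $\{1,\dots,n\}$ whose longest increasing subsequence cannot exceed that of the original; hence $n\mapsto\P(\ell_n\le\ell)$ is nonincreasing for each fixed $\ell$. A short computation --- split $g_t(\ell)=\sum_k\P(N_t=k)\,\P(\ell_k\le\ell)$ at $k=n$ and use this monotonicity together with the bound $\P(\ell_k\le\ell)\le1$ --- yields, for $t<n<t'$,
$$
g_{t'}(\ell)-\P(N_{t'}<n)\ \le\ \P(\ell_n\le\ell)\ \le\ \frac{g_t(\ell)}{\P(N_t\le n)}.
$$
Now fix $\delta\in(0,1/6)$, take $t=n-n^{1/2+\delta}$ and $t'=n+n^{1/2+\delta}$, and put $\ell=2\sqrt n+s\,n^{1/6}$. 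Poisson concentration makes the correction $\P(N_{t'}<n)$ tend to $0$ and the denominator $\P(N_t\le n)$ tend to $1$. Moreover $2\sqrt{t'}-2\sqrt n$ and $2\sqrt n-2\sqrt t$ are both of order $n^{\delta}=o(n^{1/6})$, while $(t')^{1/6}$ and $t^{1/6}$ equal $(1+o(1))\,n^{1/6}$, so re-expressing $\ell$ in the forms $2\sqrt t+u_t\,t^{1/6}$ and $2\sqrt{t'}+u_{t'}\,(t')^{1/6}$ produces arguments $u_t\to s$ and $u_{t'}\to s$. Since $g_\tau\big(2\sqrt\tau+u\,\tau^{1/6}\big)$ is nondecreasing in $u$ and converges to the continuous function $F_2$, this forces $g_t(\ell)\to F_2(s)$ and $g_{t'}(\ell)\to F_2(s)$, and the sandwich then gives $\P\big(\ell_n\le 2\sqrt n+s\,n^{1/6}\big)\to F_2(s)$, which is Theorem~\ref{Theorem_longest_increasing} (the integer rounding of $\ell$ is harmless, $\ell_n$ being integer-valued while $n^{1/6}\to\infty$).

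I expect the only real subtlety --- given that Theorem~\ref{Theorem_first_row_plancherel} may be assumed --- to lie in the bookkeeping of this de-Poissonization: one must take the window half-width $n^{1/2+\delta}$ wide enough that the escaping Poisson mass is negligible, yet narrow enough ($\delta<1/6$) that moving the index across the window shifts the centering constant $2\sqrt{\,\cdot\,}$ by only $o(n^{1/6})$. The inequality $\tfrac12<\tfrac23$ is exactly what leaves room for such a $\delta$; everything else is routine. (The substantive content --- why $F_2$ and the exponent $1/3$ arise at all --- sits entirely inside Theorem~\ref{Theorem_first_row_plancherel}, whose proof via the Plancherel measure on partitions and its determinantal structure is carried out later in these notes.)
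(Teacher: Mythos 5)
Your proposal is correct and takes exactly the route the paper indicates: the notes merely assert that Theorem \ref{Theorem_longest_increasing} is "not hard to show" equivalent to the Poissonized Theorem \ref{Theorem_first_row_plancherel} (proved later via the Plancherel measure and Airy-kernel asymptotics), without carrying out the de-Poissonization. Your argument --- the identification $L(\sqrt t)\stackrel{d}{=}\ell_{N_t}$ with $N_t\sim\mathrm{Poisson}(t)$, the monotone coupling giving that $\P(\ell_n\le\ell)$ is nonincreasing in $n$, the two-sided sandwich, and the window width $n^{1/2+\delta}$ with $\delta<1/6$ so that the recentering error is $o(n^{1/6})$ --- correctly supplies the details the paper omits.
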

The problem of understanding the limit behavior of $\ell_n$ has a long history and goes back to
the book of Ulam of 1961 \cite{Ulam}. Ulam conjectured that $\E \ell_n\approx c\sqrt{n}$ but was
not able to identify the constant; he also conjectured Gaussian fluctuations. In
1974 Hammersley
\cite{Ham} proved, via a sub--additivity argument, that there exists a constant such that
$\ell_n\approx c\sqrt{n}$ and this constant was identified in 1977  by Kerov and Vershik
\cite{VK}.

The random variable $\ell_n$ has an interesting interpretation in terms of an airplane boarding
problem. Imagine a simplified airplane with one seat in each of $n$ rows,
large distances between
rows, and one entrance in front. Each entering passenger has a ticket with a
seat number, but the
order of passengers in the initial queue is random (this is our random
permutation). Suppose that
each passenger has a carry-on, and it takes one minute for that person to load
it into the overhead
bin as soon as (s)he reaches her/his seat. The aisle is narrow, and nobody can
pass
 the passenger who is loading the carry-on.  It turns out that the total time to board the
airplane is precisely $\ell_n$. Let us demonstrate this with an example.

Consider
the permutation
$\sigma=2413$ with $\ell_4(\sigma)=2$. The airplane boarding looks as follows:
The first passenger
enters the airplane and proceeds to the seat number $2$. While (s)he loads a
carry-on, the other passengers stay
behind and the one with the ticket for the seat number $1$ ((s)he was the third
person in the original queue) starts loading her/his
carry-on. After one minute, the passenger with the ticket for the seat number
$4$ proceeds to his seat and also starts loading,
as well as the one aiming for the seat number $3$. In two minutes the boarding
is complete.

Interestingly enough, if
 the queue is divided into groups, as often happens in reality, then the boarding time (for
 long queues)
will only increase by the factor $\sqrt{k}$, where $k$ is the number of the groups.

\medskip

Let us now proceed to more recent developments. In the Last Passage Percolation problem we were
maximizing a functional $H(x)$ over a set $\mathcal X$. A general statistical mechanics principle
says that such a maximization can be seen as zero-temperature limit of the \emph{Gibbs ensemble}
on $\mathcal X$ with Hamiltonian $-H(x)$. More formally, we have the following essentially obvious
statement
$$
 \max_{x\in\mathcal X} H(x) =\lim_{\beta\to\infty} \frac{1}{\beta} \ln \sum_{x\in\mathcal X}
 e^{\beta H(x)}.
$$
The parameter $\beta$ is usually referred to as \emph{the inverse temperature} in the statistical
mechanics literature.

In the Last Passage Percolation model, $\mathcal X$ is the set of all directed paths joining
$(1,1)$ with a point $(a,b)$, and the value of $H$ on path $x$ is the sum of $w_{(i,j)}$ along the
path $x$. The Gibbs ensemble in this case is known under the name of a ``Directed Polymer in
Random Media''. The study of such objects with various path sets and various choices of noise
(i.e.\ $w_{(i,j)}$) is a very rich subject.

Directed Polymers in Random Media appeared for the first time close to thirty years ago in an
investigation of low temperature expansion of the partition function of the Ising model with
domain wall boundary conditions, see \cite{HH}, \cite{IS}, but nowadays there are many other
physical applications. Let us give one concrete model where such polymers arise.

Consider a set of massive particles in $\mathbb Z$ that evolve in discrete time as follows. At
each time moment the mass of each particle is multiplied by a random variable $d_{t,x}$, where $t$
is the time moment and $x$ is the particle's position. Random variables $d_{t,x}$ are typically
assumed to be i.i.d. Then each particle gives birth to a twin
 of the same mass and the twin moves to $x+1$. If we now start at time $0$ with a single particle
 of mass $1$ at $x=1$, then the mass $Z(T,x)$ of all particles at $x$ at time $T$ can be computed as a sum
 over all directed paths $(1,1)=b[1]\to b[2]\to\dots
b[x+T-1]=(T,x)$ joining $(1,1)$ and $(T,x)$:
\begin{equation}
\label{eq_parition_function_gamma_polymer} Z(T,x)=\sum\limits_{(1,1)=b[1]\to b[2]\to\dots
b[x+T-1]=(T,x)} \prod_{k=1}^{x+T-1} d_{b[k]},
\end{equation}
This model can be used as a simplified description for the migration of plankton with $d_{t,x}$
representing the state of the ocean at location $x$ and time $t$ which affects the speed of growth
of the population. Independent $d_{t,x}$ model quickly changing media, e.g.\ due
to the turbulent
flows in the ocean.

Random Polymers in Random Media exhibit a very interesting phenomenon called intermittency which
is the existence of large peeks happening with small probability, that are high enough to dominate
the asymptotics of the moments.  Physicists believe that intermittency is widespread in nature
and, for instance, the mass distribution in the universe or a magnetogram of the sun show
intermittent behavior. To see this phenomenon in our model, suppose for a moment that $d_{t,x}$
does not depend on $t$. Then there would be locations where the amount of plankton exponentially
grows, while in other places all the plankton quickly dies, so we see very high peaks. Now it is
reasonable to expect that such peaks would still be present when $d_{t,x}$ are independent both of
$t$ and $x$ and this will cause intermittency. Proving and quantifying intermittency is, however,
rather difficult.

Regarding the distribution of $Z(T,x)$, it was long believed in the physics literature that it
should belong to the same KPZ universality class as the Last Passage Percolation. Now, at least in
certain cases, we can prove it. The following integrable random polymer was introduced and studied
by Sepp\"{a}l\"{a}inen \cite{Sep} who proved the $t^{\frac{1}{3}}$ exponent for the fluctuations.
The next theorem is a refinement of this result.

\begin{theorem}[\cite{BCR}] \label{Theorem_polymer_intro}
Assume $d_{t,x}$ are independent positive random variables with density
$$
 \frac{1}{\Gamma(\theta)} x^{-\theta-1} \exp\left(-\frac1x\right).
$$
Then there exist $\theta^*>0$ and (explicit) $c_1, c_2>0$ such that for
$0<\theta<\theta^*$,
$$
 \lim_{n\to\infty} \P\left( \frac{Z(n,n)-c_1 n}{c_2 n^{1/3}}\le s \right)= F_2(s).
$$
\end{theorem}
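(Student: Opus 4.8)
The plan is to realize $Z(n,n)$ as an observable of an exactly solvable measure coming from the Macdonald hierarchy of Section~\ref{Section_Macdonald}, to extract from it a Fredholm determinant formula for a suitable transform of $Z(n,n)$, and then to run a steepest descent analysis and recognize $F_2$ in the limit. \emph{Step 1 (from weights to a Whittaker measure).} The inverse-gamma density in the statement is the $q\to1$ degeneration of the weights appearing in the $q$-Whittaker ($t=0$ Macdonald) process; equivalently, via the geometric RSK correspondence of A.\,N.\ Kirillov one maps the square array $(d_{t,x})_{1\le x,t\le n}$ of i.i.d.\ inverse-gamma variables to a triangular array whose bottom row $(\lambda_1,\dots,\lambda_n)$ is distributed according to a \emph{Whittaker measure}, and under this map $\log Z(n,n)=\lambda_n$. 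I would first set up this correspondence and verify that the pushforward of the product measure is the claimed Whittaker measure, writing its density through $\mathrm{GL}_n(\R)$-Whittaker functions and their Plancherel (orthogonality) measure.

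\emph{Step 2 (exact formula for a transform of $Z$).} Applying the Macdonald--Ruijsenaars difference operators — the same circle of ideas developed in Section~\ref{Section_Macdonald} for the O'Connell--Yor polymer — to the $q$-Whittaker process gives the $q$-deformed Laplace transform $\E\bigl[1/(u Z_q;q)_\infty\bigr]$ of the partition function as a single explicit nested contour integral, which one rewrites as a Fredholm determinant. Choosing $u=u(\zeta,q)$ so that $1/(uZ_q;q)_\infty\to e^{-\zeta Z}$ as $q\to1$ then yields an identity of the form
$$
\E\bigl[e^{-\zeta Z(n,n)}\bigr]=\det\bigl(I+K_\zeta\bigr)_{L^2(\mathcal C)},
$$
with an explicit kernel $K_\zeta$ built from ratios of $\Gamma$-functions and a double contour integral. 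The work here is to justify passing to the limit $q\to1$ inside the Fredholm expansion via uniform trace-norm bounds on the $q$-kernels; alternatively one can aim to derive the $q=1$ identity directly from the Whittaker measure of Step~1.

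\emph{Step 3 (steepest descent).} Put $\zeta=\exp(-c_1 n+c_2 s\,n^{1/3})$, so that $e^{-\zeta Z(n,n)}$ is essentially the indicator $\mathbf{1}\{(Z(n,n)-c_1 n)/(c_2 n^{1/3})\le s\}$ up to a tail controlled separately. I would locate the double critical point of the exponential factor of $K_\zeta$ — a function of the form $n(\psi(w)-\psi(z))$ whose second derivative degenerates at the critical point, which is exactly where the explicit $c_1=c_1(\theta)$, $c_2=c_2(\theta)$ come from — deform $\mathcal C$ and the inner contour onto steepest-descent paths through it, and rescale in an $n^{-1/3}$ window to show that $K_\zeta$ converges in trace norm to the Airy kernel. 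Then $\det(I+K_\zeta)\to F_2(s)$, and a de-Poissonization / Tauberian argument (monotonicity of $\zeta\mapsto\E[e^{-\zeta Z}]$ together with the separation of scales) upgrades this to convergence of distribution functions.

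The crux is the rigorous steepest descent in Step~3: one must check that admissible steepest-descent contours through the critical point exist and stay inside the domain of analyticity of the $\Gamma$-factors — this is precisely what forces the restriction $0<\theta<\theta^*$, since for larger $\theta$ the relevant saddle collides with a pole and the deformation fails — and one needs $n$-uniform exponential decay away from the critical point to control the Fredholm tails, as well as an honest interchange of the $q\to1$ and $n\to\infty$ limits (best handled by working at $q=1$ with Whittaker-function asymptotics of Ishii--Stade / Givental type). By comparison, the RSK pushforward of Step~1 and the difference-operator computation of Step~2 are routine once the Schur/Macdonald-process formalism of the earlier sections is in place.
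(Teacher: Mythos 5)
Your plan matches the approach the paper intends: the notes do not prove Theorem \ref{Theorem_polymer_intro} themselves but attribute it to \cite{BCR} and develop precisely this machinery in Section \ref{Section_Macdonald} --- (geometric) RSK leading to Whittaker-type measures, difference-operator/contour-integral formulas for a Laplace-type transform packaged as a Fredholm determinant, and steepest descent producing the Airy kernel, with $\theta^*$ arising from contours colliding with poles of the $\Gamma$-factors. Your three steps are consistent with the actual argument of \cite{BCR} (which works directly at $q=1$ with the Whittaker measure of Corwin--O'Connell--Sepp\"al\"ainen--Zygouras, for essentially the moment-growth reasons you flag), the only caveat being that the displayed limit is really a statement about the free energy $\log Z(n,n)$, as your Step 1 implicitly recognizes.
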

The upper bound on the parameter $\theta>0$ in this theorem is technical
and it will probably be removed in future works.

In a similar way to our transition from Last Passage Percolation to monotone paths in a Poisson
field and longest increasing subsequences, we can do a limit transition here, so that discrete
paths in \eqref{eq_parition_function_gamma_polymer} turn into Brownian bridges,
while $d_{t,x}$
turn into the space--time white noise. Let us explain in more detail how this works as this will
provide a direct link to the Kardar--Parisi--Zhang equation that gave the name to the KPZ
universality class.

For a Brownian bridge $B=B(s)$ we obtain a functional
\begin{equation}
\label{eq_partition_function_cont_polymer}
 H(B) =\int {\beta \dot W (s,B(s))} ds,
\end{equation}
where $\dot W$ is the 2d white noise. Thus, the partition function $Z(t,x)$ has the form
\begin{equation}
 Z(t,x)=\frac{1}{\sqrt{2\pi t}}
\exp\left(-\frac{x^2}{2t}\right)\E\big(:\exp:(H(B)\big),
\end{equation}
where $\E$ is the expectation with respect to the law of the Brownian bridge which starts at $0$
at time $0$ and ends at $x$ at time $t$, and $:\exp:$ is the Wick ordered exponential, see
\cite{AKQ} and references therein for more details. Note that the randomness coming from the white
noise is still there, and $Z(t,x)$ is a random variable.

Another way of defining $Z(t,x)$ is through the stochastic PDE it satisfies:
\begin{equation}
\label{eq_stochastic_heat}
 \frac{\partial}{\partial t} Z(t,x) =\frac{1}{2}\left(\frac{\partial}{\partial x}\right)^2 Z(t,x) + \dot W Z.
\end{equation}
 This is known as the \emph{stochastic heat equation}. Indeed, if we remove the part with the white noise
 in \eqref{eq_stochastic_heat}, then we end up with the usual heat equation.

If the space (corresponding to the variable $x$) is discrete, then an equation similar to
\eqref{eq_stochastic_heat} is known as the \emph{parabolic Anderson model}; it has been
extensively studied for many years.

Note that through our approach the solution of \eqref{eq_stochastic_heat} with $\delta$--initial
condition at time $0$ is the limit of discrete $Z(t,x)$ of
\eqref{eq_parition_function_gamma_polymer} and, thus, we know something about it.

If we now define $U$ through the so--called Hopf--Cole transformation
$$
 Z(x,t)=\exp(U(x,t))
$$
then, as a corollary of \eqref{eq_stochastic_heat}, $U$ formally satisfies
\begin{equation}
\label{eq_KPZ}
 \frac{\partial}{\partial t} U(t,x) =\frac{1}{2}\left(\frac{\partial}{\partial x}\right)^2 U(t,x) + \left(\frac{\partial}{\partial x} U(t,x)\right)^2 +\dot
 W,
\end{equation}
which is the \emph{non-linear} Kardar--Parisi--Zhang (KPZ) equation introduced in \cite{KPZ}
 as a way of understanding the growth of surfaces we started with (i.e.\ ballistic
deposition), see \cite{C-KPZ} for a nice recent survey.

Due to non-linearity of \eqref{eq_KPZ} it is tricky even to give a  meaning to this equation (see,
however, \cite{Hai} for a recent progress), but physicists still dealt with it and that's one way
how the  exponent $1/3$ of $t^{\frac13}$ was predicted. (An earlier way was through dynamical
renormalization group techniques, see \cite{FNS}.)

\bigskip

If we were to characterize the aforementioned results in one phrase, we would use ``integrable
probability''. ``Integrable'' here refers to explicit formulas that can be derived, and also hints
at parallels with integrable systems. There are direct connections, e.g.\ $y(s)$ defined via
$$
 y^2(s)=-(\ln F_2(s))''
$$
solves the (nonlinear) Painleve II differential equation (see \cite{TW-U})
$$
 y''(s)=sy(s)+2y(s)^3.
$$
Also if we define
$$
 F(x_1,\dots,x_n;t)=\E\left( Z(x_1,t)\cdots Z(x_n,t)\right),
$$
where $Z(t,x)$ is the solution of Stochastic Heat Equation \eqref{eq_stochastic_heat}, then
\begin{equation}
\label{eq_q_Bose}
 \frac{\partial }{\partial t} F=\frac{1}{2} \left( \sum_{i=1}^n \left(\frac{\partial}{\partial
 x_i}\right)^2 +\sum_{i\ne j} \delta(x_i-x_j)\right) F,
\end{equation}
where $\delta$ is the Dirac delta--function. \eqref{eq_q_Bose} is known as the evolution equation
of the quantum delta-Bose gas. It was the second quantum many body system solved
via Bethe ansatz,
see \cite{LL}, \cite{Mcg}.

There is also a deeper analogy: Both integrable systems and integrable probability models can be
viewed as shadows of representation theory of infinite--dimensional Lie groups and algebras.
However, while integrable PDEs often represent rather exotic behavior from the point of view of
general PDEs, integrable probability delivers universal behavior for the whole universality class
of similar models. Moreover, in the rare occasions when the universality can be proved (e.g.\ in
random matrices, see recent reviews \cite{EY}, \cite{TV} and references therein, or in $(1+1)$d
polymers in the so-called intermediate disorder regime, see \cite{AKQ_disorder}), one shows that
the generic behavior is the same as in the integrable case. Then the integrable case provides the
only known route to an explicit description of the answer.

While we will not explain in these notes the representation theoretic undercurrent in any detail,
we cannot and do not want to get rid of it completely. In what follows we will rely on the theory
of symmetric functions which is the algebraic--combinatorial apparatus of the representation
theory.

\subsection*{Acknowledgments} We are very grateful to Ivan Corwin, Grigori Olshanski, and Leonid
Petrov for very valuable comments on
an earlier version
of this text. A.~B. was partially supported by the NSF grant DMS-1056390. V.~G.
was partially supported by  RFBR-CNRS grants 10-01-93114 and 11-01-93105.

\section{Symmetric functions}
\label{Section_symmetric}

In this section we briefly review certain parts of the theory of symmetric functions. If the
reader is familiar with the basics of this theory, (s)he might want to skip this section returning
to it, if necessary, in the future. There are several excellent treatments of symmetric functions
in the literature, see e.g.\ \cite{M}, \cite{Sagan}, \cite{St_book}. We will mostly follow the
notations of \cite{M} and recommend the same book for the proofs.

\medskip

Our first aim is to define the algebra $\Lambda$ of symmetric functions in \emph{infinitely many}
variables. Let $\Lambda_N = \mathbb C[x_1, \dots, x_N]^{S_N}$ be the space of polynomials in $x_1,
\dots, x_N$ which are symmetric with respect to permutations of the $x_j$. $\Lambda_N$ has a
natural grading by the \emph{total degree} of a polynomial.

Let $\pi_{N+1} \colon \mathbb C[x_1, \dots, x_{N+1}] \to \mathbb C[x_1, \dots, x_N]$ be the map
defined by setting $x_{N+1} = 0$. It preserves the ring of symmetric polynomials and gradings.
Thus we obtain a tower of graded algebras
\[
 \mathbb C \xleftarrow{\pi_1} \Lambda_1 \xleftarrow{\pi_2} \Lambda_2 \xleftarrow{\pi_3}
\dots.
\]
We define $\Lambda$ as the \emph{projective limit} of the above tower
\[
\Lambda = \varprojlim_N \Lambda_N = \{(f_1, f_2, f_3, \dots) \mid f_j \in \Lambda_j, \ \pi_j f_j =
f_{j-1},\, \deg(f_j) \text{ are bounded }\}.
\]
An equivalent definition $\Lambda$ is as follows: Elements of $\Lambda$ are formal power series
$f(x_1, x_2,\dots)$ in infinitely many indeterminates $x_1, x_2,\dots$ of bounded degree that are
invariant under the permutations of the $x_i$'s. In particular,
$$
 x_1+x_2+x_3+\dots
$$
is an element of $\Lambda$, while
$$
 (1+x_1)(1+x_2)(1+x_3)\cdots
$$
is not, because here the degrees are unbounded.

\emph{Elementary symmetric functions} $e_k$, $k=1,2,\dots$ are defined by
$$
 e_k=\sum\limits_{i_1<i_2<\dots<i_k} x_{i_1}\cdots x_{i_k}.
$$

\emph{Complete homogeneous functions} $h_k$, $k=1,2,\dots$ are defined by
$$
 h_k=\sum\limits_{i_1\le i_2\le \dots\le i_k} x_{i_1}\cdots x_{i_k}.
$$

\emph{Power sums} $p_k$, $k=1,2,\dots$ are defined by
$$
 p_k=\sum\limits_i x_i^k.
$$

\begin{theorem}
\label{Theorem_fundamental_sym}
 The systems $\{e_k\}$, $\{h_k\}$, $\{p_k\}$ are algebraically independent generators of $\Lambda$. In other words,
 $\Lambda$ can be seen as the algebra of polynomials in $h_k$, or the algebra of polynomials in
 $e_k$, or the algebra of polynomials in $p_k$:
 $$
 \Lambda  =\mathbb C [e_1,e_2,\dots]=\mathbb C [h_1,h_2,\dots] =\mathbb C [p_1,p_2,\dots].
$$
\end{theorem}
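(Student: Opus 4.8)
The plan is to prove Theorem~\ref{Theorem_fundamental_sym} by establishing three facts: (i) $\Lambda = \mathbb C[h_1, h_2, \dots]$ with the $h_k$ algebraically independent; (ii) the same for the $e_k$; and (iii) the same for the $p_k$. The natural approach is to reduce everything to a well-understood linear basis of $\Lambda$, namely the monomial symmetric functions $m_\lambda$ indexed by partitions $\lambda$, where $m_\lambda$ is the sum of all distinct monomials obtained by permuting the variables in $x_1^{\lambda_1} x_2^{\lambda_2}\cdots$. It is essentially immediate from the projective-limit definition that $\{m_\lambda : \lambda \vdash n\}$ is a $\mathbb C$-basis of the degree-$n$ component $\Lambda^n$, so that $\dim \Lambda^n = p(n)$, the number of partitions of $n$. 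The strategy is then to show that the various products $h_\lambda := h_{\lambda_1} h_{\lambda_2}\cdots$ (and $e_\lambda$, $p_\lambda$) also form bases, which simultaneously gives generation and algebraic independence, since a set of $p(n)$ elements of $\Lambda^n$ that spans must be linearly independent, and algebraic independence of the generators is equivalent to the monomials in them being linearly independent.

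First I would handle the $e_k$ and $h_k$ together, since they are dual in a precise combinatorial sense. The key computational input is the generating-function identity
\[
\sum_{k\ge 0} h_k t^k = \prod_{i\ge 1} \frac{1}{1 - x_i t}, \qquad \sum_{k\ge 0} e_k t^k = \prod_{i\ge 1} (1 + x_i t),
\]
whose product is $1$, yielding the Newton-type recursions $\sum_{k=0}^n (-1)^k e_k h_{n-k} = 0$ for $n \ge 1$. These recursions let one express each $h_n$ as a polynomial in $e_1, \dots, e_n$ and vice versa, so $\mathbb C[e_1, e_2, \dots] = \mathbb C[h_1, h_2, \dots]$ as subalgebras of $\Lambda$. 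To see this common subalgebra is all of $\Lambda$, I would expand $e_\lambda$ (for $\lambda'$ the conjugate partition, to get indexing right) in the monomial basis and check that the transition matrix to $\{m_\mu\}$ is triangular with respect to the dominance order on partitions, with $\pm 1$ on the diagonal. Triangularity with unit diagonal forces the matrix to be invertible over $\mathbb Z$, hence $\{e_\lambda\}$ is a basis of $\Lambda^n$ for each $n$; counting dimensions then gives both that the $e_k$ generate and that they are algebraically independent. The same triangularity argument applies to $h_\lambda$.

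For the power sums, I would use the classical identity
\[
\sum_{k\ge 1} p_k \frac{t^k}{k} = \log \prod_{i\ge 1} \frac{1}{1-x_i t} = \log \sum_{k\ge 0} h_k t^k,
\]
equivalently Newton's identities $p_n = \sum_{k=1}^{n-1}(-1)^{k-1} e_k p_{n-k} + (-1)^{n-1} n\, e_n$, which express each $p_n$ as a polynomial in $e_1, \dots, e_n$ \emph{and}, crucially, each $e_n$ as a polynomial in $p_1, \dots, p_n$ — here one must divide by integers, so this step genuinely uses that we work over $\mathbb C$ (or at least $\mathbb Q$), not $\mathbb Z$. This gives $\mathbb C[p_1, p_2, \dots] = \mathbb C[e_1, e_2, \dots] = \Lambda$, and algebraic independence of the $p_k$ follows because the change of generators from $\{e_k\}$ (already known independent) to $\{p_k\}$ is "triangular" in degree, hence invertible.

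I expect the main obstacle to be the bookkeeping in the triangularity claim: one must set up the dominance partial order on partitions correctly, pin down exactly which of $e_\lambda$ or $e_{\lambda'}$ pairs with $m_\mu$, and verify that the leading term (in dominance order) of the appropriate product is $m_\lambda$ with coefficient $1$ while all other terms $m_\mu$ have $\mu < \lambda$. This is a standard but slightly delicate combinatorial lemma; once it is in place, the dimension count $\dim\Lambda^n = p(n)$ turns "spanning" into "basis" for free, and algebraic independence is then just the statement that the monomials $e_\lambda$ (resp.\ $h_\lambda$, $p_\lambda$) are linearly independent, which is exactly the basis property. All of this is carried out in detail in \cite[Chapter I]{M}, which I would cite for the routine verifications.
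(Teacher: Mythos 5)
Your argument is correct and is essentially the proof the paper points to: it cites \cite[Chapter I, Section 2]{M}, where exactly this chain — the monomial basis $m_\lambda$ with $\dim\Lambda^n=p(n)$, the unitriangularity of $e_{\lambda'}$ (and $h_\lambda$) with respect to dominance order, and Newton's identities to pass to the $p_k$ over $\mathbb Q$ — is carried out. No gaps; the points you flag (conjugate indexing in the triangularity lemma, division by integers for the $p_k$) are precisely the delicate spots, and you handle them correctly.
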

The proof of this statement can be found in \cite[Chapter I, Section 2]{M}. Theorem
\ref{Theorem_fundamental_sym} for the polynomials in finitely many variables is known as \emph{the
fundamental theorem of symmetric polynomials}.

\smallskip

It is convenient to introduce generating functions for the above generators:
$$
 H(z):=\sum_{z=0}^{\infty} h_k z^k, \quad E(z):=\sum_{z=0}^{\infty} e_k z^k,\quad P(z):=\sum_{z=1}^{\infty} p_k
 z^{k-1},
$$
where we agree that $h_0=e_0=1$.

\begin{proposition} We have
\label{Prop_gen_functions}
\begin{equation}
\label{eq_H_gen}
 H(z)=\prod_{i} \frac{1}{1-x_i z},
\end{equation}
\begin{equation}
\label{eq_E_gen}
 E(z)=\prod_{i} (1+x_i z),
\end{equation}
\begin{equation}
\label{eq_P_gen}
 P(z)=\frac{d}{dz} \sum_{i} \ln\frac{1}{1-x_i z}.
\end{equation}
In particular,
\begin{equation}
\label{eq_gen_functions_relation}
 H(z)=\frac{1}{E(-z)}=\exp\left(\sum_{k=1}^{\infty} \frac{z^k p_k}{k}\right).
\end{equation}
\end{proposition}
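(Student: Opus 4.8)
The plan is to verify the three product/derivative formulas \eqref{eq_H_gen}--\eqref{eq_P_gen} directly, by expanding the right-hand sides as formal power series in $z$ with coefficients in $\Lambda$, and then to deduce \eqref{eq_gen_functions_relation} from them. The only point requiring a little care is the meaning of the infinite products $\prod_i(1-x_iz)^{-1}$ and $\prod_i(1+x_iz)$: I would read each identity as an identity in $\Lambda[[z]]$, and justify it either by noting that the coefficient of $z^k$ is a well-defined bounded-degree symmetric formal power series and checking the identity degree by degree, or --- perhaps more cleanly --- by first checking it in each $\Lambda_N=\mathbb C[x_1,\dots,x_N]^{S_N}$, where the products are genuinely finite, and then observing that both sides are compatible with the projections $\pi_N$ and hence agree in the projective limit $\Lambda$.

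For \eqref{eq_H_gen}: expand $\prod_i \frac{1}{1-x_iz} = \prod_i\bigl(1 + x_iz + x_i^2z^2 + \cdots\bigr)$. Selecting from the $i$-th factor the term $x_i^{m_i}z^{m_i}$ and multiplying, the coefficient of $z^k$ is the sum of all monomials $x_{i_1}x_{i_2}\cdots x_{i_k}$ with $i_1\le i_2\le\cdots\le i_k$, which is precisely $h_k$ by definition; hence $\prod_i(1-x_iz)^{-1}=\sum_{k\ge0}h_kz^k=H(z)$. Identity \eqref{eq_E_gen} is the same argument applied to $\prod_i(1+x_iz)$: now each factor contributes either $1$ or $x_iz$, so the coefficient of $z^k$ is the sum of $x_{i_1}\cdots x_{i_k}$ over strictly increasing indices $i_1<\cdots<i_k$, which is $e_k$. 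For \eqref{eq_P_gen}, differentiate termwise: $\frac{d}{dz}\ln\frac{1}{1-x_iz} = \frac{x_i}{1-x_iz} = \sum_{k\ge1}x_i^kz^{k-1}$, and summing over $i$ gives $\sum_{k\ge1}\bigl(\sum_i x_i^k\bigr)z^{k-1}=\sum_{k\ge1}p_kz^{k-1}=P(z)$.

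Finally, for \eqref{eq_gen_functions_relation}: multiplying \eqref{eq_H_gen} by \eqref{eq_E_gen} evaluated at $-z$ gives $H(z)E(-z)=\prod_i\frac{1-x_iz}{1-x_iz}=1$, so $H(z)=1/E(-z)$. For the exponential expression, take the formal logarithm of \eqref{eq_H_gen}: $\ln H(z) = -\sum_i\ln(1-x_iz) = \sum_i\sum_{k\ge1}\frac{x_i^kz^k}{k} = \sum_{k\ge1}\frac{z^k}{k}\sum_i x_i^k = \sum_{k\ge1}\frac{z^kp_k}{k}$, and exponentiating yields the claim (equivalently, one notes $P(z)=\frac{d}{dz}\ln H(z)=H'(z)/H(z)$ by \eqref{eq_P_gen} and integrates, using $H(0)=1$). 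I do not expect a genuine obstacle here; the one thing to get right is the bookkeeping that makes the infinite products and the formal $\ln$ and $\exp$ legitimate, and the reduction to $\Lambda_N$ handles that cleanly.
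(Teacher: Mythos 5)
Your proposal is correct and follows essentially the same route as the paper: expand the products termwise to recover $h_k$ and $e_k$, differentiate the logarithm termwise for $P(z)$, and deduce \eqref{eq_gen_functions_relation} by combining these. The extra bookkeeping you add (reduction to $\Lambda_N$ and compatibility with the projections $\pi_N$) is a reasonable elaboration of what the paper leaves implicit.
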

\begin{proof}
 In order to prove \eqref{eq_E_gen} open the parentheses and compare with the definition of
 $e_k$. To prove \eqref{eq_H_gen} note that
 $$
  \prod_{i} \frac{1}{1-x_i z}=\prod_i(1+x_i z+ x_i^2 z^2+ x_i^3 z^3+\dots)
 $$
and open the parentheses again. Finally, using the power series expansion of the logarithm we get
\begin{multline*}
\frac{d}{dz} \sum_{i} \ln\frac{1}{1-x_i z}=\frac{d}{dz} \sum_i \left(x_iz+ \frac{x_i^2
z^2}{2}+\frac{x_i^3 z^3}{3}+\dots\right)\\= \sum_i \left(x_i+ x_i^2 z+ x_i^3
z^2+\dots\right)=\sum_{k=1}^{\infty} p_k z^{k-1}. \end{multline*}
\end{proof}

\begin{figure}[h]
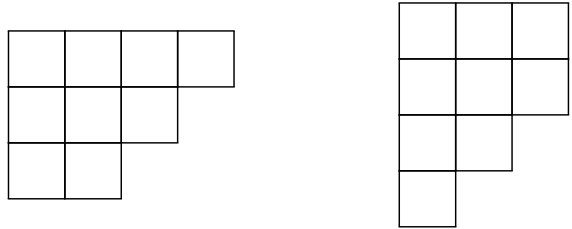

\centerline{ \large \ytableausetup{centertableaux}
\begin{ytableau}
 \,&\,  & \, &\,  \\
 \,&\,  &\,  \\
\, &\,
\end{ytableau}
\hskip 2cm
\begin{ytableau}
\, & \, & \, \\
\, & \, & \, \\
\, & \, \\
\,
\end{ytableau}
} \caption{Left panel: Young diagram $\lambda$ of size $9$ with row lengths $(4,3,2)$ and column
lengths $(3,3,2,1)$. Right panel: Transposed diagram $\lambda'$. \label{Figure_young_diag}}
\end{figure}

 Let $\lambda$ be a
\emph{Young diagram} of size $n$ or, equivalently, a \emph{partition} of $n$. In other words,
$\lambda$ is a sequence $\lambda_1\ge\lambda_2\ge\dots$ of non-negative integers (which are
identified with \emph{row lengths} of the Young diagram), such that $\sum_i
\lambda_i=|\lambda|=n$. The diagram whose row lengths are column lengths of $\lambda$ is called
\emph{transposed diagram} and denoted $\lambda'$. In other words, for each $i$, $\lambda'_i$ is
equal to the number of $j$ such that $\lambda_j\ge i$. We draw Young diagrams as collections of
unit boxes and Figure \ref{Figure_young_diag} gives an example of a Young diagram and its
transpose.

The \emph{length} $\ell(\lambda)$ of $\lambda$ is defined as the number of non-zero numbers
$\lambda_i$ (equivalently, the number of rows in $\lambda$). Clearly, $\ell(\lambda)=\lambda'_1$.

We denote the set of all Young diagrams by $\Y$. By definition $\Y$ includes the empty partition
$\varnothing=(0,0,\dots)$.

\begin{definition} The \emph{Schur polynomial} $s_\lambda(x_1,\dots,x_N)$ is a symmetric polynomial in $N$
variables parameterized by Young diagram $\lambda$ with $\ell(\lambda)\le N$ and given by
\begin{equation}
\label{eq_Schur} s_\lambda(x_1,\dots,x_N)= \dfrac{\det
\left[x_i^{\lambda_j+N-j}\right]_{i,j=1}^N}{\prod_{i<j}
 (x_i-x_j)}.
\end{equation}
\end{definition}
One proves that when $\ell(\lambda)\le N$
$$
 \pi_{N+1} s_\lambda(x_1,\dots,x_N,x_{N+1})= s_\lambda(x_1,\dots,x_N,0)=s_\lambda(x_1,\dots,x_N).
$$
In addition,
$$
 \pi_{\ell(\lambda)} s_\lambda(x_1,\dots,x_{\ell(\lambda)})=0.
$$
Therefore, the sequence of symmetric polynomials $s_\lambda(x_1,\dots,x_N)$ with fixed $\lambda$
and varying number of variables $N\ge\ell(\lambda)$, complemented by zeros for $N<\ell(\lambda)$,
defines an element of $\Lambda$ that one calls the \emph{Schur symmetric function} $s_\lambda$. By
definition $s_\varnothing(x)\equiv 1$.

\begin{proposition}
\label{proposition_prop_Schur_basic} The Schur functions $s_\lambda$, with $\lambda$ ranging over
the set of all Young diagrams, form a linear basis of $\Lambda$. They are related to generators
$e_k$ and $h_k$ through the \emph{Jacobi--Trudi} formulas:
$$
s_\lambda=\det\Bigl[h_{\lambda_i-i+j}\Bigr]_{i,j=1,\dots,\ell(\lambda)}
 =\det\Bigl[e_{\lambda'_i-i+j}\Bigr]_{i,j=1,\dots, \ell(\lambda')},
$$
where we agree that $h_k=e_k=0$ for $k<0$.
\end{proposition}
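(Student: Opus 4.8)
The plan is to prove the three assertions in turn: that the $s_\lambda$ form a basis, and then each of the two Jacobi--Trudi determinant formulas, deriving the basis statement as a byproduct of the triangularity built into Jacobi--Trudi. First I would record the combinatorial description of Schur functions as generating functions over semistandard Young tableaux, $s_\lambda=\sum_T x^T$, where $T$ ranges over fillings of $\lambda$ weakly increasing along rows and strictly increasing down columns; this is equivalent to the bialternant formula \eqref{eq_Schur} by the classical Lindström--Gessel--Viennot / straightening argument, and I would simply cite \cite[Chapter I]{M} for it. From this description the linear independence is immediate: the monomial $x_1^{\lambda_1}x_2^{\lambda_2}\cdots$ appears in $s_\lambda$ (from the superstandard tableau with all $i$'s in row $i$) with coefficient $1$, and no monomial $x^\mu$ with $\mu>\lambda$ in dominance order occurs in $s_\lambda$. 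Hence $s_\lambda=m_\lambda+\sum_{\mu<\lambda}c_{\lambda\mu}m_\mu$ in the monomial basis $\{m_\lambda\}$, which is unitriangular, so $\{s_\lambda\}$ is a basis of $\Lambda$ because $\{m_\lambda\}$ is.

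For the $h$-Jacobi--Trudi identity $s_\lambda=\det[h_{\lambda_i-i+j}]_{i,j=1}^{\ell}$, the approach I would take is the lattice-path / Lindström--Gessel--Viennot argument. Fix $\ell=\ell(\lambda)$ and consider $\ell$ nonintersecting lattice paths in $\Z^2$ that take unit steps east and north, where the $i$-th path runs from $A_i=(1-i,\,1)$ to $B_i=(\lambda_i-i+1,\,\infty)$ (cut off at a large height), weighting each east step at height $k$ by the variable $x_k$. The generating function for a single path from $A_i$ to $B_j$ is exactly $h_{\lambda_j-j+i}$ (it records a weakly increasing sequence of row-indices of length $\lambda_j-j+i$), so by the LGV lemma the signed sum $\det[h_{\lambda_i-i+j}]$ equals the generating function for families of nonintersecting paths, and such families are in weight-preserving bijection with semistandard tableaux of shape $\lambda$ (the east steps of the $i$-th path, read off, give the $i$-th row of $T$; nonintersection forces the column-strict condition). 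This yields $\det[h_{\lambda_i-i+j}]=\sum_T x^T=s_\lambda$. The $e$-version follows the same template with the roles of rows and columns swapped: use the transpose shape $\lambda'$, read tableaux by columns, and weight by strictly-increasing column entries, so a single path contributes $e_{\lambda'_j-j+i}$; alternatively one can deduce it from the $h$-identity together with the duality $H(z)E(-z)=1$ of \eqref{eq_gen_functions_relation} and the involution $\omega:h_k\mapsto e_k$ on $\Lambda$, using that $\omega(s_\lambda)=s_{\lambda'}$.

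The main obstacle, and the step I would be most careful about, is the bijection between nonintersecting path families and semistandard tableaux — in particular getting the precise indexing of the start/end points right so that the $(i,j)$ entry of the matrix is $h_{\lambda_i-i+j}$ and not some transpose or shift thereof, and checking that nonintersection corresponds exactly to strict increase down columns (rather than, say, merely weak increase). One must also handle the convention $h_k=0$ for $k<0$ cleanly: this corresponds to the absence of any path when the endpoint lies strictly west of the startpoint, and it is what makes the determinant expansion collapse to the identity permutation contribution plus cancelling pairs. Everything else — the LGV sign-reversing involution on intersecting families, the generating-function identities, and the monomial-triangularity bookkeeping — is routine and can be quoted from \cite[Chapter I, (2.5), (3.4), (5.4)]{M}.
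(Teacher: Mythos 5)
Your sketch is correct, but it is worth noting at the outset that the paper does not actually prove this proposition: it simply refers to \cite[Chapter I, Section 3]{M}. So the comparison is really between your argument and Macdonald's. Macdonald's proof of the Jacobi--Trudi identities is algebraic: starting from the bialternant formula \eqref{eq_Schur} he relates the alternants $a_{\lambda+\delta}$ to the matrices of coefficients of $E(t)$ and $H(t)$, using the duality $H(t)E(-t)=1$, and obtains $a_{\lambda+\delta}=a_\delta\det[h_{\lambda_i-i+j}]$ by a determinant manipulation; the basis statement is then deduced from the unitriangular transition to the monomial basis, exactly as you do. Your route replaces the algebraic core by the Lindstr\"om--Gessel--Viennot lattice-path argument, which has the advantage of simultaneously establishing the tableau expansion $s_\lambda=\sum_T x^T$ and making the convention $h_k=0$ for $k<0$ transparent (no path exists), at the cost of the careful indexing and the path-to-tableau bijection you correctly identify as the delicate step; it also fits naturally with Theorem \ref{Theorem_LGV} which the paper states later. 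Two small cautions: the deduction of the $e$-version from the $h$-version via $\omega(s_\lambda)=s_{\lambda'}$ is potentially circular, since the standard proof of that identity for $\omega$ uses both Jacobi--Trudi formulas, so you should rely on your direct column-strict LGV argument (or prove $\omega(s_\lambda)=s_{\lambda'}$ independently, e.g.\ via the dual Cauchy identity); and since the proposition is about $\Lambda$ rather than $\Lambda_N$, you should add the one-line remark that the identity holds for every finite number of variables compatibly with the projections $\pi_N$, hence in the projective limit.
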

The proof of this statement can be found in \cite[Chapter I, Section 3]{M}. The expression of
$s_\lambda$ through generators $p_k$ is more involved and is related to the table of
\emph{characters of irreducible representations} of the symmetric groups, see \cite[Chapter I,
Section 7]{M}.

Now suppose that we have two copies of the algebra $\Lambda$ or, in other words, two sets of
variables $x=(x_1,x_2,\dots)$ and $y=(y_1,y_2,\dots)$. We can consider functions of the form
$s_\lambda(x)s_\mu(y)$, which will be symmetric functions in variables $x$ and $y$ separately, but
not jointly; formally such function can be viewed as an element of the tensor product $\Lambda
\otimes \Lambda$. More generally, we can consider an infinite sum
\begin{equation}
\label{eq_Cauchy_sum}
 \sum_{\lambda} s_\lambda(x)s_\lambda(y),
\end{equation}
 as an infinite series symmetric in variables $x_1,x_2,\dots$ and in variables $y_1,y_2,\dots$.
 The following theorem gives a neat formula for the sum \eqref{eq_Cauchy_sum}.

 \begin{theorem}[The Cauchy identity]
\label{theorem_Cauchy}
 We have
\begin{equation}
\label{eq_Schur_Cauchy}
 \sum_{\lambda\in\Y} s_\lambda(x_1,x_2,\dots) s_\lambda(y_1,y_2,\dots)=\prod_{i,j} \frac{1}{1-x_i y_j},\quad
\end{equation}
and also
 \begin{multline}
 \label{eq_p_Cauchy}
 \sum_{\lambda\in\Y} \frac{p_\lambda(x_1,x_2,\dots) p_\lambda(y_1,y_2\dots)}{z_\lambda}
\\ =
 \exp\left(\sum\limits_{k=1}^\infty \frac{p_k(x_1,x_2,\dots)p_k(y_1,y_2,\dots)}{k}\right)=
 \prod_{i,j} \frac{1}{1-x_i y_j},
\end{multline}
where
$$
 p_\lambda=p_{\lambda_1}p_{\lambda_2}\cdots p_{\lambda_{\ell(\lambda)}}
$$
and
 $ z_\lambda=\prod_{i\ge 1} i^{m_i}  m_i!, $ where $m_i(\lambda)$ is the number of rows of length $i$
in $\lambda$.
 \end{theorem}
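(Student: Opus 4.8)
The plan is to establish the Schur form \eqref{eq_Schur_Cauchy} by first reducing it to an identity in finitely many variables, then evaluating that identity with the bialternant formula \eqref{eq_Schur} together with the classical Cauchy determinant evaluation; the power-sum form \eqref{eq_p_Cauchy} will afterwards drop out almost formally from Proposition \ref{Prop_gen_functions}. For the reduction, recall that $s_\lambda(x_1,\dots,x_N,0,0,\dots)=s_\lambda(x_1,\dots,x_N)$ and that $s_\lambda$ vanishes identically once $N<\ell(\lambda)$, so the coefficient of any fixed monomial on either side of \eqref{eq_Schur_Cauchy} stabilizes as $N\to\infty$. Hence it is enough to prove, for every $N$,
\[
 \sum_{\lambda:\ \ell(\lambda)\le N} s_\lambda(x_1,\dots,x_N)\,s_\lambda(y_1,\dots,y_N)=\prod_{i,j=1}^N\frac{1}{1-x_iy_j},
\]
viewed as an identity of formal power series (equivalently, valid whenever all $|x_iy_j|<1$), and the same type of stabilization argument will handle \eqref{eq_p_Cauchy}.

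Next, write $\delta=(N-1,N-2,\dots,1,0)$ and, for a weakly decreasing integer vector $\alpha$, put $a_\alpha(x)=\det[x_i^{\alpha_j}]_{i,j=1}^N$, so that $\prod_{i<j}(x_i-x_j)=a_\delta(x)$ and $s_\lambda(x)=a_{\lambda+\delta}(x)/a_\delta(x)$ by \eqref{eq_Schur}. As $\lambda$ ranges over partitions with at most $N$ parts, $\alpha=\lambda+\delta$ ranges bijectively over the strictly decreasing sequences $\alpha_1>\dots>\alpha_N\ge 0$ of nonnegative integers, so the left-hand side of the displayed identity equals $\bigl(a_\delta(x)a_\delta(y)\bigr)^{-1}\sum_{\alpha_1>\dots>\alpha_N\ge0}a_\alpha(x)a_\alpha(y)$. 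Applying the Cauchy--Binet formula to the product of the $N\times\infty$ matrix $[x_i^{\alpha}]_{i,\,\alpha\ge 0}$ with the $\infty\times N$ matrix $[y_j^{\alpha}]_{\alpha\ge 0,\,j}$ — legitimate because every monomial receives only finitely many contributions — gives $\sum_{\alpha_1>\dots>\alpha_N\ge0}a_\alpha(x)a_\alpha(y)=\det\bigl[\sum_{\alpha\ge0}(x_iy_j)^\alpha\bigr]_{i,j=1}^N=\det\bigl[(1-x_iy_j)^{-1}\bigr]_{i,j=1}^N$. It then remains to prove the Cauchy determinant identity
\[
 \det\!\left[\frac{1}{1-x_iy_j}\right]_{i,j=1}^N=\frac{\prod_{i<j}(x_i-x_j)(y_i-y_j)}{\prod_{i,j=1}^N(1-x_iy_j)},
\]
which follows by induction on $N$ (subtract the last row from all others, factor out the resulting $x_i-x_N$ and $(1-x_iy_j)^{-1}$, and recurse) or by a zeros-and-poles argument. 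Since $\prod_{i<j}(x_i-x_j)(y_i-y_j)=a_\delta(x)a_\delta(y)$, the prefactor cancels and the finite-variable identity — hence \eqref{eq_Schur_Cauchy} — follows.

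For \eqref{eq_p_Cauchy}, observe that $\prod_i(1-x_iy_j)^{-1}=H(y_j)$ in the variables $x$ by \eqref{eq_H_gen}, and $H(y_j)=\exp\bigl(\sum_{k\ge1}y_j^k p_k(x)/k\bigr)$ by \eqref{eq_gen_functions_relation}; multiplying over $j$ yields the middle expression $\exp\bigl(\sum_{k\ge1}p_k(x)p_k(y)/k\bigr)$. Expanding each factor $\exp\bigl(p_k(x)p_k(y)/k\bigr)$ and collecting terms, a term indexed by a finitely supported vector $(m_1,m_2,\dots)$ of nonnegative integers corresponds to the partition $\lambda$ having $m_i$ parts equal to $i$; since then $\prod_k p_k(\cdot)^{m_k}=p_\lambda(\cdot)$ and $\prod_k k^{m_k}m_k!=z_\lambda$, this reassembles exactly $\sum_\lambda p_\lambda(x)p_\lambda(y)/z_\lambda$. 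I expect the one genuinely nontrivial ingredient to be the Cauchy determinant evaluation (equivalently, the Cauchy--Binet bookkeeping reducing the Schur sum to it); everything else is formal, modulo a routine check that the infinite Cauchy--Binet sum is locally finite. An alternative, more combinatorial route — which the paper is about to develop anyway — would expand $\prod_{i,j}(1-x_iy_j)^{-1}$ over nonnegative integer matrices and invoke the RSK correspondence of Section \ref{Section_RSK} together with the tableau expansion $s_\lambda(x)=\sum_T x^T$; that bypasses the bialternant computation but relies on machinery not yet available here.
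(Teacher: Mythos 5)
Your proof is correct. Note that the paper itself does not prove Theorem \ref{theorem_Cauchy} at all --- it simply refers the reader to \cite[Chapter I, Section 4]{M} --- so there is no in-paper argument to compare against; what you have written is essentially the classical proof found in that reference. Your route (stabilization to finitely many variables, the bialternant formula \eqref{eq_Schur}, Cauchy--Binet to convert $\sum_\alpha a_\alpha(x)a_\alpha(y)$ into $\det\bigl[(1-x_iy_j)^{-1}\bigr]$, and the Cauchy determinant evaluation) is sound, including the bookkeeping points you flag: the sign conventions in Cauchy--Binet cancel because both minors are reordered simultaneously, and the infinite column sum is locally finite degree by degree. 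One detail worth appreciating in your write-up: you prove the Cauchy determinant identity independently (by induction or zeros-and-poles) rather than quoting the paper's later evaluation \eqref{eq_Cauchy}, whose in-paper justification via the normalization constant of the Schur measure ultimately rests on the Cauchy identity itself and would therefore be circular here. The derivation of \eqref{eq_p_Cauchy} from \eqref{eq_H_gen} and \eqref{eq_gen_functions_relation} by expanding $\prod_k\exp\bigl(p_k(x)p_k(y)/k\bigr)$ and matching the multiplicity vector $(m_1,m_2,\dots)$ with $z_\lambda=\prod_k k^{m_k}m_k!$ is complete and correct as an identity of formal power series.
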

\noindent {\bf Remark.} The right--hand sides of \eqref{eq_Schur_Cauchy} and \eqref{eq_p_Cauchy}
should be viewed as formal power series via
$$
\frac{1}{1-x_i y_j}=1+x_i y_j +(x_i y_j)^2+ (x_i y_j)^3+\dots.
$$
The proof of Theorem \ref{theorem_Cauchy} can be found in \cite[Chapter I, Section 4]{M}. In fact,
\eqref{eq_Schur_Cauchy} is a particular case of the more general \emph{skew Cauchy identity}. We
need to introduce further notations in order to state it.

Take two sets of variables $x=(x_1,x_2,\dots)$ and $y=(y_1,y_2,\dots)$ and a symmetric function
$f\in\Lambda$. Let $(x,y)$ be the union of sets of variables $x$ and $y$. Then we can view
$f(x,y)$ as a function in $x_i, y_j$ symmetric with respect to all possible permutations of
variables. In particular, $f(x,y)$ is a symmetric function in $x_i$ and also a symmetric function
in $y_i$, more precisely, $f(x,y)$ is a sum of products of symmetric functions of $x_i$ and
symmetric functions of $y_i$. What does this decomposition look like? The answer, of course,
depends on $f$. For instance,
$$
 p_k(x,y)=\sum_i x_i^k+\sum_i y_i^k =p_k(x)+p_k(y).
$$
\begin{definition}
\label{Definition_skew_Schur} Let $\lambda$ be any Young diagram. Expand $s_\lambda(x,y)$ as a
linear combination of Schur symmetric functions in variables $y_i$; the coefficients of this
expansion are called \emph{skew Schur functions} and denoted $s_{\lambda/\mu}$:
$$
 s_{\lambda}(x,y)=\sum_{\mu} s_{\lambda/\mu}(x) s_\mu(y).
$$
In particular, $s_{\lambda/\mu}(x)$ is a symmetric function in variables $x_i$.
\end{definition}

\begin{proposition}[The skew Cauchy identity] For any Young diagrams $\lambda$, $\nu$ we have
\label{proposition_skew_Cauchy}
\begin{multline}
\label{eq_skew_Cauchy}
 \sum_{\mu\in\Y} s_{\mu/\lambda}(x_1,x_2,\dots) s_{\mu/\nu}(y_1,y_2,\dots) \\= \prod_{i,j}\frac{1}{1-x_i y_j}
  \sum_{\kappa\in\Y} s_{\lambda/\kappa}(y_1,y_2,\dots) s_{\nu/\kappa}(x_1,x_2,\dots).
\end{multline}
\end{proposition}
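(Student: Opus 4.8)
The plan is to deduce \eqref{eq_skew_Cauchy} from the ordinary Cauchy identity \eqref{eq_Schur_Cauchy} together with the defining property of skew Schur functions (Definition \ref{Definition_skew_Schur}), the only further ingredient being that the $s_\lambda$ form a linear basis of $\Lambda$ (Proposition \ref{proposition_prop_Schur_basic}), which licenses comparing coefficients of Schur functions in an auxiliary alphabet. Here $x$, $y$ and an auxiliary alphabet $v$ denote sets of variables, and every identity below is read degree by degree, so that all sums over partitions are finite.

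\emph{Step 1: a one-sided special case.} First I would establish
$$
 \sum_{\mu\in\Y} s_{\mu/\lambda}(x)\,s_\mu(y)=\prod_{i,j}\frac{1}{1-x_iy_j}\; s_\lambda(y).
$$
For this, write \eqref{eq_Schur_Cauchy} with the second set of variables replaced by the union $(y,v)$ of two alphabets; on the left-hand side expand $s_\mu(y,v)=\sum_\tau s_{\mu/\tau}(y)s_\tau(v)$ by Definition \ref{Definition_skew_Schur}, on the right-hand side factor the product as $\prod_{i,j}\frac{1}{1-x_iy_j}\prod_{i,j}\frac{1}{1-x_iv_j}$ and expand the second factor again via \eqref{eq_Schur_Cauchy}; then compare the coefficients of $s_\lambda(v)$ and interchange $x$ and $y$ (which is harmless since the product is symmetric in the two alphabets).

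\emph{Step 2: the general identity.} Now substitute $y\mapsto(y,v)$ in the identity of Step 1. On the left $s_\mu(y,v)=\sum_\nu s_{\mu/\nu}(y)s_\nu(v)$, on the right $s_\lambda(y,v)=\sum_\kappa s_{\lambda/\kappa}(y)s_\kappa(v)$ and the product factors once more; the key move is then to apply the identity of Step 1 \emph{in reverse} to rewrite $\prod_{i,j}\frac{1}{1-x_iv_j}\,s_\kappa(v)=\sum_\rho s_{\rho/\kappa}(x)s_\rho(v)$. After this rearrangement both sides are explicit series in the variables $v$, and comparing the coefficients of $s_\nu(v)$ yields exactly \eqref{eq_skew_Cauchy} with $\kappa$ as the summation index.

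The argument is essentially bookkeeping, so the only point requiring care is the coefficient comparison: since $\{s_\lambda\}$ is a basis of $\Lambda$, a bounded-degree series $\sum_\nu c_\nu\,s_\nu(v)$ with $c_\nu\in\Lambda\otimes\Lambda$ determines the $c_\nu$ uniquely, and within each total degree only finitely many terms contribute, so there are no convergence subtleties. An equivalent alternative, the route taken in \cite[Chapter I]{M}, is to use the Hall inner product for which the $s_\lambda$ are orthonormal, together with the skewing operators $u^\perp$ adjoint to multiplication by $u$ (so $s_\mu^\perp s_\lambda=s_{\lambda/\mu}$), and to evaluate $(s_\lambda^\perp)_x(s_\nu^\perp)_y\prod_{i,j}\frac{1}{1-x_iy_j}$ directly; there the main obstacle is the co-Leibniz rule for skewing a product, which the generating-function argument above avoids.
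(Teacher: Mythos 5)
Your proof is correct. Note that the paper does not actually prove Proposition \ref{proposition_skew_Cauchy}: it simply refers to \cite[Chapter I, Section 5, Example 26]{M}, where the identity is established with the Hall inner product and the skewing (adjoint) operators — the alternative you sketch in your closing paragraph. Your argument is therefore a genuinely self-contained route within the toolkit the lecture notes have already set up: it uses only the ordinary Cauchy identity \eqref{eq_Schur_Cauchy}, the branching definition of skew Schur functions (Definition \ref{Definition_skew_Schur}), and the fact that the $s_\lambda$ form a basis, and I checked that both coefficient comparisons go through as claimed. In Step 1, expanding $\sum_\mu s_\mu(x)s_\mu(y,v)$ in the $v$-alphabet and matching against $\prod_{i,j}(1-x_iy_j)^{-1}\sum_\lambda s_\lambda(x)s_\lambda(v)$ gives $\sum_\mu s_\mu(x)s_{\mu/\lambda}(y)=\prod_{i,j}(1-x_iy_j)^{-1}s_\lambda(x)$, and the relabeling $x\leftrightarrow y$ is indeed harmless; in Step 2, substituting $y\mapsto(y,v)$, expanding $s_\lambda(y,v)$, and reusing Step 1 to write $\prod_{i,j}(1-x_iv_j)^{-1}s_\kappa(v)=\sum_\rho s_{\rho/\kappa}(x)s_\rho(v)$ lets you read off \eqref{eq_skew_Cauchy} as the coefficient of $s_\nu(v)$. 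You also correctly flag the only delicate point, namely that the comparison of Schur coefficients in the auxiliary alphabet is legitimate because everything is homogeneous degree by degree with finitely many contributing partitions in each degree. What your route buys is exactly this economy: no Hall inner product, no adjoint operators, no co-Leibniz rule — just the Cauchy kernel and bookkeeping — which fits the way the notes later specialize these identities (e.g.\ in Propositions \ref{Prop_Schur_proc_normalizations} and \ref{prop_Schur_commutativity}); what Macdonald's operator-theoretic route buys instead is a formalism that generalizes immediately to the $(q,t)$ setting of Section \ref{Section_Macdonald}.
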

For the proof of this statement see \cite[Chapter I, Section 5, Example 26]{M}. In order to see
that \eqref{eq_Schur_Cauchy} is indeed a particular case of \eqref{eq_skew_Cauchy} we need the
following generalization of Jacobi--Trudi identity (its proof can be found in the same section of
\cite{M}).

\begin{proposition} Assuming that $h_k=0$ for $k<0$, we have \label{Prop_J-T_skew}
$$
 s_{\lambda/\mu} = \det
 \Bigl[h_{\lambda_i-\mu_j-i+j}\Bigr]_{i,j=1,\dots,\max(\ell(\lambda),\ell(\mu))}.
$$
In particular, $s_{\lambda/\mu}=0$ unless $\mu\subset\lambda$, i.e.\ $\mu_i\le\lambda_i$ for all
$i$.
\end{proposition}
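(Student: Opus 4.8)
The plan is to prove the skew Jacobi--Trudi formula
$s_{\lambda/\mu} = \det\bigl[h_{\lambda_i-\mu_j-i+j}\bigr]_{i,j=1,\dots,n}$
(with $n = \max(\ell(\lambda),\ell(\mu))$) by a generating-function manipulation that reduces it to the ordinary Jacobi--Trudi identity from Proposition~\ref{proposition_prop_Schur_basic}. The starting point is the defining relation $s_\lambda(x,y)=\sum_\mu s_{\lambda/\mu}(x)s_\mu(y)$, which says that $s_{\lambda/\mu}$ is the coefficient extracting operator: if one applies to $s_\lambda(x,y)$ the linear functional on the $y$-variables dual to $s_\mu$ with respect to the Hall inner product, one recovers $s_{\lambda/\mu}(x)$. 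Equivalently, one can work with the coproduct on $\Lambda$ and use that $s_{\lambda/\mu} = \langle \Delta s_\lambda, 1\otimes s_\mu\rangle$ in the first tensor factor.

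First I would record the behaviour of the generators under the ``union of variables'' operation: $h_k(x,y) = \sum_{a+b=k} h_a(x) h_b(y)$, i.e.\ $H(z;x,y) = H(z;x)H(z;y)$ by \eqref{eq_H_gen}, which is immediate from the product formula. Next I would substitute the ordinary Jacobi--Trudi expression $s_\lambda(x,y) = \det\bigl[h_{\lambda_i-i+j}(x,y)\bigr]_{i,j=1}^{n}$ and expand each entry using the convolution formula above. The determinant of a matrix each of whose entries is a sum $\sum_{b} h_{\lambda_i-i+j-b}(x) h_b(y)$ is best handled via the Cauchy--Binet / Lindstr\"om--Gessel--Viennot type expansion: writing the matrix as a product $A(x)\cdot B(y)$ where $A(x)_{i,k} = h_{\lambda_i - i + k}(x)$ (for a suitable range of the auxiliary index $k\in\mathbb Z$) and $B(y)_{k,j} = h_{k - j}(y)\cdot$(shift), one gets $\det[\,\cdot\,] = \sum_{S} \det A(x)_{\,\cdot\,,S}\,\det B(y)_{S,\,\cdot\,}$ over size-$n$ column sets $S$. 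The key computation is then that $\det A(x)_{\,\cdot\,,S}$ is, up to the obvious reindexing, again an ordinary Jacobi--Trudi determinant and hence equals $s_{\lambda/\nu(S)}(x)$ for the partition $\nu(S)$ coded by $S$, while $\det B(y)_{S,\,\cdot\,}$ equals $s_{\nu(S)}(y)$. Matching with $s_\lambda(x,y)=\sum_\mu s_{\lambda/\mu}(x)s_\mu(y)$ and using linear independence of the $s_\mu(y)$ then forces $s_{\lambda/\mu}(x) = \det\bigl[h_{\lambda_i - \mu_j - i + j}(x)\bigr]$.

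The ``in particular'' clause follows once the determinant formula is in hand: if $\mu\not\subset\lambda$, pick the smallest index $r$ with $\mu_r > \lambda_r$; then for all $i\ge r$ and all $j\le r$ one has $\lambda_i - \mu_j - i + j \le \lambda_r - \mu_r - i + j < 0$ (using $\lambda_i\le\lambda_r$, $\mu_j\ge\mu_r$, $i\ge j$), so the entire lower-left $(n-r+1)\times r$ block of the matrix vanishes, the block structure forces $\det = 0$, and hence $s_{\lambda/\mu}=0$.

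The main obstacle I anticipate is the bookkeeping in the Cauchy--Binet step: one must choose the auxiliary index range correctly (effectively $\mathbb Z_{\ge 0}$ truncated, or all of $\mathbb Z$ with only finitely many nonzero contributions because $h_k=0$ for $k<0$), verify that the infinite sum over column subsets $S$ is actually finite and that each surviving $S$ corresponds to a genuine partition $\mu\subset\lambda$ with $\ell(\mu)\le n$, and check the sign conventions so that no spurious $\pm$ appears. Once that indexing is pinned down, everything else is a direct appeal to the ordinary Jacobi--Trudi identity and to the linear independence of Schur functions; alternatively, for readers who prefer it, the whole proof can be replaced by a reference to \cite[Chapter~I, Section~5]{M}, which is what the paper does.
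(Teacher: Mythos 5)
Your proof is correct, and it is worth noting that it does more than the paper itself, which offers no argument for this proposition and simply points to \cite[Chapter I, Section 5]{M}; what you give is the standard self-contained derivation from ingredients the paper has already set up. Your route --- start from Definition~\ref{Definition_skew_Schur}, write $s_\lambda(x,y)$ by the ordinary Jacobi--Trudi identity of Proposition~\ref{proposition_prop_Schur_basic}, use the factorization of the generating function \eqref{eq_H_gen} over a union of variables, i.e.\ $h_m(x,y)=\sum_{a+b=m}h_a(x)h_b(y)$, then apply Cauchy--Binet and compare coefficients of the linearly independent $s_\mu(y)$ --- does work, and the bookkeeping you flag closes up: with $A(x)_{i,k}=h_{\lambda_i-i+k}(x)$ the second factor must be $B(y)_{k,j}=h_{j-k}(y)$ (you wrote $h_{k-j}$, a harmless slip to fix); the auxiliary index is effectively confined to $1-\lambda_1\le k\le n$ because $h_k=0$ for $k<0$, so the sum over column sets $S=\{k_1<\dots<k_n\}$ is finite; any $S$ with $k_n>n$ contributes a zero row to $B$, and the surviving $S$ correspond bijectively to partitions via $\mu_i=i-k_i$, with $\det A_{\cdot,S}=\det\bigl[h_{\lambda_i-\mu_j-i+j}(x)\bigr]$ and $\det B_{S,\cdot}=s_\mu(y)$ by Jacobi--Trudi, with no extra signs. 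Two points you should state explicitly: the ordinary Jacobi--Trudi determinant may be taken of any size $n\ge\ell(\lambda)$ (the extra rows give a unitriangular block, so Proposition~\ref{proposition_prop_Schur_basic}, stated with size $\ell(\lambda)$, extends immediately), and the coefficient comparison also shows $s_{\lambda/\mu}=0$ when $\ell(\mu)>n$, consistent with your zero-block argument for $\mu\not\subset\lambda$, which is correct as written. The aside about the Hall inner product and the coproduct is unnecessary (and uses structure the paper never introduces), so it is cleaner to drop it; what your approach buys, compared with the paper's bare citation, is a complete proof inside the paper's own toolkit at the cost of the indexing care you correctly identified.
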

Comparing Proposition \ref{Prop_J-T_skew} with Proposition \ref{proposition_prop_Schur_basic} we
conclude that if $\mu=\varnothing$ is the empty Young diagram, then (one can also see this
independently from definitions)
$$
 s_{\lambda/\mu}=s_\lambda,
$$
and also
$$
 s_{\mu/\nu}=s_{\varnothing/\nu}=\begin{cases} 1, & \nu=\varnothing,\\ 0,&\text{otherwise.} \end{cases}
$$
 Now if we set $\lambda=\nu=\varnothing$ in \eqref{eq_skew_Cauchy} we get
\eqref{eq_Schur_Cauchy}.

Another property of the skew Schur functions is summarized in the following proposition (its proof
can be found in \cite[Chapter I, Section 5]{M}).
\begin{proposition} \label{proposition_fundamental_skew_Schur}Let $x$ and $y$ be two sets of variables.
For any $\lambda,\mu\in\Y$ we have
$$
 s_{\lambda/\mu}(x,y)=\sum_{\nu\in\Y} s_{\lambda/\nu}(x) s_{\nu/\mu}(y).
$$
\end{proposition}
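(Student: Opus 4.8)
The plan is to deduce the identity purely formally from the defining property of skew Schur functions (Definition \ref{Definition_skew_Schur}), by introducing a third auxiliary set of variables and expanding one and the same Schur function in two ways, exploiting the ``associativity'' of the union of variable sets, $((x,y),w)=(x,(y,w))=(x,y,w)$. This is essentially the coassociativity of the coproduct on $\Lambda$ that underlies skew Schur functions.

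First I would let $w=(w_1,w_2,\dots)$ be a fresh set of variables and look at $s_\lambda(x,y,w)$, the Schur function in the union of the three sets. Applying Definition \ref{Definition_skew_Schur} with $(x,y)$ in the role of the first set and $w$ in the role of the second gives
$$
 s_\lambda(x,y,w)=\sum_{\mu\in\Y} s_{\lambda/\mu}(x,y)\,s_\mu(w).
$$
On the other hand, applying the definition with $x$ as first set and $(y,w)$ as second set, and then once more to split $s_\nu(y,w)=\sum_{\mu} s_{\nu/\mu}(y)\,s_\mu(w)$, gives
$$
 s_\lambda(x,y,w)=\sum_{\nu\in\Y} s_{\lambda/\nu}(x)\,s_\nu(y,w)=\sum_{\nu\in\Y}\sum_{\mu\in\Y} s_{\lambda/\nu}(x)\,s_{\nu/\mu}(y)\,s_\mu(w).
$$
By Proposition \ref{Prop_J-T_skew} the only nonzero terms occur for $\mu\subseteq\nu\subseteq\lambda$, so all these sums are finite and there is no convergence issue; rearranging the finite double sum,
$$
 s_\lambda(x,y,w)=\sum_{\mu\in\Y}\Bigl(\sum_{\nu\in\Y} s_{\lambda/\nu}(x)\,s_{\nu/\mu}(y)\Bigr)s_\mu(w).
$$

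Finally, since the Schur functions $s_\mu$ form a linear basis of $\Lambda$ (Proposition \ref{proposition_prop_Schur_basic}), the expansion of $s_\lambda(x,y,w)$ in the family $\{s_\mu(w)\}_{\mu\in\Y}$, with coefficients that are symmetric functions separately in $x$ and in $y$, is unique. Comparing the coefficient of $s_\mu(w)$ in the two displays yields
$$
 s_{\lambda/\mu}(x,y)=\sum_{\nu\in\Y} s_{\lambda/\nu}(x)\,s_{\nu/\mu}(y),
$$
which is the assertion. The only points that require any care are bookkeeping --- keeping straight which set of variables plays the role of ``first'' and ``second'' in each application of Definition \ref{Definition_skew_Schur} --- and the (harmless) observation that finiteness of the sums makes the coefficient comparison legitimate; no estimate or combinatorial construction is needed. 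An alternative route would be to use the Jacobi--Trudi determinant of Proposition \ref{Prop_J-T_skew} together with a Lindström--Gessel--Viennot or Cauchy--Binet argument on non-intersecting lattice paths, but the coproduct argument above is shorter and self-contained given the results already established.
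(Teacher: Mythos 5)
Your argument is correct. Note that the paper itself does not prove this proposition at all --- it only points to Macdonald's book --- so there is no in-text proof to compare with; your coassociativity argument (introduce a third alphabet $w$, expand $s_\lambda(x,y,w)$ two ways via Definition \ref{Definition_skew_Schur}, and compare coefficients of $s_\mu(w)$) is exactly the standard route, essentially the one found in \cite[Chapter I, Section 5]{M}. The two points that needed attention are handled: finiteness of the sums follows from $s_{\lambda/\nu}=0$ unless $\nu\subset\lambda$ and $s_{\nu/\mu}=0$ unless $\mu\subset\nu$ (Proposition \ref{Prop_J-T_skew}), and the coefficient comparison is legitimate because, after fixing a monomial in the $x,y$ variables, the identity becomes an equality of finite linear combinations of the $s_\mu(w)$, which are linearly independent by Proposition \ref{proposition_prop_Schur_basic}. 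One small bookkeeping remark: your first display uses Definition \ref{Definition_skew_Schur} with the union $(x,y)$ playing the role of the ``first'' alphabet, which is fine precisely because $s_{\lambda/\mu}$ is a well-defined element of $\Lambda$ evaluated on whatever alphabet is inserted --- it is worth saying this explicitly, since it is the point where the left-hand side $s_{\lambda/\mu}(x,y)$ of the proposition actually appears.
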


Another important notion is that of a \emph{specialization}.
\begin{definition}
Any algebra homomorphism $\rho :\Lambda\to \mathbb C,\quad f\mapsto f(\rho)$,  is called a
 \emph{specialization}. In other words, $\rho$ should satisfy the following properties:
 $$
 (f+g)(\rho)=f(\rho)+g(\rho),\quad
  (fg)(\rho)=f(\rho)g(\rho),\quad   (\theta f)(\rho)=\theta f(\rho), \quad \theta\in \mathbb C.
 $$
\end{definition}

Take any sequence of complex numbers $u_1,u_2,\dots$ satisfying $\sum |u_i|<\infty$. Then the
\emph{substitution} map $\Lambda\to \mathbb C,\,\,  x_i\mapsto u_i$ is a specialization. More
generally, any specialization is uniquely determined by its values on any set of generators of
$\Lambda$. Furthermore, if the generators are algebraically independent, then these values can be
any numbers. What this means is that defining $\rho$ is equivalent to specifying the set of
numbers $p_1(\rho)$, $p_2(\rho)$,\dots, \emph{or} the set of numbers $e_1(\rho)$,
$e_2(\rho)$,\dots, \emph{or} the set of numbers $h_1(\rho)$, $h_2(\rho)$,\dots. In particular, if
$\rho$ is the substitution of complex numbers $u_i$, then
\begin{equation}
\label{eq_substitution}
 p_k\mapsto p_k(\rho)=\sum_{i} (u_i)^k.
\end{equation}
Note that the condition $\sum_i |u_i|<\infty$ implies that the series in \eqref{eq_substitution}
converges for any $k\ge 1$.

Sometimes it is important to know which specializations are \emph{positive} in a certain sense. We
call a specialization $\rho$ \emph{Schur--positive} if for every Young diagram $\lambda$ we have
$$
 s_\lambda(\rho)\ge 0.
$$
There is an explicit classification for Schur--positive specializations.
\begin{theorem}
\label{Theorem_Thoma} The Schur--positive specializations are parameterized by pairs of sequences
of non-negative reals $\alpha=(\alpha_1\ge\alpha_2\ge\dots\ge 0)$ and
$\beta=(\beta_1\ge\beta_2\ge\dots\ge 0)$ satisfying $\sum_i(\alpha_i+\beta_i)<\infty$ and an
additional parameter $\gamma\ge 0$. The specialization with parameters $(\alpha;\beta;\gamma)$ can
be described by its values on power sums
$$
p_1\mapsto p_1(\alpha;\beta;\gamma)=\gamma+\sum_i (\alpha_i+\beta_i),
$$
$$
 p_k\mapsto
p_k(\alpha;\beta;\gamma)=\sum\limits_{i}
 (\alpha_i^k + (-1)^{k-1} \beta_i^{k}), \, \quad k\ge 2
$$
or, equivalently, via generating functions
$$
\sum\limits_{k=0}^\infty h_k(\alpha;\beta;\gamma) z^k = e^{\gamma z} \prod\limits_{i\ge 1}
\dfrac{1+\beta_i z} {1-\alpha_i
 z}.
$$
\end{theorem}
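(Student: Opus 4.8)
\emph{Approach.} The statement has two halves: a constructive half --- every triple $(\alpha;\beta;\gamma)$ gives a Schur-positive specialization with the advertised generating function --- and a classification half --- there are no others. I would dispatch the constructive half first. For a finite set of parameters, introduce three homomorphisms. First, the plain substitution $\rho_\alpha\colon x_i\mapsto\alpha_i$, for which $s_\lambda(\rho_\alpha)=s_\lambda(\alpha_1,\dots,\alpha_N)\ge 0$ and, by \eqref{eq_H_gen}, $\sum_k h_k(\rho_\alpha)z^k=\prod_i(1-\alpha_i z)^{-1}$. Second, the ``dual'' homomorphism $\rho_\beta$ defined on generators by $h_k\mapsto e_k(\beta_1,\dots,\beta_N)$; combining the $h$-Jacobi--Trudi and $e$-Jacobi--Trudi formulas of Proposition~\ref{proposition_prop_Schur_basic} one gets $s_\lambda(\rho_\beta)=s_{\lambda'}(\beta_1,\dots,\beta_N)\ge0$, while \eqref{eq_E_gen} gives $\sum_k h_k(\rho_\beta)z^k=\prod_i(1+\beta_i z)$. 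Third, the homomorphism $\rho_\gamma$ with $p_1\mapsto\gamma$ and $p_k\mapsto0$ for $k\ge2$, equivalently $\sum_k h_k(\rho_\gamma)z^k=e^{\gamma z}$ by \eqref{eq_gen_functions_relation}; this is the $N\to\infty$ limit of the substitutions $x_1=\dots=x_N=\gamma/N$, hence a limit of Schur-nonnegative specializations, so $s_\lambda(\rho_\gamma)\ge0$. One then forms the ``union of alphabets'' $\rho_{\alpha;\beta;\gamma}=\rho_\alpha\oplus\rho_\beta\oplus\rho_\gamma$. On power sums the union adds values, matching the asserted formulas for $p_k$; and by Proposition~\ref{proposition_fundamental_skew_Schur} (with $\mu=\varnothing$) one has $s_\lambda(\rho'\oplus\rho'')=\sum_\nu s_{\lambda/\nu}(\rho')\,s_\nu(\rho'')$, so since every skew Schur function is a nonnegative combination of straight Schur functions (Littlewood--Richardson), a union of Schur-positive specializations is Schur-positive, and the generating functions multiply to $e^{\gamma z}\prod_i(1+\beta_i z)/\prod_i(1-\alpha_i z)$. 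Passing from finitely many to infinitely many $\alpha_i,\beta_i$ is a routine continuity argument using $\sum_i(\alpha_i+\beta_i)<\infty$ and the fact that each $s_\lambda$ has bounded degree and bounded coefficients.

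For the classification half the plan is to reduce it to the theory of totally positive sequences. Let $\rho$ be any Schur-positive specialization and set $a_k=h_k(\rho)$, so $a_0=1$ and $a_k=0$ for $k<0$. The key step is to show that the one-sided Toeplitz matrix $A=[a_{j-i}]_{i,j\ge0}$ is totally nonnegative. This follows from the skew Jacobi--Trudi identity (Proposition~\ref{Prop_J-T_skew}): any minor of $A$ with rows $i_1<\dots<i_r$ and columns $j_1<\dots<j_r$ equals $s_{\lambda/\mu}(\rho)$ for the partitions $\lambda_a=j_{r+1-a}-(r+1-a)$ and $\mu_a=i_{r+1-a}-(r+1-a)$ (and equals $0$ when $\mu\not\subset\lambda$), which is $\ge0$ because $s_{\lambda/\mu}$ is a nonnegative combination of Schur functions and $\rho$ is Schur-positive. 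The $2\times2$ minors give $a_k^2\ge a_{k-1}a_{k+1}$, so $H_\rho(z)=\sum_{k\ge0}a_kz^k$ has positive radius of convergence. I would then invoke the Edrei--Thoma classification of one-sided totally positive (P\'olya frequency) sequences --- the Aissen--Schoenberg--Whitney--Edrei theorem --- which asserts that such an $H_\rho$ continues to a meromorphic function on $\mathbb C$ of the form $C\,e^{\gamma z}\prod_i(1+\beta_i z)/\prod_i(1-\alpha_i z)$ with $\gamma\ge0$, $\alpha_1\ge\alpha_2\ge\dots\ge0$, $\beta_1\ge\beta_2\ge\dots\ge0$, $\sum_i(\alpha_i+\beta_i)<\infty$, and $C=a_0=1$. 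Since a specialization is determined by the values $h_k(\rho)$, i.e.\ by $H_\rho$, and since $\log H_\rho(z)=\sum_{k\ge1}p_k(\rho)z^k/k$ by \eqref{eq_gen_functions_relation}, expanding the logarithm of the product recovers exactly the stated values of $p_k(\rho)$, so $\rho=\rho_{\alpha;\beta;\gamma}$, completing the classification.

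The main obstacle is the imported classification of totally positive sequences, which is itself a deep theorem with a genuinely complex-analytic proof: one must show that $H_\rho$, a priori only analytic near the origin, extends meromorphically to all of $\mathbb C$ with zeros and poles tightly constrained by total nonnegativity, and then a Hadamard-factorization / genus-zero argument forces the numerator and denominator to be the displayed canonical products times an exponential. Reproducing that argument is the real work; in a lecture setting I would present the reduction above in detail and then quote the result as a black box (Edrei; Aissen--Schoenberg--Whitney; or Thoma's original derivation, which is equivalent to the description of the extreme characters of $S(\infty)$, i.e.\ of the boundary of the Young graph, obtained by the Vershik--Kerov ergodic method). The conceptual point worth emphasizing is that the chain ``$s_\lambda(\rho)\ge0$ for all straight shapes $\iff$ for all skew shapes $\iff$ $A$ totally nonnegative'' is precisely the bridge identifying the symmetric-function problem with the classical total-positivity problem.
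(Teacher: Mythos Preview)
The paper does not actually prove this theorem: immediately after the statement it simply records that the result is equivalent to the classification of totally positive triangular Toeplitz matrices and to the description of characters of $S(\infty)$, and then cites Thoma, Edrei, Vershik--Kerov/Kerov's book/KOO, and Okounkov for proofs by several methods. So there is no ``paper's own proof'' to compare against; the theorem is quoted as a black box.

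Your outline is a correct and standard reconstruction of the Edrei route that the paper points to. The constructive half is fine: the three building blocks $\rho_\alpha$, $\rho_\beta$ (via the involution $h_k\leftrightarrow e_k$ and the two Jacobi--Trudi formulas), and the Plancherel $\rho_\gamma$ as a limit of principal specializations are all Schur-positive, and closure under ``union of alphabets'' follows from $s_\lambda(\rho'\!\oplus\!\rho'')=\sum_\nu s_{\lambda/\nu}(\rho')s_\nu(\rho'')$ together with the Littlewood--Richardson nonnegativity of $s_{\lambda/\nu}$ in the Schur basis. For the classification half, your reduction ``$s_\lambda(\rho)\ge0$ for all $\lambda$ $\Rightarrow$ $s_{\lambda/\mu}(\rho)\ge0$ $\Rightarrow$ total nonnegativity of the Toeplitz matrix $[h_{j-i}(\rho)]$'' via Proposition~\ref{Prop_J-T_skew} is exactly the bridge the paper alludes to, and then quoting Aissen--Schoenberg--Whitney--Edrei is the honest thing to do; you are right that this is the deep step and cannot be made elementary. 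One small remark: your explicit formulas $\lambda_a=j_{r+1-a}-(r+1-a)$, $\mu_a=i_{r+1-a}-(r+1-a)$ need $i_a\le a-1+$const-type shifts to guarantee $\mu_a\ge0$; in practice one first shifts the row/column index sets so that the smallest index is $0$, which does not change the minor, and then the partitions come out nonnegative. This is cosmetic and does not affect the argument.
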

\noindent{\bf Remark.} One can show that if $\rho$ is a Schur--positive specialization, then also
$s_{\lambda/\mu}(\rho)\ge 0$ for any $\lambda,\mu\in\Y$.

\smallskip

Theorem \ref{Theorem_Thoma} has a number of equivalent reformulations, in particular, it is
equivalent to the description of all characters of the infinite symmetric group $S(\infty)$ and to
the classification of \emph{totally positive} triangular Toeplitz matrices. The first proofs of
Theorem \ref{Theorem_Thoma} were obtained (independently) by Thoma \cite{Thoma1} and Edrei
\cite{Ed}, a proof by a different method can be found in \cite{VK_S}, \cite{Kerov_book},
\cite{KOO}, and yet another proof is given in \cite{Ok}.

\medskip

Our next goal is to study the simplest Schur--positive specializations more thoroughly. Given two
Young diagrams $\lambda$ and $\mu$ we say that $\lambda/\mu$ is a \emph{horizontal strip} if $0\le
\lambda'_i-\mu'_i \le 1$ for all $i$; $\lambda/\mu$ is a \emph{vertical strip} if $0\le
\lambda_i-\mu_i \le 1$ for all $i$.

 A \emph{semistandard Young tableau} of shape $\lambda$ and
rank $N$ is a filling of boxes of $\lambda$ with numbers from $1$ to $N$ in such
a way that the
numbers strictly increase along the columns and weakly increase along the rows. A \emph{standard
Young tableau} of shape $\lambda$ is a filling of boxes of $\lambda$ with numbers from $1$ to
$|\lambda|$ in such a way that the numbers strictly increase both along the columns and along the
rows (in particular, this implies that each number appears exactly once). Examples of Young
tableaux are given in Figure \ref{Figure_young_tabl}.

\begin{figure}[h]
\centerline{ \large \ytableausetup{centertableaux}
\begin{ytableau}
1 & 1 & 2 &5 & 5 \\
2 & 3 & 4 \\
3 & 4
\end{ytableau}
\hskip 2cm
\begin{ytableau}
1 & 2 & 3& 5 \\
4 & 6 & 8 \\
7
\end{ytableau}
} \caption{Left panel: a semistandard Young tableau of shape $(5,3,2)$ and rank $5$. Right panel:
a standard Young tableau of shape $(4,3,1)$. \label{Figure_young_tabl}}
\end{figure}

The number of all semistandard Young tableaux of shape $\lambda$ and rank $N$ is denoted as ${\rm
Dim}_N(\lambda)$. The number of all standard Young tableau of shape $\lambda$ is denoted as
$\dim(\lambda)$. These quantities have representation--theoretic interpretations, namely, ${\rm
Dim}_N(\lambda)$ is the \emph{dimension} of the irreducible representation of unitary group $U(N)$
indexed by (the highest weight) $\lambda$ and ${\rm dim}(\lambda)$ is the dimension of the
irreducible representation of symmetric group $S(|\lambda|)$ indexed by $\lambda$.

The proofs of the following statements are a combination of Theorem \ref{Theorem_Thoma} and
results of \cite[Chapter I]{M}.

\begin{proposition}
\label{Proposition_spec_alpha} Suppose that $\alpha_1=c$ and all other
$\alpha$-, $\beta$-,
$\gamma$-parameters are zeros. Then for Schur--positive specialization
$(\alpha;\beta;\gamma)$,
$s_\lambda(\alpha;\beta;\gamma)=0$ unless $\lambda$ is a one--row Young diagram (i.e.\
$\ell(\lambda)=1$), and, more generally, $s_{\lambda/\mu}(\alpha;\beta;\gamma)=0$ unless
$\lambda/\mu$ is a horizontal strip. In the latter case
$$
 s_{\lambda/\mu}(\alpha;\beta;\gamma)=c^{|\lambda|-|\mu|}.
$$
\end{proposition}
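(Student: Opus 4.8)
The plan is to observe that the specialization $\rho=(\alpha;\beta;\gamma)$ with $\alpha_1=c$ and all other parameters equal to zero is nothing but the substitution $x_1\mapsto c$, $x_2=x_3=\dots=0$. Indeed, Theorem \ref{Theorem_Thoma} gives $\sum_{k\ge0}h_k(\rho)z^k=(1-cz)^{-1}$, so $h_k(\rho)=c^k$ for $k\ge0$ (and $h_k=0$ for $k<0$ by convention). Everything then reduces to a single zero-one determinant evaluation. Since Proposition \ref{Prop_J-T_skew} already tells us that $s_{\lambda/\mu}=0$ unless $\mu\subset\lambda$, we may assume $\mu\subset\lambda$ and put $n=\ell(\lambda)\ge\ell(\mu)$; then $s_{\lambda/\mu}(\rho)=\det\bigl[h_{\lambda_i-\mu_j-i+j}(\rho)\bigr]_{i,j=1}^{n}$, where the $(i,j)$ entry equals $c^{\lambda_i-\mu_j-i+j}$ when the exponent is nonnegative and $0$ otherwise.

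Next I would set $a_i=\lambda_i-i$ and $b_j=\mu_j-j$ --- strictly decreasing integer sequences --- and write the $(i,j)$ entry as $c^{a_i}\,c^{-b_j}\,\mathbf{1}[a_i\ge b_j]$. Pulling $c^{a_i}$ out of row $i$ and $c^{-b_j}$ out of column $j$ (the case $c=0$ is immediate, since $s_{\lambda/\mu}$ has no constant term when $\lambda\ne\mu$), and using $\sum_{i=1}^n a_i-\sum_{j=1}^n b_j=|\lambda|-|\mu|$, one gets
\[
 s_{\lambda/\mu}(\rho)=c^{\,|\lambda|-|\mu|}\cdot\det\bigl[\mathbf{1}[a_i\ge b_j]\bigr]_{i,j=1}^{n}.
\]
So it remains to prove that this last determinant equals $1$ if $\lambda/\mu$ is a horizontal strip and $0$ otherwise.

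For the determinant, the key remarks are: (a) because $b_j$ decreases in $j$, the $1$'s in row $i$ form a final segment $\{m_i,m_i+1,\dots,n\}$ with $m_i=\min\{j:a_i\ge b_j\}$; (b) because $a_i$ decreases in $i$, the $m_i$ are non-decreasing; (c) because $\mu\subset\lambda$ forces $a_i\ge b_i$, we have $m_i\le i$ for all $i$. If $m_i=i$ for every $i$, the matrix is upper triangular with unit diagonal and the determinant is $1$. Otherwise, a non-decreasing integer sequence with $m_i\le i$ that is not $(1,2,\dots,n)$ must repeat a value, so two consecutive rows coincide and the determinant is $0$. Finally I would unwind the condition ``$m_i=i$ for all $i$'': it says exactly $a_i\ge b_i$ and $a_i<b_{i-1}$, i.e.\ $\mu_i\le\lambda_i$ and $\lambda_i\le\mu_{i-1}$, i.e.\ $\lambda_1\ge\mu_1\ge\lambda_2\ge\mu_2\ge\cdots$, which is precisely the condition that $\lambda/\mu$ be a horizontal strip (equivalently $0\le\lambda'_i-\mu'_i\le1$ for all $i$). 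Taking $\mu=\varnothing$ and recalling $s_\lambda=s_{\lambda/\varnothing}$ recovers in particular the statement about one-row diagrams.

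The step I expect to be the main obstacle is packaging (a)--(c) into the clean dichotomy ``the determinant is $1$ exactly when $(m_i)=(1,\dots,n)$'' and matching ``$m_i=i$ for all $i$'' with the combinatorial definition of a horizontal strip; everything else is bookkeeping. An alternative would be to invoke the one-variable case of the tableau expansion of skew Schur functions, but the Jacobi--Trudi route keeps the argument inside the toolbox already assembled in this section.
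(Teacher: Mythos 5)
Your proof is correct. Note, though, that the paper does not actually spell out an argument here: it disposes of Propositions \ref{Proposition_spec_alpha}--\ref{Prop_plancherel} in one line, as ``a combination of Theorem \ref{Theorem_Thoma} and results of \cite[Chapter I]{M}'' --- i.e.\ one reads off from the generating function of Theorem \ref{Theorem_Thoma} that the specialization is evaluation in a single variable $x_1=c$ (equivalently $h_k\mapsto c^k$), and then quotes the standard one-variable evaluation of skew Schur functions from Macdonald (where it is usually obtained from the tableau/branching description of $s_{\lambda/\mu}$). You follow the same first step but then prove the one-variable fact yourself from the skew Jacobi--Trudi identity of Proposition \ref{Prop_J-T_skew}: after extracting $c^{|\lambda|-|\mu|}$ you reduce to the $0$--$1$ determinant $\det[\mathbf{1}[\lambda_i-i\ge\mu_j-j]]$, and your analysis of it (ones forming final segments, $m_i$ nondecreasing with $m_i\le i$, repeated rows unless $m_i=i$ for all $i$, and the identification of $m_i=i$ with the interlacing $\lambda_1\ge\mu_1\ge\lambda_2\ge\cdots$, i.e.\ the horizontal-strip condition) is sound, as are your treatments of the edge cases $c=0$ and $\mu\not\subset\lambda$. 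What your route buys is a self-contained argument staying entirely within the toolbox assembled in Section \ref{Section_symmetric}, at the price of redoing a determinant evaluation that the cited reference already provides; the paper's citation is shorter but imports the tableau formalism it otherwise does not use.
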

\begin{proposition}
\label{Proposition_spec_beta} Suppose that $\beta_1=c$ and all other $\alpha$-,
$\beta$-,
$\gamma$-parameters are zeros. Then for the Schur--positive specialization
$(\alpha;\beta;\gamma)$, $s_\lambda(\alpha;\beta;\gamma)=0$ unless $\lambda$ is a one--column
Young diagram (i.e.\ $\lambda_1\le 1$), and, more generally,
$s_{\lambda/\mu}(\alpha;\beta;\gamma)=0$ unless $\lambda/\mu$ is a vertical strip. In the latter
case
$$
 s_{\lambda/\mu}(\alpha;\beta;\gamma)=c^{|\lambda|-|\mu|}.
$$
\end{proposition}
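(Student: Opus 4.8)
The plan is to evaluate $s_{\lambda/\mu}$ on this specialization directly from the skew Jacobi--Trudi formula, after reading off the values of the $h_k$ from Thoma's theorem; an alternative is to deduce the statement from Proposition~\ref{Proposition_spec_alpha} by conjugating Young diagrams.

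First I would pin down the specialization $\rho=(\alpha;\beta;\gamma)$ with $\beta_1=c$ and all other parameters $0$. By Theorem~\ref{Theorem_Thoma} its generating function is $\sum_{k\ge0}h_k(\rho)z^k=1+cz$, so $h_0(\rho)=1$, $h_1(\rho)=c$, and $h_k(\rho)=0$ for $k\ge2$; with the convention $h_k=0$ for $k<0$ this fixes every value we shall need. (Equivalently, by \eqref{eq_gen_functions_relation}, $E(z)=1/H(-z)=(1-cz)^{-1}$, i.e.\ $e_k(\rho)=c^k$ for all $k\ge0$, which is the mirror image of what occurs in Proposition~\ref{Proposition_spec_alpha}.)

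The main step is to compute the skew Jacobi--Trudi determinant of Proposition~\ref{Prop_J-T_skew}:
\[
 s_{\lambda/\mu}(\rho)=\det\bigl[h_{\lambda_i-\mu_j-i+j}(\rho)\bigr]_{i,j=1}^{n},\qquad n=\max(\ell(\lambda),\ell(\mu)).
\]
Set $a_i=\lambda_i-i$ and $b_j=\mu_j-j$; these are strictly decreasing integer sequences, and the $(i,j)$ entry is nonzero precisely when $a_i-b_j\in\{0,1\}$, in which case it equals $1$ or $c$ according as $a_i-b_j$ is $0$ or $1$. I would then show that the only permutation $\sigma$ surviving in the Leibniz expansion is the identity: if $\sigma$ had an inversion, say $i<i'$ with $\sigma(i)>\sigma(i')$, then strict monotonicity of $a$ and $b$, together with $a_{i'}-b_{\sigma(i')}\ge0$ and $a_i-b_{\sigma(i)}\le1$, forces $0<a_i-b_{\sigma(i')}<1$, impossible for an integer. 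Hence $s_{\lambda/\mu}(\rho)=\prod_{i=1}^{n}h_{\lambda_i-\mu_i}(\rho)$. This product vanishes unless $0\le\lambda_i-\mu_i\le1$ for every $i$ --- that is, unless $\lambda/\mu$ is a vertical strip (which forces $\mu\subseteq\lambda$, consistently with Proposition~\ref{Prop_J-T_skew}) --- and when $\lambda/\mu$ is a vertical strip it equals $c^{\#\{i\,:\,\lambda_i-\mu_i=1\}}=c^{|\lambda|-|\mu|}$ because $\sum_i(\lambda_i-\mu_i)=|\lambda|-|\mu|$.

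As a cross-check and a shorter derivation, I would observe that, by the identification of $e_k(\rho)$ and $h_k(\rho)$ above, $\rho$ is the composition of the $\alpha_1=c$ specialization of Proposition~\ref{Proposition_spec_alpha} with the standard involution $\omega\colon\Lambda\to\Lambda$ interchanging $e_k\leftrightarrow h_k$; since $\omega(s_{\lambda/\mu})=s_{\lambda'/\mu'}$ (see \cite[Ch.~I, \S5]{M}) and ``$\lambda'/\mu'$ is a horizontal strip'' is synonymous with ``$\lambda/\mu$ is a vertical strip'', applying Proposition~\ref{Proposition_spec_alpha} to the conjugate pair $(\lambda',\mu')$ gives the claim at once, with $c^{|\lambda'|-|\mu'|}=c^{|\lambda|-|\mu|}$. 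The only genuine work in the direct argument is the determinant collapse --- checking that no non-identity permutation contributes and treating the padding indices $i>\ell(\mu)$ where $\mu_i=0$; on the $\omega$-route the sole point to watch is the transpose bookkeeping. I expect this (very mild) determinant collapse to be the main obstacle.
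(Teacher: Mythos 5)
Your proposal is correct and takes essentially the approach the paper intends: the paper gives no written argument, stating only that the proof is a combination of Theorem \ref{Theorem_Thoma} and results of Macdonald's Chapter I, and your derivation supplies exactly those details --- reading off $h_0=1$, $h_1=c$, $h_k=0$ ($k\ge 2$) from Theorem \ref{Theorem_Thoma} and then collapsing the skew Jacobi--Trudi determinant of Proposition \ref{Prop_J-T_skew} to its diagonal (your inversion argument is sound), or equivalently transferring Proposition \ref{Proposition_spec_alpha} through the involution $\omega$, which is the standard route in \cite[Chapter I, Section 5]{M}.
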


\begin{proposition}
\label{Proposition_spec_unitary_dim} Suppose that $\alpha_1=\alpha_2=\dots=\alpha_N=1$ and all
other $\alpha$-, $\beta$-,
$\gamma$-parameters  are zeros. Then  for the Schur--positive specialization
$(\alpha;\beta;\gamma)$
$$
s_\lambda(\alpha;\beta;\gamma)={\rm Dim}_N(\lambda), \quad \lambda\in\Y.
$$
\end{proposition}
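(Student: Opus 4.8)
The plan is to reduce the statement to the combinatorial identity $s_\lambda(1,\dots,1) = {\rm Dim}_N(\lambda)$ (with $N$ ones) and then prove that identity by induction on $N$ using the branching rule for Schur functions.

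\textbf{Step 1 (the specialization is a substitution of $N$ ones).} I would first observe that under the specialization $\rho=(\alpha;\beta;\gamma)$ with $\alpha_1=\dots=\alpha_N=1$ and all remaining parameters zero, Theorem~\ref{Theorem_Thoma} gives $\sum_{k\ge 0}h_k(\rho)z^k=(1-z)^{-N}$, which by \eqref{eq_H_gen} is precisely $H(z)$ evaluated at $x_1=\dots=x_N=1$, $x_i=0$ for $i>N$. Since a specialization is determined by its values on the generators $h_1,h_2,\dots$, this forces $\rho$ to coincide with that substitution, so $s_\lambda(\rho)=s_\lambda(1,\dots,1,0,0,\dots)$. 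Using the property $\pi_{N+1}s_\lambda(x_1,\dots,x_{N+1})=s_\lambda(x_1,\dots,x_N)$ recorded just after the definition \eqref{eq_Schur}, this equals $s_\lambda$ in $N$ variables each set to $1$ when $\ell(\lambda)\le N$, and equals $0$ when $\ell(\lambda)>N$. Thus it suffices to prove $s_\lambda(\underbrace{1,\dots,1}_{N})={\rm Dim}_N(\lambda)$ for every $\lambda$, where $s_\lambda$ in $N$ variables is understood to vanish once $\ell(\lambda)>N$.

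\textbf{Step 2 (two matching recursions).} On the algebraic side I would split the variable set of $s_\lambda$ into the single variable $x_N$ and the remaining $x_1,\dots,x_{N-1}$; Definition~\ref{Definition_skew_Schur} then gives
\[
 s_\lambda(x_1,\dots,x_N)=\sum_{\mu}s_{\lambda/\mu}(x_N)\,s_\mu(x_1,\dots,x_{N-1}).
\]
By Proposition~\ref{Proposition_spec_alpha}, applied with $x_N$ playing the role of the single parameter $\alpha_1$ (a one-variable substitution, again identified through its $H$-generating function exactly as in Step~1), $s_{\lambda/\mu}(x_N)=x_N^{|\lambda|-|\mu|}$ when $\lambda/\mu$ is a horizontal strip and $0$ otherwise. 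Setting every variable equal to $1$ yields $s_\lambda(1^N)=\sum_{\mu:\ \lambda/\mu\ \text{horizontal strip}}s_\mu(1^{N-1})$. On the combinatorial side I would argue that in a semistandard tableau of shape $\lambda$ with entries in $\{1,\dots,N\}$ the cells carrying the maximal label $N$ form a horizontal strip $\lambda/\mu$ (two equal entries cannot lie in one column), and that erasing them is a bijection onto semistandard tableaux of shape $\mu$ with entries in $\{1,\dots,N-1\}$; hence ${\rm Dim}_N(\lambda)=\sum_{\mu:\ \lambda/\mu\ \text{horizontal strip}}{\rm Dim}_{N-1}(\mu)$.

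\textbf{Step 3 (base case, induction, and the main point).} For $N=1$, formula \eqref{eq_Schur} gives $s_\lambda(x_1)=x_1^{|\lambda|}$ if $\ell(\lambda)\le1$ and $0$ otherwise, so $s_\lambda(1)=1$ exactly for one-row $\lambda$ (including $\varnothing$); likewise ${\rm Dim}_1(\lambda)=1$ exactly for one-row $\lambda$, since a column of height $\ge 2$ admits no strictly increasing filling from a single label. The two recursions of Step~2 are formally identical, so induction on $N$ gives $s_\lambda(1^N)={\rm Dim}_N(\lambda)$ for all $\lambda$ and $N$ — the case $\ell(\lambda)>N$ taking care of itself, since then every admissible $\mu$ has $\ell(\mu)>N-1$. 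Together with Step~1 this proves the proposition. The hard part is, frankly, not hard: everything substantial is already contained in Theorem~\ref{Theorem_Thoma} and Proposition~\ref{Proposition_spec_alpha}, and the only genuinely new verification is the tableau bijection of Step~2, which is the combinatorial shadow of the branching rule. An alternative route would skip the induction altogether and simply invoke the monomial-expansion formula $s_\lambda(x_1,\dots,x_N)=\sum_{T}x^{{\rm wt}(T)}$ summed over semistandard tableaux $T$ (see \cite[Chapter I, Section 5]{M}), then set all $x_i=1$.
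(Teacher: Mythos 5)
Your proof is correct, and its skeleton matches what the paper intends: the paper gives no detailed argument for Proposition \ref{Proposition_spec_unitary_dim}, saying only that it follows by combining Theorem \ref{Theorem_Thoma} with results of Macdonald's Chapter I, which amounts to your Step 1 (identifying the specialization with the substitution $x_1=\dots=x_N=1$ via the $h$-generating function) followed by the citation of the semistandard-tableau (monomial) expansion of $s_\lambda$ --- precisely the ``alternative route'' you mention at the end. Your main route goes a bit further than the paper: instead of quoting the combinatorial formula, you re-derive the needed identity $s_\lambda(1^N)={\rm Dim}_N(\lambda)$ by matching the branching recursion $s_\lambda(1^N)=\sum_{\mu}s_\mu(1^{N-1})$ over horizontal strips $\lambda/\mu$ (obtained from Definition \ref{Definition_skew_Schur} and Proposition \ref{Proposition_spec_alpha}) with the tableau recursion obtained by stripping the maximal entry $N$, plus an induction on $N$. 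This buys self-containedness at the cost of re-proving a standard fact; the paper's (implicit) route is shorter because the SSYT expansion of $s_\lambda$ immediately gives the count after setting all variables to $1$. One small point worth making explicit in your Step 2: $s_{\lambda/\mu}(x_N)$ is a polynomial in the single variable $x_N$, so its identification with $x_N^{|\lambda|-|\mu|}\cdot\mathbf 1_{\{\lambda/\mu\ \text{horizontal strip}\}}$ follows from Proposition \ref{Proposition_spec_alpha} evaluated at all $c\ge 0$ (or simply take $c=1$, which is all you use); as written, the phrase ``$x_N$ playing the role of $\alpha_1$'' slightly blurs the distinction between a formal variable and the nonnegative parameter in Theorem \ref{Theorem_Thoma}, but the argument is sound.
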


\begin{proposition}
\label{Prop_plancherel} Suppose that $\gamma=c$ and all $\alpha$- and
$\beta$-parameters are
zeros. Then for the Schur--positive specialization $(\alpha;\beta;\gamma)$
$$
s_\lambda(\alpha;\beta;\gamma)=\frac{c^{|\lambda|}}{|\lambda|!}\, {\rm dim}(\lambda),\quad
\lambda\in\Y.
$$
\end{proposition}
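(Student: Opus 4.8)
The plan is to read off the complete homogeneous functions of the specialization from Theorem~\ref{Theorem_Thoma}, feed them into the Jacobi--Trudi formula, and then recognize the resulting determinant as a classical one. So let $\rho$ denote the specialization with $\gamma=c$ and all $\alpha_i=\beta_i=0$. The generating function in Theorem~\ref{Theorem_Thoma} becomes $\sum_{k\ge 0}h_k(\rho)z^k=e^{cz}$, so $h_k(\rho)=c^k/k!$ for $k\ge 0$ (and $h_k(\rho)=0$ for $k<0$, consistent with the convention $1/m!=0$ for negative integers $m$). If $c=0$ then $\rho$ annihilates every symmetric function of positive degree, and both sides of the asserted identity vanish for $\lambda\ne\varnothing$ and equal $1$ for $\lambda=\varnothing$; so assume $c\ne 0$. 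Writing $\ell=\ell(\lambda)$ and $n=|\lambda|$, the Jacobi--Trudi formula (Proposition~\ref{proposition_prop_Schur_basic}) gives
\[
 s_\lambda(\rho)=\det\bigl[h_{\lambda_i-i+j}(\rho)\bigr]_{i,j=1}^{\ell}
 =\det\!\left[\frac{c^{\,\lambda_i-i+j}}{(\lambda_i-i+j)!}\right]_{i,j=1}^{\ell}.
\]
Extracting the factor $c^{\lambda_i-i}$ from the $i$-th row and $c^{\,j}$ from the $j$-th column contributes the scalar $c^{\sum_i(\lambda_i-i)+\sum_j j}=c^{\,n}$ (since $\sum_{i=1}^\ell i=\sum_{j=1}^\ell j$), so $s_\lambda(\rho)=c^{\,n}\det\bigl[1/(\lambda_i-i+j)!\bigr]_{i,j=1}^{\ell}$.

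It remains to prove the identity $\det\bigl[1/(\lambda_i-i+j)!\bigr]_{i,j=1}^{\ell}=\dim(\lambda)/n!$, which has nothing to do with the specialization. Put $\mu_i=\lambda_i+\ell-i$, so $\mu_1>\mu_2>\dots>\mu_\ell\ge 0$, and reindex the columns by $k=\ell-j$; this identifies the matrix, up to the sign of reversing $\ell$ columns, with $\bigl[1/(\mu_i-k)!\bigr]_{1\le i\le\ell,\ 0\le k\le\ell-1}$. Using $1/(\mu_i-k)!=\binom{\mu_i}{k}\,k!/\mu_i!$ and the fact that the functions $x\mapsto\binom{x}{k}$ for $k=0,\dots,\ell-1$ are obtained from $x^k$ by an upper-triangular change of basis with diagonal entries $1/k!$, all the factorials $k!$ cancel and one is left with a Vandermonde determinant in the $\mu_i$; the sign from the column reversal cancels the sign produced by the decreasing order of the $\mu_i$, and one gets $\det\bigl[1/(\lambda_i-i+j)!\bigr]=\prod_{1\le i<j\le\ell}(\mu_i-\mu_j)\big/\prod_{i=1}^\ell\mu_i!$. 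This is precisely the classical Frobenius formula $\dim(\lambda)=n!\,\prod_{i<j}(\mu_i-\mu_j)\big/\prod_i\mu_i!$ (equivalent to the hook-length formula; see \cite[Chapter~I]{M}), which completes the proof.

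A faster alternative is to expand $s_\lambda$ in the power-sum basis, $s_\lambda=\sum_{\rho'}z_{\rho'}^{-1}\chi^\lambda_{\rho'}p_{\rho'}$: since $p_k(\rho)=0$ for $k\ge 2$, only the term $\rho'=(1^n)$ survives, with $p_{(1^n)}(\rho)=c^{\,n}$, $z_{(1^n)}=n!$, and $\chi^\lambda_{(1^n)}=\chi^\lambda(e)=\dim(\lambda)$, giving the claim immediately. Either way, the single nontrivial input beyond Theorem~\ref{Theorem_Thoma} is the identification of $\dim(\lambda)$ with the Frobenius/hook-length expression (equivalently, with the value of the $S_n$-character at the identity together with the character expansion of $s_\lambda$); I expect this to be the main obstacle, and it is exactly where one appeals to \cite[Chapter~I]{M} rather than to material already set up in this section.
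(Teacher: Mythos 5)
Your proof is correct: the paper itself gives no written argument for this proposition beyond the remark that it follows by combining Theorem~\ref{Theorem_Thoma} with results of \cite[Chapter~I]{M}, and both of your routes (Jacobi--Trudi with $h_k=c^k/k!$ plus the Frobenius determinant formula for $\dim(\lambda)$, or the character expansion of $s_\lambda$ in power sums with only $p_{(1^n)}$ surviving) are precisely such a combination, with the edge cases ($c=0$, negative indices, signs from the column reversal and the decreasing $\mu_i$) handled properly. In short, you have supplied the details the paper delegates to its references, so this is essentially the same approach as the paper.
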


\section{Determinantal point processes}

\label{Section_determinantal}

In this section we introduce \emph{determinantal point processes} which are an important tool in
the study of growth models and in the KPZ universality class.

Consider a reasonable ``state space'' or ``one particle space'' $\X$, say the real line $\R$, or
the Euclidean space $\R^d$, or a discrete space such as the set $\Z$ of integers or its subset.  A
\emph{point configuration} $X$ in $\X$ is a locally finite (i.e.\ without accumulation points)
collection of points of the space $\X$. For our purposes it suffices to assume that the points of
$X$ are always pairwise distinct. The set of all point configurations in $\X$ will be denoted as
$\Conf(\X)$.

A compact subset $A\subset \X$ is called \emph{a window}. For a window $A$ and $X\in\Conf(\X)$,
set $N_A(X)=|A\cap X|$ (number of points of $X$ in the window). Thus, $N_A$ is a function on
$\Conf(\X)$. We equip $\Conf(\X)$ with the Borel structure (i.e.\ $\sigma$--algebra) generated by
functions $N_A$ for all windows $A$.

A \emph{random point process} on $\X$ is a probability measure on $\Conf(\X)$. We will often use
the term \emph{particles} for the elements of a random point configuration. Thus, we will speak
about particle configurations.

The most known example of a random point process is the homogeneous (rate 1) Poisson process on
$\R$. For any finite interval $A\subset \R$ (or, more generally, for any compact set $A$), the
number $N_A$ of particles falling in $A$ is finite because, by the very assumption, $X$ has no
accumulation points. Since $X$ is random, $N_A$ is random, too. Here are the key properties of the
Poisson process (see e.g.\ \cite[Section 23]{Bi} or \cite[Chapter 10]{Kal}):

\begin{itemize}
\item  $N_A$ has the Poisson distribution with parameter $|A|$, the length of $A$. That is
$$
\P(N_A=n)=e^{-|A|}\frac{|A|^n}{n!}, \qquad n=0,1,2,\dots
$$
\item If $A_1$,\dots,$A_k$ are pairwise disjoint intervals, then the corresponding random
variables $N_{A_1},\dots,N_{A_k}$ are independent. This means that the particles do not interact.
\end{itemize}

The Poisson process can be constructed as follows. Let $M=1,2,3,\dots$ be a natural number. Take
the interval $[-M/2,M/2]$ and place $M$ particles in it, uniformly and independently of each
other. Observe that the mean density of the particles is equal to 1 for any $M$ because the number
of particles and the length of the interval are the same. Now pass to the limit as $M\to\infty$.
As $M$ gets large, the interval approximates the whole real line, and in the limit one obtains the
Poisson random configuration.

\begin{exercise} Assuming that the limit process exists, show that it satisfies the above two properties
concerning the random variables $N_A$.
\end{exercise}

The above simple construction contains two important ideas: First, the idea of limit transition.
Starting from $M$--particle random configurations one can get infinite particle configurations by
taking a limit. Second, the observation that the limit transition may lead to a simplification.
Indeed, the structure of the joint distribution of $N_{A_1},\dots,N_{A_k}$ simplifies in the
limit.

\bigskip

Let us construct a discrete analog of the Poisson process. Replace the real line $\R$ by the
lattice $\Z$ of integers. This will be our new state space. A particle configuration on $\Z$ is
simply an arbitrary subset $X\subset\Z$, the assumption of absence of accumulation points holds
automatically.

Fix a real number $p\in(0,1)$. The stationary Bernoulli process with parameter $p$ on $\Z$ is
constructed as follows: For each integer $n\in\Z$ we put a particle at the node $n$ with
probability $p$, independently of other nodes. This procedure leads to a random particle
configuration.

Equivalently, the Bernoulli process is a doubly infinite sequence $\xi_n$, $n\in\Z$, of binary
random variables, such that each $\xi_n$ takes value 1 or 0 with probability $p$ or $1-p$,
respectively, and, moreover, these variables are independent. Then the random configuration $X$
consists of those $n$'s for which $\xi_n=1$.

The following construction is a simple example of a \emph{scaling limit transition}. Shrink our
lattice by the factor of $p$. That is, instead of $\Z$ consider the isomorphic lattice
$p\,\Z\subset\R$ with mesh $p$, and transfer the Bernoulli process to $p\,\Z$. The resulted scaled
Bernoulli process can be regarded as a process on $\R$ because $p\,\Z$ is contained in $\R$, and
each configuration on $p\,\Z$ is simultaneously a configuration on $\R$. As $p$ goes to 0, the
scaled Bernoulli process will approximate the Poisson process on the line. This is intuitively
clear, because for Bernoulli, like Poisson, there is no interaction, and the mean density of
particles for the scaled Bernoulli process is equal to 1.

\bigskip

How to describe a point process? The problem here comes from the fact that the space of particle
configurations $\Conf(\X)$ is, typically, infinite--dimensional and, thus, there is no natural
``Lebesgue measure'' which could be used for writing densities. One solution is to use
\emph{correlation functions}.

Let us temporarily restrict ourselves to point processes on a finite or countable discrete space
$\X$ (for instance, the reader may assume $\X=\Z$). Such a process is the same as a collection
$\{\xi_x\}$ of binary random variables, indexed by elements $x\in\X$, which indicate the presence
of a particle at $x$. (They are often called \emph{occupancy variables}.) The Bernoulli process
was a simple example. Now we no longer assume that these variables are independent and identically
distributed; they may have an arbitrary law. Their law is simply an arbitrary probability measure
$\P$ on the space of all configurations. This is a large space, it can be described as the
infinite product space $\{0,1\}^\X$. Thus, defining a point process on $\X$ amounts to specifying
a probability measure $\P$ on $\{0,1\}^\X$.

\begin{definition}
 Let $A$ range over \emph{finite} subsets of $\X$. The
correlation function of a point process $X$ on $\X$ is the function $\rho(A)$ defined by
$$
\rho(A)=\P(A \subset X).
$$
\end{definition}

If $A$ has $n$ points, $A=\{x_1,\dots,x_n\}$, then we also employ the alternative notation
$$
\rho(A)=\rho_n(x_1,\dots,x_n).
$$
In this notation, the single function $\rho$ splits into a sequence of functions
$\rho_1,\rho_2,\dots$, where $\rho_n$ is a symmetric function in $n$ distinct arguments from $\X$,
called the \emph{$n$--point correlation function}.

Equivalently, in terms of occupation random variables $\{\xi_x\}$
$$
\rho_n(x_1,\dots,x_n)=\P(\xi_{x_1}=\dots=\xi_{x_n}=1).
$$

For example, for the Bernoulli process one easily sees that $\rho(A)=p^{|A|}$.

\begin{exercise}
Show that a random point process on a discrete set $\X$ is uniquely determined by its correlation
function.
\end{exercise}


Let us now extend the definition of the correlation functions to arbitrary, not necessarily
discrete state spaces $\X$.

Given a random point process on $\X$, one can usually define a sequence $\{\rho_n\}_{n=1}^\infty$,
where $\rho_n$ is a symmetric measure on $\X^n$ called the $n$th \emph{correlation measure}. Under
mild conditions on the point process, the correlation measures exist and determine the process
uniquely.

The correlation measures are characterized by the following property: For any $n\ge1$ and a
compactly supported bounded Borel function $f$ on $\X^n$, one has
$$
\int_{\X^n}f\rho_n=\E\left( \sum_{x_{i_1},\dots,x_{i_n}\in X}f(x_{i_1},\dots,x_{i_n}) \right)
$$
where $\E$ denotes averaging with respect to our point process, and the sum on the right is taken
over all $n$-tuples of pairwise distinct points of the random point configuration $X$.

In particular, for $n=1$ we have
$$
\int_{\X}f(x)\rho_1(dx)=\E\left( \sum_{x\in X} f(x)\right)
$$
and $\rho_1$ is often called the \emph{density measure} of a point process.

Often one has a natural measure $\mu$ on $\X$ (called \emph{reference measure}) such that the
correlation measures have densities with respect to $\mu^{\otimes n}$, $n=1,2,\dots\,$. Then the
density of $\rho_n$ is called the $n$th \emph{correlation function} and it is usually denoted by
the same symbol $\rho_n$.

\begin{exercise} Show that if $\X$ is discrete, then the previous definition of the $n$th correlation function
coincides with this one provided that $\mu$ is the counting measure (i.e. $\mu$ assigns weight 1
to every point of $\X$).
\end{exercise}

\begin{exercise}
 Show that the $n$th correlation function of homogeneous rate 1 Poisson process (with respect to Lebesgue measure)
 is identically equal to $1$:
 $$
 \rho_n\equiv 1,\quad n\ge 1.
 $$
\end{exercise}

If $\X\subset \R$ and $\mu$ is absolutely continuous with respect to the Lebesgue measure, then
the probabilistic meaning of the $n$th correlation function is that of the density of probability
to find a particle in each of the infinitesimal intervals around points $ x_1, x_2, \ldots x_n $:
\begin{multline*}
\rho_n(x_1, x_2, \ldots x_n) \mu(dx_1) \cdots \mu(dx_n)\\= \P \,( \text{there is a particle in
each interval}\ \, (x_i, x_i +dx_i)).
\end{multline*}

For a random point process with fixed number of particles, say $N$, described by a joint
probability distribution $P(dx_1,\dots,dx_N)$ (it is natural to assume that $P$ is symmetric with
respect to permutations of the arguments), the correlation measures for $n\le N$ are given by
\begin{equation}
\label{eq_cor_func_eval} \rho_n(dx_1,\dots,dx_n)=\frac{N!}{(N-n)!}\int_{x_{n+1},\dots,x_N\in\X}
P(dx_1,\dots,dx_N).
\end{equation}
Indeed, we have
\begin{multline*}
\E\left( \sum_{\substack{ x_{i_1},\dots,x_{i_n}\\
\text{pairwise distinct}}} f(x_{i_1},\dots,x_{i_n})\right)=\int_{\X^N}\sum_{\substack{
x_{i_1},\dots,x_{i_n}\\ \text{pairwise distinct}}} f(x_{i_1},\dots,x_{i_n})P(dx)\\=
\frac{N!}{(N-n)!}\int_{\X^N} f(x_{1},\dots,x_{n})P(dx).
\end{multline*}

 For $n>N$ the correlation measures $\rho_n$ vanish
identically.

Another important property of the correlation measures is obtained by considering the test
functions $f$ given by products of characteristic functions of a window $A\subset \X$. Then one
immediately obtains
$$
\E \bigl( N_A(N_A-1)\cdots(N_A-n+1)\bigr)=\int_{A^n}\rho_n(dx_1,\dots,dx_n).
$$

\begin{definition}
A point process on $\X$ is said to be \emph{determinantal} if there exists a function $K(x,y)$ on
$\X\times\X$ such that the correlation functions (with respect to some reference measure) are
given by the determinantal formula
$$
\rho_n(x_1,\dots,x_n)={\det[K(x_i,x_j)]}_{i,j=1}^n
$$
for all $n=1,2,\dots$. The function $K$ is called the \emph{correlation kernel}.
\end{definition}

That is,
\begin{gather*}
\rho_1(x_1)=K(x_1,x_1), \quad \rho_2(x_1,x_2)=\det\bmatrix K(x_1,x_1) &
K(x_1,x_2)\\ K(x_2,x_1) & K(x_2,x_2)\endbmatrix\\
\rho_3(x_1,x_2,x_3)=\det\bmatrix K(x_1,x_1) & K(x_1,x_2) &K(x_1,x_3),\\
K(x_2,x_1) & K(x_2,x_2) & K(x_2,x_3)\\ K(x_3,x_1) & K(x_3,x_2) & K(x_3,x_3)\endbmatrix, \quad
\text{etc}.
\end{gather*}

Note that the determinants in the right--hand side do not depend on the ordering of the arguments
$x_i$. Also the correlation kernel is not unique --- gauge transformations of the form
$$
 K(x,y)\mapsto \frac{f(x)}{f(y)} K(x,y)
$$
with a non-vanishing $f:\X\to\mathbb C$ do not affect the correlation functions.

 The correlation kernel is a single function of two variables while the correlation functions
form an infinite sequence of functions of growing number of variables. Thus, if a point process
happens to be determinantal, it can be described by a substantially reduced amount of data. This
is somehow similar to Gaussian processes, for which all the information about process is encoded
in a single covariance function.

\begin{exercise}
 Prove that the stationary Poisson process on $\R$ and the Bernoulli process are
determinantal. What are their correlation kernels?
\end{exercise}

Determinantal processes appeared in the 60s in the context of random matrix theory with first
example going back to the work of Dyson \cite{Dy}. As a class such processes were first
distinguished in 1975 by Macchi \cite{Mac}, who considered the case when correlation kernel is
Hermitian ($K(x,y)=\overline K(y,x)$) and called them ``Fermion processes''. In this case
$$
\rho_2(x,y)=\rho_1(x)\rho_1(y)-|K(x,y)|^2\le \rho_1(x)\rho_1(y)
$$
which, from a probabilistic point of view, means that particles repel.

The term ``determinantal point process'' first appeared in \cite{BO98} together with first natural
examples of such processes with \emph{non--Hermitian} kernels. Now this term is widespread and
there is even a wikipedia article with the same name.

Nowadays lots of sources of determinantal point processes are known including (in addition to  the
random matrix theory) dimers on bipartite graphs \cite{Ken08}, uniform spanning trees
\cite{Lyo03}, ensembles of non-intersecting paths on planar acyclic graphs \cite{J-paths} and
zeros of random analytic functions \cite{Peres_book}. A recent review of determinantal point
processes can be found in \cite{B-det}.

An important class of determinantal random point processes is formed by \emph{bi\-or\-thogonal
ensembles}.

\begin{definition}
\label{Definition_biorth}
 Consider a state space $\X$ with a reference measure $\mu$. An $N$--point
biorthogonal ensemble on $\X$ is a probability measure on $N$--point subsets $\{x_1,\dots,x_N\}$
of $\X$ of the form
$$
P_N(dx_1,\dots,dx_N)=c_N\cdot \det\left[\phi_i(x_j)\right]_{i,j=1}^N
\det\left[\psi_i(x_j)\right]_{i,j=1}^N \cdot \mu(dx_1)\cdots \mu(dx_N)
$$
for a normalization constant $c_N>0$ and functions $\phi_1,\psi_1,\dots,\phi_N,\psi_N$ on $\X$
such that all integrals of the form $\int_\X\phi_i(x)\psi_j(x)\mu(dx)$ are finite.
\end{definition}

Important examples of biorthogonal ensembles come from the random matrix theory. One case is the
measure on $N$--particle configurations on the unit circle with density proportional to
$$
 \prod_{i<j} |x_i-x_j|^2,
$$
which can be identified with the distribution of eigenvalues of the $N\times N$ random unitary
matrices; here we equip the unitary group $U(N)$ with the Haar measure of total mass 1.

Another case is the measure on $N$--particle configurations on $\mathbb R$ with density
proportional to
$$
 \prod_{i<j} (x_i-x_j)^2 \prod_i e^{-x_i^2},
$$
which is the distribution of eigenvalues of a random Hermitian matrix from Gaussian Unitary
Ensemble (GUE), see \cite{Me}, \cite{For}, \cite{AGZ}, \cite{ABF}.

\begin{theorem}\label{Theorem_biorth}
Any biorthogonal ensemble is a determinantal point process. Its correlation kernel has the form
$$
K(x,y)=\sum_{i,j=1}^N \phi_i(x)\psi_j(y) [G^{-t}]_{ij},
$$
where $G=[G_{ij}]_{i,j=1}^N$ is the Gram matrix:
$$
G_{ij}=\int_{\X}\phi_i(x)\psi_j(x)\mu(dx).
$$
\end{theorem}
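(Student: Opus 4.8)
The plan is to compute the correlation functions of the biorthogonal ensemble directly from the defining formula in Definition~\ref{Definition_biorth}, using the integration formula \eqref{eq_cor_func_eval} for correlation measures of a fixed-$N$ ensemble, and then recognize the resulting expression as a determinant. The key algebraic identity will be the Andr\'eief (Heine) identity: for functions $\phi_i,\psi_i$ and a measure $\mu$,
$$
\frac{1}{N!}\int_{\X^N} \det[\phi_i(x_j)]_{i,j=1}^N \det[\psi_i(x_j)]_{i,j=1}^N\, \mu(dx_1)\cdots\mu(dx_N) = \det[G_{ij}]_{i,j=1}^N,
$$
where $G_{ij}=\int_\X \phi_i(x)\psi_j(x)\mu(dx)$. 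First I would apply this with $N$ replaced by appropriate sizes to pin down the normalization constant: taking $n=0$ in \eqref{eq_cor_func_eval} (i.e. integrating the whole measure to $1$) gives $c_N = 1/\det[G]$, so in particular $G$ must be invertible for the ensemble to be well-defined.

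Next I would compute $\rho_n(x_1,\dots,x_n)$ by expanding the two determinants $\det[\phi_i(x_j)]$ and $\det[\psi_i(x_j)]$ along the first $n$ rows and columns of variables, separating the variables $x_1,\dots,x_n$ (which are held fixed) from $x_{n+1},\dots,x_N$ (which are integrated out). Concretely, using the Laplace expansion of each $N\times N$ determinant into a sum over $n$-subsets of the index set $\{1,\dots,N\}$, integrating the $x_{n+1},\dots,x_N$ variables produces, via Andr\'eief applied to the complementary $(N-n)\times(N-n)$ blocks, minors of the Gram matrix $G$. Collecting terms, $\rho_n(x_1,\dots,x_n)$ becomes $c_N\cdot(N-n)!$ times a sum over pairs of $n$-subsets $I,J$ of $\{1,\dots,N\}$ of $\det[\phi_I(x_\bullet)]\det[\psi_J(x_\bullet)]$ weighted by the complementary minor of $G$; after dividing by $\det G$ and using the cofactor expansion formula, the complementary minors reassemble into the entries of $G^{-1}$ (up to signs and transpose). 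The outcome is
$$
\rho_n(x_1,\dots,x_n) = \det\Bigl[\textstyle\sum_{i,j=1}^N \phi_i(x_k)\,\psi_j(x_l)\,[G^{-t}]_{ij}\Bigr]_{k,l=1}^n,
$$
which is exactly $\det[K(x_k,x_l)]$ with $K$ as in the statement. The cleanest way to package the bookkeeping is the Cauchy--Binet formula together with the identity, valid for invertible $G$, that the $n\times n$ minor of the ``resolvent-type'' matrix $\Phi^t G^{-t} \Psi$ (where $\Phi,\Psi$ are the $N\times N$ matrices of function values) indexed by rows/columns $x_1,\dots,x_n$ equals the sum over $I,J$ as above — this is essentially the statement that $K(x,y)$ is the kernel of the projection of the $N$-dimensional space onto itself in the appropriate biorthogonal basis.

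The main obstacle is purely the combinatorial bookkeeping of signs in the Laplace expansions: one has to track the parity factors $(-1)^{\sigma(I)}$ and $(-1)^{\sigma(J)}$ attached to each pair of complementary subsets and verify they combine with the cofactor signs of $G$ to give precisely $[G^{-t}]_{ij} = (-1)^{i+j}\det(G \text{ with row } j \text{ and column } i \text{ deleted})/\det G$. A slick alternative that sidesteps much of this is to first reduce to the \emph{orthonormal} case: if $\int_\X \phi_i \psi_j \,d\mu = \delta_{ij}$ then $G = \mathrm{Id}$, $c_N = 1/N!$, and the computation of $\rho_n$ via Andr\'eief is immediate, yielding $K(x,y)=\sum_i \phi_i(x)\psi_i(y)$; the general case then follows by the change of basis $\phi \mapsto G^{-1}\phi$ (or $\psi \mapsto G^{-t}\psi$), which multiplies the product of the two determinants in $P_N$ by $\det(G^{-1})$, hence is absorbed into $c_N$, and transforms the kernel into the claimed form. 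I would present the orthonormalization argument as the main line and relegate the direct Cauchy--Binet computation to a remark. One should also note at the end that $K$ is defined only up to the gauge freedom $K(x,y)\mapsto \frac{f(x)}{f(y)}K(x,y)$ already mentioned in the text, so the formula for the kernel is one representative among many.
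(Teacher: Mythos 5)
Your main line --- biorthogonalizing by a change of basis that is absorbed into the normalization constant, computing the correlation functions in that basis via the Andr\'eief/Cauchy--Binet bookkeeping, and then transporting the kernel back through $G^{-t}$ --- is essentially the paper's own proof, where the change of basis is written as matrices $A,B$ with $AGB^t=\mathrm{Id}$ and the kernel identification uses $A^tB=G^{-t}$. The only slip is the normalization: the Andr\'eief computation gives $c_N=(N!\det G)^{-1}$ rather than $(\det G)^{-1}$, a harmless factor of $N!$ once \eqref{eq_cor_func_eval} is applied consistently.
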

\noindent {\bf Remark.} When the functions $\phi_i(x)$ and $\psi_j(y)$ are biorthogonal,  the Gram
matrix is diagonal and it can be easily inverted. This is the origin of the term ``biorthogonal
ensemble''.

\begin{proof}[Proof of Theorem \ref{Theorem_biorth}]
Observe that
\begin{multline*}
\int_{\X^N}\det\left[\phi_i(x_j)\right]_{i,j=1}^N \det\left[\psi_i(x_j)\right]_{i,j=1}^N \cdot
\mu(dx_1)\cdots \mu(dx_N)\\=\int_{\X^N}\left(\sum_{\sigma,\tau\in S(N)}\sgn(\sigma\tau)
\prod_{i=1}^N\phi_{\sigma(i)}(x_j)\psi_{\tau(i)}(x_j)\right)\mu(dx_1)\dots\mu(dx_N)\\=
\sum_{\sigma,\tau\in S(N)}\sgn(\sigma\tau)\prod_{i=1}^N G_{\sigma(i)\tau(j)}=N!\det G.
\end{multline*}
This implies that the normalization constant $c_N$ in the definition of the biorthogonal ensemble
above is equal to $(N!\det G)^{-1}$, and that the matrix $G$ is invertible.

We need to prove that $\rho_n(x_1,\dots,x_n)=\det[K(x_i,x_j)]_{i,j=1}^n$ for any $n\ge 1$. For
$n>N$ the statement is trivial as both sides vanish (the right-hand side vanishes because the
matrix under determinant has rank no more than $N$ due to the explicit formula for $K$).

Assume $n\le N$. By formula \eqref{eq_cor_func_eval} for the correlation functions,
\begin{multline*}
\rho_n(x_1,\dots,x_n)\\=\frac {N!c_N}{(N-n)!}\int_{\X^{N-n}}
\det\left[\phi_i(x_j)\right]_{i,j=1}^N \det\left[\psi_i(x_j)\right]_{i,j=1}^N \mu(dx_{n+1})\cdots
\mu(dx_N).
\end{multline*}
Let $A$ and $B$ be two $N\times N$ matrices such that $AGB^t=Id$. Set
$$
\Phi_k=\sum_{k=1}^N A_{kl}\phi_l,\qquad \Psi_k=\sum_{k=1}^N B_{kl}\psi_l,\qquad k=1,\dots,N.
$$
Then
$$
{\langle \Phi_i,\Psi_j\rangle}_{L^2(\X,\mu)}=[AGB^t]_{ij}=\delta_{ij}.
$$
On the other hand, for any $x_1,\dots,x_N$,
\begin{gather*}
\det\left[\Phi_i(x_j)\right]_{i,j=1}^N=\det
A\det\left[\phi_i(x_j)\right]_{i,j=1}^N,\\
\det\left[\Psi_i(x_j)\right]_{i,j=1}^N=\det B\det\left[\psi_i(x_j)\right]_{i,j=1}^N.
\end{gather*}
Also $AGB^t=Id$ implies $\det A \det B \det G=1$. Hence,  the formula for the correlation
functions can be rewritten as
\begin{multline*}
\rho_n(x_1,\dots,x_n)\\=\frac {1}{(N-n)!}\int_{\X^{N-n}} \det\left[\Phi_i(x_j)\right]_{i,j=1}^N
\det\left[\Psi_i(x_j)\right]_{i,j=1}^N \mu(dx_{n+1})\cdots \mu(dx_N).
\end{multline*}
Opening up the determinants and using the fact that $\Phi_i$'s and $\Psi_j$'s are biorthogonal, we
obtain
\begin{multline*}
\rho_n(x_1,\dots,x_n)=\frac {1}{(N-n)!}\sum_{\substack{ \sigma,\tau\in S(N)\\
\sigma(k)=\tau(k),\, k>n}}\sgn(\sigma\tau)
\prod_{i=1}^n\Phi_{\sigma(i)}(x_i)\Psi_{\tau(i)}(x_i)\\= \sum_{ 1\le j_1<\dots<j_n\le
N}\det\Phi^{j_1,\dots,j_n} \det\Psi^{j_1,\dots,j_n},
\end{multline*}
where $\Phi^{j_1,\dots,j_n}$ is the submatrix of $\Phi:=[\Phi_j(x_i)]_{\substack{ i=1,\dots,n\\
j=1,\dots,n}}$ formed by columns $j_1,\dots,j_n$, and similarly for $\Psi^{j_1,\dots,j_n}$. The
Cauchy-Binet formula now yields $\rho_n(x_1,\dots,x_n)=\det \Phi\Psi^t$, and
$$
[\Phi\Psi^t]_{ij}=\sum_{k,l,m=1}^N A_{kl}B_{km}\phi_l(x_i)\psi_m(x_j)=\sum_{l,m=1}^N
[A^tB]_{lm}\phi_l(x_i)\psi_m(x_j).
$$
The right-hand side is equal to $K(x_i,x_j)$ because $AGB^t=Id$ is equivalent to $A^tB=G^{-t}$.
\end{proof}

The appearance of biorthogonal ensembles in applications is often explained by the combinatorial
statement known as the Lindstr\"om-Gessel-Viennot (LGV) theorem, see \cite{Ste90} and references
therein, that we now describe.

Consider a finite\footnote{The assumption of finiteness is not necessary as long as the sums in
(\ref{eq_paths}) converge.} directed acyclic graph and denote by $V$ and $E$ the sets of its
vertices and edges. Let $w:E\to{\mathbb C}$ be an arbitrary weight function. For any path $\pi$
denote by $w(\pi)$ the product of weights over the edges in the path: $w(\pi)=\prod_{e\in \pi}
w(e)$. Define the weight of a collection of paths as the product of weights of the paths in the
collection (we will use the same letter $w$ to denote it). We say that two paths $\pi_1$ and
$\pi_2$ do not intersect (notation $\pi_1\cap\pi_2=\varnothing$) if they have no common vertices.

For any $u,v\in V$, let $\Pi(u,v)$ be the set of all (directed) paths from $u$ to $v$. Set
\begin{equation}\label{eq_paths}
\mathcal{T}(u,v)=\sum_{\pi\in\Pi(u,v)} w(\pi).
\end{equation}

\begin{theorem}\label{Theorem_LGV} Let $(u_1,\dots,u_n)$ and $(v_1,\dots,v_n)$ be two
$n$-tuples of vertices of our graph, and assume that for any nonidentical permutation $\sigma\in
S(n)$,
$$
\left\{(\pi_1,\dots,\pi_n)\mid \pi_i\in \Pi\left(u_i,v_{\sigma(i)}\right),\
\pi_i\cap\pi_j=\varnothing,\ i,j=1,\dots,n\right\}=\varnothing.
$$
Then
$$
\sum_{\substack{\pi_1\in \Pi(u_1,v_1),\dots, \pi_n\in \Pi(u_n,v_n)\\
\pi_i\cap \pi_j=\varnothing,\
i,j=1,\dots,n}}w(\pi_1,\dots,\pi_n)=\det\left[\mathcal{T}(u_i,v_j)\right]_{i,j=1}^n.
$$
\end{theorem}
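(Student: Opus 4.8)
The plan is to prove the Lindström–Gessel–Viennot theorem by the classical sign-reversing involution argument. First I would expand the right-hand side $\det[\mathcal T(u_i,v_j)]_{i,j=1}^n$ using the definition of the determinant and the definition \eqref{eq_paths} of $\mathcal T$. This gives
\[
\det\left[\mathcal T(u_i,v_j)\right]_{i,j=1}^n
=\sum_{\sigma\in S(n)}\sgn(\sigma)\sum_{\pi_i\in\Pi(u_i,v_{\sigma(i)})}w(\pi_1,\dots,\pi_n),
\]
i.e.\ a signed sum over all tuples $(\sigma,\pi_1,\dots,\pi_n)$ where $\pi_i$ runs from $u_i$ to $v_{\sigma(i)}$, with \emph{no} disjointness requirement. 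The terms in which the paths are pairwise non-intersecting necessarily have $\sigma=\mathrm{id}$, by the hypothesis that for every non-identity $\sigma$ there are no non-intersecting tuples connecting $u_i$ to $v_{\sigma(i)}$; these terms reproduce exactly the left-hand side with a plus sign. So it remains to show that all the remaining terms — those coming from tuples $(\sigma,\pi_1,\dots,\pi_n)$ in which at least two paths share a vertex — cancel in pairs.

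The core of the argument is to construct a weight-preserving, sign-reversing involution on the set of ``bad'' tuples (those with an intersection). Given such a tuple, I would use a fixed linear order on indices (and, if desired, on vertices) to pick the smallest index $i$ such that $\pi_i$ meets some other path, let $j>i$ be the smallest index with $\pi_i\cap\pi_j\neq\varnothing$, and let $P$ be the first vertex (in the order in which $\pi_i$ visits vertices) lying on both $\pi_i$ and $\pi_j$. Now swap the tails of $\pi_i$ and $\pi_j$ after $P$: define $\pi_i'$ to follow $\pi_i$ up to $P$ and then follow $\pi_j$'s tail, and $\pi_j'$ to follow $\pi_j$ up to $P$ and then $\pi_i$'s tail. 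The new tuple connects $u_i$ to $v_{\sigma(j)}$ and $u_j$ to $v_{\sigma(i)}$, so the associated permutation is $\sigma$ composed with the transposition $(i\,j)$, which flips the sign; since the multiset of edges is unchanged, the weight $w(\pi_1,\dots,\pi_n)$ is unchanged. Because the graph is acyclic, the new paths are genuine paths (no repeated vertices created along a single path beyond what tail-swapping allows), and one checks that applying the same rule to the new tuple recovers the original — the key point being that the selection data $(i,j,P)$ is invariant under the swap, which is why acyclicity and the canonical choices matter. Hence this is an involution with no fixed points on the bad tuples, so their signed contributions sum to zero.

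Putting the two parts together yields $\det[\mathcal T(u_i,v_j)] = \sum w(\pi_1,\dots,\pi_n)$ over non-intersecting tuples with $\sigma=\mathrm{id}$, which is the claim. The main obstacle is verifying carefully that the tail-swap map is genuinely an involution: one must argue that the triple $(i,j,P)$ selected from the swapped tuple coincides with the one selected from the original, and that no path acquires a repeated vertex (so the swapped objects are still legitimate paths) — this is exactly where directed acyclicity is used, and where a sloppy choice of ``first intersection point'' could break reversibility. The convergence footnote for the infinite case is a routine addendum: absolute convergence of the sums in \eqref{eq_paths} lets one rearrange the (now infinite) signed sum freely and apply the same involution termwise.
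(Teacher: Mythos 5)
Your overall strategy is the standard one (the paper itself gives no proof of this theorem, it only cites Stembridge), and the first half of your argument is fine: expanding the determinant, observing that non-intersecting tuples force $\sigma=\mathrm{id}$ by hypothesis, and reducing everything to a weight-preserving, sign-reversing involution on intersecting tuples. The gap is exactly at the point you yourself flag as the crux: with your selection rule the triple $(i,j,P)$ is \emph{not} invariant under the tail swap, so the map you define is not an involution. Concretely, take three paths in which $\pi_1$ meets $\pi_3$ at a vertex $Q$ but is disjoint from $\pi_2$, while the tail of $\pi_3$ after $Q$ passes through a vertex $R$ of $\pi_2$ (such a configuration is easily realized in a DAG). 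Your rule selects $i=1$, $j=3$, $P=Q$ and swaps the tails of $\pi_1$ and $\pi_3$ at $Q$; but in the swapped tuple the new $\pi_1'$ now meets $\pi_2$ at $R$, so the rule applied to the swapped tuple selects $j=2$ rather than $j=3$, and a second application does not return the original tuple. Since the cancellation of the ``bad'' terms rests entirely on the map being a sign-reversing pairing, the proof as written is incomplete.

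The repair is standard and small, but it changes the order of the canonical choices: first let $P$ be the \emph{first vertex along $\pi_i$ that lies on any other path}, and only then let $j$ be the minimal index $\ne i$ with $P\in\pi_j$. With this rule the invariance goes through: the portion of $\pi_i$ before $P$ meets no other path and is untouched by the swap (a vertex of it cannot lie on the new $\pi_j'$, since that would force it to lie either on $\pi_j$, contradicting the choice of $P$, or on the tail of $\pi_i$ after $P$, contradicting that $\pi_i$ is a path in an acyclic graph), and the set of indices $m$ with $P\in\pi_m$ is unchanged because only tails strictly after $P$ are exchanged. Hence the same $(i,P,j)$ is selected after the swap, the map is a fixed-point-free involution, the sign flips via the transposition $(i\,j)$ and the weight is preserved, which completes the cancellation. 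Your remarks about acyclicity guaranteeing that the spliced walks are genuine paths and about absolute convergence in the infinite case are correct and can stay as they are.
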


Theorem \ref{Theorem_LGV} means that if, in a suitable weighted oriented graph, we have
nonintersecting paths with fixed starting and ending vertices, distributed according to their
weights, then the distribution of the intersection points of these paths with any chosen
``section'' has the same structure as in Definition \ref{Definition_biorth}, and thus by Theorem
\ref{Theorem_biorth} we obtain a determinantal point process. More generally, the distribution of
the intersection points of paths with finitely many distinct ``sections'' also form a
determinantal point process. The latter statement is known as the \emph{Eynard--Metha theorem},
see e.g.\ \cite{BR05} and references therein.

A continuous time analog of Theorem \ref{Theorem_LGV} goes back to \cite{Kar59}, who in particular
proved the following statement (the next paragraph is essentially a quotation).

Consider a stationary stochastic process whose state space is an interval on the extended real
line. Assume that the process has strong Markov property and that its paths are continuous
everywhere. Take $n$ points $x_1<\dots<x_n$ and $n$ Borel sets $E_1<\dots<E_n$, and suppose $n$
labeled particles start at $x_1,\dots,x_n$ and execute the process simultaneously and
independently. Then the determinant $\det\left[P_t(x_i,E_j)\right]_{i,j=1}^n$, with $P_t(x,E)$
being the transition probability of the process, is equal to the probability that at time $t$ the
particles will be found in sets $E_1,\dots,E_n$, respectively, without any of them ever having
been coincident in the intervening time.

Similarly to Theorem \ref{Theorem_LGV}, this statement coupled with Theorem \ref{Theorem_biorth}
(or more generally, with Eynard--Metha theorem) leads to determinantal processes.

\section{From Last Passage Percolation to Plancherel measure}

\label{Section_RSK}

The aim of this section is to connect the Last Passage Percolation with certain probability
measures on the set of Young diagrams often referred to as \emph{Plancherel measures for the
symmetric groups.}

\begin{figure}[h]
\begin{center}
{\scalebox{1.0}{\includegraphics{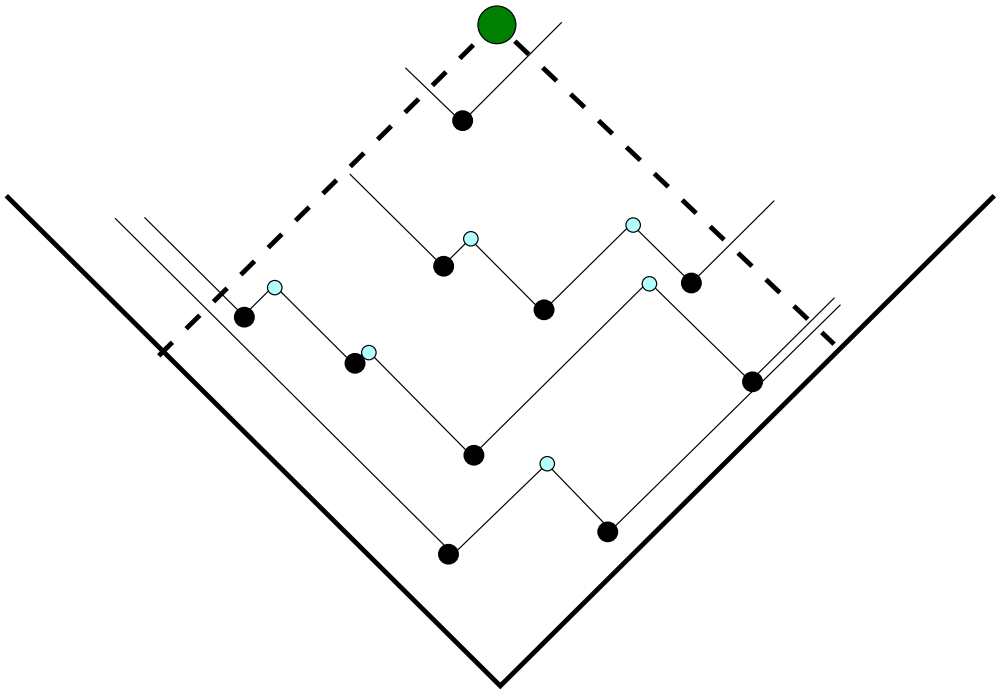}}} \caption{Points of Poisson process (in black),
broken lines joining them and points of the second generation (in light blue). Maximum number of
points collected along monotonous paths joining points $(\theta,\theta)$ (the big green point) and
$(0,0)$ coincides with number of broken lines separating them, which is $4$ in our case. Only the
points inside dashed square matter. \label{Fig_geo_RSK} }
\end{center}

\end{figure}

We start from the Poisson process in the first quadrant, as in Theorem
\ref{Theorem_first_row_plancherel} and Figure \ref{Fig_directed_Path_in_poisson}, but now we
rotate the quadrant by 45 degrees, like in Figure \ref{Fig_directed_paths}. There is a graphical
way to find the value of $L(\theta)$. Namely, for each point of the process draw two rays starting
from it and parallel to the axes. Extend each ray till the first intersection with another ray. In
this way, we get a collection of broken lines, as shown in Figure \ref{Fig_geo_RSK}. At the first
intersection points of the rays we put new points that form the second generation. Note now that
$L(\theta)$ is equal to the number of broken lines separating $(\theta,\theta)$ and the origin. As
it turns out, it is beneficial to iterate this process. We erase all the points of the original
Poisson process, but keep the points of the second generation and draw broken lines joining them;
we repeat this until no points inside the square with vertices $(0,0)$, $(0,\theta)$,
$(\theta,0)$, and $(\theta,\theta)$ are left, as shown in Figure \ref{Fig_geo_RSK2}. Compute the
number of broken lines separating $(\theta,\theta)$ and $(0,0)$ at each step and record these
numbers to form a Young diagram $\lambda(\theta)=(\lambda_1(\theta),\lambda_2(\theta),\dots)$, so
that, in particular, $\lambda_1(\theta)=L(\theta)$. Observe that $|\lambda(\theta)|$ equals the
number of points of the original Poisson process inside the square with vertices $(0,0)$,
$(0,\theta)$, $(\theta,0)$,  and $(\theta,\theta)$. The procedure we just described is known as
Viennot's geometric construction of the Robinson--Schensted correspondence, see e.g.\
\cite{Sagan}.

\begin{figure}[h]
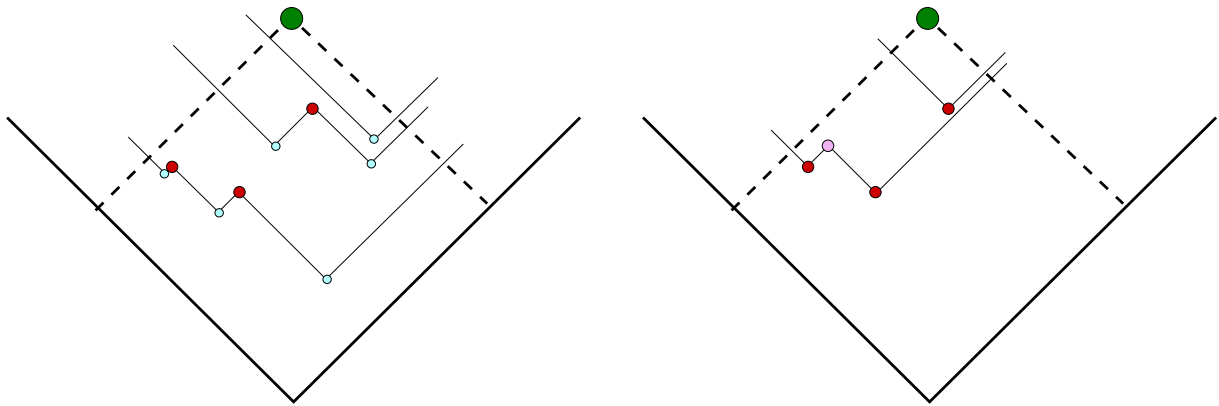

\begin{center}
{\scalebox{0.58}{\includegraphics{geo_rsk2.pdf}}} \hfill
{\scalebox{0.58}{\includegraphics{geo_rsk3.pdf}}} \caption{Left panel: Points of second generation
(in light blue), broken lines joining them and points of the third generation (in dark red). Right
panel: Points of the third generation (in dark red), broken lines joining them and points of the
fourth generation (in purple).
 \label{Fig_geo_RSK2} }
\end{center}
\end{figure}

Our interest in this construction is based on the fact that the distribution of $\lambda(\theta)$
can be fairly easily computed, as opposed to that of $L(\theta)=\lambda_1(\theta)$.

\begin{theorem} The distribution of $\lambda(\theta)$ is given by the \emph{Poissonized Plancherel
measure}
\begin{equation}
\label{eq_Poi_Planch}
 \P(\lambda(\theta)=\mu)=e^{-\theta^2}\left( \frac{\theta^{|\mu|} {\rm dim}(\mu)}{|\mu|!} \right)^2,\quad \mu\in\Y.
\end{equation}
\end{theorem}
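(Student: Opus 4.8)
The plan is to split the randomness according to the number of Poisson points that fall inside the square $Q_\theta$ with vertices $(0,0),(0,\theta),(\theta,0),(\theta,\theta)$ (of area $\theta^2$), and then to identify the combinatorial object produced by Viennot's construction with the Robinson--Schensted shape of a uniformly random permutation.

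First I would recall that, since the process has intensity $1$, the number $N$ of its points inside $Q_\theta$ is Poisson distributed with parameter $|Q_\theta|=\theta^2$, and that, conditioned on $\{N=n\}$, those $n$ points are independent and uniformly distributed in $Q_\theta$. Almost surely no two of them share an $x$-coordinate or a $y$-coordinate, so listing the points in increasing order of $x$-coordinate and reading off the induced order of their $y$-coordinates yields a permutation $\sigma\in S(n)$; by exchangeability of the points, $\sigma$ is uniformly distributed on $S(n)$. Only the points inside $Q_\theta$ are relevant, since the broken-line construction is confined to this square.

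Second, I would invoke Viennot's theorem (as cited, see \cite{Sagan}): the iterated broken-line construction applied to the point set is precisely the geometric form of the Robinson--Schensted correspondence, so the diagram $\lambda(\theta)$ recorded at successive generations equals the common shape of the insertion and recording tableaux $(P(\sigma),Q(\sigma))$; in particular $|\lambda(\theta)|=N$ and $\lambda_1(\theta)=L(\theta)$ is the longest increasing subsequence length of $\sigma$. Since the Robinson--Schensted correspondence is a bijection between $S(n)$ and the set of pairs of standard Young tableaux of a common shape $\mu$ with $|\mu|=n$, and there are $\dim(\mu)$ such tableaux for each such $\mu$, a uniform $\sigma\in S(n)$ has Robinson--Schensted shape equal to a given $\mu$ with probability $(\dim\mu)^2/n!$.

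Finally I would combine the two steps: conditioning on $\{N=n\}$ forces $|\lambda(\theta)|=n$, so only the term $n=|\mu|$ contributes in the total-probability decomposition, giving
\[
 \P(\lambda(\theta)=\mu)=e^{-\theta^2}\,\frac{\theta^{2|\mu|}}{|\mu|!}\cdot\frac{(\dim\mu)^2}{|\mu|!}=e^{-\theta^2}\left(\frac{\theta^{|\mu|}\dim(\mu)}{|\mu|!}\right)^2,
\]
which is \eqref{eq_Poi_Planch}. The step that requires the most care is the second one: rigorously matching the geometric bookkeeping after the $45^\circ$ rotation (rays extended to their first intersections, second-generation points, and the iteration) with the combinatorial RSK algorithm, i.e.\ verifying that the number of broken lines separating $(0,0)$ from $(\theta,\theta)$ at the $k$-th stage equals $\lambda_k(\theta)$. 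This identification of the ``shadow lines'' with the rows of the tableau is the genuine content; once it is granted or quoted from \cite{Sagan}, the probabilistic part above is immediate.
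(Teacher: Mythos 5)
Your proposal is correct and follows essentially the same route as the paper's own sketch: condition on the Poisson number of points in the square, observe that the induced permutation is uniform, invoke the Robinson--Schensted bijection (via Viennot's geometric construction) to get the conditional law $(\dim\mu)^2/n!$, and sum against the Poisson weights. You also rightly flag that the only nontrivial content is the identification of the shadow-line construction with RSK, which the paper likewise delegates to the cited references.
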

\begin{proof}[Sketch of the proof]
Note that $\lambda(\theta)$ depends only on the relative order of coordinates of the points inside
the square with vertices $(0,0)$, $(\theta,0)$, $(0,\theta)$, $(\theta,\theta)$, but not on their
positions. This order can be encoded by a permutation $\sigma(\theta)$ which is the permutation
between the points ordered by $x$ coordinate and by $y$ coordinate. Observe that the size
$n(\theta)$ of $\sigma(\theta)$, i.e.\ the number of points inside the square, has Poisson
distribution with parameter $\theta^2$, and given this number, the distribution of
$\sigma(\theta)$ is uniform.

Next, we need to use the Robinson-Schensted-Knuth (RSK) correspondence (or rather its earlier
Robinson--Schensted version), which is an explicit bijection between permutations of size $n$ and
pairs of standard Young tableaux of same shape. We refer the reader to \cite{Sagan},
\cite{Fulton}, \cite{Kn} for the definition and properties of this correspondence. What is
important for us is that $\lambda(\theta)$ is precisely the common shape of two obtained tableau.
Therefore, given that the size  $n(\theta)$ of $\sigma(\theta)$ is $m$, the number of boxes in
$\lambda(\theta)$ is also $m$, and the conditional distribution of $\lambda(\theta)$ is
$$
 \P(\lambda(\theta)=\mu\mid n(\theta)=m)=\frac{{\rm dim}^2(\mu)}{m!}.
$$
Taking into account the Poisson distribution on $m$, we arrive at \eqref{eq_Poi_Planch}.
\end{proof}
\noindent {\bf Remark.} The fact that the RSK correspondence is a bijection implies that
\begin{equation}
\label{eq_Burnside}
 \sum_{|\lambda|=n} {\rm dim}^2(\lambda)=n!.
\end{equation}
On the other hand, if we recall the definition of ${\rm dim}(\lambda)$ as the dimension of
irreducible representation of the symmetric group $S(n)$, then, taking into the account that
$|S(n)|=n!$, the equality \eqref{eq_Burnside} is nothing else but the celebrated \emph{Burnside
identity} which says that squares of the dimensions of irreducible complex representations of any
finite group sum up to the number of the elements in the group.

\smallskip

Let us now suggest some intuition on why the asymptotic behavior of $L(\theta)$ should be related
to those of growth models and, in particular, to the ballistic deposition that we started with in
Section \ref{Section_Intro}. Introduce coordinates $(z,t)$ in Figure \ref{Fig_geo_RSK} so that $t$
is the vertical coordinate, and consider the following growth model. At time $t$ the \emph{height
profile} is given by an integer--valued (random) function $h(x,t)$, at time zero $h(x,0)\equiv 0$.
At any given time $t$ and any point $x$, the left and right $x$--limits of the function $h(x,t)$
differ at most by $1$, in other words, $h(x,t)$ is almost surely a step function with steps $(+1)$
(``up step'', when we read the values of the function from left to right) and $(-1)$ (``down
step''). If there is a point of the Poisson process (of Figure \ref{Fig_geo_RSK}) at $(z,t)$, then
at time $t$ a \emph{seed} is born at position $x=z$, which is combination of up and down steps,
i.e.\ $h(z,t)$ increases by $1$. After that the down step starts moving with speed $1$ to the
right, while the up step starts moving to the left with the same speed. When the next seed is
born, another up and down steps appear and also start moving. When up and down steps (born by
different seeds) meet each other, they disappear. This model is known as \emph{Polynuclear Growth}
(PNG), see \cite{Meakin}, \cite{PS_review}, a very nice computer simulation for it is available at
Ferrari's website \cite{Fe}, and at Figure \ref{Fig_PNG} we show one possible height function.

\begin{figure}[h]
\begin{center}
{\scalebox{1.3}{\includegraphics{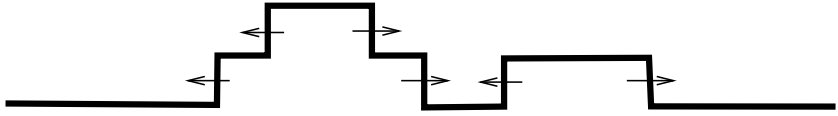}}} \caption{Height function of PNG model after the birth
of 3 seeds.
 \label{Fig_PNG} }
\end{center}
\end{figure}

Coming back to Figure \ref{Fig_geo_RSK}, note that its broken lines symbolize the space--time
trajectories of up/down steps, while second generation points are identified with collisions of up
and down steps. In particular, the positions of up and down steps at time $t=t_0$ are the points
of intersection of the line $t=t_0$ at Figure \ref{Fig_geo_RSK} with parts of broken lines of
slope $(-1)$ and $(+1)$, respectively. Now it is easy to prove that the PNG-height $h(0,t)$ at
time $t$ and point $0$ is precisely the Last Passage Percolation Time $L(t)$. In order to observe
the full Young diagram $\lambda(\theta)$ one should introduce \emph{multi-layer} PNG model, where
a seed on level $k$, $k\ge 2$, is born when the up and down steps collide at level $k-1$, and the
position of seed coincides with the position of collision, see \cite{PS} for details.

The PNG model is in the KPZ universality class, and obtaining information on its asymptotic
behavior (roughness of interface, fluctuation distributions) would give us similar (although
conjectural) statements for other members of the same universality class.

\section{The Schur measures and their asymptotic behavior}

\label{Section_Schur_measures}

The aim of this section is to show how the asymptotic behavior of the Poissonized Plancherel
measure and certain more general distributions on Young diagrams can be analyzed.

Take any two Schur--positive specializations $\rho_1$, $\rho_2$ of the algebra of symmetric
functions $\Lambda$ (those were classified in Theorem \ref{Theorem_Thoma}). The following
definition first appeared in \cite{Ok-wedge}.

\begin{definition}
\label{Def_Schur_meas}
 \emph{The Schur measure} $\mathbb S_{\rho_1;\rho_2}$ is a probability measure on the set of all Young diagrams defined
 through
 $$
 \P_{\rho_1,\rho_2}(\lambda)=\dfrac{s_\lambda(\rho_1) s_\lambda(\rho_2)}{H(\rho_1;\rho_2)},
 $$
 where the normalizing constant $H(\rho_1;\rho_2)$ is given by
$$
 H(\rho_1;\rho_2)=\exp\left(\sum_{k=1}^{\infty} \frac{p_k(\rho_1)p_k(\rho_2)}{k}\right).
$$
\end{definition}
\noindent{\bf Remark.}
 The above definition makes sense only if $\rho_1$, $\rho_2$ are such that
 \begin{equation}
 \label{eq_sum_schur}
  \sum_\lambda s_\lambda(\rho_1) s_\lambda(\rho_2)<\infty,
 \end{equation}
and in the latter case this sum equals $H(\rho_1;\rho_2)$, as follows from Theorem
\ref{theorem_Cauchy}. The convergence of \eqref{eq_sum_schur} is guaranteed, for instance, if
$|p_k(\rho_1)|<C r^k$ and $|p_k(\rho_2)|< C r^k$ with some constants $C>0$ and $0<r<1$. In what
follows we assume that this (or a similar) condition is always satisfied.

\begin{proposition}
 Let $\rho_\theta$ be the (Schur--positive) specialization with single non-zero parameter $\gamma=\theta$, i.e.\
 $$
  p_1(\rho_\theta)=\theta,\quad p_k(\rho_\theta)=0, \quad k>1.
 $$
 Then $\P_{\rho_\theta,\rho_\theta}$ is the Poissonized Plancherel measure \eqref{eq_Poi_Planch}.
\end{proposition}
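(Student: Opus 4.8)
The plan is to unwind both sides of the claimed identity $\P_{\rho_\theta,\rho_\theta}(\lambda) = e^{-\theta^2}\left(\frac{\theta^{|\lambda|}\dim(\lambda)}{|\lambda|!}\right)^2$ using the two computational tools already at our disposal: Proposition \ref{Prop_plancherel} for the value $s_\lambda(\rho_\theta)$, and the formula for the normalizing constant $H(\rho_1;\rho_2)$ in Definition \ref{Def_Schur_meas}. First I would apply Proposition \ref{Prop_plancherel} with $c=\theta$: since $\rho_\theta$ has $\gamma=\theta$ as its only nonzero parameter, we get $s_\lambda(\rho_\theta) = \frac{\theta^{|\lambda|}}{|\lambda|!}\dim(\lambda)$, so that $s_\lambda(\rho_\theta)s_\lambda(\rho_\theta) = \left(\frac{\theta^{|\lambda|}\dim(\lambda)}{|\lambda|!}\right)^2$, which is exactly the squared factor appearing in \eqref{eq_Poi_Planch}.

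Next I would compute the normalizing constant. From Theorem \ref{Theorem_Thoma}, the specialization $\rho_\theta$ has $p_1(\rho_\theta)=\theta$ and $p_k(\rho_\theta)=0$ for $k\ge 2$. Plugging into
$$
H(\rho_\theta;\rho_\theta)=\exp\left(\sum_{k=1}^\infty \frac{p_k(\rho_\theta)p_k(\rho_\theta)}{k}\right),
$$
only the $k=1$ term survives, giving $H(\rho_\theta;\rho_\theta)=\exp(\theta^2)=e^{\theta^2}$. Dividing, $\P_{\rho_\theta,\rho_\theta}(\lambda) = \frac{s_\lambda(\rho_\theta)^2}{H(\rho_\theta;\rho_\theta)} = e^{-\theta^2}\left(\frac{\theta^{|\lambda|}\dim(\lambda)}{|\lambda|!}\right)^2$, which matches \eqref{eq_Poi_Planch} term by term.

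For completeness I would note that the convergence condition \eqref{eq_sum_schur} required for the Schur measure to be well-defined holds here: the identity $\sum_\lambda s_\lambda(\rho_\theta)^2 = H(\rho_\theta;\rho_\theta) = e^{\theta^2} < \infty$ follows from Theorem \ref{theorem_Cauchy} (or can be checked directly from the Burnside-type identity $\sum_{|\lambda|=n}\dim^2(\lambda)=n!$, which gives $\sum_\lambda \left(\frac{\theta^{|\lambda|}\dim(\lambda)}{|\lambda|!}\right)^2 = \sum_{n\ge 0}\frac{\theta^{2n}}{(n!)^2}\sum_{|\lambda|=n}\dim^2(\lambda) = \sum_{n\ge 0}\frac{\theta^{2n}}{n!} = e^{\theta^2}$). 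Honestly, there is no real obstacle here — the proposition is essentially a matter of substituting the known values of $s_\lambda$ and $H$ for this particular specialization; the only thing to be careful about is matching the parameter conventions (the $c$ in Proposition \ref{Prop_plancherel} being the $\gamma=\theta$ here), and confirming that the single surviving power-sum term in the exponential indeed produces $\theta^2$ rather than, say, $\theta^2/2$ — which it does, since the $k=1$ denominator is $1$.
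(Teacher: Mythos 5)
Your proof is correct and is exactly the argument the paper has in mind: the paper's proof is the one-line remark that the statement is an immediate corollary of Proposition \ref{Prop_plancherel}, and you have simply spelled out that corollary, including the computation $H(\rho_\theta;\rho_\theta)=e^{\theta^2}$ and the convergence check via the Cauchy/Burnside identity.
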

\begin{proof}
 This is an immediate corollary of Proposition \ref{Prop_plancherel}.
\end{proof}

Our next goal is to show that any Schur measure is a determinantal point process. Given a Young
diagram $\lambda$, we associate to it a point configuration $X(\lambda)=\{\lambda_i-i+1/2\}\subset
\mathbb Z+1/2$. This is similar to the correspondence shown in Figures \ref{Fig_broken},
\ref{Fig_broken_IC}. Note that $X(\lambda)$ is semi--infinite, i.e.\ there are finitely many
points to the right of the origin, but almost all points to the left of the origin belong to
$X(\lambda)$.

\begin{theorem}[\cite{Ok-wedge}]
\label{theorem_corr_Schur}
 Suppose that the $\lambda\in\Y$ is distributed according to the Schur measure $\mathbb S_{\rho_1;\rho_2}$.
  Then $X(\lambda)$ is a determinantal point process on $\mathbb Z+1/2$ with
 correlation kernel $K(i,j)$ defined by the generating series
 \begin{equation}
 \label{eq_x2}
  \sum_{i,j\in\mathbb Z+\frac12}  K(i,j) v^i w^{-j}=\frac{ H(\rho_1;v) H(\rho_2;w^{-1})}{ H(\rho_2;v^{-1}) H(\rho_1; w)}
  \sum_{k=\frac12,\frac32,\frac52,\dots} \left(\frac{w}{v}\right)^k,
 \end{equation}
 where
 $$
  H(\rho;z)=\sum_{k=0}^{\infty} h_k(\rho) z^k=\exp\left(\sum_{k=1}^{\infty}
  p_k(\rho)\frac{z^k}{k}\right).
 $$
\end{theorem}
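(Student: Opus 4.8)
The plan is to identify the Schur measure with a determinantal point process by realizing it inside the fermionic Fock space and then computing the correlation kernel as a vacuum expectation of a product of vertex operators. First I would set up the standard correspondence between Young diagrams $\lambda$ and semi-infinite subsets of $\Z+\tfrac12$ via $\lambda\mapsto X(\lambda)=\{\lambda_i-i+\tfrac12\}$; this is a bijection between $\Y$ and ``Maya diagrams'' of charge $0$. On the fermionic Fock space $\mathcal F=\bigwedge^{\infty/2}(\C^{\infty})$ with basis vectors $v_\lambda$ indexed by such subsets, one has the Clifford algebra generated by $\psi_k,\psi_k^*$, $k\in\Z+\tfrac12$, and the standard fact (the boson--fermion correspondence) that the operators $\Gamma_\pm(\rho)=\exp\big(\sum_{k\ge1}\frac{p_k(\rho)}{k}\alpha_{\pm k}\big)$, built from the bosonic current $\alpha_m=\sum_{k}\psi_{k-m}\psi_k^*$, act on $v_\lambda$ by
$$
 \Gamma_-(\rho)v_\mu=\sum_{\lambda\supseteq\mu} s_{\lambda/\mu}(\rho)\,v_\lambda,\qquad
 \Gamma_+(\rho)v_\lambda=\sum_{\mu\subseteq\lambda} s_{\lambda/\mu}(\rho)\,v_\mu,
$$
so in particular $\Gamma_+(\rho)v_\varnothing=v_\varnothing$ and $\langle v_\varnothing|\Gamma_+(\rho_2)\Gamma_-(\rho_1)|v_\varnothing\rangle=H(\rho_1;\rho_2)$.

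With this in hand, the Schur measure is $\P(\lambda)=\frac{1}{H(\rho_1;\rho_2)}\langle v_\varnothing|\Gamma_+(\rho_2)|v_\lambda\rangle\langle v_\lambda|\Gamma_-(\rho_1)|v_\varnothing\rangle$, and the correlation functions become
$$
 \rho_n(a_1,\dots,a_n)=\frac{\langle v_\varnothing|\Gamma_+(\rho_2)\,\psi_{a_1}\psi_{a_1}^*\cdots\psi_{a_n}\psi_{a_n}^*\,\Gamma_-(\rho_1)|v_\varnothing\rangle}{\langle v_\varnothing|\Gamma_+(\rho_2)\Gamma_-(\rho_1)|v_\varnothing\rangle},
$$
since $\psi_a\psi_a^*$ is exactly the projector onto Maya diagrams containing $a$. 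Next I would commute the $\Gamma_\pm$ past the fermions: the key commutation relations are $\Gamma_+(\rho)\psi(z)=H(\rho;z)\,\psi(z)\Gamma_+(\rho)$ and $\Gamma_-(\rho)\psi(z)=H(\rho;z^{-1})^{-1}\psi(z)\Gamma_-(\rho)$ for the generating field $\psi(z)=\sum_k \psi_k z^k$, with the conjugate relations for $\psi^*(w)=\sum_k\psi_k^* w^{-k}$ carrying the reciprocal factors. Moving all $\Gamma$'s to the outside, they annihilate against the vacua and leave behind scalar factors $H(\rho_1;v)/H(\rho_2;v^{-1})$ attached to each $\psi(v)$ and $H(\rho_2;w^{-1})/H(\rho_1;w)$ attached to each $\psi^*(w)$; what remains inside is the \emph{free-fermion} correlator $\langle v_\varnothing|\prod \psi(v_i)\psi^*(w_i)|v_\varnothing\rangle$, which by Wick's theorem is the determinant of the two-point functions $\langle v_\varnothing|\psi(v)\psi^*(w)|v_\varnothing\rangle=\sum_{k=\frac12,\frac32,\dots}(w/v)^k$ (the propagator $\frac{1}{1-w/v}$ expanded in the region $|w|<|v|$). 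This is precisely the determinantal structure, and reading off the kernel as the generating series of the (dressed) two-point function gives exactly formula \eqref{eq_x2}.

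I expect the main obstacle to be bookkeeping of \textbf{convergence and contour/expansion conventions} rather than any conceptual difficulty: one must be careful that the geometric series $\sum_{k>0}(w/v)^k$ is expanded in the correct annulus, that the products $H(\rho_i;\cdot)$ converge there (guaranteed by the growth hypothesis on $p_k(\rho_i)$ noted after Definition~\ref{Def_Schur_meas}), and that the various normal-ordering constants cancel as claimed. A secondary point to handle carefully is justifying that $\psi_a\psi_a^*$ really is the occupation operator at site $a$ and that Wick's theorem applies verbatim for a non-Hermitian ``kernel'' here — this is where one invokes the general principle behind Theorem~\ref{Theorem_biorth} (and its continuous/Eynard--Mehta extensions) that a determinantal structure of correlation functions needs no Hermiticity. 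An alternative, more elementary route avoiding Fock space would be to approximate $\rho_1,\rho_2$ by finite variable specializations, where $\mathbb S_{\rho_1;\rho_2}$ becomes a Schur-function analogue of a biorthogonal ensemble via the Cauchy identity and the bijection with non-intersecting lattice paths (Lindström--Gessel--Viennot), apply Theorem~\ref{Theorem_biorth}, and then pass to the limit in the kernel; but the vertex-operator computation is cleaner and yields the closed generating-series form directly.
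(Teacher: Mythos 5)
Your proposal is correct, but it follows a genuinely different route from the one in the text: what you outline is essentially Okounkov's original argument from \cite{Ok-wedge} (infinite wedge space, vertex operators $\Gamma_\pm$, Wick's theorem for the free-fermion correlator), whereas the paper reproduces Johansson's proof: it reduces the claim to a formal identity for finitely many variables $x_1,\dots,x_N$, $y_1,\dots,y_N$, recognizes in \eqref{eq_x1} a biorthogonal ensemble in the sense of Definition \ref{Definition_biorth}, applies Theorem \ref{Theorem_biorth}, inverts the Gram matrix explicitly via the Cauchy determinant \eqref{eq_Cauchy} and Cramer's rule, and resums the resulting finite kernel into the double contour integral \eqref{eq_x3}, which after a change of variables and shift becomes \eqref{eq_x4}, i.e.\ \eqref{eq_x2}. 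Amusingly, the ``alternative, more elementary route'' you mention at the end is precisely the paper's proof, except that no Lindstr\"om--Gessel--Viennot step is needed: the biorthogonal structure is read off directly from the determinantal formula \eqref{eq_Schur} for Schur polynomials. Comparing the two: your fermionic computation works with infinitely many variables at once and produces the generating-series form of the kernel immediately, at the cost of importing the boson--fermion correspondence (whose key input, the action of $\Gamma_\pm(\rho)$ on $v_\lambda$ by skew Schur functions, is itself equivalent to the symmetric-function identities of Section \ref{Section_symmetric}); the paper's route uses only tools already established in the notes (Theorem \ref{Theorem_biorth}, the Cauchy identity) but needs the finite-$N$ truncation and a limiting argument for the extra deterministically placed particles. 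Two small points to tighten in your write-up: the commutation relation for $\Gamma_-$ should read $\Gamma_-(\rho)\psi(z)=H(\rho;z^{-1})\,\psi(z)\,\Gamma_-(\rho)$ (no inverse), and with your assignment of $\rho_1$ to $\Gamma_-$ and $\rho_2$ to $\Gamma_+$ the dressed propagator comes out with $\rho_1$ and $\rho_2$ interchanged relative to \eqref{eq_x2}; this is harmless since $\mathbb S_{\rho_1;\rho_2}=\mathbb S_{\rho_2;\rho_1}$, but the conventions should be matched explicitly to land on the stated formula.
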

\noindent {\bf Remark 1.} If we expand $H$--functions in the right--hand side of \eqref{eq_x2}
into power series and multiply the resulting expressions, then \eqref{eq_x2} can be viewed as a
\emph{formal} identity of power series.

\noindent {\bf Remark 2.} There is also an \emph{analytical} point of view on \eqref{eq_x2}.
 Using the fact that (under suitable convergence conditions) the contour
integral around zero
$$
 \frac{1}{2\pi \i} \oint \left( \sum_{k=-\infty}^{\infty} a_k z^k \right) \frac{dz}{z^{n+1}}
$$
is equal to $a_n$ and also that when $|w|<|v|$ we have
$$
\sum_{k=\frac12,\frac32,\frac52,\dots} \left(\frac{w}{v}\right)^k = \frac{\sqrt{vw}}{v-w},
$$
we can rewrite \eqref{eq_x2} as
\begin{equation}
\label{eq_x4}
 K(i,j)=\frac{1}{(2\pi \i)^2} \oint\oint \frac{ H(\rho_1;v) H(\rho_2;w^{-1})}{ H(\rho_2;v^{-1}) H(\rho_1;
 w)} \frac{\sqrt{vw}}{v-w} \frac{dv dw}{v^{i+1} w^{-j+1}},
\end{equation}
with integration going over two circles around the origin $|w|=R_1$, $|v|=R_2$ such that $R_1<R_2$
and the functions $H(\rho_1;u)$,  $H(\rho_2;u^{-1})$  are holomorphic in the annulus
$R_1-\varepsilon<|u|<R_2+\varepsilon$. In particular,
 if $|p_k(\rho_1)|<C r^k$ and $|p_k(\rho_2)|< C r^k$ with some constants $C>0$ and $0<r<1$, then
 any $r<R_1<R_2<r^{-1}$ are suitable.

\medskip

We now present a proof of Theorem \ref{theorem_corr_Schur}  which is due to Johansson
\cite{Joh-EuroCongr}, see also \cite{Ok-wedge} for the original proof.
\begin{proof}[Proof of Theorem \ref{theorem_corr_Schur}]
We have to prove that for any finite set $A=\{a_1,\dots,a_m\}\subset \mathbb Z+1/2$ we have
$$
 \sum_{\lambda: A\subset X(\lambda)} \dfrac{s_\lambda(\rho_1)
 s_\lambda(\rho_2)}{H(\rho_1;\rho_2)} =\det \left[K(a_i,a_j)\right]_{i,j=1}^m.
$$
For this it suffices to prove the following formal identity of power series. Let $x=(x_1,\dots)$
and $y=(y_1,\dots)$ be two sets of variables; then
\begin{equation}
\label{eq_Schur_cor_formal}
 \sum_{\lambda : A\subset X(\lambda)} \dfrac{s_\lambda(x)
 s_\lambda(y)}{\prod_{i<j}(1-x_iy_j)^{-1}} =\det \left[\widehat K(a_i,a_j)\right]_{i,j=1}^m,
\end{equation}
where the generating function of $\widehat K(i,j)$ is similar to that of $K(i,j)$ but with
$\rho_1$ and $\rho_2$ replaced by $x$ and $y$, respectively. One shows (we omit a justification
here) that it is enough to prove \eqref{eq_Schur_cor_formal} for arbitrary \emph{finite} sets of
variables $x$ and $y$, so let us prove it for $x=(x_1,\dots,x_N)$, $y=(y_1,\dots,y_N)$. In the
latter case the Schur functions $s_\lambda(x)$ are non-zero only if $\ell(\lambda)\le N$. Because
of that it is more convenient to work with a finite point configuration
$X_N(\lambda)=\{\lambda_j+N-j\}_{j=1}^N\subset {\mathbb Z}_{\ge 0}$ which differs from
$X(\lambda)$ in two ways. First there is a deterministic shift by $N-1/2$, this has an evident
effect on the correlation functions. Second, $X(\lambda)$ is infinite, while $X_N(\lambda)$ is
finite. However,  the additional points of $X(\lambda)$ (as compared to those of $X_N(\lambda)$)
are deterministically located and move away to $-\infty$ and, therefore, they do not affect the
correlation functions in the end.

The definition of Schur functions \eqref{eq_Schur} implies that
\begin{equation}
\label{eq_x1}
 \dfrac{s_\lambda(x_1,\dots,x_N)
 s_\lambda(y_1,\dots,y_N)}{\prod_{i<j}(1-x_iy_j)^{-1}}=\frac{1}{Z} \det
 \left[x_i^{\lambda_j+N-j}\right]_{i,j=1}^N \det \left[y_i^{\lambda_j+N-j}\right]_{i,j=1}^{N}.
\end{equation}
Now we recognize a biorthogonal ensemble in  the right--hand side of \eqref{eq_x1}. Therefore, we
can use Theorem \ref{Theorem_biorth} which yields that $X_N(\lambda)$ is a determinantal point
process with correlation kernel
$$
 \widetilde K(\ell_1,\ell_2)=\sum_{i,j=1}^N x_i^{\ell_1} y_j^{\ell_2} G_{ij}^{-t},
$$
where $G_{ij}^{-t}$ is the inverse--transpose matrix of the $N\times N$ Gram matrix
$$
 G_{ij}=\sum_{\ell\ge 0} x_i^{\ell} y_j^{\ell} =\frac{1}{1-x_i y_j}.
$$
We can compute the determinant of $G$, which is
\begin{equation}
\label{eq_Cauchy} \det[G_{ij}]_{i,j=1}^N  = \det \left[\frac{1}{1-x_i y_j}\right]_{i,j=1}^N
=\frac{\prod_{i<j} (x_i-x_j)\prod_{i<j} (y_i-y_j)}{\prod_{i,j=1}^N (1-x_i y_j)}.
\end{equation}
This is known as the \emph{Cauchy determinant} evaluation. One can prove \eqref{eq_Cauchy}
directly, see e.g.\ \cite{Kr}. Another way is to recall that in the proof of Theorem
\ref{Theorem_biorth} we showed that the determinant of $G$ is the normalization constant of the
measure, and we know the normalization constant from the very definition of the Schur measure,
i.e.\ by the Cauchy identity.

By the Cramer's rule, we have
$$
 (G^{-t})_{k,\ell}= \frac{ (-1)^{k+l} \det[G_{ij}]_{i\ne k, j\ne \ell}}{\det[G_{ij}]_{i,j=1}^N }.
$$
Using the fact that submatrices of $G_{ij}$ are matrices of the same type and their determinants
can be evaluated using \eqref{eq_Cauchy}, we get
$$
 (G^{-t})_{k,\ell}=\dfrac{\prod_{j=1}^N (1-x_j y_\ell)(1-x_k y_j)}{(1-x_k y_\ell)\prod_{j\ne k}(x_k-x_j) \prod_{j\ne
 \ell}(y_j-y_\ell)}.
$$
We claim that
\begin{equation}
\label{eq_x3}
 \widetilde K(\ell_1,\ell_2)=\sum_{i,j=1}^N x_i^{\ell_1} y_j^{\ell_1} G_{ij}^{-t} =
 \frac{1}{(2\pi \i)^2}\oint\oint \prod_{k=1}^N \frac{(1-zy_k)(1-v x_k)}{(z-x_k)(v-y_k)} \frac{z^{\ell_1} v^{\ell_2}}{1-zv} dzdv,
\end{equation}
with contours chosen so that they enclose the singularities at $z=x_k$ and $v=y_k$, but do not
enclose the singularity at $zv=1$. Indeed, \eqref{eq_x3} is just the evaluation of the double
integral as the sum of the residues. Changing the variables $z=1/w$ and shifting $\ell_1,\ell_2$
by $N-1/2$ in \eqref{eq_x3} we arrive at \eqref{eq_x4}.
\end{proof}

Applying Theorem \ref{theorem_corr_Schur} to the Poissonized Plancherel measure we obtain
\begin{corollary}[\cite{BOO},\cite{J-Annals}]
\label{Cor_Planch_cor_function}
 Suppose that $\lambda$ is a random Young diagram distributed by the Poissonized Plancherel measure.
 Then the points of $X(\lambda)$ form a determinantal point process on $\mathbb Z +\frac12$ with correlation kernel
 $$
 K_\theta(i,j)=\frac{1}{(2\pi \i)^2} \oint\oint  \exp\bigg(\theta(v -v^{-1}-w+w^{-1})\bigg) \frac{\sqrt{vw}}{v-w} \frac{dv dw}{v^{i+1} w^{-j+1}},
 $$
 with integration over positively oriented  simple contours enclosing zero  and such that $|w|<|v|$.
\end{corollary}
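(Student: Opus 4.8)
The plan is simply to specialize Theorem \ref{theorem_corr_Schur} to $\rho_1 = \rho_2 = \rho_\theta$. By the Proposition above identifying $\P_{\rho_\theta,\rho_\theta}$ as the Poissonized Plancherel measure (which itself rests on Proposition \ref{Prop_plancherel}), the resulting Schur measure is exactly the distribution in question, so all that remains is to transform the kernel formula \eqref{eq_x4} into the asserted shape, and this is a one-line computation once we know $H(\rho_\theta;z)$ explicitly.

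First I would record that
$$
H(\rho_\theta;z) = \exp\left(\sum_{k=1}^\infty p_k(\rho_\theta)\frac{z^k}{k}\right) = e^{\theta z},
$$
since $p_1(\rho_\theta) = \theta$ and $p_k(\rho_\theta) = 0$ for $k \ge 2$ (equivalently, this is the $\gamma = \theta$ case of the generating-function description in Theorem \ref{Theorem_Thoma}). Substituting this into the prefactor of \eqref{eq_x4} with $\rho_1 = \rho_2 = \rho_\theta$ gives
$$
\frac{H(\rho_1;v)\,H(\rho_2;w^{-1})}{H(\rho_2;v^{-1})\,H(\rho_1;w)} = \frac{e^{\theta v}\,e^{\theta w^{-1}}}{e^{\theta v^{-1}}\,e^{\theta w}} = \exp\bigl(\theta(v - v^{-1} - w + w^{-1})\bigr),
$$
and plugging this back into \eqref{eq_x4} yields exactly the stated formula for $K_\theta(i,j)$.

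The only point requiring a remark --- and hence the main (if mild) obstacle --- is the choice of contours. In the general statement the circles $|w| = R_1 < |v| = R_2$ had to lie in an annulus on which $H(\rho_1;u)$ and $H(\rho_2;u^{-1})$ are holomorphic; but $H(\rho_\theta;u) = e^{\theta u}$ is entire and $H(\rho_\theta;u^{-1}) = e^{\theta/u}$ is holomorphic on $\mathbb C \setminus \{0\}$, and the convergence hypothesis $|p_k(\rho_\theta)| < C r^k$ holds for every $r \in (0,1)$. Hence the annulus may be taken arbitrarily wide, the only surviving constraint is $|w| < |v|$ (needed so that $\sum_{k = 1/2, 3/2, \dots}(w/v)^k = \sqrt{vw}/(v-w)$ converges), and by Cauchy's theorem the integral is independent of the precise positively oriented simple contours around the origin chosen subject to this. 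Otherwise the corollary is a direct substitution.
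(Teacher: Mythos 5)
Your proposal is correct and is exactly the paper's route: the corollary is obtained by specializing Theorem \ref{theorem_corr_Schur} to $\rho_1=\rho_2=\rho_\theta$, using $H(\rho_\theta;z)=e^{\theta z}$ (since $p_1(\rho_\theta)=\theta$, $p_k(\rho_\theta)=0$ for $k\ge 2$) to turn the prefactor in \eqref{eq_x4} into $\exp\bigl(\theta(v-v^{-1}-w+w^{-1})\bigr)$. Your remark on the contours is also accurate, since the relevant $H$--functions are entire away from the origin, so any positively oriented circles with $|w|<|v|$ will do.
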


Our next aim is to study the behavior of $K_\theta(i,j)$ as $\theta\to\infty$. The argument below
is due to Okounkov \cite{Ok-lectures}, but the results were obtained earlier in \cite{BOO},
\cite{J-Annals} by different tools. Let us start from the case $i=j$. Then $K(i,i)$ is the
\emph{density} of particles of our point process or, looking at Figure \ref{Fig_broken_IC}, the
average local slope of the (rotated) Young diagram. Intuitively, one expects to see some
non-trivial behavior when $i$ is of order $\theta$. To see that  set $i=u\theta$. Then $K_\theta$
transforms into
\begin{equation}
\label{eq_x5}
 K_\theta(u\theta,u\theta)=\frac{1}{(2\pi \i)^2} \oint\oint  \exp\bigg(\theta(S(v)-S(w))\bigg) \frac{\sqrt{vw}}{v-w} \frac{dv dw}{vw},
\end{equation}
with
$$
 S(z)=z-z^{-1}-u\ln z.
$$
Our next aim is to deform the contours of integration so that $\Re (S(v)-S(w))<0$ on them. (It is
ok if $\Re (S(v)-S(w))=0$ at finitely many points.)  If we manage to do that,
then  \eqref{eq_x5}
would decay as $\theta\to\infty$. Let us try to do this. First, compute the critical points of
$S(z)$, i.e.\ roots of its derivative
$$
 S'(z)=1+z^{-2}-u z^{-1}.
$$
When $|u|<2$ the equation $S'(z)=0$ has two complex conjugated roots of absolute value $1$ which
we denote $e^{\pm i\phi}$. Here $\phi$ satisfies $2\cos(\phi)=u$. Let us deform the contours so
that both of them pass through the critical points and look as shown at Figure
\ref{Figure_contours}.
\begin{figure}[h]
\begin{center}
{\scalebox{1.0}{\includegraphics{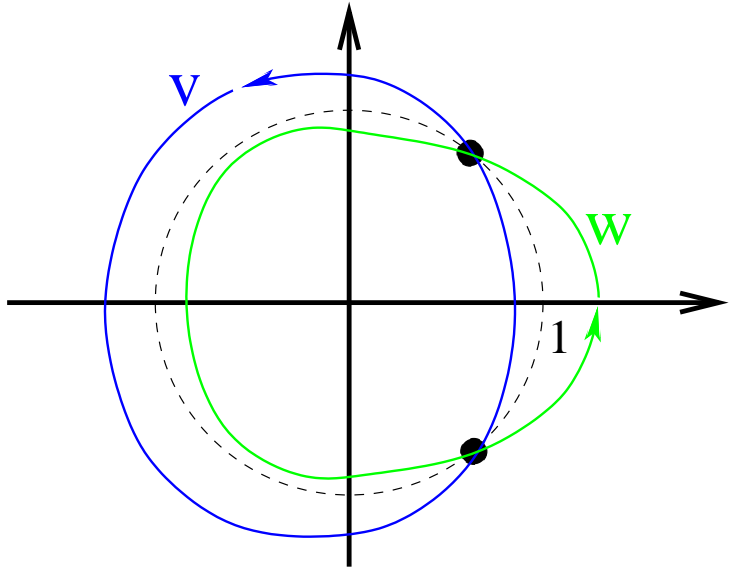}}} \caption{Deformed contours: $v$--contour in blue
and $w$--contour in green. The dashed contour is the unit circle and the black dots indicate the
critical points of $S(z)$.
 \label{Figure_contours}}
\end{center}
\end{figure}
We claim that now $\Re S(v)<0$ everywhere on its contour except at critical points $e^{\pm
i\phi}$, and $\Re S(w)>0$ everywhere on its contour except at critical points $e^{\pm i\phi}$
($\Re S(v)=\Re S(w)=0$ at $e^{\pm\phi}$.) To prove that observe that $\Re S(z)=0$ for $z$ on the
unit circle $|z|=1$ and compute the gradient of $\Re S(z)=\Re S(a+bi)$ on the unit circle (i.e.\
when $a^2+b^2=1$):
$$
 \Re S(a+bi)= a -\frac{a}{a^2+b^2} -\frac{u}{2}\ln(a^2+b^2),
$$
\begin{multline}
\label{eq_x7}
 \nabla \Re S(a+bi)=\left(1 - \frac{b^2-a^2}{(a^2+b^2)^2}-\frac{au}{a^2+b^2}\, ,\, \frac{2ab}{(a^2+b^2)^2}
 -\frac{bu}{a^2+b^2}\right)
 \\=\left(1 - b^2+a^2-au\, , 2ab -bu\right)=\left(2a^2-au\, , 2ab-bu\right)=(2a-u)(a,b).
\end{multline}
Identity \eqref{eq_x7} implies that the gradient vanishes at points $e^{\pm i\phi}$, points
outwards the unit circle on the right arc joining the critical points and points inwards on the
left arc. This implies our inequalities for $\Re S(z)$ on the contours. (We assume that the
contours are fairly close to the unit circle so that the gradient argument works.)

Now it follows that after the deformation of the contours the integral vanishes as
$\theta\to\infty$. Does this mean that the correlation functions also vanish? Actually, no. The
reason is that the integrand in \eqref{eq_x5} has a singularity at $v=w$. Therefore, when we
deform the contours from the contour configuration with $|w|<|v|$, as we had in Corollary
\ref{Cor_Planch_cor_function}, to the contours of Figure \ref{Figure_contours} we get a residue of
the integrand in \eqref{eq_x5} at $z=w$ along the arc of the unit circle joining $e^{\pm i\phi}$.
This residue is
$$
 \frac{1}{2\pi \i}\int_{e^{-i\phi}}^{e^{i\phi}} \frac{dz}{z} =\frac{\phi}{\pi}.
$$

We conclude that if $u=2\cos(\phi)$ with $0<\phi<\pi$, then
$$
 \lim_{\theta\to\infty} K_\theta(u\theta,u\theta)=\frac{\phi}{\pi}.
$$

Turning to the original picture we see that the asymptotic density of particles at point $i$
changes from $0$ when $i\approx 2\theta$ to $1$ when $i\approx -2\theta$. This means that after
rescaling by the factor $\theta^{-1}$ times the Plancherel--random Young diagram asymptotically
looks like in Figure \ref{Figure_limit_shape}. This is a manifestation of the
Vershik--Kerov--Logan--Shepp limit shape theorem, see \cite{VK}, \cite{LS}.

\begin{figure}[h]
\begin{center}
{\scalebox{0.8}{\includegraphics{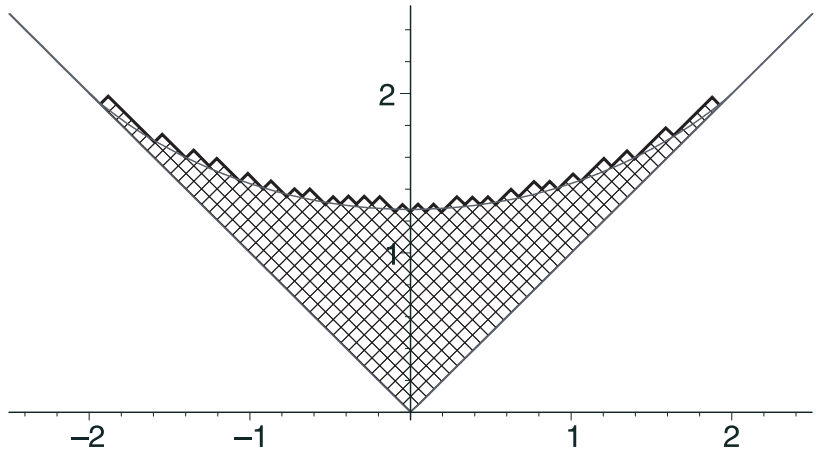}}} \caption{The celebrated
Vershik--Kerov--Logan-Shepp curve as a limit shape for the Plancherel random Young diagrams.
 \label{Figure_limit_shape}}
\end{center}
\end{figure}
\medskip

More generally, what happens with $K_\theta(i,j)$ when $i=u\theta+x$, $j=u\theta +y$ and
$\theta\to\infty$? In other words, we want to study how the point configuration (or the boundary
of the random Young diagram $\lambda$) behaves in the limit locally near a ``bulk point''. One
proves the following theorem.
\begin{theorem}[\cite{BOO}]
\label{theorem_bulk}
 For any $-2<u<2$ and any two integers $x$, $y$ we have
 \begin{equation}
 \label{eq_discrete_sine}
  \lim_{\theta\to\infty} K_\theta(\lfloor u\theta\rfloor+x, \lfloor u \theta\rfloor +y) =\begin{cases}
  \dfrac{\sin(\phi(x-y))}{\pi(x-y)},&\text{ if } x\ne y,\\ \dfrac{\phi}{\pi},&\text{ otherwise,}
  \end{cases}
 \end{equation}
 where $\phi=\arccos(u/2)$.
\end{theorem}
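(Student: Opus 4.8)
The plan is to perform a saddle-point (steepest descent) analysis of the double contour integral
$$
K_\theta(\lfloor u\theta\rfloor+x,\ \lfloor u\theta\rfloor+y)=\frac{1}{(2\pi\i)^2}\oint\oint \exp\bigl(\theta(S(v)-S(w))\bigr)\,\frac{\sqrt{vw}}{v-w}\,\frac{v^{x}}{w^{y}}\,\frac{dv\,dw}{vw},
$$
where $S(z)=z-z^{-1}-u\ln z$ and the extra factor $v^{x}w^{-y}$ comes from the difference between the exponent $i=\lfloor u\theta\rfloor+x$ and the ``bulk'' exponent $u\theta$; note $v^{x}w^{-y}$ is a bounded, $\theta$-independent factor on contours near the unit circle. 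As in the density computation already carried out in the excerpt for $x=y=0$, the first step is to deform the $v$- and $w$-contours from the configuration $|w|<|v|$ of Corollary \ref{Cor_Planch_cor_function} so that they both pass through the two conjugate critical points $e^{\pm i\phi}$ of $S$, where $2\cos\phi=u$, and so that $\Re S(v)<0$ on the $v$-contour and $\Re S(w)>0$ on the $w$-contour away from $e^{\pm i\phi}$. The gradient computation \eqref{eq_x7} already establishes exactly this sign structure for contours close to the unit circle, since $\nabla\Re S(a+bi)=(2a-u)(a,b)$ on $|z|=1$ vanishes precisely at $e^{\pm i\phi}$ and points outward (resp.\ inward) on the right (resp.\ left) arc between them.

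The second step is to account for the residue crossed during this deformation. Because the $v$-contour must be pulled from outside the $w$-contour to a position where the two coincide along the arc of the unit circle between $e^{-i\phi}$ and $e^{i\phi}$, we pick up the residue of the integrand at $v=w$ along that arc; the residue of $\dfrac{\sqrt{vw}}{v-w}\,\dfrac{v^x}{w^y}\,\dfrac{1}{vw}$ at $v=w$ is $\dfrac{w^{x-y}}{w}=w^{x-y-1}$ (the exponential factor $e^{\theta(S(v)-S(w))}$ equals $1$ there), so the residue contribution is
$$
\frac{1}{2\pi\i}\int_{e^{-i\phi}}^{e^{i\phi}} w^{x-y-1}\,dw=\frac{1}{2\pi}\int_{-\phi}^{\phi} e^{i(x-y)\theta'}\,d\theta'=\begin{cases}\dfrac{\sin(\phi(x-y))}{\pi(x-y)},& x\neq y,\\[1ex]\dfrac{\phi}{\pi},& x=y,\end{cases}
$$
which is precisely the claimed discrete sine kernel. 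The third step is to show that the remaining double integral over the deformed contours tends to $0$ as $\theta\to\infty$: on those contours $\Re(S(v)-S(w))\le 0$ with equality only at the finitely many points $v,w\in\{e^{\pm i\phi}\}$, and $S$ has nondegenerate quadratic behavior there (since $S''(e^{\pm i\phi})\neq 0$ for $-2<u<2$), so a standard Laplace/Watson estimate gives a bound of order $\theta^{-1}$ for the deformed integral, provided we check the integrand has no singularity trapped between the original and deformed contours other than the $v=w$ pole already extracted (it does not, as $H$ is entire-type here: the only poles are at $v=w$ and the origin, already handled).

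The main obstacle is the careful bookkeeping of contours near the critical points: the $v$- and $w$-contours must cross transversally at $e^{\pm i\phi}$ in the correct angular sectors (the descent direction for $v$ and the ascent direction for $w$) so that the sign conditions $\Re S(v)<0$, $\Re S(w)>0$ hold globally along them and simultaneously the arc between $e^{\pm i\phi}$ along which the residue is collected is well-defined and traversed exactly once. One must also verify uniformity in $x,y$ fixed (trivial, since they only contribute a bounded factor) and handle the mild technical point that $\lfloor u\theta\rfloor$ differs from $u\theta$ by a bounded amount, which again only perturbs the bounded factor $v^{\{u\theta\}}w^{-\{u\theta\}}$ and does not affect the saddle-point estimate. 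Everything else — the sign analysis via \eqref{eq_x7}, the residue evaluation, and the Laplace estimate — is routine once the contours are correctly drawn as in Figure \ref{Figure_contours}.
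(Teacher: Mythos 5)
Your proposal is correct and follows essentially the same route as the paper: the paper likewise reduces Theorem \ref{theorem_bulk} to the steepest-descent argument already done for $x=y=0$ and only recomputes the residue along the arc, namely $\frac{1}{2\pi \i}\int_{e^{-i\phi}}^{e^{i\phi}} z^{-(x-y)-1}\,dz=\frac{\sin(\phi(x-y))}{\pi(x-y)}$. The only slip is in transcribing the kernel: the extra factor is $w^{y}/v^{x}$ rather than $v^{x}/w^{y}$, so the residue is $w^{y-x-1}$ instead of your $w^{x-y-1}$ --- harmless here, since the resulting discrete sine kernel is even in $x-y$.
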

\noindent{\bf Remark.} The right--hand side of \eqref{eq_discrete_sine} is known as the
\emph{discrete sine kernel} and it is similar to the \emph{continuous sine kernel} which arises as
a \emph{universal} local limit of correlation functions for eigenvalues of random Hermitian
(Wigner) matrices, see e.g.\ \cite{EY}, \cite{TV} and references therein.

\begin{proof}[Proof of Theorem \ref{theorem_bulk}]
 The whole argument remains the same as in the case $x=y=0$, except for the computation of the residue which is now
\begin{equation}
\label{eq_x6}
 \frac{1}{2\pi \i}\int_{e^{-i\phi}}^{e^{i\phi}} \frac{dz}{z^{x-y+1}} =\frac{\sin(\phi(x-y))}{\pi(x-y)}.
\end{equation} We conclude that if $u=2\cos(\phi)$ with $0<\phi<\pi$, then
$$
 \lim_{\theta\to\infty} K_\theta(\lfloor u\theta\rfloor+x,\lfloor u\theta\rfloor+y)=\frac{\sin(\phi(x-y))}{\pi(x-y)}.
$$
\end{proof}

So far we got some understanding on what's happening in the \emph{bulk}, while we started with the
Last Passage Percolation which is related to the so-called \emph{edge} asymptotic behavior, i.e.\
limit fluctuations of $\lambda_1$. This corresponds to having $u=2$, at which point the above
arguments no longer work. With some additional efforts one can prove the following theorem:

\begin{theorem}[\cite{BOO},\cite{J-Annals}]
For any two reals $x$, $y$ we have \label{Theorem_Edge}
\begin{equation}
\label{eq_Airy_limit}
\lim_{\theta\to\infty} \theta^{1/3} K_\theta(2\theta+x\theta^{1/3},2\theta+y\theta^{1/3})=K_{Airy}(x,y)
\end{equation}
where
\begin{equation}
\label{eq_Airy_def} K_{Airy}(x,y)=\frac{1}{(2\pi \i)^2} \int \int e^{\widetilde v^3/3-\widetilde
w^3/3 + \widetilde v x -\widetilde w y} \frac{d\widetilde v d\widetilde w}{\tilde v-\tilde w},
\end{equation}
 with contours shown at the right panel of Figure \ref{Figure_contours2}.
\end{theorem}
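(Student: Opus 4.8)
The plan is to run a steepest‑descent (saddle‑point) analysis of the double contour integral for $K_\theta$ furnished by Corollary~\ref{Cor_Planch_cor_function}, the same strategy as in the proof of Theorem~\ref{theorem_bulk} but now at the \emph{degenerate} critical point $z=1$ responsible for the Airy kernel. Setting $i=2\theta+x\theta^{1/3}$, $j=2\theta+y\theta^{1/3}$ (understood, if one insists on lattice arguments, as $\lfloor\cdot\rfloor$; an $O(1)$ shift in $i,j$ is an $O(\theta^{-1/3})$ shift in $x,y$ and is harmless in the limit), one collects terms and rewrites
$$ K_\theta(i,j)=\frac{1}{(2\pi\i)^2}\oint\oint e^{\theta(S(v)-S(w))}\,e^{-x\theta^{1/3}\ln v+y\theta^{1/3}\ln w}\,\frac{v^{-1/2}w^{-1/2}}{v-w}\,dv\,dw,\qquad S(z)=z-z^{-1}-2\ln z. $$
The crucial structural fact is that $S'(z)=(1-z^{-1})^2$, so $z=1$ is a critical point of order three: $S(1)=S'(1)=S''(1)=0$ while $S'''(1)=2$, whence $S(1+\zeta)=\zeta^3/3+O(\zeta^4)$. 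This cubic vanishing is exactly what dictates the $\theta^{1/3}$ scaling and the cubic exponent of the Airy kernel.

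\textbf{Contour deformation.} I would first deform the contours of Corollary~\ref{Cor_Planch_cor_function} (nested circles about $0$, $|w|<|v|$) so that both pass through $z=1$ along the steepest directions. On $|z|=1$ one has $\Re S\equiv 0$ and, by the $u=2$ specialization of \eqref{eq_x7}, $\nabla\Re S=2(a-1)(a,b)$, which points radially inward everywhere except at $z=1$; hence $\Re S$ strictly decreases as one moves outward from the unit circle and strictly increases inward. Therefore the $v$‑contour should be pushed slightly \emph{outside} the unit circle, touching it at $z=1$, where locally it opens to the right in the directions $e^{\pm i\pi/3}$ (the sectors with $\Re(\zeta^3)<0$); the $w$‑contour should be pushed slightly \emph{inside}, touching $z=1$, opening to the left in the directions $e^{\pm 2i\pi/3}$ (where $\Re(\zeta^3)>0$). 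Keeping the $w$‑contour strictly inside the $v$‑contour (shifts through $1-$ resp.\ $1+$ infinitesimal amounts) means the pole at $v=w$ is never crossed — in contrast with the bulk analysis, the arc of $|z|=1$ joining the two (now coincident) critical points has degenerated to the single point $z=1$, so no separate residue term appears; the singularity $1/(v-w)$ is instead absorbed into the local integral. Using the above monotonicity of $\Re S$ near $|z|=1$ together with the behaviour of $S$ for $|z|\to 0,\infty$, one can complete these to global contours on which $\Re S(v)\le-\delta$ and $\Re S(w)\ge\delta$ outside a fixed neighbourhood $|z-1|<\epsilon$, so those portions contribute $O(e^{-\delta\theta})$.

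\textbf{Local rescaling.} On the remaining local pieces substitute $v=1+\theta^{-1/3}\tilde v$, $w=1+\theta^{-1/3}\tilde w$. Then $dv\,dw=\theta^{-2/3}d\tilde v\,d\tilde w$, $1/(v-w)=\theta^{1/3}/(\tilde v-\tilde w)$, $v^{-1/2}w^{-1/2}\to1$, $\theta S(v)\to\tilde v^3/3$, $-\theta S(w)\to-\tilde w^3/3$, while $-\theta^{1/3}\ln v$ and $\theta^{1/3}\ln w$ converge to the linear terms in $\tilde v,\tilde w$. The powers of $\theta$ balance as $\theta^{1/3}\cdot\theta^{-2/3}\cdot\theta^{1/3}=1$, which is precisely why multiplying by $\theta^{1/3}$ produces a finite limit. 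Thus the rescaled integrand converges pointwise to the integrand $e^{\tilde v^3/3-\tilde w^3/3+\cdots}/(\tilde v-\tilde w)$ of \eqref{eq_Airy_def} on the right‑/left‑opening Airy contours of Figure~\ref{Figure_contours2} (the exact signs of the linear terms and orientation of the contours being those fixed in \eqref{eq_Airy_def}, reached after at most a reflection $\tilde v,\tilde w\mapsto-\tilde v,-\tilde w$).

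\textbf{Main obstacle.} The only place real work is needed is upgrading pointwise convergence of the rescaled integrand to convergence of the integrals. One must choose the local contour so that $\Re(\theta S(v))\le-c|\tilde v|^3$ and $\Re(\theta S(w))\ge c|\tilde w|^3$ \emph{uniformly} for $|\tilde v|,|\tilde w|\le\epsilon\theta^{1/3}$ — i.e.\ so that the quartic and higher remainders in $S(1+\zeta)=\zeta^3/3+O(\zeta^4)$ never overwhelm the cubic term along the contour — and exhibit an integrable dominating function, so that dominated convergence applies; the factor $1/(\tilde v-\tilde w)$ is harmless because the two contours stay a fixed positive distance apart. Combining the exponentially negligible global contribution with the convergent local one yields \eqref{eq_Airy_limit}.
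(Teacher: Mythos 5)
Your proposal is correct and follows essentially the same route as the paper's own argument: a steepest-descent analysis of the double contour integral from Corollary~\ref{Cor_Planch_cor_function} at the degenerate (double) critical point $z=1$, where $S(1+\zeta)=\zeta^3/3+O(\zeta^4)$ forces the $\theta^{1/3}$ scaling, the substitution $v=1+\theta^{-1/3}\tilde v$, $w=1+\theta^{-1/3}\tilde w$ producing the Airy integrand on the contours of Figure~\ref{Figure_contours2}. In fact you supply more detail than the paper's sketch does, explicitly addressing the absence of a residue term at $u=2$, the uniform cubic decay along the contours, and the dominated-convergence step that the paper leaves implicit behind its ``$\approx$''.
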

\noindent {\bf Remark 1.} Theorem \ref{Theorem_Edge} means that the random point process
$X(\lambda)$ ``at the edge'', after shifting by $2\theta$ and rescaling by $\theta^{1/3}$,
converges to a certain non-degenerate determinantal random process with state space $\mathbb R$
and correlation kernel $K_{Airy}$.

\noindent {\bf Remark 2.} As we will see, Theorem \ref{Theorem_Edge} implies the following limit
theorem for the Last Passage Percolation Time $\lambda_1$: For any $s\in\mathbb R$
$$
 \lim_{\theta\to\infty}\P(\lambda_1\le 2\theta + s \theta^{1/3})=
 \det({\mathbf 1} -K_{Airy}(x,y))_{L_2(s,+\infty)}.
$$
One shows that the above \emph{Fredholm determinant} is the Tracy--Widom distribution $F_2(s)$
from Section \ref{Section_Intro}, see \cite{TW-U}.

\begin{proof}[Proof of Theorem \ref{Theorem_Edge}]
We start as in the proof of Theorem \ref{theorem_bulk}. When $u=2$ the two critical points of
$S(z)$ merge, so that the contours now look as in Figure \ref{Figure_contours2} (left panel) and
the integral in \eqref{eq_x6} vanishes. Therefore, the correlation functions near the edge tend to
$0$. This is caused by the fact that points of our process near the edge rarify, distances between
them become large, and the probability of finding a point in any given location tends to $0$.

\begin{figure}[h]
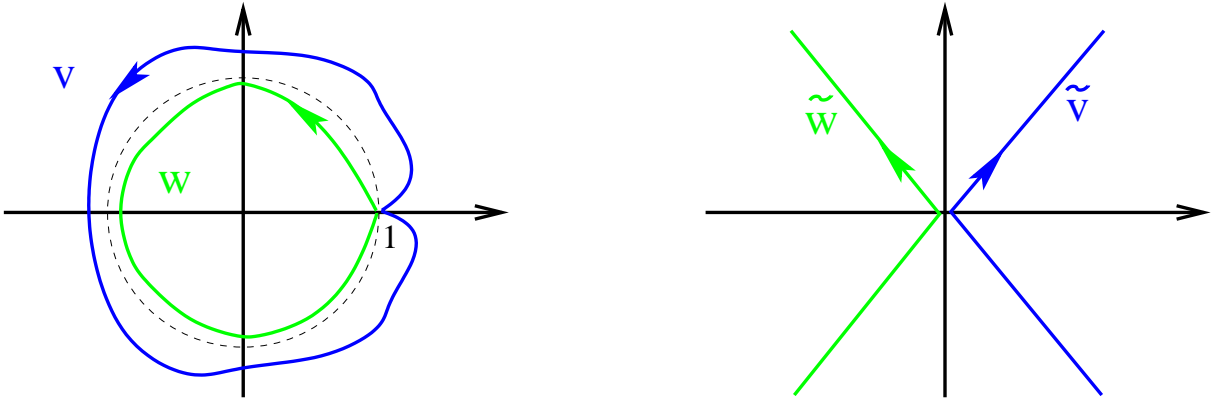

\begin{center}
{\scalebox{0.7}{\includegraphics{contours2.pdf}}} \hfill
{\scalebox{0.7}{\includegraphics{contours3.pdf}}} \caption{Contours for the edge--scaling limit
($u=2$). Left panel: $v$--contour in blue and $w$--contour in green. The dashed contour is the
unit circle. Right panel: limiting contours.
 \label{Figure_contours2}}
\end{center}
\end{figure}

In order to see some nontrivial behavior we need \emph{rescaling}. Set
$$v=1+\theta^{-1/3}  \widetilde v,\quad w =1 +\theta^{-1/3} \widetilde w$$
in the contour integral. Note that $z=1$ is a double critical point of $S(z)=z-z^{-1}-2\ln(z)$, so
that in the neighborhood of $1$ we have
$$
 S(z)=\frac{1}{3} (z-1)^3+O((z-1)^4)
$$
Now as $\theta\to\infty$ we have
\begin{multline*}
\exp\bigg(\theta(S(v)-S(w))\bigg)=\exp\left(\theta\left(\frac{1}{3}(\theta^{-1/3}\widetilde v)^3-
\frac{1}{3}(\theta^{-1/3}\tilde w)^3\right) +o(1)\right)\\=\exp\left(\frac{1}{3}\widetilde
v^3-\frac{1}{3}\tilde w^3\right).
\end{multline*}
We conclude that as $\theta\to\infty$
\begin{equation}
\label{eq_Airy_limit} K_\theta(2\theta+x\theta^{1/3},2\theta+y\theta^{1/3})\approx
\frac{\theta^{-1/3}}{(2\pi \i)^2}\int \int e^{\widetilde v^3/3-\widetilde w^3/3 + \widetilde v x
-\widetilde w y} \frac{d\widetilde v d\widetilde w}{\tilde v-\tilde w},
\end{equation}
and the contours here are contours of Figure \ref{Figure_contours2} (left panel) in the
neighborhood of 1; they are shown at the right panel of Figure \ref{Figure_contours2}.
\end{proof}
Using Theorem \ref{Theorem_Edge} we can now compute the asymptotic behavior of the Last Passage
Percolation time, i.e.\ of $\lambda_1$. Using the inclusion--exclusion principle, for any
$A\in\mathbb R$ we have
\begin{equation}
\label{eq_inc_exc}
 \P(\lambda_1\le A)=1-\sum_{x>A} \rho_1(x)
 +\frac{1}{2!}\sum_{x,y>A} \rho_2(x,y) - \frac{1}{3!} \sum_{x,y,z>A} \rho_3(x,y,z)+\dots
\end{equation}
Recall that correlation functions $\rho_k$ are $k\times k$ determinants involving kernel
$K_\theta$, substitute $A=2\theta + s \theta^{1/3}$ and send $\theta\to\infty$. The sums in
\eqref{eq_inc_exc} turn into the integrals and we get (ignoring convergence issues, which can,
however, be handled)
\begin{multline*}
 \lim_{\theta\to\infty}\P(\lambda_1\le 2\theta + s \theta^{1/3})=1-\int_{x>s}
 K_{Airy}(x,x)dx\\+\frac{1}{2!} \int_{x,y>s} \det\begin{bmatrix}
K_{Airy}(x,x)&K_{Airy}(x,y)\\ K_{Airy}(y,x) &K_{Airy}(y,y)\end{bmatrix}dxdy
-\dots\,.
\end{multline*} In the last expression one recognizes the \emph{Fredholm
determinant} expansion (see e.g.\ \cite{Lax} or \cite{Simon})
for
$$
 \det({\mathbf 1} -K_{Airy}(x,y))_{L_2(s,+\infty)}. \qedhere
$$

\bigskip

The conceptual conclusion from all the above is that as soon as we have an integral representation
for the correlation kernel of a point process, many limiting questions can be answered by
analyzing these integrals. The method for the analysis that we presented is, actually, quite
standard and is well-known (at least since the $XIX$ century) under the \emph{steepest descent
method} name. In the context of determinantal point processes and Plancherel measures it was
pioneered by Okounkov and we recommend \cite{Ok-lectures} for additional details.

\section{The Schur processes and Markov chains}
\label{Section_Schur_process} While in the previous sections we gave a few of tools for solving
the problems of probabilistic origin, in this section we present a general framework, which
produces ``analyzable'' models.

\subsection{The Schur process}

The following definition is due to \cite{OR-S}.

\begin{definition}
\label{Def_Schur_proc} The Schur process (of rank $N$) is a probability measure on sequences of
Young diagrams $\lambda^{(1)}, \mu^{(1)}, \lambda^{(2)}, \mu^{(2)},\dots,\mu^{(N-1)},
\lambda^{(N)}$, parameterized by $2N$ Schur--positive specializations
$\rho_0^+,\dots,\rho^+_{N-1}$, $\rho_1^-,\dots,\rho_N^-$ and given by
\begin{multline}
\label{eq_Schur_proc}
 \P\bigg(\lambda^{(1)},\mu^{(1)}, \lambda^{(2)}, \mu^{(2)},\dots,\mu^{(N-1)},
\lambda^{(N)}\bigg)
\\=\frac{1}{Z} s_{\lambda^{(1)}}(\rho^+_0)
 s_{\lambda^{(1)}/\mu^{(1)}}(\rho^-_1) s_{\lambda^{(2)}/\mu^{(1)}}(\rho^+_1)
 \cdots s_{\lambda^{(N)}/\mu^{(N-1)}}(\rho^+_{N-1})
 s_{\lambda^{(N)}}(\rho^-_N),
\end{multline}
where $Z$ is a normalization constant.
\end{definition}
Proposition \ref{Prop_J-T_skew} implies that in Definition \ref{Def_Schur_proc} almost surely
$$
\lambda^{(1)}\supset\mu^{(1)}\subset \lambda^{(2)}\supset\dots\supset\mu^{(N-1)}\subset
\lambda^{(N)}.
$$
It is convenient to use the graphical illustration for the  Schur process as shown in Figure \ref{Figure_Schur_saw}.

\begin{figure}[ht]
\begin{center}
\begin{picture} (330,60)
 \put(10,5){$\varnothing$}
 \put(20,10){\color{blue}\vector(1,1){30}}
 \put(25,30){\color{blue}$\rho^+_0$}
 \put(52,38){$\lambda^{(1)}$}
 \put(70,40){\color{blue}\vector(1,-1){30}}
 \put(85,30){\color{blue}$\rho^-_1$}
 \put(100,5){$\mu^{(1)}$}
 \put(120,10){\color{blue}\vector(1,1){30}}
 \put(125,30){\color{blue}$\rho^+_1$}
 \put(152,38){$\lambda^{(2)}$}
 \put(170,40){\color{blue}\vector(1,-1){30}}
 \put(185,30){\color{blue}$\rho^-_2$}
 \put(200,5){$\mu^{(2)}$}
 \put(220,10){\color{blue}\vector(1,1){30}}
 \put(225,30){\color{blue}$\rho^+_2$}
 \put(252,38){$\lambda^{(3)}$}
 \put(270,40){\color{blue}\vector(1,-1){30}}
 \put(285,30){\color{blue}$\rho^-_3$}
 \put(300,5){$\varnothing$}
\end{picture}
\end{center}
\caption{Graphical illustration for the Schur process with $N=3$. \label{Figure_Schur_saw}}
\end{figure}
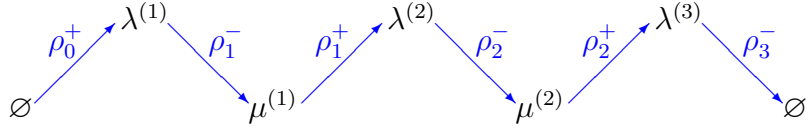

Note that if we set $N=1$ in the definition of the Schur process then we get back the Schur
measure of Definition \ref{Def_Schur_meas}.

Let us introduce some further notations. For two Schur--positive specializations $\rho$, $\rho'$
we set
$$
 H(\rho;\rho')=\exp\left(\sum_{k=1}^{\infty} \frac{p_k(\rho) p_k(\rho')}{k}\right).
$$
Given two specializations $\rho_1,\rho_2$, their \emph{union} $(\rho_1,\rho_2)$ is defined through
its values on power sums $p_k$
$$
 p_k(\rho_1,\rho_2)=p_k(\rho_1)+p_k(\rho_2).
$$
Theorem \ref{Theorem_Thoma} implies that if $\rho_1$ is a Schur--positive specialization with
parameters $(\alpha^{(1)},\beta^{(1)},\gamma^{(1)})$, and $\rho_2$ is a Schur--positive
specialization with parameters $(\alpha^{(2)},\beta^{(2)},\gamma^{(2)})$, then $(\rho_1,\rho_2)$
is a Schur--positive specialization with parameters
$(\alpha^{(1)}\bigcup\alpha^{(2)},\beta^{(1)}\bigcup\beta^{(2)},\gamma^{(1)}+\gamma^{(2)})$, where
$\alpha^{(1)}\bigcup\alpha^{(2)}$ stands for the sequence obtained by rearranging the union of
sequences $\alpha^{(1)}$ and $\alpha^{(2)}$ in decreasing order (and similarly for $\beta$). In
particular, if $\rho_1$ and $\rho_2$ specialize symmetric functions by substituting sets of
variables, say $(x_1,x_2,\dots)$ and $(y_1,y_2,\dots)$ (which corresponds to zero $\beta_i$ and
$\gamma$), then $(\rho_1,\rho_2)$ substitutes all the variables $(x_1,x_2,\dots,y_1,y_2,\dots)$.

The definition implies that for specializations $\rho_1,\dots,\rho_k$, $\rho'_1,\dots,\rho'_m$ we
have
$$
 H(\rho_1,\dots,\rho_k;\rho'_1,\dots,\rho'_m)=\prod_{i=1}^k \prod_{j=1}^m H(\rho_i;\rho_j).
$$

\begin{proposition} \label{Prop_Schur_proc_normalizations} Suppose that for every $i<j$ we have $H(\rho_i^+;\rho_j^-)<\infty$.
Then the Schur process is well--defined and the normalization constant $Z$ in its definition is
$$
 Z=\prod_{i<j} H(\rho_i^+;\rho_j^-).
$$
\end{proposition}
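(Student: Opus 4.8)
The plan is to read the right--hand side of \eqref{eq_Schur_proc} as a matrix element of a product of ``transfer operators'' and to invoke the skew Cauchy identity (Proposition \ref{proposition_skew_Cauchy}) as a commutation relation between them. On the vector space spanned by the symbols $|\lambda\rangle$, $\lambda\in\Y$ (and a suitable completion thereof), introduce for each Schur--positive specialization $\rho$ the operators
$$
 \Gamma^-(\rho)\,|\mu\rangle=\sum_{\lambda\supset\mu} s_{\lambda/\mu}(\rho)\,|\lambda\rangle,
 \qquad
 \Gamma^+(\rho)\,|\lambda\rangle=\sum_{\mu\subset\lambda} s_{\lambda/\mu}(\rho)\,|\mu\rangle,
$$
so that $\langle\lambda|\Gamma^-(\rho)|\mu\rangle=\langle\mu|\Gamma^+(\rho)|\lambda\rangle=s_{\lambda/\mu}(\rho)$. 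Since $s_{\lambda^{(1)}}(\rho_0^+)=s_{\lambda^{(1)}/\varnothing}(\rho_0^+)$ and $s_{\lambda^{(N)}}(\rho_N^-)=s_{\lambda^{(N)}/\varnothing}(\rho_N^-)$, summing \eqref{eq_Schur_proc} over all the diagrams $\lambda^{(k)},\mu^{(k)}$ --- which are exactly the intermediate states in a chain of matrix elements --- gives
$$
 Z=\langle\varnothing|\,\Gamma^+(\rho_N^-)\,\Gamma^-(\rho_{N-1}^+)\,\Gamma^+(\rho_{N-1}^-)\cdots\Gamma^-(\rho_1^+)\,\Gamma^+(\rho_1^-)\,\Gamma^-(\rho_0^+)\,|\varnothing\rangle .
$$

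Two ingredients feed into the evaluation. First, Proposition \ref{proposition_skew_Cauchy}, read off on matrix elements $\langle\lambda|\,\cdot\,|\nu\rangle$ --- the prefactor $\prod_{i,j}(1-x_iy_j)^{-1}$ in \eqref{eq_skew_Cauchy} being precisely $H$ of the two specializations --- is equivalent to the operator relation
$$
 \Gamma^+(\rho')\,\Gamma^-(\rho)=H(\rho;\rho')\,\Gamma^-(\rho)\,\Gamma^+(\rho'),
$$
valid whenever $H(\rho;\rho')<\infty$. Second, $\Gamma^+(\rho)|\varnothing\rangle=|\varnothing\rangle$ and $\langle\varnothing|\Gamma^-(\rho)=\langle\varnothing|$, because $s_{\varnothing/\varnothing}=1$ while $s_{\varnothing/\kappa}=0$ for $\kappa\neq\varnothing$. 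Now one moves every $\Gamma^+$ to the right of every $\Gamma^-$: process $\Gamma^+(\rho_j^-)$ for $j=1,2,\dots,N$ in turn, each time sliding it rightward past the operators $\Gamma^-(\rho_{j-1}^+),\Gamma^-(\rho_{j-2}^+),\dots,\Gamma^-(\rho_0^+)$ that by then lie immediately to its right (they are adjacent from the start, and the earlier steps only move operators further to the right); this costs a scalar $\prod_{0\le i<j}H(\rho_i^+;\rho_j^-)$ and leaves $\Gamma^+(\rho_j^-)$ parked next to the previously processed $\Gamma^+(\rho_{j-1}^-),\dots,\Gamma^+(\rho_1^-)$. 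In particular no two operators of the same type are ever interchanged, and only the factors $H(\rho_i^+;\rho_j^-)$ with $i<j$ are used. After all $N$ steps,
$$
 Z=\Bigl(\prod_{j=1}^{N}\ \prod_{0\le i<j}H(\rho_i^+;\rho_j^-)\Bigr)\,\langle\varnothing|\,\Gamma^-(\rho_{N-1}^+)\cdots\Gamma^-(\rho_0^+)\,\Gamma^+(\rho_N^-)\cdots\Gamma^+(\rho_1^-)\,|\varnothing\rangle=\prod_{i<j}H(\rho_i^+;\rho_j^-),
$$
since the surviving matrix element collapses to $\langle\varnothing|\varnothing\rangle=1$ by the two boundary facts.

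The only real subtlety --- and the main (mild) obstacle --- is that the $\Gamma$'s involve infinite sums, so one must justify both the rearrangements above and the very meaning of ``well--defined''. Here Schur--positivity rescues everything: by the Remark after Theorem \ref{Theorem_Thoma} every $s_{\lambda/\mu}(\rho)$ is nonnegative, so all the sums in play (and the iterated applications of Proposition \ref{proposition_skew_Cauchy}) are sums of nonnegative terms and every reordering is legitimate by Tonelli's theorem; and the final answer $\prod_{i<j}H(\rho_i^+;\rho_j^-)$ is a \emph{finite} product of quantities assumed finite, hence finite, so $Z<\infty$ and \eqref{eq_Schur_proc} genuinely defines a probability measure. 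A reader wishing to bypass the operator language can run the same argument as an induction on $N$: the base case $N=1$ is the Cauchy identity (Theorem \ref{theorem_Cauchy}), and the inductive step consists of summing out $\lambda^{(1)}$ from $\sum_{\lambda^{(1)}}s_{\lambda^{(1)}}(\rho_0^+)\,s_{\lambda^{(1)}/\mu^{(1)}}(\rho_1^-)$ by Proposition \ref{proposition_skew_Cauchy} --- which is exactly one commutation move.
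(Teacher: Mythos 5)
Your proof is correct, and it packages the computation differently from the paper. The paper's argument sums out the diagrams from the left: it applies the specialized skew Cauchy identity (Proposition \ref{proposition_skew_Cauchy}) to eliminate $\lambda^{(1)}$, picking up $H(\rho_0^+;\rho_1^-)$, and then crucially invokes Proposition \ref{proposition_fundamental_skew_Schur} to merge $\rho_0^+$ with $\rho_1^+$ into the union specialization $(\rho_0^+,\rho_1^+)$, so that the remaining sum is again of the same shape and the procedure can be iterated, each step producing a block of $H$--factors via $H(\rho_0^+,\dots,\rho_{k-1}^+;\rho_k^-)=\prod_{i<k}H(\rho_i^+;\rho_k^-)$. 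You instead rewrite $Z$ as the vacuum expectation $\langle\varnothing|\Gamma^+(\rho_N^-)\Gamma^-(\rho_{N-1}^+)\cdots\Gamma^+(\rho_1^-)\Gamma^-(\rho_0^+)|\varnothing\rangle$ and use the skew Cauchy identity only, in the form of the commutation relation $\Gamma^+(\rho')\Gamma^-(\rho)=H(\rho;\rho')\,\Gamma^-(\rho)\Gamma^+(\rho')$, together with the vacuum identities $\Gamma^+(\rho)|\varnothing\rangle=|\varnothing\rangle$, $\langle\varnothing|\Gamma^-(\rho)=\langle\varnothing|$; one $H(\rho_i^+;\rho_j^-)$ is extracted per swap and unions of specializations never appear, so Proposition \ref{proposition_fundamental_skew_Schur} is not needed. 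This is the standard transfer-operator (vertex operator) formalism of Okounkov--Reshetikhin, and it has two advantages: the bookkeeping of which $H$--factors arise is completely transparent, and your explicit appeal to Schur--positivity plus Tonelli justifies all rearrangements and the well-definedness claim, a point the paper leaves implicit. One small caveat: your closing remark about running the argument as an induction on $N$ is looser than the rest --- after summing out $\lambda^{(1)}$ the remaining sum has two consecutive ``up'' specializations, so to recognize it literally as a rank-$(N-1)$ process in the sense of Definition \ref{Def_Schur_proc} you would either need the merging step (i.e.\ the paper's use of Proposition \ref{proposition_fundamental_skew_Schur}) or the staircase generalization mentioned after Proposition \ref{Prop_projection_of_process_is_measure}; but since that remark is an optional aside, it does not affect the validity of your main argument.
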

\begin{proof}
The proof is based on the iterated applications of identities
$$
 \sum_{\mu\in\Y} s_{\mu/\lambda}(\rho) s_{\mu/\nu}(\rho') \\= H(\rho;\rho')
  \sum_{\kappa\in\Y} s_{\lambda/\kappa}(\rho') s_{\nu/\kappa}(\rho)
$$
and
$$
 s_{\lambda/\mu}(\rho,\rho')=\sum_{\nu\in\Y} s_{\lambda/\nu}(\rho) s_{\nu/\mu}(\rho').
$$
The above identities are valid for any specializations $\rho$, $\rho'$ such that all the sums are
convergent and are just the results of the application of these specializations to the statements
of Propositions \ref{proposition_skew_Cauchy} and \ref{proposition_fundamental_skew_Schur}.

We have:
\begin{multline}
\label{eq_x8} \sum s_{\lambda^{(1)}}(\rho^+_0)
 s_{\lambda^{(1)}/\mu^{(1)}}(\rho^-_1) s_{\lambda^{(2)}/\mu^{(1)}}(\rho^+_1)
 \cdots s_{\lambda^{(N)}/\mu^{(N-1)}}(\rho^+_N)
 s_{\lambda^{(N)}}(\rho^-_N)
\\= H(\rho^+_0;\rho^-_1) \sum s_{\varnothing/\nu}(\rho^-_1)
 s_{\mu^{(1)}/\nu}(\rho^+_0) s_{\lambda^{(2)}/\mu^{(1)}}(\rho^+_1)
 \cdots s_{\lambda^{(N)}/\mu^{(N-1)}}(\rho^+_N)
 s_{\lambda^{(N)}}(\rho^-_N)
 \\= H(\rho^+_0;\rho^-_1) \sum s_{\varnothing/\nu}(\rho^-_1)
 s_{\lambda^{(2)}/\nu}(\rho^+_0,\rho^+_1)
 \cdots s_{\lambda^{(N)}/\mu^{(N-1)}}(\rho^+_N)
 s_{\lambda^{(N)}}(\rho^-_N)
\\= H(\rho^+_0;\rho^-_1) \sum s_{\lambda^{(2)}}(\rho^+_0,\rho^+_1)
 \cdots s_{\lambda^{(N)}/\mu^{(N-1)}}(\rho^+_N)
 s_{\lambda^{(N)}}(\rho^-_N),
\end{multline}
where we used the fact that $s_{\varnothing/\nu}=0$ unless $\lambda^{(1)}=\varnothing$, and
$s_{\varnothing/\varnothing}=1$. Note, that the summation in the last line of \eqref{eq_x8} runs
over $\lambda^{(2)}, \mu^{(2)},\dots,\mu^{(N-1)}, \lambda^{(N)}$, i.e.\ there is no summation over
$\lambda^{(1)}, \mu^{(1)}$ anymore. Iterating this procedure, we get the value of the
normalization constant $Z$.
\end{proof}

It turns out that ``one--dimensional'' marginals of the Schur processes are the Schur measures:

\begin{proposition}
\label{Prop_projection_of_process_is_measure} The projection of the Schur process to the Young
diagram $\lambda^{(k)}$ is the Schur measure $\mathbb S_{\rho_1;\rho_2}$ with specializations
$$
 \rho_1=(\rho^+_0,\rho^+_1,\dots,\rho^+_{k-1}),\quad
 \rho_2=(\rho^-_k,\rho^-_{k+1},\dots,\rho^-_N).
$$
\end{proposition}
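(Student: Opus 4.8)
The plan is to recover the law of $\lambda^{(k)}$ by summing the joint probability \eqref{eq_Schur_proc} over every other diagram in the sequence, organizing the sum by cutting the chain at $\lambda^{(k)}$ into its ``past'' $\lambda^{(1)},\mu^{(1)},\dots,\mu^{(k-1)}$ and its ``future'' $\mu^{(k)},\lambda^{(k+1)},\dots,\lambda^{(N)}$. The sum over the past is handled by the very same telescoping computation used in the proof of Proposition \ref{Prop_Schur_proc_normalizations}, except that one stops one diagram early. Writing $s_{\lambda^{(1)}}(\rho^+_0)=s_{\lambda^{(1)}/\varnothing}(\rho^+_0)$, one alternates between (i) summing over a $\lambda^{(i)}$ via the specialized skew Cauchy identity of Proposition \ref{proposition_skew_Cauchy}, which --- because $s_{\varnothing/\kappa}=0$ unless $\kappa=\varnothing$ --- collapses to $H(\rho^+_0,\dots,\rho^+_{i-1};\rho^-_i)\,s_{\mu^{(i)}}(\rho^+_0,\dots,\rho^+_{i-1})$, and (ii) summing over a $\mu^{(i)}$ via the branching identity $s_{\lambda/\mu}(\rho,\rho')=\sum_{\nu}s_{\lambda/\nu}(\rho)s_{\nu/\mu}(\rho')$ of Proposition \ref{proposition_fundamental_skew_Schur}, which absorbs the accumulated ``$+$'' specialization together with the next one $\rho^+_i$. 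Carrying this out over $\lambda^{(1)},\mu^{(1)},\dots,\lambda^{(k-1)},\mu^{(k-1)}$ will leave a constant $C_{\mathrm{left}}=\prod_{0\le i<j\le k-1}H(\rho^+_i;\rho^-_j)$ multiplied by $s_{\lambda^{(k)}}(\rho^+_0,\dots,\rho^+_{k-1})$ and by the still-untouched edges $s_{\lambda^{(k)}/\mu^{(k)}}(\rho^-_k)\cdots s_{\lambda^{(N)}}(\rho^-_N)$ of the future.

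Next I would dispose of the future in the same way, using that the Schur process is invariant under simultaneously reversing the chain and swapping the roles of the ``$+$'' and ``$-$'' specializations; the identical telescoping (now run from the $\varnothing$ on the right) produces $C_{\mathrm{right}}\,s_{\lambda^{(k)}}(\rho^-_k,\dots,\rho^-_N)$ with $C_{\mathrm{right}}=\prod_{k\le i<j\le N}H(\rho^+_i;\rho^-_j)$, the symmetry $H(\rho;\rho')=H(\rho';\rho)$ being used to write the factors in this form. Combining the two halves gives
$$
 \P\bigl(\lambda^{(k)}=\lambda\bigr)=\frac{C_{\mathrm{left}}\,C_{\mathrm{right}}}{Z}\,s_{\lambda}(\rho_1)\,s_{\lambda}(\rho_2),\qquad \rho_1=(\rho^+_0,\dots,\rho^+_{k-1}),\ \ \rho_2=(\rho^-_k,\dots,\rho^-_N),
$$
which is manifestly proportional to $s_\lambda(\rho_1)s_\lambda(\rho_2)$ and hence is the Schur measure $\mathbb S_{\rho_1;\rho_2}$. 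To pin down the constant one can either invoke that both sides are probability measures on $\Y$ (so the constant must equal $H(\rho_1;\rho_2)^{-1}$, since $\sum_\lambda s_\lambda(\rho_1)s_\lambda(\rho_2)=H(\rho_1;\rho_2)$ by Theorem \ref{theorem_Cauchy}), or check directly that $Z=C_{\mathrm{left}}\,H(\rho_1;\rho_2)\,C_{\mathrm{right}}$ by partitioning the index pairs $(i,j)$ occurring in $Z=\prod_{i<j}H(\rho^+_i;\rho^-_j)$ of Proposition \ref{Prop_Schur_proc_normalizations} into those with $j\le k-1$, those with $i\ge k$, and those with $i\le k-1<k\le j$.

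The step I expect to require the most care is the bookkeeping at the junction $\lambda^{(k)}$. One must verify that the past telescoping terminates exactly on $s_{\lambda^{(k)}}(\rho^+_0,\dots,\rho^+_{k-1})$ --- in particular that the last ``up'' specialization $\rho^+_{k-1}$ enters through a branching step rather than a skew-Cauchy step, so it generates no $H$-factor --- and symmetrically that the future telescoping produces $s_{\lambda^{(k)}}(\rho^-_k,\dots,\rho^-_N)$ with no spurious $H$. Correspondingly, neither half should produce any ``cross'' factor $H(\rho^+_i;\rho^-_j)$ with $i<k\le j$; these are precisely the factors left over in $Z/(C_{\mathrm{left}}C_{\mathrm{right}})$ and must reassemble into $H(\rho_1;\rho_2)$. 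Once this is checked, the rest is a routine specialization of the Cauchy-type identities already on hand.
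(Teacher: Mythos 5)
Your proposal is correct and is exactly the argument the paper intends: the paper's proof is simply ``analogous to Proposition \ref{Prop_Schur_proc_normalizations}'', i.e.\ the same telescoping via the specialized skew Cauchy identity and the branching rule, which you carry out from both ends of the chain toward $\lambda^{(k)}$. Your bookkeeping of the $H$-factors ($C_{\mathrm{left}}$, $C_{\mathrm{right}}$, and the leftover cross terms reassembling into $H(\rho_1;\rho_2)$) checks out against $Z=\prod_{i<j}H(\rho_i^+;\rho_j^-)$.
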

\begin{proof}
 The proof is analogous to that of Proposition \ref{Prop_Schur_proc_normalizations}.
\end{proof}

Proposition \ref{Prop_projection_of_process_is_measure} means that the projection of a Schur
process to the Young diagram $\lambda^{(k)}$ can be identified with a determinantal point process
and, thus, can be analyzed with methods of Section \ref{Section_Schur_measures}. In fact, a more
general statement is true: The joint distribution of \emph{all} Young diagrams of a Schur process
is also a determinantal point process with correlation kernel similar to that of Theorem
\ref{theorem_corr_Schur}, see \cite{OR-S}, \cite{BR05}.

Note that if one of the specializations, say $\rho^+_k$ is \emph{trivial}, i.e.\ this is the Schur
positive specialization $(0;0;0)$, then (since $s_{\lambda/\mu}(0;0;0)=0$ unless $\lambda=\mu$)
two of the Young diagrams should coincide, namely $\mu^{(k)}=\lambda^{(k+1)}$. In this case we can
safely forget about $\mu^{(k)}$ and omit it from our notations. This also shows that in the
definition of the Schur process we could replace the saw--like diagram of Figure
\ref{Figure_Schur_saw} by any staircase--like scenario.

Let us give two examples of the Schur processes.

\subsection{Example 1. Plane partitions}

\label{Section_plane_partitions}

A \emph{plane partition} $Y$ is a $2d$ array of non-negative integers $Y_{ij}$, $i,j=1,2,\dots$,
such that $\sum_{i,j} Y_{ij}<\infty$ and the numbers weakly decrease along the rows and columns,
i.e.\ if $i'\ge i$ and $j'\ge j$, then $Y_{ij}\ge Y_{i'j'}$. The sum $\sum_{i,j} Y_{ij}$ is called
the \emph{volume} of the plane partition $Y$. In the same way as ordinary partitions were
identified with Young diagrams in Section \ref{Section_symmetric}, plane partitions can be
identified with \emph{$3d$ Young diagrams}. To see that, view a plane partition as a collection of
numbers written in the vertices of the regular square grid on the plane and put $k$ unit cubes on
each number $k$. The resulting $3d$ body is the desired $3d$ Young diagram; an example is shown in
Figure \ref{Figure_plane_partition}.

\begin{figure}[h]
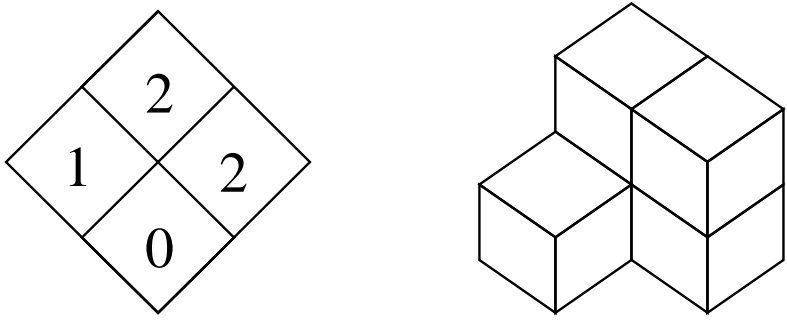

\begin{center}
{\scalebox{1.0}{\includegraphics{plane_part.pdf}}} \hskip 2cm
{\scalebox{1.0}{\includegraphics{y3d.pdf}}} \caption{ Left panel: Plane partition of volume $5$.
Right panel: The corresponding $3d$ Young diagram.
 \label{Figure_plane_partition}}
\end{center}
\end{figure}

Fix  $q$,  $0<q<1$ and consider the following probability measure on the set of \emph{all} plane
partitions
$$
 \P(Y)=\frac{1}{M} q^{{\rm volume}(Y)},
$$
which is one of the simplest (speaking of definition, not properties) possible probability
measures on this set. The normalization constant $M$ is given by the celebrated MacMahon formula
(see \cite{MacMahon}, \cite[Section 7.20]{St_book}, \cite[Chapter I, Section 5, Example 13]{M})
$$
 M=\prod_{n=1}^{\infty} (1-q^n)^{-n}.
$$

We claim that the above measure can be described via a Schur process. In fact this is a particular
case of a more general statement that we now present.

\begin{definition}\label{Definition_skew_plane_part}
Fix two natural numbers $A$ and $B$. For a Young diagram $\pi\subset
B^A=(\underbrace{B,\dots,B}_{A\text{ times}})$, set $\bar\pi=B^A/ \pi$.
A skew plane
partition $\Pi$ with support $\bar\pi$ is a filling of all boxes of $\bar\pi$ by nonnegative
integers $\Pi_{i,j}$ (we assume that $\Pi_{i,j}$ is located in the $i$th row and $j$th column of
$B^A$) such that $ \Pi_{i,j}\ge \Pi_{i,j+1}$ and $\Pi_{i,j}\ge \Pi_{i+1,j}$ for all values of
$i,j$. The volume of the skew plane partition $\Pi$ is defined as
$$
{\rm volume}(\Pi)=\sum_{i,j}\Pi_{i,j}.
$$
\end{definition}

For an example of a skew plane partition see Figure \ref{Fig_3dskew}.

Our goal is to explain that the measure on plane partitions with given support $\bar\pi$  and
weights proportional to $q^{{\rm volume}(\,\cdot\,)}$, $0<q<1$, is a Schur process. This fact has
been observed and used in \cite{OR-S}, \cite{OR-skew}, \cite{OR-birth}, \cite{BMRT},
\cite{B-Schur}.

The Schur process will be such that for any two neighboring specializations $\rho_k^-,\rho_k^+$ at
least one is trivial. This implies that each $\mu^{(j)}$ coincides either with $\lambda^{(j)}$ or
with $\lambda^{(j+1)}$. Thus, we can restrict our attention to $\lambda^{(j)}$'s only.

 For a plane partition $\Pi$, we define the Young diagrams $\lambda^{(k)}$ ($1\le k\le A+B+1$) via
$$
\lambda^{(k)}(\Pi)=\bigl\{\Pi_{i,i+k-A-1}\mid (i,i+k-A-1)\in\bar\pi\bigr\}.
$$
Note that $\lambda^{(1)}=\lambda^{(A+B+1)}=\varnothing$. Figure \ref{Fig_3dskew} shows a skew
plane partition $\Pi$ and corresponding sequence of Young diagrams.

\begin{figure}[h]
\begin{center}
{\scalebox{0.7}{\includegraphics{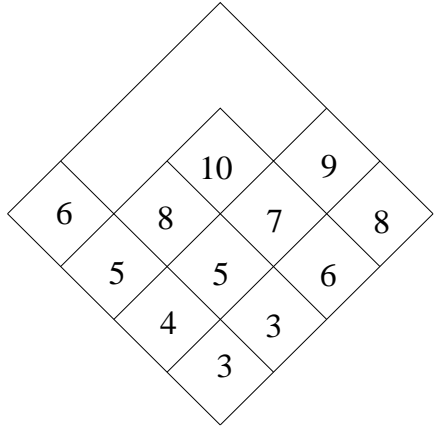}}} \caption{A skew plane partition $\Pi$ with support
 $B^A/\pi$. Here $B=A=4$ and  $\pi=(2,1,1)$. The corresponding sequence of Young diagrams  is $\varnothing \subset (6)
\supset (5) \subset (8,4) \subset (10,5,3) \supset (7,3) \subset
 (9,6) \supset (8) \supset \varnothing.
$ \label{Fig_3dskew}}
\end{center}
\end{figure}

We need one more piece of notation. Define
$$
{\mathcal L}(\pi)=\{A+\pi_i-i+1\mid i=1,\dots,A\}.
$$
This is an $A$-point subset in $\{1,2,\dots,A+B\}$, and all such subsets are in bijection with the
partitions $\pi$ contained in the box $B^A$; this is similar to the identification of Young
diagrams and point configurations used in Theorem \ref{theorem_corr_Schur}. The elements of
${\mathcal L}(\pi)$ mark the up-right steps in the boundary of $\pi$ (=back wall of $\Pi$), as in
Figure \ref{Fig_3dskew}.




\begin{theorem} \label{Theorem_qvol_is_Schur}
Let $\pi$ be a partition contained in the box $B^A$. The measure on the plane partitions $\Pi$
with support $\bar\pi$ and weights proportional to $q^{vol(\Pi)}$, is the Schur process with
$N=A+B+1$ and Schur--positive specializations $\{\rho_i^+\}$, $\{\rho_j^-\}$ defined by

\begin{gather} \rho_0^+=\rho_N^-=(0;0;0)\\
\rho_j^+=\begin{cases} (q^{-j};0;0),&j\in{\mathcal L}(\pi),
\\ (0;0;0),&j\notin{\mathcal L}(\pi);
\end{cases}
\qquad \rho_j^-=\begin{cases} (0;0;0),&j\in{\mathcal L}(\pi),\\(q^{j};0;0),&j\notin{\mathcal
L}(\pi),
\end{cases}
\end{gather}
where $(a;0;0)$ is the Schur--positive specialization with single non-zero parameter $\alpha_1=a$.
\end{theorem}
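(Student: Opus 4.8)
The plan is to push the measure $\P(\Pi)\propto q^{\mathrm{vol}(\Pi)}$ forward through the diagonal-slice map $\Pi\mapsto(\lambda^{(1)}(\Pi),\dots,\lambda^{(N)}(\Pi))$ of the statement and match the result, factor by factor, with the Schur process weight \eqref{eq_Schur_proc} once the auxiliary diagrams $\mu^{(j)}$ have been removed.

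\emph{Step 1: combinatorics of the diagonal slices.} I would first show that $\Pi\mapsto(\lambda^{(1)},\dots,\lambda^{(N)})$ is a bijection from skew plane partitions with support $\bar\pi$ onto sequences of Young diagrams with $\lambda^{(1)}=\lambda^{(N)}=\varnothing$ in which each consecutive pair differs by a \emph{horizontal} strip, with $\lambda^{(k)}\subset\lambda^{(k+1)}$ when $k\in\mathcal L(\pi)$ and $\lambda^{(k+1)}\subset\lambda^{(k)}$ when $k\notin\mathcal L(\pi)$. That each diagonal slice is weakly decreasing is immediate from $\Pi_{i,j}\ge\Pi_{i,j+1}\ge\Pi_{i+1,j+1}$; that consecutive slices interlace as horizontal strips follows from the two monotonicities $\Pi_{i,j}\ge\Pi_{i,j+1}$ and $\Pi_{i,j}\ge\Pi_{i+1,j}$; and the genuinely bookkeeping-heavy point is that the \emph{direction} of the strip at slot $k$ is governed by whether the $k$-th step of the boundary of $\pi$ (the back wall of $\Pi$) goes up or right, which by the definition of $\mathcal L(\pi)$ is exactly the dichotomy $k\in\mathcal L(\pi)$ versus $k\notin\mathcal L(\pi)$. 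The inverse reassembles the slices into a unique admissible $\Pi$; this is the reading of a (skew) plane partition by diagonals of \cite{OR-S}.

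\emph{Step 2: matching the weights.} I would then substitute this data into \eqref{eq_Schur_proc}. At each $k=1,\dots,N-1$ exactly one of $\rho_k^+,\rho_k^-$ is the trivial specialization $(0;0;0)$, and since $s_{\lambda/\mu}(0;0;0)=0$ unless $\lambda=\mu$, the diagram $\mu^{(k)}$ is pinned to $\lambda^{(k)}$ (if $k\in\mathcal L(\pi)$) or to $\lambda^{(k+1)}$ (if $k\notin\mathcal L(\pi)$), so the $\mu^{(j)}$ disappear and the two boundary factors $s_{\lambda^{(1)}}(\rho_0^+)$, $s_{\lambda^{(N)}}(\rho_N^-)$ force $\lambda^{(1)}=\lambda^{(N)}=\varnothing$. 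By Proposition \ref{Proposition_spec_alpha} the surviving factor at slot $k$ is $s_{\lambda^{(k+1)}/\lambda^{(k)}}(q^{-k};0;0)=q^{-k(n_{k+1}-n_k)}$ when $k\in\mathcal L(\pi)$ and $s_{\lambda^{(k)}/\lambda^{(k+1)}}(q^{k};0;0)=q^{k(n_k-n_{k+1})}$ when $k\notin\mathcal L(\pi)$, where $n_k:=|\lambda^{(k)}|$; in either case it vanishes unless the containment (and horizontal-strip condition) found in Step 1 holds, so the supports agree. The key observation is that in \emph{both} cases the exponent of $q$ equals $k\,n_k-k\,n_{k+1}$, whence the total power of $q$ is
\[
\sum_{k=1}^{N-1}\bigl(k\,n_k-k\,n_{k+1}\bigr)=\sum_{k=1}^{N-1}n_k=\sum_{k=1}^{N}n_k=\mathrm{vol}(\Pi),
\]
the telescoping using $n_1=n_N=0$. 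Thus \eqref{eq_Schur_proc} equals $\tfrac1Z\,q^{\mathrm{vol}(\Pi)}$ on the image of the bijection and $0$ elsewhere --- precisely the $q^{\mathrm{vol}}$ measure on skew plane partitions supported on $\bar\pi$.

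\emph{Step 3: well-definedness, and the main difficulty.} Finally I would verify the hypothesis of Proposition \ref{Prop_Schur_proc_normalizations}: the only pairs $i<j$ with $H(\rho_i^+;\rho_j^-)\ne 1$ have $i\in\mathcal L(\pi)$, $j\notin\mathcal L(\pi)$, and then $H(\rho_i^+;\rho_j^-)=H\bigl((q^{-i};0;0);(q^{j};0;0)\bigr)=(1-q^{\,j-i})^{-1}$, finite because $j-i\ge 1$; this simultaneously gives the normalization $Z=\prod_{i\in\mathcal L(\pi),\,j\notin\mathcal L(\pi),\,i<j}(1-q^{\,j-i})^{-1}$, the skew analogue of the MacMahon formula. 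I expect the main obstacle to be the combinatorial dictionary of Step 1 --- carefully aligning the boundary profile of $\pi$, the index set $\mathcal L(\pi)\subset\{1,\dots,A+B\}$, and the up/down interlacing at each of the $N-1$ slots under the chosen diagonal labelling $\lambda^{(k)}=\{\Pi_{i,i+k-A-1}\}$. Once that is in place, the weight computation of Step 2, despite some care needed with the signs of exponents, telescopes cleanly.
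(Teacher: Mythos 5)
Your proposal is correct and follows essentially the same route as the paper's proof: identify the diagonal slices, use Proposition \ref{Proposition_spec_alpha} to see that the supports match (horizontal strips whose direction is dictated by $\mathcal L(\pi)$), and observe that the powers of $q$ telescope to $\mathrm{vol}(\Pi)$ — the paper merely collects the exponent diagram-by-diagram rather than slot-by-slot, which is the same computation. Your Step 3 (finiteness of $H(\rho_i^+;\rho_j^-)=(1-q^{j-i})^{-1}$ and the resulting product formula for $Z$) is a correct small addition that the paper's proof leaves implicit.
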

\noindent {\bf Remark.} One can send $A,B$ to infinity in Theorem \eqref{Theorem_qvol_is_Schur}.
If $\pi=\varnothing$, then we get plane partitions ($3d$ Young diagrams) we started with.

\begin{proof}[Proof of Theorem \ref{Theorem_qvol_is_Schur}]
Definition \ref{Definition_skew_plane_part} and Proposition \ref{Proposition_spec_alpha} imply
that the set of all skew plane partitions supported by $\bar\pi$, as well as the support of the
Schur process from the statement of the theorem, consists of sequences
$(\lambda^{(1)},\lambda^{(2)},\dots,\lambda^{(N)})$ with
$$
\gathered \lambda^{(1)}=\lambda^{(N)}=\varnothing, \\
\lambda^{(j)}\prec\lambda^{(j+1)} \text{  if  } j\in {\mathcal L}(\lambda),\qquad
\lambda^{(j)}\succ \lambda^{(j+1)} \text{  if  } j\notin {\mathcal L}(\lambda),
\endgathered
$$
where we write $\mu\prec\nu$ or $\nu\succ\mu$  if $\nu_1\ge\mu_1\ge\nu_2\ge\mu_2\ge \dots$\,.

On the other hand, Proposition \ref{Proposition_spec_alpha} implies that the weight of
$(\lambda^{(1)}, \lambda^{(2)}, \dots, \lambda^{(N)})$ with respect to the Schur process from the
hypothesis is equal to $q$ raised to the power
$$
\sum_{j=2}^{A+B}\left|\lambda^{(j)}\right|\Bigl( -(j-1){\bold 1}_{j-1\in {\mathcal
L}(\pi)}-(j-1){\bold 1}_{j-1\notin {\mathcal L}(\pi)} +j{\bold 1}_{j\in {\mathcal L}(\pi)}+j{\bold
1}_{j\notin {\mathcal L}(\pi)} \Bigr),
$$
where the four terms are the contributions of $\rho_{j-1}^+,\rho_{j-1}^-,\rho_j^+,\rho_j^-$,
respectively.

Clearly, the sum is equal to $\sum_{j=2}^{A+B}|\lambda^{(j)}|={\rm volume}(\Pi)$.
\end{proof}

Theorem \ref{Theorem_qvol_is_Schur} gives a way for analyzing random (skew) plane partitions via
the approach of Section \ref{Section_Schur_measures}. Using this machinery one can prove various
interesting limit theorems describing the asymptotic behavior of the model as $q\to 1$, see
\cite{OR-S}, \cite{OR-skew}, \cite{OR-birth}, \cite{BMRT}.

\subsection{Example 2. RSK and random words}
\label{Section_RSK_dyn}

Our next example is based on the Robinson--Schensted--Knuth correspondence and is a generalization
of constructions of Section \ref{Section_RSK}.

Take the alphabet of $N$ letters $\{1,\dots,N\}$ and a collection of positive parameters
$a_1,\dots,a_N$. Consider a growth process of the random word $\omega_N(t)$ with each letter $j$
appearing (independently) at the end of the word according to a Poisson process of rate $a_j$. In
particular, the length $|\omega_N(t)|$ of the word at time $t$ is a Poisson random variable with
parameter $(a_1+\dots+a_N)t$:
$$
 \P(|\omega_N(t)|=k) =e^{-(a_1+\dots+a_N)t} \frac{ ( (a_1+\dots+a_N)t)^k} {k!}.
$$
The growth of $\omega_N(t)$ can be illustrated by the random point process with point $(t,j)$
appearing if the letter $j$ is added to the word at time $t$, see Figure \ref{Fig_RSK_discrete}.
For any $T>0$, one can produce a \emph{Young diagram} $\lambda^{(N)}(T)$ from all the points
$(t,j)$ with $t\le T$ using the Robinson--Schensted-Knuth (RSK) algorithm, whose geometric version
was given in Section \ref{Section_RSK}. (We again address the reader to \cite{Sagan},
\cite{Fulton}, \cite{Kn} for the details on RSK.) In particular, the length of the first row of
$\lambda^{(N)}(T)$ equals the maximal number of points one can collect along a monotonous path
joining $(0,0)$ and $(T,N)$, as in Figure \ref{Fig_RSK_discrete}. More generally, let $w_{N-k}(t)$
be the word obtained from $w_N(t)$ by removing all the instances of letters $N,N-1,\dots,N-k+1$,
and let $\lambda^{(N-k)}(T)$ denote the Young diagram corresponding to $w_{N-k}(t)$, i.e.\ this is
the Young diagram obtained from all the points $(t,j)$ with $t\le T$, $j\le N-k$.

\begin{figure}[h]
\begin{center}
{\scalebox{1.0}{\includegraphics{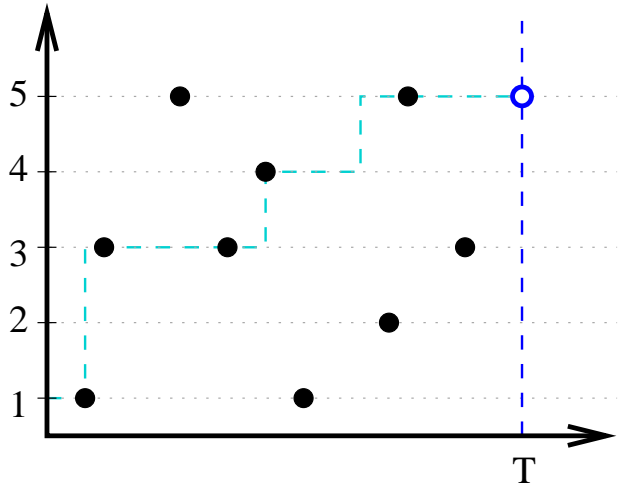}}} \caption{ Collection of points with
coordinates $(t,j)$ corresponding to the growth of random word and the path collecting maximal
number of points $\ell_N(T)=\lambda^{(N)}(T)=5$. Here $N=5$. \label{Fig_RSK_discrete}}
\end{center}
\end{figure}

\begin{proposition}
\label{Proposition_RSK_Schur} For any $t$ and $N$ the collection of (random) Young diagrams
$\lambda^{(1)}(t),\lambda^{(2)}(t),\dots,\lambda^{(N)}(t)$ forms a Schur process with probability
distribution
$$
 \frac{1}{Z} s_{\lambda^{(1)}}(a_1)
 s_{\lambda^{(2)}/\lambda^{(1)}}(a_2)\cdots s_{\lambda^{(N)}/\lambda^{(N-1)}}(a_N)
 s_{\lambda(N)}(\rho_t),
$$
where we identify $a_i$ with the Schur--positive specialization with parameter $\alpha_1=a_i$ and
all other parameters $0$, and $\rho_t$ is the specialization with single non-zero parameter
$\gamma=t$.
\end{proposition}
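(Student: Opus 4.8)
The plan is to compute the joint law of $(\lambda^{(1)}(t),\dots,\lambda^{(N)}(t))$ directly, using the elementary structure of the word process together with the RSK bijection, and then to match the resulting expression term by term with the claimed Schur process weight via Propositions \ref{Proposition_spec_alpha} and \ref{Prop_plancherel}. First I would record the law of the word itself: superposing the $N$ independent Poisson clocks produces a single Poisson process of rate $a:=a_1+\dots+a_N$ in which each arrival independently gets type $j$ with probability $a_j/a$, so that for a fixed finite word $w=w_1\cdots w_n$ with content $\alpha=(\alpha_1,\dots,\alpha_N)$ one has
$$
\P\big(\omega_N(t)=w\big)=e^{-at}\frac{(at)^n}{n!}\cdot\frac{\prod_{i=1}^n a_{w_i}}{a^n}=e^{-at}\,\frac{t^{\,n}}{n!}\prod_{j=1}^N a_j^{\alpha_j}.
$$
Note that this depends on $w$ only through $|w|=n$ and $\alpha$.

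Next I would feed $\omega_N(t)$ into the RSK algorithm, $w\mapsto(P,Q)$, a bijection onto pairs formed by a semistandard tableau $P$ with entries in $\{1,\dots,N\}$ of some shape $\lambda$ and a standard tableau $Q$ of the same shape, with $|\lambda|=|w|$ (see \cite{Sagan}, \cite{Fulton}, \cite{Kn}). The structural input I need is the compatibility of Schensted insertion with restriction to an initial sub-alphabet: for every $m$, the sub-tableau of $P$ consisting of the cells with entries $\le m$ has the same shape as the RSK insertion tableau of the sub-word of $\omega_N(t)$ obtained by deleting all letters $>m$, i.e.\ of $w_m(t)$ --- which by definition is $\lambda^{(m)}(t)$. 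Consequently $\varnothing=\lambda^{(0)}\prec\lambda^{(1)}\prec\dots\prec\lambda^{(N)}=\lambda$ is an interlacing sequence (each $\lambda^{(j)}/\lambda^{(j-1)}$ a horizontal strip), it is in bijection with $P$, and the content of $P$ reads $\alpha_j=|\lambda^{(j)}|-|\lambda^{(j-1)}|$.

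Then I would assemble the answer. Fix an interlacing sequence $\varnothing\prec\mu^{(1)}\prec\dots\prec\mu^{(N)}$; for any non-interlacing sequence both sides of the claim vanish. By the previous paragraph, $\{\lambda^{(k)}(t)=\mu^{(k)},\ 1\le k\le N\}$ is precisely the event that $P$ equals the semistandard tableau $P_0$ determined by this sequence, and by the bijectivity of RSK its preimages (words $w$) are in bijection with the standard tableaux of shape $\mu^{(N)}$, of which there are $\dim(\mu^{(N)})$, each carrying the same weight from the word law, which depends only on $|\mu^{(N)}|$ and on $\alpha_j=|\mu^{(j)}|-|\mu^{(j-1)}|$. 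Hence
$$
\P\big(\lambda^{(1)}(t)=\mu^{(1)},\dots,\lambda^{(N)}(t)=\mu^{(N)}\big)=e^{-at}\,\frac{t^{\,|\mu^{(N)}|}}{|\mu^{(N)}|!}\,\dim(\mu^{(N)})\prod_{j=1}^N a_j^{\,|\mu^{(j)}|-|\mu^{(j-1)}|}.
$$
Finally, Proposition \ref{Proposition_spec_alpha} identifies $s_{\mu^{(j)}/\mu^{(j-1)}}(a_j)=a_j^{\,|\mu^{(j)}|-|\mu^{(j-1)}|}$ on horizontal strips (and $0$ otherwise, which also absorbs the degenerate sequences) and $s_{\mu^{(1)}}(a_1)=a_1^{\,|\mu^{(1)}|}$, while Proposition \ref{Prop_plancherel} gives $s_{\mu^{(N)}}(\rho_t)=\frac{t^{|\mu^{(N)}|}}{|\mu^{(N)}|!}\dim(\mu^{(N)})$; thus the probability above equals $e^{-at}$ times $s_{\mu^{(1)}}(a_1)\,s_{\mu^{(2)}/\mu^{(1)}}(a_2)\cdots s_{\mu^{(N)}/\mu^{(N-1)}}(a_N)\,s_{\mu^{(N)}}(\rho_t)$, i.e.\ the Schur process weight with $Z=e^{at}$. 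To check that this $Z$ agrees with Proposition \ref{Prop_Schur_proc_normalizations}: in the notation of Definition \ref{Def_Schur_proc} the nontrivial specializations are $\rho^+_{j-1}=a_j$ ($j=1,\dots,N$) and $\rho^-_N=\rho_t$, so $Z=\prod_{j=1}^N H(a_j;\rho_t)=\prod_{j=1}^N e^{a_j t}=e^{at}$, using $p_1(\rho_t)=t$ and $p_k(\rho_t)=0$ for $k\ge 2$.

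The part I expect to be the main obstacle is the RSK restriction property invoked above --- that erasing the letters larger than $m$ from the word corresponds to taking the sub-tableau of $P$ on entries $\le m$. This is a classical property of Schensted insertion (it follows from the fact that inserting a letter $\le m$ alters the sub-tableau of entries $\le m$ exactly as a Schensted insertion into that sub-tableau, the larger entries merely being displaced), but it is the one step genuinely requiring the combinatorics of the bumping algorithm; everything else is the elementary word-statistics computation and the translation dictionary furnished by Propositions \ref{Proposition_spec_alpha}, \ref{Prop_plancherel} and \ref{Prop_Schur_proc_normalizations}.
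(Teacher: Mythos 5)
Your proposal is correct and is essentially the argument the paper has in mind: the paper's proof is a one-line appeal to "properties of the RSK correspondence" (citing Johansson's lectures), and your computation of the word law, the restriction property of Schensted insertion, the count of $\dim(\mu^{(N)})$ preimages, and the translation via Propositions \ref{Proposition_spec_alpha}, \ref{Prop_plancherel} and \ref{Prop_Schur_proc_normalizations} is exactly the standard derivation behind that appeal. Nothing further is needed.
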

\begin{proof}
 This statement follows from  properties of RSK correspondence, cf.\ \cite{J-lectures}.
\end{proof}

Now let us concentrate on the random vector
$(\ell_1(t),\dots,\ell_N(t))=(\lambda^{(1)}_1(t),\dots,\lambda_1^{(N)}(t))$ and try to describe
its time evolution as $t$ grows. Recall that in Figure \ref{Fig_RSK_discrete} the value of
$\ell_k(T)$ is the maximal number of points collected along monotonous paths joining $(0,0)$ and
$(T,k)$. Suppose that at time $t$ a new letter $k$ appears, so that there is a point $(t,k)$ in
the picture. It means that $\ell_k$ grows by $1$ ($\ell_k(t)=\ell_k(t-)+1$), because we can add
this point to any path coming to any $(t',k)$ with $t'<k$. Clearly, $\ell_j$ does not change for
$j<k$. Note that if $\ell_{k+1}(t-)>\ell_{k}(t-)$, then $\ell_{k+1}$ also does not change, since
the maximal number of points collected along a path passing through $(t,k)$ is at most
$\ell_{k}(t)$. Finally, if $\ell_{k}(t-)=\ell_{k+1}(t-)=\dots=\ell_{k+m}(t-)$, then all the
numbers $\ell_k,\dots,\ell_{k+m}$ should increase by $1$, since the optimal path will now go
through $(t,k)$ and collect $\ell_k$ points.

The above discussion shows that the evolution of $(\ell_1(t),\dots,\ell_N(t))$ is a Markov
process, and it admits the following interpretation. Take $N$ (distinct) particles on $\mathbb Z$
with coordinates $\ell_1+1 <\ell_2+2<\dots<\ell_N+N$. Each particle has an independent exponential
clock of rate $a_i$. When a clock rings, the corresponding particle attempts to jump to the right
by one. If that spot is empty then the particle jumps and nothing else happens. Otherwise, in
addition to the jump, the particle \emph{pushes} all immediately right adjacent particles by one,
see Figure \ref{Fig_PushASEP} for an illustration of these rules. The dynamics we have just
described is known as the Long Range Totally Asymmetric Exclusion Process, and it is also a
special case of PushASEP, see \cite{BF-Push}, \cite{Spitzer}.
\begin{figure}[h]
\begin{center}
{\scalebox{1.0}{\includegraphics{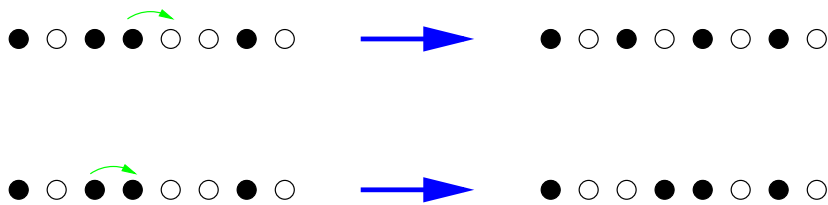}}} \caption{PushASEP. Top panel: jump of a
particle. Bottom panel: jump of a particle which results in pushing.\label{Fig_PushASEP}}
\end{center}
\end{figure}

We also note (without  proof) that if instead of
$(\lambda^{(1)}_1(t),\dots,\lambda_1^{(N)}(t))$ one considers the random vector
$(\lambda^{(1)}_1(t), \lambda^{(2)}_2(t)\dots,\lambda_N^{(N)}(t))$, then the fixed
time distribution of the particles
$\lambda_N^{(N)}-N<\lambda_{N-1}^{(N-1)}-(N-1)<\dots<\lambda_1^{(1)}-1$ coincides
with that of the well-known TASEP process\footnote{Note, however, that the
\emph{evolution} of the particles in TASEP is different from that of
$(\lambda^{(1)}_1(t), \lambda^{(2)}_2(t)\dots,\lambda_N^{(N)}(t))$.} that was
presented in Section \ref{Section_Intro}.

The conclusion now is that Proposition \ref{Proposition_RSK_Schur} together with methods of
Section \ref{Section_Schur_measures} gives a way of the asymptotic analysis of TASEP and PushASEP
stochastic dynamics at large times. In particular, one can prove the analogues of Theorems
\ref{Theorem_first_row_plancherel}, \ref{Theorem_longest_increasing}, \ref{Theorem_polymer_intro},
see e.g.\ \cite{J-TASEP}, \cite{BF-Push}.

\subsection{Markov chains on Schur processes}

\label{Section_Markov_chains}

In the last section we linked Schur processes to simple particle dynamics like TASEP using the RSK
correspondence. In this section we produce \emph{another} family of Markov dynamics connecting
such objects whose description is arguably more straightforward and independent of complicated
combinatorial algorithms, such as RSK.

We start by introducing a  general framework. Let $\rho$ and $\rho'$ be two Schur--positive
specializations such that $H(\rho;\rho')<\infty$. Define matrices $p^{\uparrow}_{\lambda\to\mu}$
and $p^{\downarrow}_{\lambda\to\mu}$ with rows and columns indexed by Young diagrams $\lambda$ and
$\mu$ as follows:
\begin{equation}
\label{eq_p_up}
 p^{\uparrow}_{\lambda\to\mu}(\rho;\rho')= \frac{1}{H(\rho;\rho')}
 \frac{s_{\mu}(\rho)}{s_\lambda(\rho)} s_{\mu/\lambda}(\rho'),
\end{equation}
\begin{equation}
\label{eq_p_down}
 p^{\downarrow}_{\lambda\to\mu}(\rho;\rho')=  \frac{s_{\mu}(\rho)}{s_\lambda(\rho,\rho')}
 s_{\lambda/\mu}(\rho').
\end{equation}

\begin{proposition}
\label{Prop_p_is_stochastic} The matrices $p^{\uparrow}_{\lambda\to\mu}$ and
$p^{\downarrow}_{\lambda\to\mu}$ are stochastic, i.e.\ all matrix elements are non-negative, and
for every $\lambda\in\Y$ we have
\begin{equation}
\label{eq_x9}
 \sum_{\mu\in\Y} p^{\uparrow}_{\lambda\to\mu}(\rho,\rho')=1,
\end{equation}
\begin{equation}
\label{eq_x10}
 \sum_{\mu\in\Y} p^{\downarrow}_{\lambda\to\mu}(\rho,\rho')=1.
\end{equation}
\end{proposition}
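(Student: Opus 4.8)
The plan is to verify non-negativity of the two matrices by inspection and then to deduce the row-sum identities \eqref{eq_x9} and \eqref{eq_x10} from two facts already available in Section \ref{Section_symmetric}: the skew Cauchy identity and the coproduct formula for skew Schur functions.

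First I would dispose of non-negativity. Since $\rho$ and $\rho'$ are Schur--positive, all of $s_\mu(\rho)$, $s_\lambda(\rho)$, $s_\lambda(\rho,\rho')$, $s_{\mu/\lambda}(\rho')$, $s_{\lambda/\mu}(\rho')$ are $\ge 0$ (the skew values by the Remark following Theorem \ref{Theorem_Thoma}), and $H(\rho;\rho')=\exp\!\big(\sum_{k\ge 1}p_k(\rho)p_k(\rho')/k\big)>0$; hence every entry of $p^{\uparrow}$ and $p^{\downarrow}$ is non-negative. Here one implicitly restricts the index set to those $\lambda$ with $s_\lambda(\rho)>0$ (respectively $s_\lambda(\rho,\rho')>0$) so that the ratios make sense — this is exactly the support of the Schur processes we wish to study.

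Next, \eqref{eq_x9} is equivalent to the identity $\sum_{\mu\in\Y}s_\mu(\rho)\,s_{\mu/\lambda}(\rho')=H(\rho;\rho')\,s_\lambda(\rho)$. I would get this by setting $\nu=\varnothing$ in the skew Cauchy identity \eqref{eq_skew_Cauchy}: using $s_{\mu/\varnothing}=s_\mu$ and the fact that $s_{\varnothing/\kappa}$ equals $1$ for $\kappa=\varnothing$ and $0$ otherwise, the right-hand side of \eqref{eq_skew_Cauchy} collapses to $\prod_{i,j}(1-x_iy_j)^{-1}s_\lambda(y)$, so that
$$
\sum_{\mu\in\Y}s_{\mu/\lambda}(x)\,s_\mu(y)=\prod_{i,j}\frac{1}{1-x_iy_j}\;s_\lambda(y).
$$
Applying the specialization $x\mapsto\rho'$, $y\mapsto\rho$ turns the infinite product into $H(\rho';\rho)=H(\rho;\rho')$ and gives the claim; dividing by $H(\rho;\rho')\,s_\lambda(\rho)$ yields \eqref{eq_x9}. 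Similarly, \eqref{eq_x10} is equivalent to $\sum_{\mu\in\Y}s_\mu(\rho)\,s_{\lambda/\mu}(\rho')=s_\lambda(\rho,\rho')$, which is exactly Proposition \ref{proposition_fundamental_skew_Schur} with $\mu=\varnothing$, namely $s_\lambda(x,y)=\sum_{\nu}s_{\lambda/\nu}(x)\,s_\nu(y)$, specialized at $x\mapsto\rho'$, $y\mapsto\rho$ (using that $(\rho',\rho)$ and $(\rho,\rho')$ are the same specialization); dividing by $s_\lambda(\rho,\rho')$ yields \eqref{eq_x10}.

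The only point that deserves real care — and the main obstacle, such as it is — is justifying the termwise application of the specializations to these infinite sums. This is guaranteed by the standing assumption $H(\rho;\rho')<\infty$ together with the growth bound $|p_k(\rho)|,|p_k(\rho')|<Cr^k$ with $0<r<1$ recorded after Definition \ref{Def_Schur_meas}, which makes the series $\sum_{\mu}s_\mu(\rho)s_{\mu/\lambda}(\rho')$ and $\sum_{\mu}s_\mu(\rho)s_{\lambda/\mu}(\rho')$ absolutely convergent, so that the specialized Cauchy-type identities hold as genuine numerical equalities. Everything else is a direct substitution into identities already proved in Section \ref{Section_symmetric}.
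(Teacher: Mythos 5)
Your proof is correct and follows the same route as the paper: non-negativity from Schur--positivity, \eqref{eq_x9} as the $\nu=\varnothing$ specialization of the skew Cauchy identity (Proposition \ref{proposition_skew_Cauchy}), and \eqref{eq_x10} as the $\mu=\varnothing$ specialization of Proposition \ref{proposition_fundamental_skew_Schur}. The extra care you take with convergence and with restricting to $\lambda$ of positive weight is a reasonable elaboration of what the paper leaves implicit.
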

\begin{proof}
 Non-negativity of matrix elements follows from the definition of Schur--positive specializations.
 Equality \eqref{eq_x9} is a specialized version of the skew Cauchy identity of Proposition
 \ref{proposition_skew_Cauchy}, and equality \eqref{eq_x10} is a specialized version of Proposition
 \ref{proposition_fundamental_skew_Schur}.
\end{proof}
Proposition \ref{Prop_p_is_stochastic} means that matrices $p^{\uparrow}_{\lambda\to\mu}$ and
$p^{\downarrow}_{\lambda\to\mu}$ can be viewed as transitional probabilities of Markov chains.
Definitions \eqref{eq_p_up}, \eqref{eq_p_down} and Proposition \ref{Prop_J-T_skew} imply that
$p^{\uparrow}_{\lambda\to\mu}=0$ unless $\mu\supset\lambda$, i.e.\ the Young diagram
\emph{increases}, and $p^{\downarrow}_{\lambda\to\mu}=0$ unless $\mu\subset\lambda$, i.e.\ the
Young diagram \emph{decreases} (hence up and down arrows in the notations).

One of the important properties of the above stochastic matrices is that they agree with Schur
measures, i.e.\ Markov chains defined using them preserve the class of Schur measures. Formally,
we have the following statement:

\begin{proposition} \label{Prop_p_agrees_with_Schur} For any $\mu\in\Y$ we have
$$
 \sum_{\lambda\in\Y} {\mathbb S}_{\rho_1;\rho_2}(\lambda) p^\uparrow_{\lambda\to\mu}(\rho_2;\rho_3)=
 {\mathbb S}_{\rho_1,\rho_3;\rho_2} (\mu)
$$
and
$$
 \sum_{\lambda\in\Y} {\mathbb S}_{\rho_1;\rho_2,\rho_3}(\lambda) p^\downarrow_{\lambda\to\mu}(\rho_2;\rho_3)=
 {\mathbb S}_{\rho_1;\rho_2} (\mu).
$$
\end{proposition}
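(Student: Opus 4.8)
The plan is to prove both identities by direct substitution of the definitions of the Schur measure and of the transition matrices $p^\uparrow$, $p^\downarrow$ (formulas \eqref{eq_p_up}--\eqref{eq_p_down}), followed by the cancellation of a common $s_\lambda$ factor and a single application of one of the skew Schur identities from Section \ref{Section_symmetric}.

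For the first (``up'') identity, I would plug the definition of ${\mathbb S}_{\rho_1;\rho_2}$ and \eqref{eq_p_up} into the left-hand side; the factor $s_\lambda(\rho_2)$ coming from the Schur measure cancels the $s_\lambda(\rho_2)$ in the denominator of $p^\uparrow_{\lambda\to\mu}(\rho_2;\rho_3)$, leaving
$$
 \frac{s_\mu(\rho_2)}{H(\rho_1;\rho_2)\,H(\rho_2;\rho_3)}\sum_{\lambda\in\Y} s_\lambda(\rho_1)\, s_{\mu/\lambda}(\rho_3).
$$
The sum here is finite because $s_{\mu/\lambda}=0$ unless $\lambda\subset\mu$ (Proposition \ref{Prop_J-T_skew}), and by Definition \ref{Definition_skew_Schur} applied to the union of the variable sets specializing to $\rho_1$ and $\rho_3$, it equals $s_\mu(\rho_1,\rho_3)$. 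Since $H$ is symmetric in its two arguments and multiplicative, $H(\rho_1;\rho_2)H(\rho_2;\rho_3)=H(\rho_1;\rho_2)H(\rho_3;\rho_2)=H(\rho_1,\rho_3;\rho_2)$, so the expression becomes $s_\mu(\rho_1,\rho_3)\,s_\mu(\rho_2)/H(\rho_1,\rho_3;\rho_2)={\mathbb S}_{\rho_1,\rho_3;\rho_2}(\mu)$.

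For the second (``down'') identity I would proceed in the same spirit: substituting the definition of ${\mathbb S}_{\rho_1;\rho_2,\rho_3}$ and \eqref{eq_p_down}, the factor $s_\lambda(\rho_2,\rho_3)$ cancels and the left-hand side becomes
$$
 \frac{s_\mu(\rho_2)}{H(\rho_1;\rho_2,\rho_3)}\sum_{\lambda\in\Y} s_\lambda(\rho_1)\, s_{\lambda/\mu}(\rho_3).
$$
This time the sum is infinite, and the key input is the skew Cauchy identity (Proposition \ref{proposition_skew_Cauchy}) specialized with the lower diagrams $\varnothing$ and $\mu$ and with the two variable sets mapped to $\rho_1$ and $\rho_3$: since $s_{\varnothing/\kappa}$ vanishes unless $\kappa=\varnothing$, the right-hand side of that identity collapses to $H(\rho_1;\rho_3)\,s_\mu(\rho_1)$, where one recognizes the specialized product $\prod_{i,j}(1-x_iy_j)^{-1}$ as $H(\rho_1;\rho_3)$ via Theorem \ref{theorem_Cauchy}. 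Substituting this and using multiplicativity of $H$ in the form $H(\rho_1;\rho_2,\rho_3)=H(\rho_1;\rho_2)H(\rho_1;\rho_3)$, the factor $H(\rho_1;\rho_3)$ cancels and what remains is $s_\mu(\rho_1)s_\mu(\rho_2)/H(\rho_1;\rho_2)={\mathbb S}_{\rho_1;\rho_2}(\mu)$.

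I do not expect a genuine obstacle: the content is entirely bookkeeping of the $H$-factors together with the correct choice of specialized identity --- the coproduct (branching) property of skew Schur functions for the ``up'' case, and the skew Cauchy identity with an empty lower parameter for the ``down'' case. The one point deserving a sentence of justification is the convergence of the infinite sum in the ``down'' case (and of the auxiliary series appearing in the skew Cauchy identity), which is guaranteed by the standing assumption recorded in the Remark after Definition \ref{Def_Schur_meas} that the relevant series of Schur functions converge, so that the above manipulations are legitimate rather than merely formal.
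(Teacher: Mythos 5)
Your proof is correct and is exactly the argument the paper intends: its one-line proof simply invokes the specialized versions of Propositions \ref{proposition_skew_Cauchy} and \ref{proposition_fundamental_skew_Schur}, which are precisely the skew Cauchy identity (with empty lower diagram) and the branching rule you use, together with the same bookkeeping of $H$-factors. Spelling out the cancellations and the convergence remark is a faithful expansion of the paper's proof, not a different route.
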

\noindent {\bf Remark.} Informally, $p^\uparrow_{\lambda\to\mu}$ increases the specialization by
adding $\rho_3$, while $p^\downarrow_{\lambda\to\mu}$ decreases the specialization by removing
$\rho_3$. Note also that ${\mathbb S}_{\rho_1;\rho_2}={\mathbb S}_{\rho_2;\rho_1}$ for any
$\rho_1,\rho_2$.
\begin{proof}[Proof of Proposition \ref{Prop_p_agrees_with_Schur}]
 This is an application of the specialized versions of Propositions
 \ref{proposition_skew_Cauchy} and \ref{proposition_fundamental_skew_Schur}.
\end{proof}

Next, note that the distribution of the Schur process of the form which appeared in Proposition
\ref{Proposition_RSK_Schur}
$$
 \frac{1}{Z}s_{\lambda^{(1)}}(\rho^+_0)
 s_{\lambda^{(2)}/\lambda^{(1)}}(\rho^+_1)\cdots s_{\lambda^{(N)}/\lambda^{(N-1)}}(\rho^+_{N-1})
 s_{\lambda(N)}(\rho_-)
$$
can be rewritten as
$$
 {\mathbb S}_{\rho^+_0,\dots,\rho^+_{N-1};\rho^-}(\lambda^{(N)})
p^{\downarrow}_{\lambda^{(N)}\to\lambda^{(N-1)}}(\rho^+_0,\dots,\rho^+_{N-2};\rho^+_{N-1}) \cdots
p^{\downarrow}_{\lambda^{(2)}\to\lambda^{(1)}}(\rho^+_0;\rho^+_{1}).
$$
More generally, \emph{any} Schur process can be viewed as a trajectory of a Markov chain with
transitional probabilities given by matrices $p^{\uparrow}$ and $p^{\downarrow}$ (with suitable
specializations) and initial distribution being a Schur measure.

Another property that we need is the following commutativity.

\begin{proposition}
\label{prop_Schur_commutativity} The following commutation relation on matrices
$p^\uparrow_{\lambda\to\mu}$ and $p^\downarrow_{\lambda\to\mu}$ holds:
\begin{equation}
\label{eq_Schur_commutativity} p^{\uparrow}(\rho_1,\rho_2;\rho_3)  p^{\downarrow} (\rho_1;\rho_2)
=  p^{\downarrow}(\rho_1;\rho_2) p^{\uparrow}(\rho_1;\rho_3)
\end{equation}
\end{proposition}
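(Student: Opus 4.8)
The plan is to verify \eqref{eq_Schur_commutativity} entrywise, computing the $(\lambda,\nu)$--matrix element of each side as a sum over the intermediate Young diagram $\mu$ and checking that the two results agree. Substituting \eqref{eq_p_up} and \eqref{eq_p_down}, the left-hand side has $(\lambda,\nu)$--entry
\[
\sum_{\mu\in\Y} p^{\uparrow}_{\lambda\to\mu}(\rho_1,\rho_2;\rho_3)\,p^{\downarrow}_{\mu\to\nu}(\rho_1;\rho_2)
= \frac{1}{H(\rho_1,\rho_2;\rho_3)}\,\frac{s_\nu(\rho_1)}{s_\lambda(\rho_1,\rho_2)}\sum_{\mu\in\Y} s_{\mu/\lambda}(\rho_3)\,s_{\mu/\nu}(\rho_2),
\]
the point being that the factor $s_\mu(\rho_1,\rho_2)$ in the numerator of $p^{\uparrow}_{\lambda\to\mu}(\rho_1,\rho_2;\rho_3)$ cancels the denominator $s_\mu(\rho_1,\rho_2)$ of $p^{\downarrow}_{\mu\to\nu}(\rho_1;\rho_2)$. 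Likewise the right-hand side has $(\lambda,\nu)$--entry
\[
\sum_{\mu\in\Y} p^{\downarrow}_{\lambda\to\mu}(\rho_1;\rho_2)\,p^{\uparrow}_{\mu\to\nu}(\rho_1;\rho_3)
= \frac{1}{H(\rho_1;\rho_3)}\,\frac{s_\nu(\rho_1)}{s_\lambda(\rho_1,\rho_2)}\sum_{\mu\in\Y} s_{\lambda/\mu}(\rho_2)\,s_{\nu/\mu}(\rho_3),
\]
where now it is the intermediate factor $s_\mu(\rho_1)$ that cancels.

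Since the prefactor $s_\nu(\rho_1)/s_\lambda(\rho_1,\rho_2)$ is common to both sides, it remains to check
\[
\frac{1}{H(\rho_1,\rho_2;\rho_3)}\sum_{\mu\in\Y} s_{\mu/\lambda}(\rho_3)\,s_{\mu/\nu}(\rho_2)
= \frac{1}{H(\rho_1;\rho_3)}\sum_{\mu\in\Y} s_{\lambda/\mu}(\rho_2)\,s_{\nu/\mu}(\rho_3).
\]
Here I would apply the skew Cauchy identity of Proposition \ref{proposition_skew_Cauchy}, specialized by sending its two sets of variables to $\rho_3$ and $\rho_2$ respectively, which gives
\[
\sum_{\mu\in\Y} s_{\mu/\lambda}(\rho_3)\,s_{\mu/\nu}(\rho_2) = H(\rho_3;\rho_2)\sum_{\kappa\in\Y} s_{\lambda/\kappa}(\rho_2)\,s_{\nu/\kappa}(\rho_3).
\]
Substituting this and using the multiplicativity $H(\rho_1,\rho_2;\rho_3)=H(\rho_1;\rho_3)H(\rho_2;\rho_3)$ together with the symmetry $H(\rho_3;\rho_2)=H(\rho_2;\rho_3)$, the left-hand prefactor collapses to $H(\rho_2;\rho_3)/(H(\rho_1;\rho_3)H(\rho_2;\rho_3))=1/H(\rho_1;\rho_3)$, and the two sides coincide; this finishes the argument.

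The proof is thus a short computation with no serious obstacle; the one point demanding care — and the step I would double-check most carefully — is that the specializations are distributed over the four matrix factors in \eqref{eq_Schur_commutativity} in exactly the right way for the intermediate Schur functions to cancel (the surviving factor is $s_\mu(\rho_1,\rho_2)$ on the left but only $s_\mu(\rho_1)$ on the right), which is what forces the asymmetric-looking placement of the arguments. The only other thing to note is that interchanging the order of summation over $\mu$ and $\kappa$ and applying the skew Cauchy identity requires absolute convergence of the relevant double series, which is guaranteed by the standing decay assumption on the specializations made just after Definition \ref{Def_Schur_meas}; under that assumption all the manipulations above are legitimate, and the same computation simultaneously shows that both matrix products are well defined on the diagrams $\lambda$ with $s_\lambda(\rho_1,\rho_2)\neq 0$.
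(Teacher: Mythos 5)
Your proof is correct and follows essentially the same route as the paper: substitute the definitions \eqref{eq_p_up}, \eqref{eq_p_down} so that the intermediate Schur function factors cancel, and reduce the resulting identity to the specialized skew Cauchy identity of Proposition \ref{proposition_skew_Cauchy} together with the multiplicativity and symmetry of $H$. The paper merely states this reduction without spelling out the cancellation and bookkeeping of $H$--factors, which you have verified correctly.
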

\noindent{\bf Remark.} In terms of acting on Schur measures, as in Proposition
\ref{Prop_p_agrees_with_Schur}, \eqref{eq_Schur_commutativity} says that adding
$\rho_3$ and
then removing $\rho_2$ is the same as first removing $\rho_2$ and then adding
$\rho_3$:
$$
\begin{array}{ll}
  {\mathbb S}_{\rho_4;\rho_1,\rho_2}  p^{\uparrow}(\rho_1,\rho_2;\rho_3)=   {\mathbb
  S}_{\rho_3,\rho_4;\rho_1,\rho_2},&  {\mathbb
  S}_{\rho_3,\rho_4;\rho_1,\rho_2} p^{\downarrow} (\rho_1;\rho_2)= {\mathbb
  S}_{\rho_3,\rho_4;\rho_1},\\
  {\mathbb S}_{\rho_4;\rho_1,\rho_2}p^{\downarrow}(\rho_1;\rho_2)  = {\mathbb S}_{\rho_4;\rho_1},& {\mathbb S}_{\rho_4;\rho_1}
  p^{\uparrow}(\rho_1;\rho_3)  = {\mathbb
  S}_{\rho_3,\rho_4;\rho_1}.
\end{array}
$$
\begin{proof}[Proof of Proposition \ref{prop_Schur_commutativity}]
We should prove that for any $\lambda,\nu\in\Y$
$$
 \sum_{\mu\in\Y}  p^{\uparrow}_{\lambda\to\mu}(\rho_1,\rho_2;\rho_3) p^{\downarrow}_{\mu\to\nu} (\rho_1;\rho_2)
=\sum_{\mu\in\Y}  p^{\downarrow}_{\lambda\to\mu} (\rho_1;\rho_2)
p^{\uparrow}_{\mu\to\nu}(\rho_1;\rho_3).
$$
Using definitions \eqref{eq_p_up}, \eqref{eq_p_down} this boils down to the specialized version of
the skew Cauchy Identity, which is Proposition \ref{proposition_skew_Cauchy}, cf.\ \cite{B-Schur}.
\end{proof}

Commutativity relation \eqref{eq_Schur_commutativity} paves the way to introducing a family of new
Markov chains through a construction that we now present. This construction first appeared in
\cite{DF} and was heavily used recently for  probabilistic models related to Young diagrams, see
\cite{BF}, \cite{BG}, \cite{BGR}, \cite{BD}, \cite{B-Schur}, \cite{BO-GT},  \cite{Be}, \cite{BC}.

Take two Schur--positive specializations $\rho_1$, $\rho_2$ and a state space $\Y^{(2)}$ of
\emph{pairs} of Young diagrams $\lambda^{(2)}\choose \lambda^{(1)}$ such that
$p^{\downarrow}_{\lambda^{(2)}\to\lambda^{(1)}}(\rho_1;\rho_2)>0$. Define a Markov chain on
$\Y^{(2)}$ with the following transition probabilities:
\begin{equation}
\label{eq_two_levels_transition}
 \P\left({\lambda^{(2)}\choose \lambda^{(1)}} \to
{\mu^{(2)}\choose\mu^{(1)}}\right)=p^{\uparrow}_{\lambda^{(1)}\to\mu^{(1)}} (\rho_1;\rho')
\dfrac{p^{\uparrow}_{\lambda^{(2)}\to\mu^{(2)}}(\rho_1,\rho_2;\rho')
p^{\downarrow}_{\mu^{(2)}\to\mu^{(1)}}(\rho_1;\rho_2)}{\sum_{\mu}
p^{\uparrow}_{\lambda^{(2)}\to\mu}(\rho_1,\rho_2;\rho')
p^{\downarrow}_{\mu\to\mu^{(1)}}(\rho_1;\rho_2)}
\end{equation}
In words \eqref{eq_two_levels_transition} means that the first Young diagram
$\lambda^{(1)}\to\mu^{(1)}$ evolves according to the transition probabilities
$p^{\uparrow}_{\lambda^{(1)}\to\mu^{(1)}} (\rho_1;\rho')$, and given $\lambda^{(2)}$ and
$\mu^{(1)}$ the distribution of $\mu^{(2)}$ is the distribution of the middle point in the
two-step Markov chain with transitions $p^{\uparrow}_{\lambda^{(2)}\to\mu}(\rho_1,\rho_2;\rho')$
and $p^{\downarrow}_{\mu\to\mu^{(1)}}(\rho_1;\rho_2)$.

\begin{proposition}
\label{Proposition_Schur_preserved}
 The above transitional probabilities on $\Y^{(2)}$ map the Schur process with distribution
$$
 {\mathbb S}_{\rho_1,\rho_2;\rho^-}(\lambda^{(2)})
p^{\downarrow}_{\lambda^{(2)}\to\lambda^{(1)}}(\rho_1;\rho_2)
$$
to the Schur process with distribution
$$
 {\mathbb S}_{\rho_1,\rho_2;\rho^-,\rho'}(\lambda^{(2)})
p^{\downarrow}_{\lambda^{(2)}\to\lambda^{(1)}}(\rho_1;\rho_2).
$$
 Informally, the specialization $\rho'$ was added to $\rho^-$ and nothing
else changed.
\end{proposition}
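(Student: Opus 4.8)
The plan is to verify the claim by a direct computation using the commutation relation of Proposition~\ref{prop_Schur_commutativity} together with Proposition~\ref{Prop_p_agrees_with_Schur}. First I would write down what needs to be shown: if the pair ${\lambda^{(2)}\choose\lambda^{(1)}}$ has distribution $m(\lambda^{(2)},\lambda^{(1)}) := {\mathbb S}_{\rho_1,\rho_2;\rho^-}(\lambda^{(2)})\,p^{\downarrow}_{\lambda^{(2)}\to\lambda^{(1)}}(\rho_1;\rho_2)$ and we apply one step of the transition kernel \eqref{eq_two_levels_transition}, then the resulting distribution of ${\mu^{(2)}\choose\mu^{(1)}}$ equals $m'(\mu^{(2)},\mu^{(1)}) := {\mathbb S}_{\rho_1,\rho_2;\rho^-,\rho'}(\mu^{(2)})\,p^{\downarrow}_{\mu^{(2)}\to\mu^{(1)}}(\rho_1;\rho_2)$. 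So I must compute
$$
 \sum_{\lambda^{(1)},\lambda^{(2)}} m(\lambda^{(2)},\lambda^{(1)})\cdot
 p^{\uparrow}_{\lambda^{(1)}\to\mu^{(1)}}(\rho_1;\rho')\cdot
 \frac{p^{\uparrow}_{\lambda^{(2)}\to\mu^{(2)}}(\rho_1,\rho_2;\rho')\,
 p^{\downarrow}_{\mu^{(2)}\to\mu^{(1)}}(\rho_1;\rho_2)}
 {\sum_{\kappa}p^{\uparrow}_{\lambda^{(2)}\to\kappa}(\rho_1,\rho_2;\rho')\,
 p^{\downarrow}_{\kappa\to\mu^{(1)}}(\rho_1;\rho_2)}
$$
and show it equals $m'(\mu^{(2)},\mu^{(1)})$.

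The key step is to recognize the normalizing denominator. By the commutativity relation \eqref{eq_Schur_commutativity} applied at the level of matrix entries,
$$
 \sum_{\kappa}p^{\uparrow}_{\lambda^{(2)}\to\kappa}(\rho_1,\rho_2;\rho')\,
 p^{\downarrow}_{\kappa\to\mu^{(1)}}(\rho_1;\rho_2)
 = \sum_{\kappa}p^{\downarrow}_{\lambda^{(2)}\to\kappa}(\rho_1;\rho_2)\,
 p^{\uparrow}_{\kappa\to\mu^{(1)}}(\rho_1;\rho'),
$$
which I will use to rewrite the denominator (this is the object that makes the kernel well-defined and will cancel against a factor produced by the Markov step on the first coordinate). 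Concretely, I would pull the factor $p^{\uparrow}_{\lambda^{(2)}\to\mu^{(2)}}(\rho_1,\rho_2;\rho')$ together with the ${\mathbb S}_{\rho_1,\rho_2;\rho^-}(\lambda^{(2)})$ and sum over $\lambda^{(2)}$ using the first identity of Proposition~\ref{Prop_p_agrees_with_Schur}, which produces ${\mathbb S}_{\rho_1,\rho_2;\rho^-,\rho'}(\mu^{(2)})$ — but the $\lambda^{(2)}$-sum is entangled with the $\lambda^{(1)}$-sum through the $\downarrow$-factor and the denominator, so the order of summation must be arranged carefully. The cleanest route is first to sum over $\lambda^{(1)}$ using the definition \eqref{eq_p_up}, \eqref{eq_p_down} of $p^{\downarrow}_{\lambda^{(2)}\to\lambda^{(1)}}(\rho_1;\rho_2)\,p^{\uparrow}_{\lambda^{(1)}\to\mu^{(1)}}(\rho_1;\rho')$, which by the specialized skew Cauchy identity (Proposition~\ref{proposition_skew_Cauchy}) collapses to $\sum_{\lambda^{(1)}}p^{\downarrow}_{\lambda^{(2)}\to\lambda^{(1)}}(\rho_1;\rho_2)p^{\uparrow}_{\lambda^{(1)}\to\mu^{(1)}}(\rho_1;\rho')$, which is exactly the denominator again (by the commutation relation just quoted, rewritten in $\downarrow\uparrow$ order). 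This sum therefore cancels the $1/(\cdots)$ factor exactly, leaving only
$$
 \sum_{\lambda^{(2)}}{\mathbb S}_{\rho_1,\rho_2;\rho^-}(\lambda^{(2)})\,
 p^{\uparrow}_{\lambda^{(2)}\to\mu^{(2)}}(\rho_1,\rho_2;\rho')\cdot
 p^{\downarrow}_{\mu^{(2)}\to\mu^{(1)}}(\rho_1;\rho_2),
$$
and the $\lambda^{(2)}$-sum is now handled by Proposition~\ref{Prop_p_agrees_with_Schur} (first identity, with base specializations $\rho_1,\rho_2$ on one side and $\rho^-$ on the other, adding $\rho'$), giving precisely ${\mathbb S}_{\rho_1,\rho_2;\rho^-,\rho'}(\mu^{(2)})\,p^{\downarrow}_{\mu^{(2)}\to\mu^{(1)}}(\rho_1;\rho_2)=m'(\mu^{(2)},\mu^{(1)})$, as desired.

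The main obstacle I anticipate is bookkeeping: the transition kernel \eqref{eq_two_levels_transition} has a data-dependent denominator, so one must be careful that the cancellation of that denominator against the $\lambda^{(1)}$-summation is valid term-by-term (i.e.\ the denominator depends on $\lambda^{(2)}$ and $\mu^{(1)}$ only, not on $\lambda^{(1)}$, which is what allows it to be pulled out of the $\lambda^{(1)}$-sum). I would therefore state explicitly at the outset that the denominator is a function of $(\lambda^{(2)},\mu^{(1)})$ alone, justify interchanging the summations (all terms are nonnegative, so Tonelli applies), and only then perform the two collapses. A secondary point worth a sentence is that the state space $\Y^{(2)}$ is exactly the support of the Schur process in question, so the denominator never vanishes where it is needed. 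Modulo these remarks the argument is a two-line chain of applications of the skew Cauchy identity and its consequences already recorded in Propositions~\ref{proposition_skew_Cauchy}, \ref{prop_Schur_commutativity}, and \ref{Prop_p_agrees_with_Schur}.
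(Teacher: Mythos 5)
Your argument is correct and is exactly the ``direct computation based on Proposition \ref{prop_Schur_commutativity}'' that the paper invokes (cf.\ \cite[Section 2.2]{BF}): summing over $\lambda^{(1)}$ first, noting the denominator of \eqref{eq_two_levels_transition} depends only on $(\lambda^{(2)},\mu^{(1)})$, cancelling it via the commutation relation \eqref{eq_Schur_commutativity}, and finishing with Proposition \ref{Prop_p_agrees_with_Schur}. The only blemish is the sentence where you say the $\lambda^{(1)}$-sum ``collapses to'' itself; what you mean (and what is true) is simply that it equals the denominator by \eqref{eq_Schur_commutativity}, so no mathematical gap remains.
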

\begin{proof}
 Direct computation based on Proposition \ref{prop_Schur_commutativity}, cf.\ \cite[Section 2.2]{BF}.
\end{proof}

More generally, we can iterate the above constructions and produce a Markov chain on sequences of
Young diagrams $\lambda^{(N)},\dots,\lambda^{(1)}$ as follows. The first Young diagram
$\lambda^{(1)}$ evolves according to transition probabilities
$p^{\uparrow}_{\lambda^{(1)}\to\mu^{(1)}} (\rho_1;\rho')$. Then, for any $k\ge 2$, as soon as
$\mu^{(k-1)}$ is defined and given $\lambda^{(k)}$ the distribution of $\mu^{(k)}$ is the
distribution of the middle point in the two-step Markov chain with transitions
$p^{\uparrow}_{\lambda^{(k)}\to\mu}(\rho_1,\dots,\rho_k;\rho')$ and
$p^{\downarrow}_{\mu\to\mu^{(k-1)}}(\rho_1,\dots,\rho_{k-1};\rho_k)$.

Similarly to Proposition \ref{Proposition_Schur_preserved} one proves that one step of thus
constructed Markov chain adds $\rho'$ to the specialization $\rho^-$ of the Schur process with
distribution
\begin{equation}
\label{eq_x11}
 {\mathbb S}_{\rho_1,\dots,\rho_N;\rho^-}(\lambda^{(N)})
p^{\downarrow}_{\lambda^{(N)}\to\lambda^{(N-1)}}(\rho_1,\dots,\rho_{N-1};\rho_N)\cdots
p^{\downarrow}_{\lambda^{(2)}\to\lambda^{(1)}}(\rho_1;\rho_2).
\end{equation}

The above constructions might look quite messy, so let us consider several examples, where they
lead to relatively simple Markov chains.

Take each $\rho_k$ to be the Schur--positive specialization with single non--zero parameter
$\alpha_1=1$, and let $\rho'$ be the Schur--positive specialization with single non-zero parameter
$\beta_1=b$. Consider a discrete time homogeneous Markov chain
$(\lambda^{(1)}(t),\dots,\lambda^{(N)}(t))$ with defined above transitional probabilities and
started from the Schur process as in \eqref{eq_x11} with $\rho^-$ being trivial specialization
(with all zero parameters). This implies that
$(\lambda^{(1)}(0),\dots,\lambda^{(N)}(0))=(\varnothing,\dots,\varnothing)$. Note that at any time
$t$ the Young diagram $\lambda^{(k)}(t)$ has at most $k$ non-empty rows and their coordinates
satisfy the following \emph{interlacing conditions}:
\begin{equation}
\label{eq_interlacing}
 \lambda^{(k)}_1\ge\lambda^{(k-1)}_1\ge\lambda^{(k)}_2\ge\dots\ge\lambda^{(k-1)}_{k-1}\ge\lambda^{(k)}_k.
\end{equation}
 In particular, $\lambda^{(1)}$ has a single
row, i.e.\ it is a number. The definitions imply that the transitional probabilities for
$\lambda^{(1)}$ are
$$
 p^{\uparrow}_{\lambda^{(1)}\to\mu^{(1)}} (\rho_1;\rho')=\begin{cases} \frac{b}{1+b},&\text{ if
 }\mu^{(1)}=\lambda^{(1)}+1,\\
 \frac{1}{1+b},&\text{ if
 }\mu^{(1)}=\lambda^{(1)},\\
 0,&\text{ otherwise.}
 \end{cases}
$$
In other words, the evolution of $\lambda^{(1)}$ is a simple Bernoulli random walk with
probability of move $p=b/(1+b)$. More generally, given that $\lambda^{(k)}(t)=\lambda$ and
$\lambda^{(k-1)}(t+1)=\mu$ the distribution of $\lambda^{(k)}(t+1)$ is given by

\begin{multline*}
 {\rm Prob} (\lambda^{(k)}(t+1)=\nu \mid \lambda^{(k)}(t)=\lambda,\lambda^{(k-1)}(t+1)=\mu)
 = \\
 = \dfrac{s_{\nu/\lambda}(0;b;0) s_{\nu/\mu}(1;0;0)}{\sum_\eta s_{\eta/\lambda}(0;b;0) s_{\eta/\mu}(1;0;0)}
 \sim b^{|\nu|-|\lambda|},
\end{multline*}
given that $0\le \nu_i -\lambda_i \le 1$, and $\mu$ and $\nu$ satisfy the interlacing conditions
\eqref{eq_interlacing}. In other words, the length of each row of $\lambda^{(k)}$ independently
increases by $1$ with probability $b/(1+b)$ unless this contradicts interlacing conditions (in
which case that length either stays the same or increases by $1$ with probability $1$).

In order to visualize the above transitional probabilities consider $N(N+1)/2$ interlacing
particles with integer coordinates $x_i^j=\lambda^{(j)}_{j+1-i}-N+i$, $j=1,\dots, N$,
$i=1,\dots,j$. In other words, for each $j$ we reorder the coordinates $\lambda^{(j)}_i$ and make
them strictly increasing. The coordinates of particles thus satisfy the inequalities
\begin{equation}\label{int_ineq}
 x_{i-1}^j < x_{i-1}^{j-1} \le x_{i}^{j}.
\end{equation}
Such arrays are often called Gelfand--Tsetlin patterns (or schemes) of size $N$ and under this
name they are widely used in representation-theoretic context. We typically use the notation
$x_i^j$ both for the location of a particle and the particle itself. It is convenient to put
particles $x_i^j$ on adjacent horizontal lines, as shown in Figure \ref{Figure_Interlacing}.
\begin{figure}[h]
\begin{center}
\noindent{\scalebox{1.3}{\includegraphics{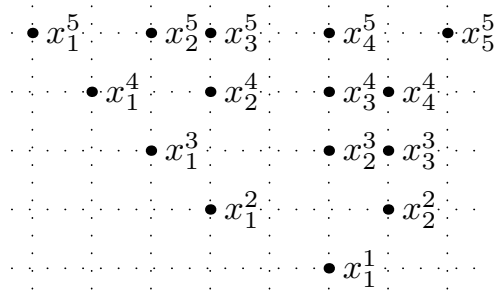}}} \caption{Interlacing particles.
\label{Figure_Interlacing} }
\end{center}
\end{figure}

Now the dynamics has the following description. At each time $t$ each of the $N(N+1)/2$ particles
flips a biased coin with probability of ``heads'' $p=b/(1+b)$ (all flips are independent). After
that positions $x_i^j(t+1)$ of particles at time $t+1$ are obtained by the following
\emph{sequential update}. First, we define $x_1^1(t+1)$, then $x_i^2(t+1)$, $i=1,2$, then
$x_i^3(t+1)$, $i=1,2,3$, etc. To start with, we set $x_1^1(t+1)=x_1^1(t)+1$, if the coin of the
particle $x_1^1$ came ``heads'', otherwise, $x_1^1(t+1)=x_1^1(t)$. Subsequently, once the values
of $x^j_i(t+1)$, $j=1,\dots,k-1$, $i=1,\dots,j$ are determined, we define $x_i^k(t+1)$ for each
$i=1,\dots,k$ independently by the following procedure, in which each rule 3 is performed only if
the conditions for the previous two are \textit{not} satisfied.
\begin{enumerate}
\item If $i>1$ and $x_i^k(t)=x_{i-1}^{k-1}(t+1)-1$,
 then we say that particle $x_i^k$ is \emph{pushed} by $x_{i-1}^{k-1}$ and
 set $x_{i}^{k}(t+1)=x_i^k(t)+1$.
\item If $x_{i}^{k-1}(t+1)=x_i^k(t)+1$, then we say that particle
 $x_i^k$ is \emph{blocked} by $x_i^{k-1}$ and set $x_i^k(t+1)=x_i^k(t)$.
\item If the coin of the particle $x_i^k$ came ``heads'', then we set $x_i^k(t+1)=x_i^k(t)+1$, otherwise,
we set $x_i^k(t+1)=x_i^k(t)$.
\end{enumerate}

Informally, one can think of each particle having  a weight depending on its vertical coordinate,
with higher particles being lighter. The dynamics is defined in such a way that heavier particles
push lighter ones and lighter ones are blocked by heavier ones in order to preserve the
interlacement conditions.

One interesting fact about this dynamics is that the joint distribution of $x_i^j(t+j)$ is the
projection of the uniform distribution on the domino tilings of the so-called Aztec diamond, see
\cite{N}, \cite{BF}. A computer simulation showing the connection with tilings can be found on
Ferrari's website \cite{Fe}.

\medskip

Sending $b\to 0$ and rescaling the time we get a continuous version of the above dynamics.
Formally, the process $Y(t)=\{Y_i^j(t)\}$, $t\geq0$, $j=1,\dots, N$, $i=1,\dots,j$, is a
continuous-time Markov chain defined as follows. Each of the $N(N+1)/2$ particles has an
independent exponential clock of rate $1$ (in other words, the times when clocks of particles ring
are independent standard Poisson processes). If the clock of the particle $Y_i^j$ rings at time
$t$, we check whether $Y_{i}^{j-1}(t-)=Y_{i}^j(t-)+1$. If so, then nothing happens; otherwise we
let $Y_i^j(t)=Y_i^j(t-)+1$. If $Y_i^j(t-)=Y_{i+1}^{j+1}(t-)=\dots=Y_{i+k}^{j+k}(t-)$, then we also
set $Y_{i+1}^{j+1}(t)=Y_{i+1}^{j+1}(t-)+1,\dots,Y_{i+k}^{j+k}(t)=Y_{i+k}^{j+k}(t-)+1$.

The Markov chain $Y$ was introduced in \cite{BF} as an example of a 2d growth model relating
classical interacting particle systems and random surfaces arising from dimers. The computer
simulation of $Y(t)$ can be found at Ferrari's website \cite{Fe}. The restriction of $Y$ to the
$N$ leftmost particles $Y_1^1,\dots,Y_1^N$ is the familiar totally asymmetric simple exclusion
process (TASEP), the restriction $Y$ to the $N$ rightmost particles $Y_1^1,\dots,Y_N^N$ is long
range TASEP (or PushASEP), while the particle configuration $Y(t)$ at a fixed time $t$ can be
identified with a lozenge tiling of a sector in the plane and with a stepped surface (see the
introduction in \cite{BF} for the details).

One proves that the fixed time distribution of $Y(t)$ is the Schur process of Section
\ref{Section_RSK_dyn} with $a_i=1$ (under the above identification of particle configurations and
sequences of Young diagrams). Moreover, the restriction of $Y(t)$ on $N$ leftmost particles
$Y_1^1,Y_1^2\dots,Y_1^N$ is the same as the restriction of the dynamics of Section
\ref{Section_RSK_dyn} and similar statement is true for restrictions on $Y_1^1, Y_2^2\dots,Y_N^N$
and on $Y_1^N, Y_2^N,\dots,Y_N^N$. Nevertheless, the dynamics $Y(t)$ is \emph{different} from that
of Section \ref{Section_RSK_dyn}.

By appropriate limit transition we can also make the state space of our dynamics continuous.

\begin{theorem}[\cite{GS}]
\label{theorem_convergence_to_Warren} The law of $N(N+1)/2$ dimensional stochastic process
$(Y(Lt)-tL)/\sqrt{L}$, $t\geq0$ converges in the limit $L\to\infty$ to the law of a continuous
time-space process $W(t)$.
\end{theorem}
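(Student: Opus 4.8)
\emph{Proof proposal.} The plan is to realize the prelimit chain $Y$ as a deterministic ``reflection'' functional of independent Poisson clocks, to pass to a diffusive scaling limit of those clocks via Donsker's theorem together with a tightness estimate, and to identify every subsequential limit as Warren's process through the system of reflected stochastic differential equations that characterizes it. To begin, let $N_i^j$, $1\le i\le j\le N$, denote the independent rate-$1$ Poisson processes attached to the particles. Reading off the update rules one obtains, for every particle,
\begin{equation}
 Y_i^j(t)=N_i^j(t)-B_i^j(t)+P_i^j(t),
\end{equation}
where $B_i^j(t)=\int_0^t\mathbf 1\{Y_i^{j-1}(s-)=Y_i^j(s-)+1\}\,dN_i^j(s)$ counts the blocked clock rings and $P_i^j(t)$, also nondecreasing, counts the pushes received from $Y_{i-1}^{j-1}$ (boundary particles are never blocked or pushed). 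Both $B_i^j$ and $P_i^j$ increase only when the corresponding interlacing inequality is saturated; pushes may cascade up the diagonal $Y_i^j,Y_{i+1}^{j+1},\dots$, but each clock ring contributes at most one unit jump per level, so any single particle jumps at rate at most $N(N+1)/2$.

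Next I would establish tightness. Under the scaling $\widehat Y_i^j(t)=(Y_i^j(Lt)-Lt)/\sqrt L$ we have $\widehat Y_i^j=\widehat N_i^j-\widehat B_i^j+\widehat P_i^j$ with $\widehat N_i^j(t)=(N_i^j(Lt)-Lt)/\sqrt L$. By Donsker's theorem the vector $(\widehat N_i^j)_{i,j}$ converges in $D([0,\infty),\R^{N(N+1)/2})$ to a vector $(\beta_i^j)_{i,j}$ of \emph{independent} standard Brownian motions, and is in particular tight. The processes $L^{-1/2}B_i^j$ and $L^{-1/2}P_i^j$ are nondecreasing, and one shows that the number of blocked rings and of pushes up to time $Lt$ is $O_P(\sqrt L)$, i.e.\ of local-time order, so these are tight as well; Aldous's criterion then yields tightness of $(\widehat Y_i^j)_{i,j}$ in the Skorokhod space. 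Since the jumps of all building blocks have size $L^{-1/2}\to 0$, every subsequential limit is almost surely continuous.

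It remains to identify any subsequential limit. Along a subsequence $\widehat N_i^j\to\beta_i^j$, $L^{-1/2}B_i^j\to\ell_i^{j,-}$, $L^{-1/2}P_i^j\to\ell_i^{j,+}$ with $\ell_i^{j,\pm}$ continuous and nondecreasing, hence $\widehat Y_i^j\to W_i^j:=\beta_i^j-\ell_i^{j,-}+\ell_i^{j,+}$. Because the interlacing gaps $Y_i^{j-1}-Y_i^j$ stay of order $1$ in the original chain, after rescaling the limit lives on the closed interlacing cone, and from the fact that $B_i^j$, $P_i^j$ grow only when the relevant gap equals $1$ one checks that $\ell_i^{j,-}$ grows only on $\{W_i^j=W_i^{j-1}\}$ and $\ell_i^{j,+}$ only on $\{W_i^j=W_{i-1}^{j-1}\}$; moreover the Poissonian martingale part of $\widehat Y_i^j$ converges to $\beta_i^j$, so $W_i^j$ is a Brownian motion plus these two reflection terms. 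Thus $W$ solves the defining system of Warren's process: $W_1^1=\beta_1^1$, and for $1\le i\le k\le N$
\begin{equation}
 dW_i^k=d\beta_i^k+d\ell_i^{k,+}-d\ell_i^{k,-},
\end{equation}
with $\ell_i^{k,+}$ reflecting $W_i^k$ off $W_{i-1}^{k-1}$ from below and $\ell_i^{k,-}$ reflecting it off $W_i^{k-1}$ from above (boundary barriers $\mp\infty$). This system is well posed — solving level by level, for each $k$ and given $W^{(k-1)}$ the coordinate $W_i^k$ is the pathwise-unique solution of a two-sided Skorokhod problem with moving barriers, and its law is Warren's process, cf.\ its use for $Y$ in \cite{BF}, \cite{GS}. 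Since all subsequential limits agree in law, the full family $\widehat Y$ converges to $W$.

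The main obstacle I expect is the local-time analysis of the identification step: proving, simultaneously with the functional CLT, that $L^{-1/2}B_i^j$ and $L^{-1/2}P_i^j$ admit continuous limits charging only the collision sets and contributing nothing to the martingale part — i.e.\ making rigorous the collapse of the $O(1)$ interlacing gaps to $0$ and the emergence of reflecting local times from the block and push cascades. A secondary issue is the well-posedness of the limiting reflected system, where the barriers $W_i^{k-1}$ are themselves only continuous semimartingales, so one needs the Skorokhod map with random time-dependent barriers rather than the classical one. As a consistency check (or an alternative route for the finite-dimensional distributions), since the fixed-time law of $Y$ and more generally its space--time correlations are determinantal through the Schur-process structure of Section~\ref{Section_RSK_dyn}, one can also obtain convergence of the finite-dimensional distributions by a steepest-descent analysis of the (extended) correlation kernel as in Section~\ref{Section_Schur_measures}, and then combine it with the tightness above.
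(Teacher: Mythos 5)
The paper itself contains no proof of Theorem \ref{theorem_convergence_to_Warren}: the statement is quoted from \cite{GS}, so your proposal can only be judged on its own merits. Its architecture — exploit the triangular structure of the dynamics (each particle is driven by its own Poisson clock and influenced only by lower levels), apply Donsker scaling to the clocks, prove tightness, and identify subsequential limits through the two-sided reflection description of Warren's process from \cite{W} — is the natural route and is in the same spirit as the level-by-level argument of \cite{GS}. But as written it is a plan rather than a proof, and the gap sits exactly where you flag it.

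Everything hinges on the assertion that the block and push counts satisfy $B_i^j(Lt),P_i^j(Lt)=O_P(\sqrt L)$, and, more than that, that $L^{-1/2}B_i^j$ and $L^{-1/2}P_i^j$ are C-tight with limits that increase only on the respective collision sets. This is not a consequence of Donsker's theorem: the blocking and pushing indicators are functionals of the gap processes $Y_i^{j-1}-Y_i^j$ and $Y_i^j-Y_{i-1}^{j-1}$, which are strongly correlated with the driving clocks, and pushes arrive in cascades along diagonals. What is needed is an induction over levels: level $1$ is a free Poisson walk; given the scaling limit and local-time-type estimates at level $k-1$, one must compare each level-$k$ gap with a reflected random walk, bound the expected number of visits to $\{\mathrm{gap}=1\}$ on $[0,Lt]$ by $C\sqrt L$ with enough uniformity to control oscillations of the monotone parts, and show that on time intervals where the limiting gap stays positive the rescaled block/push counts are negligible. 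None of this is supplied, and without it neither your tightness claim nor the support property of $\ell_i^{j,\pm}$ (hence the identification of the limit as Warren's system) is established. A second, smaller issue is the initial condition: the packed configuration rescales to the fully degenerate point where all coordinates coincide, so the limit must be Warren's process started from zero; at $t=0$ the two barriers of your two-sided Skorokhod problem coincide, so the off-the-shelf Lipschitz Skorokhod map with separated continuous barriers does not apply directly and one needs the entrance-type analysis of \cite{W} (plus an argument that barriers separate instantly and a.s. never cross afterwards) to get uniqueness in law. Finally, note that your fallback via the determinantal structure would give only finite-dimensional distributions and would not remove the main difficulty, which is precisely the tightness/local-time estimate above.
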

Note that for $N=1$ Theorem \ref{theorem_convergence_to_Warren} is a classical statement (known as
Donsker invariance principle) on the convergence of Poisson process towards the Brownian motion
(see e.g.\ \cite[Section 37]{Bi} or \cite[Section 12]{Kal}).

The process $W(t)$ was introduced in \cite{W} and has an independent probabilistic
description: $W_1^1$ is the standard Brownian motion; given $W_i^j$ with $j<k$ the
coordinate $W_i^k$ is defined as the Brownian motion \emph{reflected} by the
trajectories $W_{i-1}^{k-1}$ (if $i>1$) and $W_{i}^{k-1}$ (if $i<k$) (see \cite{W}
for a more detailed definition). The process $W$ has many interesting properties:
For instance, its projection $W_i^N$, $i=1,\dots,N$, is the $N$-dimensional Dyson's
Brownian motion, namely the process of $N$ independent Brownian motions conditioned
to never collide with each other. The fixed time distribution of vector $W_i^N$,
$i=1,\dots,N$, can be identified with the distribution of eigenvalues of the random
Hermitian matrix from the Gaussian Unitary Ensemble (it was introduced after
Definition \ref{Definition_biorth} above), and the fixed--time distribution of the
whole process $W(t)$ is the so--called GUE--corners process\footnote{The name
GUE--minors process is also used.}, cf.\ \cite{Bar}, \cite{JN}.

\bigskip

Our construction of Markov dynamics can be adapted to the measures on (skew) plane partitions of
Section \ref{Section_plane_partitions}. In particular, this leads to an efficient \emph{perfect
sampling algorithm}, which allows to visualize how a typical random (skew) plane partition looks
like, see \cite{B-Schur} for details and Figure \ref{Figure_skew_sample} for a
result of a computer
simulation.
\begin{figure}[h]
\begin{center}
\noindent{\scalebox{0.7}{\includegraphics{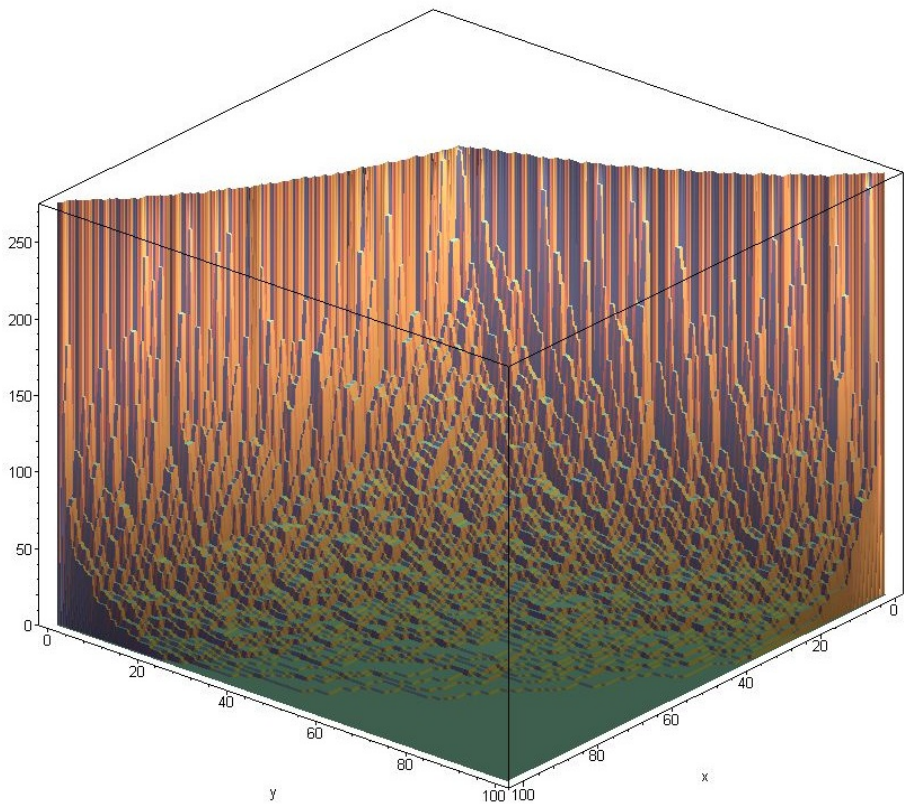}}}
\caption{Sample from the measure $q^{\rm volume}$ on skew plane partitions with one particular
choice of support. \label{Figure_skew_sample}}
\end{center}
\end{figure}
 Further generalizations include, in particular, various
probability distributions on \emph{boxed} plane partitions (see \cite{BG}, \cite{BGR}, \cite{Be}])
and directed polymers in random media which we discuss in the next section.

\bigskip

One feature that all of the above Markov chains share is that the transitional probabilities
decompose into relatively simple $1d$ distributions. In other words, each step of the dynamics
boils down to sampling from several explicit distributions and applying not very complicated
combinatorial rules. But in the same time, after we do several steps, we arrive at highly
non-trivial probabilistic objects of independent interest, such as random plane partitions or
eigenvalues of random matrices.

\section{Macdonald polynomials and directed polymers}

\label{Section_Macdonald}

In this section we generalize the constructions of Sections \ref{Section_Schur_measures},
\ref{Section_Schur_process} based on the Schur functions to their $(q,t)$ deformation known as
\emph{Macdonald symmetric functions} and discuss new applications.

\subsection{Preliminaries on Macdonald polynomials}
In what follows we assume that $q$ and $t$ are real numbers satisfying $0<q<1$, $0<t<1$.

One way to define Macdonald symmetric functions $P_\lambda(\,\cdot\,;q,t)$ indexed by Young
diagrams $\lambda$ is through the Gram--Schmidt orthogonalization procedure. Define the following
scalar product on $\Lambda$ via its values on the basis of power sums as
$$
 \left\langle p_\lambda, p_\mu \right\rangle_{q,t} = \delta_{\lambda=\mu} z_\lambda
 \prod_{i=1}^{\ell(\lambda)}\frac{1-q^{\lambda_i}}{1-t^{\lambda_i}},
$$
where $z_\lambda$, $p_\lambda$ are as in Theorem \ref{theorem_Cauchy}.
\begin{definition}
 Macdonald symmetric functions $P_\lambda(\,\cdot\,;q,t)$ form a unique linear basis in $\Lambda$ such that
 \begin{enumerate}
  \item The leading coefficient of $P_\lambda(\,\cdot\,;q,t)$ (with respect to lexicographic ordering on
  monomials) is $x_1^{\lambda_1} x_2^{\lambda_2}\cdots$.
  \item $\left\langle P_\lambda(\,\cdot\,;q,t), P_\mu(\,\cdot\,;q,t)\right\rangle_{q,t} =0$, unless
  $\lambda=\mu$.
 \end{enumerate}
\end{definition}
We omit the dependence on $(q,t)$ and write simply $P_\lambda(\,\cdot\,)$ when it leads to no
confusion.

Since $\Lambda$ is a projective limit of algebras $\Lambda_N$ of symmetric polynomials in $N$
variables $x_1,\dots,x_N$, the functions $P_\lambda$ automatically define Macdonald symmetric
polynomials in $N$ variables $P_\lambda(x_1,\dots,x_N;q,t)$. The latter can be characterized (for
generic $q$, $t$)  by being eigenvectors of certain difference operators.

\begin{definition} The $r$th Macdonald difference operator $\mathcal D_r^N$ is defined as
$$
 \mathcal D_r^N = \sum_{I\subset \{1,\dots,N\},\, |I|=r} A_I(x;t) \prod_{i\in I} T_i,
$$
where
$$
 A_I(x;t)=t^{r(r-1)/2} \prod_{i\in I,\, j\notin I}\frac{tx_i-x_j}{x_i-x_j},
$$
and $T_i$ is the $q$--shift operator in variable $x_i$
$$
 (T_i f)(x_1,\dots,x_N)=f(x_1,\dots,x_{i-1},qx_i,x_{i+1},\dots,x_N).
$$
In particular,
$$
 (\mathcal D_1^N f)(x_1,\dots,x_N)=\sum_{i=1}^N \prod_{j\ne i} \frac{tx_i-x_j}{x_i-x_j}
 f(x_1,\dots,x_{i-1},qx_i,x_{i+1},\dots,x_N).
$$
\end{definition}

\begin{proposition}
\label{Proposition_Macdonald_difference}
 For any $N\ge 1$, polynomials $P_\lambda(x_1,\dots,x_N;q,t)$ with $\ell(\lambda)\le N$ form a common eigenbasis
 of operators $\mathcal D_r^N$, $r=1,\dots,N$. More precisely,
 $$
  \mathcal D_r^N P_\lambda(x_1,\dots,x_N;q,t) =
  e_r(q^{\lambda_1}t^{N-1},q^{\lambda_2}t^{N-2},\dots,q^{\lambda_N}) P_\lambda(x_1,\dots,x_N;q,t),
 $$
 where $e_r$ are the elementary symmetric polynomials.
 In particular,
  $$
  \mathcal D_1^N P_\lambda(x_1,\dots,x_N;q,t) =
  (q^{\lambda_1}t^{N-1}+q^{\lambda_2}t^{N-2}+\dots+q^{\lambda_N}) P_\lambda(x_1,\dots,x_N;q,t).
 $$
\end{proposition}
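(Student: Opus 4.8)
The plan is to descend to symmetric \emph{polynomials} in $N$ variables and to establish, for each fixed $N$, three facts about the operators $\mathcal D_r^N$, following \cite[Chapter VI]{M}: (i) $\mathcal D_r^N$ maps $\Lambda_N$ into itself; (ii) in the basis of monomial symmetric polynomials $m_\mu$ it is triangular with respect to the dominance partial order, and its diagonal entry on $m_\lambda$ (for $\ell(\lambda)\le N$) is $e_r(q^{\lambda_1}t^{N-1},\dots,q^{\lambda_N})$; (iii) $\mathcal D_r^N$ is self-adjoint for the scalar product $\langle\,\cdot\,,\cdot\,\rangle_{q,t}$. It is convenient to assemble the operators into the generating operator $D_N(X)=\sum_{r=0}^N X^r\,\mathcal D_r^N$ with a formal variable $X$; then (ii) reads $D_N(X)m_\lambda=\big(\prod_{i=1}^N(1+Xq^{\lambda_i}t^{N-i})\big)m_\lambda+\sum_{\mu<\lambda}c_\mu(X)m_\mu$, and the coefficient of $X^r$ in $\prod_{i=1}^N(1+Xq^{\lambda_i}t^{N-i})$ is exactly $e_r(q^{\lambda_1}t^{N-1},\dots,q^{\lambda_N})$.

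For (i): symmetry of $\mathcal D_r^N f$ is immediate because $S_N$ permutes the index sets $I$ and conjugates the operators $A_I(x;t)\prod_{i\in I}T_i$ accordingly, so the sum over $I$ is $S_N$-invariant; the only point to check is that the simple poles of the coefficients $A_I(x;t)$ along the hyperplanes $x_i=x_j$ cancel. This I would do by grouping the summands of $\mathcal D_r^N f$ in pairs corresponding to $I$ and $I'=(I\setminus\{i\})\cup\{j\}$ with $i\in I$, $j\notin I$, and checking that their residues at $x_i=x_j$ are opposite (here one uses that $T_i$ and $T_j$ give the same output on $\{x_i=x_j\}$). Hence $\mathcal D_r^N f$ is a genuine polynomial, and being symmetric it lies in $\Lambda_N$.

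For (ii): apply $\mathcal D_r^N$ to a dominant monomial $x^\lambda=x_1^{\lambda_1}\cdots x_N^{\lambda_N}$ with $\lambda$ a partition. The summand indexed by $I$ equals $A_I(x;t)\,q^{\sum_{i\in I}\lambda_i}\,x^\lambda$; expanding $A_I(x;t)$ in the region $x_1\gg\cdots\gg x_N$ one sees that every monomial it produces is $\le x^\lambda$ in dominance, with $x^\lambda$ itself occurring with coefficient $\prod_{i\in I}t^{N-i}$ (a short bookkeeping exercise with the $t^{r(r-1)/2}$ prefactor). Summing over $I$ and then over the $S_N$-orbit of $\lambda$ yields the displayed triangular action of $D_N(X)$ with diagonal entry $\prod_i(1+Xq^{\lambda_i}t^{N-i})$. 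For (iii) I would establish self-adjointness of $\mathcal D_r^N$ for $\langle\,\cdot\,,\cdot\,\rangle_{q,t}$; the cleanest route is to prove it first for Macdonald's torus scalar product $\langle f,g\rangle'_N=\frac{1}{N!}[x^0]\big(f\,\overline g\,\Delta\big)$, $\Delta=\prod_{i\ne j}\frac{(x_i/x_j;q)_\infty}{(tx_i/x_j;q)_\infty}$, by a summation-by-parts manipulation of the $q$-shifts, and then transfer to $\langle\,\cdot\,,\cdot\,\rangle_{q,t}$ using the compatibility of the two pairings on symmetric polynomials in $N$ variables proved in \cite[Chapter VI]{M}.

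Finally, combine the pieces. For $(q,t)$ generic the $N$ numbers $q^{\lambda_i}t^{N-i}$ are pairwise distinct and their multiset determines $\lambda$ (take logarithms and use $\mathbb Q$-linear independence of $\log q,\log t$), so the polynomials $\prod_{i=1}^N(1+Xq^{\lambda_i}t^{N-i})$ attached to distinct partitions with at most $N$ parts are distinct. Thus $D_N(X)$ is a self-adjoint operator on $\Lambda_N$ whose matrix in the $m_\lambda$-basis is triangular with distinct diagonal entries: its eigenvectors form an orthogonal basis, and triangularity forces the eigenvector with eigenvalue $\prod_i(1+Xq^{\lambda_i}t^{N-i})$ to be of the form $m_\lambda+\sum_{\mu<\lambda}(\ast)m_\mu$ and orthogonal to every $m_\mu$ with $\mu<\lambda$ --- which are precisely the two properties characterizing the Macdonald polynomial $P_\lambda$. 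Hence $D_N(X)P_\lambda=\big(\prod_i(1+Xq^{\lambda_i}t^{N-i})\big)P_\lambda$, and the case of arbitrary $0<q,t<1$ follows by continuity since the coefficients of $P_\lambda$ are regular on $(0,1)^2$; extracting the coefficient of $X^r$ gives the proposition. The main obstacle is the computational core --- the pole cancellation in (i), the precise identification of the diagonal term in (ii), and the self-adjointness in (iii); each is routine in principle but requires the careful bookkeeping of \cite[Chapter VI, Sections 3 and 4]{M}.
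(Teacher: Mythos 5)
Your outline is correct and coincides with the argument the paper itself leans on without reproducing: the text offers no proof of this proposition (it notes the eigenrelation can be taken as an alternative definition of the $P_\lambda$ and refers to Macdonald's book), and your three ingredients --- cancellation of the poles of $A_I$ along $x_i=x_j$ so that $\mathcal D_r^N$ preserves $\Lambda_N$, dominance triangularity on the $m_\lambda$ with diagonal entry $e_r(q^{\lambda_1}t^{N-1},\dots,q^{\lambda_N})$, self-adjointness, and then separation of eigenvalues for generic $(q,t)$ plus continuity --- are exactly the scheme of \cite[Chapter VI]{M}. The one point you state loosely is the bridge from self-adjointness for the torus product $\langle\cdot,\cdot\rangle'_N$ to the orthogonality for $\langle\cdot,\cdot\rangle_{q,t}$ that defines $P_\lambda$: there is no off-the-shelf ``compatibility of the two pairings'' in \cite[Chapter VI, Sections 3--4]{M}, so you should either prove self-adjointness for $\langle\cdot,\cdot\rangle_{q,t}$ directly (e.g.\ via the symmetric action of the operator on the Cauchy kernel) or invoke the separate theorem that the same $P_\lambda$ are orthogonal for the torus product (Section VI.9 of \cite{M}); either reference closes this step, so it is a matter of citation precision rather than a flaw in the argument.
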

Proposition \ref{Proposition_Macdonald_difference} can be taken as an alternative definition of
Macdonald polynomials. Both these difference operators and polynomials themselves were introduced
by I.~Macdonald in the late 80s \cite{M-new}. Macdonald polynomials generalized  various
previously known classes of symmetric polynomials. When $q=t$ we get Schur polynomials, when $q=0$
we get \emph{Hall--Littlewood} polynomials, and if we set $q=t^{a}$ and send $t$ to $1$ then we
get \emph{Jack} polynomials. We follow the notations of Macdonald's book \cite{M} where an
interested reader can find proofs of various properties of Macdonald polynomials that we use.

Macdonald polynomials inherit many of their properties from Schur polynomials. In particular,
there exist Macdonald versions of skew functions and Cauchy identity which we used for the
definition of the Schur measure (process) and for the construction of the dynamics in Section
\ref{Section_Schur_process}.

Set
$$
Q_\lambda(\,\cdot\,;q,t)=\frac{P_\lambda(\,\cdot\,;q,t)}{ \left\langle P_\lambda(\,\cdot\,;q,t),
P_\lambda(\,\cdot\,;q,t)\right\rangle_{q,t}}.
$$
The normalizing constant $\left\langle P_\lambda(\,\cdot\,;q,t),
P_\lambda(\,\cdot\,;q,t)\right\rangle_{q,t}$ can be computed explicitly, see \cite[Chapter VI]{M}.

The following statement is a $(q,t)$--analogue of the Cauchy identity:
\begin{proposition} \label{Proposition_Macdonald_Cauchy} Let $x$ and $y$ be two sets of variables. Then
$$
 \sum_{\lambda\in\Y} P_\lambda(x;q,t) Q_\lambda(y;q,t) =\prod_{i,j} \frac{(tx_iy_j;q)_{\infty}}{(x_i
 y_k;q)_{\infty}},
 $$
 where
 $$
  (a;q)_\infty =\prod_{i=0}^{\infty}(1-a q^i).
 $$
\end{proposition}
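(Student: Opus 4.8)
The plan is to derive the identity from the duality between the bases $\{P_\lambda\}$ and $\{Q_\lambda\}$ relative to the scalar product $\langle\,\cdot\,,\,\cdot\,\rangle_{q,t}$, following \cite[Chapter VI]{M}. Everything happens degree by degree: in each homogeneous component $\Lambda^n$ there are only finitely many partitions of $n$, so all the ``matrices'' below are finite, and the infinite products on the right-hand side are read off as formal power series exactly as in the Remark after Theorem \ref{theorem_Cauchy}. First I would rewrite the right-hand side. Put $c_k=\frac{1-t^k}{1-q^k}$ and $\Pi(x;y)=\prod_{i,j}\frac{(tx_iy_j;q)_\infty}{(x_iy_j;q)_\infty}$. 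Taking logarithms and using $\log\frac{1-ta}{1-a}=\sum_{m\ge1}\frac{1-t^m}{m}a^m$ together with $\sum_{k\ge0}q^{km}=\frac{1}{1-q^m}$ gives, as a formal identity,
$$
 \Pi(x;y)=\exp\left(\sum_{k\ge1}c_k\,\frac{p_k(x)\,p_k(y)}{k}\right)=\sum_{\lambda\in\Y}\frac{\prod_i c_{\lambda_i}}{z_\lambda}\,p_\lambda(x)\,p_\lambda(y).
$$
Since the definition of $\langle\,\cdot\,,\,\cdot\,\rangle_{q,t}$ on power sums gives $\langle p_\lambda,p_\lambda\rangle_{q,t}=z_\lambda\prod_i\frac{1-q^{\lambda_i}}{1-t^{\lambda_i}}=z_\lambda/\prod_i c_{\lambda_i}$, this reads $\Pi(x;y)=\sum_\lambda \frac{p_\lambda(x)p_\lambda(y)}{\langle p_\lambda,p_\lambda\rangle_{q,t}}$; i.e.\ $\Pi$ is the reproducing kernel of $(\Lambda,\langle\,\cdot\,,\,\cdot\,\rangle_{q,t})$.

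Next I would prove the reproducing-kernel lemma: if $\{u_\lambda\}$ is a graded basis of $\Lambda$ and $\{v_\lambda\}$ satisfies $\langle u_\lambda,v_\mu\rangle_{q,t}=\delta_{\lambda\mu}$, then $\sum_\lambda u_\lambda(x)v_\lambda(y)=\Pi(x;y)$. Writing $u_\lambda=\sum_\rho a_{\lambda\rho}p_\rho$, $v_\mu=\sum_\sigma b_{\mu\sigma}p_\sigma$ and $D=\operatorname{diag}\big(\langle p_\rho,p_\rho\rangle_{q,t}\big)$, the hypothesis becomes $ADB^{\top}=\mathrm{Id}$ (so $A$, being a square change-of-basis matrix in each degree, is invertible), while the desired conclusion, after identifying $\sum_\lambda u_\lambda(x)v_\lambda(y)$ with $\sum_{\rho,\sigma}(A^{\top}B)_{\rho\sigma}p_\rho(x)p_\sigma(y)$ and $\Pi$ with $\sum_{\rho,\sigma}(D^{-1})_{\rho\sigma}p_\rho(x)p_\sigma(y)$ via Step 1, becomes $A^{\top}B=D^{-1}$; since $A$ is invertible and $D$ is symmetric these two matrix identities are equivalent.

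Finally I would conclude. Property (2) in the definition of the Macdonald polynomials says $\langle P_\lambda,P_\mu\rangle_{q,t}=0$ for $\lambda\ne\mu$; for $0<q,t<1$ the weights $z_\lambda\prod_i\frac{1-q^{\lambda_i}}{1-t^{\lambda_i}}$ are positive, so $\langle\,\cdot\,,\,\cdot\,\rangle_{q,t}$ is positive definite on power sums and $\langle P_\lambda,P_\lambda\rangle_{q,t}>0$. Hence $Q_\lambda=P_\lambda/\langle P_\lambda,P_\lambda\rangle_{q,t}$ is well defined and $\langle P_\lambda,Q_\mu\rangle_{q,t}=\delta_{\lambda\mu}$. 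Since $\{P_\lambda\}$ is a basis of $\Lambda$, the lemma applies with $u_\lambda=P_\lambda$, $v_\lambda=Q_\lambda$ and yields $\sum_\lambda P_\lambda(x)Q_\lambda(y)=\Pi(x;y)$, as claimed.

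The main obstacle is not any single computation but the bookkeeping of the middle step: making ``$\{P_\lambda\}$ is a basis'' and the matrix manipulations rigorous over the infinite index set $\Y$. This is resolved by observing that all objects are graded, so the linear algebra happens inside the finite-dimensional spaces $\Lambda^n$; in particular one should record that the passage from $\langle u_\lambda,v_\mu\rangle_{q,t}=\delta_{\lambda\mu}$ to invertibility of $A$ genuinely uses that $\{u_\lambda\}$ is a basis, which is why the lemma is stated with that hypothesis. The logarithmic expansion of $\Pi$ in the first step is legitimate as a formal power series identity because every $p_k(x)p_k(y)$ has positive degree, so only finitely many terms contribute in each total degree.
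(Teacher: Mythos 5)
Your proof is correct: the computation of $\log\Pi$, the identification of $\Pi(x;y)=\sum_\lambda p_\lambda(x)p_\lambda(y)/\langle p_\lambda,p_\lambda\rangle_{q,t}$ as the reproducing kernel, and the dual-bases matrix argument (carried out degree by degree, with invertibility of $A$ coming from the basis hypothesis) all check out, and the nonvanishing of $\langle P_\lambda,P_\lambda\rangle_{q,t}$ for $0<q,t<1$ justifies $\langle P_\lambda,Q_\mu\rangle_{q,t}=\delta_{\lambda\mu}$. The paper itself gives no proof of this proposition, deferring to \cite[Chapter VI]{M}, and your argument is essentially the standard one found there, so there is nothing to reconcile.
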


Similarly to Definition \ref{Definition_skew_Schur} we have
\begin{definition}
Let $x$ and $y$ be two sets of variables. The skew functions $P_{\lambda/\mu}$ and
$Q_{\lambda/\mu}$ are defined via
$$
P_{\lambda}(x,y)=\sum_{\mu\in\Y} P_{\lambda/\mu}(x) P_\mu(y),
$$
and
$$
Q_{\lambda}(x,y)=\sum_{\mu\in\Y} Q_{\lambda/\mu}(x) Q_\mu(y).
$$
\end{definition}
There is also a $(q,t)$ analogue of the skew Cauchy identity, see \cite[Chapter VI, Section 7]{M}.

Another ingredient of our constructions related to Schur functions was the classification of
positive specializations of Theorem \ref{Theorem_Thoma}. For Macdonald polynomials the following
conjecture is due to Kerov:
\begin{conjecture}
\label{Conjecture_Kerov}
 The Macdonald--positive specializations\footnote{A specialization $\rho$ of $\Lambda$ is called Macdonald--positive
 if $P_\lambda(\rho)\ge 0$ for any $\lambda\in\Y$ (we assume ${0<q,t<1}$).} are parameterized by pairs of sequences
of non-negative reals $\alpha=(\alpha_1\ge\alpha_2\ge\dots\ge 0)$ and
$\beta=(\beta_1\ge\beta_2\ge\dots\ge 0)$ satisfying $\sum_i(\alpha_i+\beta_i)<\infty$ and an
additional parameter $\gamma\ge 0$. The specialization with parameters $(\alpha;\beta;\gamma)$ can
be described by its values on power sums
$$
p_1\mapsto p_1(\alpha;\beta;\gamma)=\frac{1-q}{1-t}\gamma+\sum_i
\left(\alpha_i+\frac{1-q}{1-t}\beta_i\right),
$$
$$
 p_k\mapsto
p_k(\alpha;\beta;\gamma)=\sum\limits_{i}
 \left(\alpha_i^k + (-1)^{k-1}\frac{1-q^k}{1-t^k}\beta_i^{k}\right), \quad k\ge 2,
$$
or, equivalently, via generating functions
$$
\sum\limits_{k=0}^\infty g_{k}(\alpha;\beta;\gamma) z^k = e^{\gamma z} \prod\limits_{i\ge 1}
(1+\beta_i z)\frac{(t\alpha_i z;q)_\infty}{(\alpha_i z;q)_\infty},
$$
where $g_k$ is the one-row $Q$--function $$ g_k(\,\cdot\,)=Q_{(k)}(\,\cdot\,;q,t).$$
\end{conjecture}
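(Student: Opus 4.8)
The plan is to split the statement into its routine \emph{positivity} half and its hard \emph{completeness} half, exactly mirroring the structure of the proof of Theorem~\ref{Theorem_Thoma} in the Schur case.

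\textbf{Positivity of the listed specializations.} First I would check that every specialization on the right-hand side is Macdonald--positive. It is enough to handle the three elementary building blocks: (i) one ordinary variable, $\alpha_1=a$ and all other parameters zero; (ii) one dual variable, $\beta_1=b$; (iii) the pure Plancherel specialization $\gamma=c$. For (i) one has $P_\lambda(a;q,t)=a^{|\lambda|}$ when $\ell(\lambda)\le 1$ and $0$ otherwise, and the skew version $P_{\lambda/\mu}(a;q,t)=\psi_{\lambda/\mu}(q,t)\,a^{|\lambda|-|\mu|}$ on horizontal strips, where the Pieri coefficient $\psi_{\lambda/\mu}$ is a ratio of factors $1-q^{c}t^{d}$ with nonnegative exponents, hence strictly positive for $0<q,t<1$ (Macdonald, Ch.~VI). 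Case (ii) follows from (i) via the involution $\omega_{q,t}$, which sends $P_\lambda(\cdot;q,t)$ to $Q_{\lambda'}(\cdot;t,q)$, and case (iii) is obtained as the $\gamma$-limit, where $g_k=\gamma^k/k!$ and the iterated Pieri rule gives $P_\lambda\ge 0$. Then I would use the coproduct identity $P_{\lambda/\mu}(x,y)=\sum_\nu P_{\lambda/\nu}(x)P_{\nu/\mu}(y)$ to conclude that the union of two specializations with nonnegative skew values again has nonnegative skew values, so arbitrary \emph{finite} unions of the three blocks are Macdonald--positive; finally a pointwise-limit argument (the condition $\sum_i(\alpha_i+\beta_i)<\infty$ makes $g_k(\alpha;\beta;\gamma)$ converge) passes to countably many $\alpha_i,\beta_i$. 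Comparing with the product formula $\sum_k g_k z^k=e^{\gamma z}\prod_i(1+\beta_iz)\tfrac{(t\alpha_iz;q)_\infty}{(\alpha_iz;q)_\infty}$ then reads off the parameters.

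\textbf{Completeness.} This is the hard half, and it is why the statement is only a conjecture. The idea is to realize Macdonald--positive specializations as the boundary of the branching graph on $\Y$ whose edges $\mu\nearrow\lambda$ ($\lambda/\mu$ a single-box horizontal strip) carry the Pieri weights $\psi_{\lambda/\mu}(q,t)$: such a specialization $\rho$ is the same data as a nonnegative normalized harmonic function $\lambda\mapsto Q_\lambda(\rho)/H(\rho)$ on this graph, the set of these is a Choquet simplex, and the Macdonald--positive specializations are precisely its extreme points, i.e.\ its Martin boundary. Since the $g_k=Q_{(k)}$ are algebraically independent generators of $\Lambda$, a specialization is a point $(g_1(\rho),g_2(\rho),\dots)$, and I want the positivity constraint to force the generating series $G(z):=\sum_k g_k(\rho)z^k$ into the displayed product form. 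Following the Vershik--Kerov--Okounkov--Olshanski scheme one shows the extreme harmonic functions are multiplicative and arise as limits $\lim_n Q_{\lambda(n)/\mu}(\cdot)/Q_{\lambda(n)}(\cdot)$ along diagram sequences escaping to infinity; the asymptotics of these ratios, accessed through contour-integral representations of skew Macdonald functions together with the eigenvalue identity $\mathcal D_r^N P_\lambda=e_r(q^{\lambda_i}t^{N-i})P_\lambda$ of Proposition~\ref{Proposition_Macdonald_difference}, should leave only the factors $e^{\gamma z}$, $(1+\beta z)$ and $(t\alpha z;q)_\infty/(\alpha z;q)_\infty$, whence $G$ factors as claimed.

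\textbf{Main obstacle.} The crux is that, unlike the Schur case, there is no Jacobi--Trudi determinant expressing $P_\lambda$ through the one-row functions $g_k$, so one cannot reduce the problem to Edrei's classification of totally positive Toeplitz series, which is what drives Theorem~\ref{Theorem_Thoma}. Instead one must perform a genuine uniform asymptotic analysis of normalized (skew) Macdonald polynomials along \emph{arbitrary} sequences of Young diagrams, which is delicate because of the two-parameter deformation and the lack of an explicit branching rule beyond horizontal strips. A sensible first step is to recover the known specializations of the conjecture within this framework — $q=t$ is Thoma's theorem, $q=t^{a}\to1$ should give the Jack case, $q=0$ the Hall--Littlewood case — by adapting the Kerov--Okounkov--Olshanski or Okounkov proofs of the Thoma theorem, thereby isolating the Pieri-positivity of $\psi_{\lambda/\mu}$ and the precise large-diagram asymptotics of the one-row skew $Q$-functions as the two technical inputs on which the general argument rests.
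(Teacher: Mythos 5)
The statement you were asked to prove is stated in the paper only as a \emph{conjecture} (Kerov's conjecture): the paper offers no proof, remarking merely that positivity of the listed specializations is relatively simple (it cites Borodin--Corwin, Section 2.2.1) while completeness of the list is a hard open problem. Your proposal is consistent with this: your positivity argument via the one-variable, dual-variable and Plancherel building blocks, the Pieri coefficients $\psi_{\lambda/\mu}(q,t)$, the involution $\omega_{q,t}$, and the coproduct/union step is essentially the standard argument the paper points to, and you correctly identify the completeness half as open (outlining the Vershik--Kerov boundary program) rather than claiming to prove it.
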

\noindent {\bf Remark 1.} The fact that all of the above specializations are non-negative on
Macdonald symmetric functions $P_\lambda$ is relatively simple, see e.g.\ \cite[Section
2.2.1]{BC}. However, the completeness of the list given in Conjecture \ref{Conjecture_Kerov} is a
hard open problem known as Kerov's conjecture and stated in \cite[Section II.9]{Kerov_book}.

\noindent{\bf Remark 2.} One can show that all Macdonald--positive specializations given in
Conjecture \ref{Conjecture_Kerov} also take non-negative values on skew Macdonald symmetric
functions.

\medskip

Continuing the analogy with Schur functions we define \emph{Macdonald measures}.

\begin{definition}
\label{Def_Macdonald_meas}
 Given two Macdonald--nonnegative specializations $\rho_1$, $\rho_2$,
 \emph{the Macdonald measure} $\mathbb M_{\rho_1;\rho_2}$ is a probability measure on the set of all Young diagrams defined
 through
 $$
 \P_{\rho_1,\rho_2}(\lambda)=\dfrac{P_\lambda(\rho_1) Q_\lambda(\rho_2)}{H_{q,t}(\rho_1;\rho_2)},
 $$
 where the normalizing constant $H_{q,t}(\rho_1;\rho_2)$ is given by
$$
 H_{q,t}(\rho_1;\rho_2)=\exp\left(\sum_{k=1}^{\infty} \frac{1-t^k}{1-q^k}\cdot \frac{p_k(\rho_1)p_k(\rho_2)}{k}\right).
$$
\end{definition}
\noindent{\bf Remark 1.}
 The above definition makes sense only if $\rho_1$, $\rho_2$ are such that
 \begin{equation}
 \label{eq_sum_Macdonald}
  \sum_\lambda P_\lambda(\rho_1) Q_\lambda(\rho_2)<\infty,
 \end{equation}
in which case this sum equals $H_{q,t}(\rho_1;\rho_2)$, as follows from Proposition
\ref{Proposition_Macdonald_Cauchy}. The convergence of \eqref{eq_sum_Macdonald} is guaranteed, for
instance, if $|p_k(\rho_1)|<C r^k$ and $|p_k(\rho_2)|< C r^k$ with some constants $C>0$ and
$0<r<1$.

\noindent{\bf Remark 2.} The definition of Macdonald measure was first given almost ten years ago
by Forrester and Rains, see \cite{FR}. In addition to the Schur ($q=t$) case, Macdonald measures
(and processes) were also studied by Vuletic \cite{Vu} for the Hall--Littlewood symmetric
functions, which correspond to $q=0$. Recently, new applications of these measures and new tools
to work with them were developed starting from \cite{BC}.

\medskip

Definition \ref{Def_Schur_proc} of the Schur process also has a straightforward $(q,t)$--analogue
involving Macdonald polynomials. In our applications we will use only one particular case of this
definitions which is a $(q,t)$ generalization of measures of Section \ref{Section_RSK_dyn}.
\begin{definition}
\label{Definition_Macdonald_Ascending} Given Macdonald--nonnegative specializations
$\rho_1^+,\dots,\rho_N^+$ and $\rho^-$, the \emph{ascending Macdonald process} is the probability
distribution on sequences of Young diagrams $\lambda^{(1)},\dots,\lambda^{(N)}$ defined via
$$
\P(\lambda^{(1)},\dots,\lambda^{(N)})= \dfrac{ P_{\lambda^{(1)}}(\rho_1^+)
 P_{\lambda^{(2)}/\lambda^{(1)}}(\rho_2^+)\cdots P_{\lambda^{(N)}/\lambda^{(N-1)}}(\rho_N^+)
 Q_{\lambda(N)}(\rho^-)}{H_{q,t}(\rho_1^+;\rho^-)\cdots H_{q,t}(\rho_N^+;\rho^-)}.
$$
\end{definition}

\subsection{Probabilistic models related to Macdonald polynomials}

\label{Section_Macdonald_probability}

The construction of Section \ref{Section_Markov_chains} of dynamics preserving the class of Schur
processes can be literally repeated for Macdonald processes by replacing all instances of skew
Schur polynomials by skew Macdonald polynomials, see \cite[Section 2.3]{BC} for the details.

Let us focus on one example. We set $t=0$ till the end of this section and, thus, Macdonald
functions are replaced with their degeneration known as $q$--Whittaker functions.

We continue the analogy with Section \ref{Section_Markov_chains} and study the example which led
to the process $Y(\tau)$ (we replaced the time parameter by $\tau$, since $t$ now has a different
meaning). Namely, we set $\rho_i^+$ to be the specialization with single non-zero parameter
$\alpha_1=1$ in Definition \ref{Definition_Macdonald_Ascending} and $\rho^-$ to be the
specialization with single non-zero $\beta_1=b$. At each step of our (discrete time) dynamics
another $\beta$ equal to $b$ is added to the specialization $\rho^-$. Sending $b\to 0$ and
rescaling time we arrive at the continuous time dynamics $Z(\tau)$ which should be viewed as a
$q$--analogue of $Y(\tau)$. Let us give a probabilistic description of $Z(\tau)$.

The process $Z(\tau)=\{Z_i^j(\tau)\}$, $t\geq0$, $j=1,\dots, N$, $i=1,\dots,j$, is a continuous
time Markov evolution of $N(N+1)/2$ particles with its coordinates being integers satisfying the
interlacing conditions
$$Z_{i-1}^j(\tau) < Z_{i-1}^{j-1}(\tau) \le Z_{i}^{j}(\tau)$$
for all meaningful $i,j$, and defined as follows. Each of the $N(N+1)/2$ particles has an
independent exponential clock.  The clock rate of particle $Z_i^j$ at time $\tau$ is
\begin{equation}
\label{eq_q_TASEP_GT}
 \dfrac{(1-q^{Z^{j-1}_i(\tau)-Z^j_i(\tau)-1})(1-q^{Z^j_i(\tau)-Z^j_{i-1}(\tau)})}{1-q^{Z^j_i(\tau)-Z^{j-1}_{i-1}(\tau)+1}},
\end{equation}
in other words, this rate depends on the positions of three neighbors, as shown in Figure
\ref{Figure_neighbors}. If one of the neighbors does not exist, then the corresponding factor
disappears from \eqref{eq_q_TASEP_GT}. When the clock of particle $Z_i^j$ rings at time $\tau$, we
let $Z_i^j(\tau)=Z_i^j(\tau-)+1$. If
$Z_i^j(\tau-)=Z_{i+1}^{j+1}(\tau-)=\dots=Z_{i+k}^{j+k}(\tau-)$, then we also set
$Z_{i+1}^{j+1}(\tau)=Z_{i+1}^{j+1}(\tau-)+1,\dots,Z_{i+k}^{j+k}(\tau)=Z_{i+k}^{j+k}(\tau-)+1$.
Equivalently, one can think that when \eqref{eq_q_TASEP_GT} becomes infinite because of the
denominator vanishing, the corresponding particle immediately moves by one to the right. Note that
if $Z^{j-1}_i=Z^j_i+1$, then the rate \eqref{eq_q_TASEP_GT} vanishes, therefore the blocking which
was present in the definition of $Y(\tau)$ is also implied by the definition of $Z(\tau)$.

\begin{figure}[h]
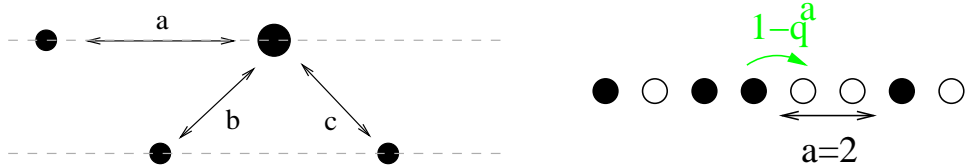

\begin{center}
\noindent{\scalebox{1.0}{\includegraphics{3neighb.pdf}}} \hskip 1cm
{\scalebox{1.3}{\includegraphics{qtasep.pdf}}} \caption{Left panel: The
distances to three
neighbors which the rate of the particle depends on. Right panel: jump rate for $q$--TASEP.
\label{Figure_neighbors} }
\end{center}
\end{figure}

The restriction of $Z(\tau)$ to the leftmost particles $Z_1^1(\tau),\dots,Z_1^N(\tau)$ is a
Markovian dynamics known as $q$--TASEP (see \cite{BC}). Namely, the rate of particle $Z_1^i$ at
time $\tau$ is $1-q^{Z^{j-1}_1(\tau)-Z^j_1(\tau)-1}$, as shown in Figure \ref{Figure_neighbors}.
When $q\to 0$ we recover the familiar TASEP.

There is one distinction from $Y(\tau)$, namely, the restriction of $Z(\tau)$ on the rightmost
particles is no longer a simple Markov process.

There are various interesting limit transitions as $\tau\to\infty$ and $q\to 1$. Let us
concentrate on one where a phenomenon known as \emph{crystallization} appears. Let
$$
 q=\exp(-\varepsilon),\quad \tau=\frac{\mathbf t}{\varepsilon^2},
$$
and send $\varepsilon$ to $0$. In this limit, particles $Z_i^j(\tau)$ approximate a perfect
lattice, namely,
$$
 Z_i^j(\tau)\approx \frac{\mathbf t}{\varepsilon^2} - \frac{\ln \varepsilon}{\varepsilon}
 (2i-j-1).
$$
The fluctuations around the points of this lattice are of order $\varepsilon^{-1}$. More
precisely, the following theorem holds.
\begin{theorem}
\label{Theorem_convergence_to_polymer} Let
 $$
 \widehat Z_i^j(\mathbf t,\varepsilon)=\varepsilon\left(Z_i^j(\tau)- \frac{\mathbf t}{\varepsilon^2} + \frac{\ln \varepsilon}{\varepsilon}
 (2i-j-1)\right),\quad q=\exp(-\varepsilon),\quad \tau=\frac{\mathbf t}{\varepsilon^2}.
 $$
 Then as $\varepsilon\to 0$, the stochastic process $\widehat Z(\tau,\varepsilon)$ weakly converges to a certain random vector $T(\mathbf t)$.
\end{theorem}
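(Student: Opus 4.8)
The plan is to identify the limiting object $T(\mathbf t)$ explicitly as the vector of partition functions of the O'Connell--Yor semi-discrete directed polymer (and its hierarchical relatives through the Gelfand--Tsetlin pattern), and then prove convergence at the level of the evolution equations rather than trying to pass to the limit in the Markov jump rates directly. The key structural fact I would exploit is Proposition \ref{Proposition_Macdonald_difference}: the observables $\sum_\lambda q^{\lambda^{(j)}_i}(\text{stuff})$ that generate the distribution of $Z(\tau)$ satisfy closed (linear) evolution equations governed by the Macdonald difference operators $\mathcal D_r^N$ at $t=0$. So the first step is to write down the system of ODEs (in $\tau$) satisfied by a suitable family of $q$-moments or generating functions of $Z_i^j(\tau)$, using that the fixed-time law is the ascending $q$-Whittaker process with the specializations described before the theorem, together with the fact that adding a dual $\beta$-specialization corresponds to applying $p^\uparrow$ and the difference operators act diagonally on $P_\lambda$.

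Second, I would substitute $q = e^{-\varepsilon}$, $\tau = \mathbf t/\varepsilon^2$, and $Z_i^j = \mathbf t/\varepsilon^2 - (\ln\varepsilon/\varepsilon)(2i-j-1) + \varepsilon^{-1}\widehat Z_i^j$ into these equations and perform the asymptotic expansion of the $q$-shift operators $T_i$. The degeneration $q\to 1$ of the $q$-Whittaker (Macdonald at $t=0$) difference operator is the quantum Toda Hamiltonian, and under this precise scaling the first Macdonald operator $\mathcal D_1^N$ should converge, after conjugation and rescaling, to the generator of a system of interacting diffusions; I expect the limiting generator for $\widehat Z$ to be exactly the one defining the O'Connell--Yor hierarchy, i.e. the $i$-th coordinate on level $j$ evolves as a Brownian motion with a drift built from $e^{-(\widehat Z^{j-1}_i - \widehat Z^j_i)}$-type terms coming from the three-neighbor rate \eqref{eq_q_TASEP_GT} after the expansion $1 - q^{m} \approx 1 - e^{-\varepsilon m}$. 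The crystallization statement (particles sitting near the perfect lattice with $\varepsilon^{-1}$ fluctuations) is precisely what makes these exponential terms converge to finite limits, so establishing the a priori bound that $\widehat Z_i^j$ stays $O(1)$ on compact time intervals (tightness) is an essential intermediate step; one obtains it from the explicit one-dimensional distributions of the ascending process together with the interlacing constraints.

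Third, with tightness in hand (in the Skorokhod space, say), I would identify all subsequential limits via a martingale problem: for a sufficiently rich class of test functions $f$, the process $f(\widehat Z(\mathbf t)) - f(\widehat Z(0)) - \int_0^{\mathbf t} (\mathcal L f)(\widehat Z(\mathbf s))\,d\mathbf s$ is a martingale for the prelimit (this is just the Dynkin formula for the jump process $Z$), and one checks that the prelimit generator applied to $f$ converges uniformly on compacts to $\mathcal L f$, where $\mathcal L$ is the hierarchical O'Connell--Yor generator. Uniqueness of the martingale problem for $\mathcal L$ (a system of coupled SDEs with smooth, globally Lipschitz-on-compacts coefficients after accounting for the reflection/pushing structure, which here manifests as the exponential drifts rather than hard reflection) then pins down the limit as the law of a unique process $T(\mathbf t)$, giving weak convergence along the full family rather than subsequences.

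The main obstacle I anticipate is controlling the expansion uniformly: the jump rate \eqref{eq_q_TASEP_GT} has a denominator $1 - q^{Z^j_i - Z^{j-1}_{i-1} + 1}$ that can blow up (the ``instantaneous push''), and in the scaling limit one must show that the probability of configurations where $Z^j_i$ is within $o(\varepsilon^{-1}\ln\varepsilon^{-1})$ of the pushing boundary is negligible, or else handle the boundary layer carefully so that the pushing mechanism degenerates into the correct drift-dominated behavior rather than contributing a singular term. Equivalently, one must verify that the $O(1)$ fluctuation scale is genuinely preserved by the dynamics and that the exponential drift terms $e^{-(\cdot)}$ that appear are the \emph{only} surviving contribution at order $\varepsilon^0$ in the generator, with all the would-be-divergent combinatorial prefactors cancelling. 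This is exactly the place where the specific form of the Macdonald eigenvalues in Proposition \ref{Proposition_Macdonald_difference} and the $q$-Gamma asymptotics $(q^a;q)_\infty / (q^b;q)_\infty$ as $q\to 1$ do the real work, and getting the error estimates uniform in the (unbounded) state space is the technical heart of the argument; I would expect to borrow the relevant $q$-Whittaker-to-Whittaker limit estimates from the literature (O'Connell, and Borodin--Corwin) rather than redo them from scratch.
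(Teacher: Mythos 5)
First, note that these lecture notes do not actually prove Theorem \ref{Theorem_convergence_to_polymer}: the text immediately defers to \cite{BC}. The argument there is \emph{distributional} rather than dynamical: the fixed-time law of $Z(\tau)$ is an explicit ascending Macdonald ($t=0$, i.e.\ $q$--Whittaker) process, and one passes to the limit directly in these explicit weights using controlled $q\to1$ asymptotics of $q$--Whittaker functions converging to class one $\mathfrak{gl}_N$--Whittaker functions (in the spirit of \cite{GLO}, with uniform error and decay bounds supplied in \cite{BC}), identifying the limit law as the Whittaker process of \cite{OC}. Since the theorem only asserts convergence of the fixed-time vector to $T(\mathbf t)$, this route needs no tightness in path space and no martingale problem. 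Your plan — expand the rates \eqref{eq_q_TASEP_GT}, prove tightness, and identify subsequential limits through the martingale problem for the O'Connell--Yor/Whittaker $2d$ growth generator — is a genuinely different and in principle stronger (process-level) strategy, and your heuristic expansion correctly produces the exponential drifts of O'Connell's SDEs, with blocking and pushing softening into drift terms.

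However, as a proof the proposal has real gaps, and they sit exactly at the points you defer. (i) The tightness/crystallization estimate is asserted to follow ``from the explicit one-dimensional distributions together with the interlacing constraints,'' but making that precise is essentially the same uniform $q$--Whittaker asymptotics that powers the proof in \cite{BC}; it is not an easier input you can take for granted, it \emph{is} the technical heart. (ii) Uniform convergence of the generator on the relevant part of the state space, including control of the pushing singularity in the denominator of \eqref{eq_q_TASEP_GT} and non-explosion/uniqueness for the limiting system of SDEs with unbounded exponential drifts, is stated as an expectation rather than proved; without it the martingale-problem identification does not close. (iii) Your first step is shakier than it looks: closed evolution equations via Macdonald difference operators (Proposition \ref{Proposition_Macdonald_difference}) give expectations of observables such as $q^{k\lambda_N}$, but any attempt to identify the limit law through moments runs into the intermittency phenomenon discussed in Section \ref{Section_Macdonald} — the limiting moments grow like $e^{k^2}$ and do not determine the distribution — so this step can serve only as heuristic input to the generator computation, not as an identification mechanism. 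In short: the architecture is reasonable and different from the cited proof, but the steps you borrow or postpone are precisely the ones that constitute the proof, so the proposal as written does not yet establish the theorem.
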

 The proof of Theorem \ref{Theorem_convergence_to_polymer} can be found in \cite{BC}. The
fixed time distribution of $T_i^j(\mathbf t)$ is known as the Whittaker process (first introduced
in \cite{OC}) and it is related to yet another class of symmetric functions (which are no longer
polynomials) that can be obtained from Macdonald symmetric functions through a limit transition.
These are class one $\mathfrak {gl}_N$--Whittaker functions, see e.g.\ \cite{Ko}, \cite{Et},
\cite{GLO}.

The stochastic dynamics $T_i^j(\mathbf t)$ can be also independently defined via certain
relatively simple stochastic differential equations, see \cite{OC}.

The interest in $T_i^j(\mathbf t)$ is caused by its remarkable connections with directed polymers
found by O'Connell and Yor \cite{OCY}, \cite{OC}.
\begin{theorem}
 Consider $N$ independent Brownian motions $B_1(s)$,\dots,$B_N(s)$. The distribution of $\exp(T_1^N(\mathbf t))$ coincides with that of the $N-1$--dimensional integral
 \begin{equation}
 \label{eq_OCon_Yor_polymer}
  \int\limits_{0<s_1<s_2<\dots<s_{N-1}<t} \exp\bigg(B_1(0,s_1) + B_2(s_1,s_2)+\dots+ B_N(s_{N-1},\mathbf
  t) \bigg) ds_1\cdots ds_{N-1},
 \end{equation}
 where
 $$
  B_i(a,b)=B_i(b)-B_i(a).
 $$
\end{theorem}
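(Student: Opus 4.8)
The plan is to prove the identity exactly the way the excerpt has trained us to pass from discrete integrable objects to continuous polymers: realize $T(\mathbf t)$ as a scaling limit of the Macdonald (here $q$-Whittaker, $t=0$) dynamics, compute the relevant marginal of the $q$-deformed model as a sum over lattice paths, and let $q\to1$. First I would record what the fixed-time law of the dynamics $Z(\tau)$ is. By the $t=0$ Macdonald analogue of Proposition \ref{Proposition_RSK_Schur}, the array $(\lambda^{(1)}(\tau),\dots,\lambda^{(N)}(\tau))$ is an ascending Macdonald process with $\rho_i^+$ the single-variable specialization $\alpha_1=1$ and $\rho^-$ the pure-$\gamma$ specialization with $\gamma=\tau$ (this is precisely what the $b\to 0$, time-rescaled accumulation of $\beta_1=b$'s produces). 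Hence, by Theorem \ref{Theorem_convergence_to_polymer}, the law of $T_1^N(\mathbf t)$ is the top-row marginal of the $\varepsilon\to0$ scaling limit of this $q$-Whittaker measure, and it suffices to identify the $\varepsilon\to0$ limit of the law of the leftmost top particle $Z_1^N(\tau)=\lambda^{(N)}_1$.

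Next I would turn the $q$-Whittaker side into a discrete polymer. The restriction of $Z(\tau)$ to $Z_1^1,\dots,Z_1^N$ is $q$-TASEP, and the natural partition-function observable attached to $Z_1^N(\tau)$ is a $q$-deformation of \eqref{eq_parition_function_gamma_polymer}: a sum over up-right lattice paths of products of independent $q$-geometric (equivalently $q$-exponential) weights built from the inter-particle gaps; alternatively one may start from the explicit moments of $q^{\,c\,Z_1^N}$ coming from the Macdonald difference operators of Proposition \ref{Proposition_Macdonald_difference}, but the path-sum form is the one whose limit is transparent. Then, with $q=e^{-\varepsilon}$ and $\tau=\mathbf t/\varepsilon^{2}$ and the centering/scaling of Theorem \ref{Theorem_convergence_to_polymer}, each $q$-geometric weight has, after taking $\log$, a drift of order $\varepsilon$ and mean-zero fluctuations of order $\sqrt{\varepsilon}$; a block of consecutive weights along a path converges by Donsker's invariance principle to a Brownian increment $B_i(s_{i-1},s_i)$, the discrete path-length constraints become the ordered times $0<s_1<\dots<s_{N-1}<\mathbf t$, and the path sum passes (Laplace method / Riemann sum) to the iterated integral \eqref{eq_OCon_Yor_polymer}. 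Combined with the first paragraph this gives that $\exp(T_1^N(\mathbf t))$ has the law of that integral.

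I expect the main obstacle to be making this last limit rigorous: one needs a \emph{joint} invariance principle for the coupled increments together with tightness and uniform integrability strong enough that the sum-to-integral passage survives the limit, and some care that the logarithmic shift is chosen so the $q$-geometric weights indeed have $O(\varepsilon)$ mean and $O(\sqrt{\varepsilon})$ fluctuations in the crystallization regime. An alternative route, which sidesteps the discrete approximation but has its own hard core, uses the SDE characterization of the hierarchy $T_i^j(\mathbf t)$ recalled just before the theorem: one checks that O'Connell and Yor's hierarchy of iterated integrals \eqref{eq_OCon_Yor_polymer} satisfies the same system of SDEs (a geometric Pitman / reflection transform of $N$ independent Brownian motions), and then invokes a Markov-functions (Rogers--Pitman type) argument with the $\mathfrak{gl}_N$-Whittaker functions as intertwining kernel to conclude that the two processes have the same law. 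In that approach the difficult step is verifying the intertwining relation between the Brownian hierarchy and the Whittaker process, which is essentially the content of O'Connell's theorem on the quantum Toda lattice.
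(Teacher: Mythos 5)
You should first note that the paper itself offers no proof of this statement: it is quoted from O'Connell--Yor and O'Connell (\cite{OCY}, \cite{OC}), so your argument has to stand on its own. Your first paragraph is fine: the fixed-time law of $Z(\tau)$ is indeed the ascending $q$--Whittaker ($t=0$ Macdonald) process with $\alpha$-specializations equal to $1$ and Plancherel specialization $\gamma=\tau$, and by Theorem \ref{Theorem_convergence_to_polymer} it suffices to identify the limiting law of the rescaled $Z_1^N(\tau)$. The genuine gap is in your second paragraph: there is no representation of $Z_1^N(\tau)$ (the $q$-TASEP particle) as a deterministic sum over up-right lattice paths of products of \emph{independent} $q$-geometric weights. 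Such a polymer formula is exactly what is lost once one deforms away from the Schur/RSK setting, and its absence is the very reason Section \ref{Section_Macdonald} develops the difference-operator, contour-integral and Fredholm-determinant machinery instead of determinantal or path-sum structures. The $q\to1$ passage behind Theorem \ref{Theorem_convergence_to_polymer} is in fact carried out at the level of the measures, via the convergence of $q$--Whittaker functions to class one $\mathfrak{gl}_N$--Whittaker functions (the results of \cite{GLO} cited in the paper), not via a Donsker invariance principle applied to a discrete polymer; so the step ``each block of $q$-geometric weights converges to a Brownian increment, hence the path sum converges to \eqref{eq_OCon_Yor_polymer}'' has nothing to rest on as written. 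Even if one imported some randomized-insertion coupling between $q$-Whittaker dynamics and independent weights, the insertion randomness is extra structure you have not accounted for, and the sum-to-integral convergence of partition functions would still collide with the moment-growth/intermittency issues the paper itself stresses.

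Your alternative route is, in substance, the proof that actually exists in the literature and that the paper is implicitly invoking: the fixed-time law of $T(\mathbf t)$ is the Whittaker measure (part of what is established in \cite{BC} together with Theorem \ref{Theorem_convergence_to_polymer}), and O'Connell's quantum Toda theorem \cite{OC} --- the Markov-functions/intertwining argument with $\mathfrak{gl}_N$--Whittaker kernels applied to the iterated geometric Pitman transforms of $B_1,\dots,B_N$ --- identifies that same Whittaker measure as the law of the log-partition-function hierarchy, whose top coordinate is the logarithm of \eqref{eq_OCon_Yor_polymer}. You locate the hard step correctly, but as written this route is a reduction of the statement to precisely the theorem being cited, not an independent proof: to make it self-contained you would have to actually verify the intertwining relation (or, equivalently, match the SDE characterizations of the two hierarchies), and neither verification is sketched in your proposal.
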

The integral \eqref{eq_OCon_Yor_polymer} is known as the partition function of the O'Connell--Yor
semidiscrete polymer. This has the following explanation. Identify each sequence
$0<s_1<s_2<\dots<s_{N-1}<\mathbf t$ with a  piecewise--constant monotonous function with unit
increments at $s_i$ or, equivalently, with staircase--like path (``directed polymer'') joining
$(0,1)$ and $(\mathbf t, N)$, as shown in Figure \ref{Figure_polymer}. Further, we view $B_i(a,b)$
as an integral of the $1d$ white noise along the interval $(a,b)$. Then the sum
$$
B_1(0,s_1) + B_2(s_1,s_2)+\dots+ B_N(s_{N-1},\mathbf
  t)
$$
turns into the integral of space--time white noise along the path defined by
$0<s_1<s_2<\dots<s_{N-1}<t$. Integrating over all possible paths we arrive at the partition
function, see \cite{BC}, \cite{BCF} and references therein for more details.

\begin{figure}[h]
\begin{center}
\noindent{\scalebox{1.0}{\includegraphics{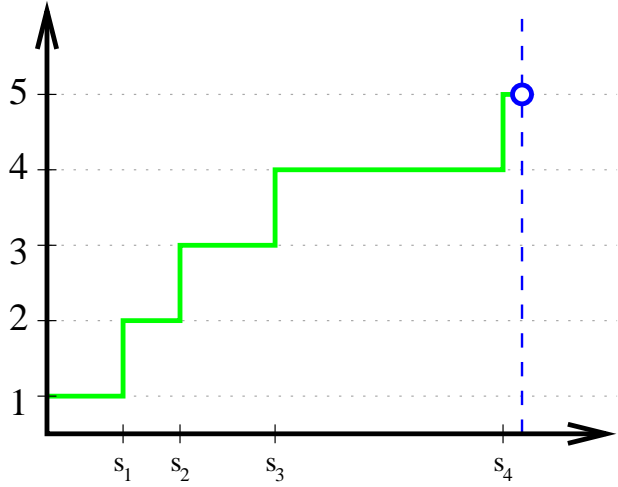}}}  \caption{Staircase--like path
corresponding to the sequence $0<s_1<s_2<\dots<s_{N-1}<\mathbf t$. Here $N=5$.
\label{Figure_polymer} }
\end{center}
\end{figure}

\subsection{Analysis of Macdonald measures}

In the previous section we explained that the formalism of Macdonald processes leads to quite
interesting probability models, but we do not know yet how to analyze them. The methods we used
for the Schur processes were based on the determinantal point processes. Similar determinantal
structure is not known to exist for the Macdonald processes and, probably, there is no such
structure at all. However, there is another approach based on the Macdonald difference operators.

We start from the $(q,t)$--Cauchy identity of Proposition \ref{Proposition_Macdonald_Cauchy} in
the form
\begin{equation}
\label{eq_qt_Cauchy}
 \sum_{\lambda\in\Y} P_\lambda(x_1,\dots,x_N;q,t) Q_\lambda(Y;q,t) =\prod_{i=1}^N \Pi(x_i;Y),
\end{equation}
where
$$
 \Pi(x;Y)= \prod_j \frac{(txy_j;q)_{\infty}}{(x
 y_k;q)_{\infty}}=\exp\left(\sum_{k=1}^{\infty} \frac{1-t^k}{1-q^k} \frac{p_k(Y) x^k }{k} \right).
$$
Let us apply the Macdonald difference operator ${\mathcal D}^r_N$ to both sides of
\eqref{eq_qt_Cauchy} with respect to the variables $x_1,\dots,x_N$. Proposition
\ref{Proposition_Macdonald_difference} yields
$$
 \sum_{\lambda\in\Y} e_r(q^{\lambda_1} t^{N-1},\dots,q^{\lambda_N})
 P_\lambda(x_1,\dots,x_N;q,t) Q_\lambda(Y;q,t) = {\mathcal D}^r_N\left(\prod_{i=1}^N \Pi(x_i;Y)\right).
$$
We immediately arrive at the following statement.
\begin{proposition}
\label{Proposition_observable}
 Let $\lambda\in\Y$ be distributed according to the Macdonald measure $\mathbb M_{\rho_1,\rho_2}$,
 where $\rho_1$ is the specialization with finitely many $\alpha$--parameters $a_1,\dots,a_N$.
 Then for $r=1,\dots,N$
 \begin{equation}
 \label{eq_observable}
  \E \bigg(e_r(q^{\lambda_1} t^{N-1},\dots,q^{\lambda_N})\bigg)=
    \frac{{\mathcal D}^r_N\left(\prod_{i=1}^N \Pi(x_i;\rho_2)\right)}{\prod_{i=1}^N
   \Pi(x_i;\rho_2)}\rule[-5.3mm]{.7pt}{13mm}_{\, x_i=a_i,\,\, 1\le i \le N,}
 \end{equation}
 where
 $$
  \Pi(x;\rho)= \exp\left(\sum_{k=1}^{\infty} \frac{1-t^k}{1-q^k} \frac{p_k(\rho)x^k}{k}
  \right).
 $$
\end{proposition}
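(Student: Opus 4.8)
The plan is to obtain \eqref{eq_observable} by applying the Macdonald difference operator ${\mathcal D}^r_N$ to both sides of the $(q,t)$--Cauchy identity and invoking the fact that Macdonald polynomials are its eigenfunctions. Concretely, I would start from Proposition \ref{Proposition_Macdonald_Cauchy} written with an auxiliary family of variables $Y=(y_1,y_2,\dots)$ as $\sum_{\lambda\in\Y}P_\lambda(x_1,\dots,x_N;q,t)\,Q_\lambda(Y;q,t)=\prod_{i=1}^N\Pi(x_i;Y)$, and apply ${\mathcal D}^r_N$ in the variables $x_1,\dots,x_N$. By Proposition \ref{Proposition_Macdonald_difference} the left--hand side turns into $\sum_{\lambda\in\Y}e_r(q^{\lambda_1}t^{N-1},\dots,q^{\lambda_N})\,P_\lambda(x_1,\dots,x_N;q,t)\,Q_\lambda(Y;q,t)$ (the eigenvalue depends only on the first $N$ parts of $\lambda$, and $P_\lambda(x_1,\dots,x_N)=0$ once $\ell(\lambda)>N$, so there is nothing to check there), while the right--hand side becomes ${\mathcal D}^r_N\bigl(\prod_{i=1}^N\Pi(x_i;Y)\bigr)$.

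Next I would divide both sides by the nowhere--vanishing product $\prod_{i=1}^N\Pi(x_i;Y)$ and specialize $Y$ to $\rho_2$: the resulting identity is one of convergent power series in the quantities $p_k(Y)$, so applying the Macdonald--nonnegative specialization $\rho_2$ is legitimate. Setting $x_i=a_i$ for $i=1,\dots,N$, the right--hand side is precisely the ratio appearing in \eqref{eq_observable}. On the left--hand side, $\prod_{i=1}^N\Pi(a_i;\rho_2)=\exp\bigl(\sum_{k\ge1}\frac{1-t^k}{1-q^k}\frac{p_k(\rho_1)p_k(\rho_2)}{k}\bigr)=H_{q,t}(\rho_1;\rho_2)$ because $p_k(\rho_1)=\sum_i a_i^k$; hence, by Definition \ref{Def_Macdonald_meas}, each summand becomes $e_r(q^{\lambda_1}t^{N-1},\dots,q^{\lambda_N})\,\P_{\rho_1,\rho_2}(\lambda)$, and the sum is exactly $\E\bigl(e_r(q^{\lambda_1}t^{N-1},\dots,q^{\lambda_N})\bigr)$.

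The hard part will be justifying the termwise application of ${\mathcal D}^r_N$ to the infinite series, i.e.\ the interchange of ${\mathcal D}^r_N$ with $\sum_\lambda$. For this I would use the growth hypothesis (e.g.\ $|p_k(\rho_2)|<Cr^k$ with $0<r<1$) to show that $\sum_\lambda P_\lambda(x_1,\dots,x_N;q,t)\,Q_\lambda(\rho_2)$ converges absolutely and locally uniformly on a neighbourhood of $(a_1,\dots,a_N)$, and that this persists after each of the finitely many $q$--shifts $T_i\colon x_i\mapsto qx_i$ entering ${\mathcal D}^r_N$; since $0<q<1$, these shifts only move the variables towards $0$, so no new convergence issue arises. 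A minor secondary nuisance is that the coefficients $A_I(x;t)$ of ${\mathcal D}^r_N$ have poles on the diagonals $x_i=x_j$, so if some of the $a_i$ coincide one should first prove the identity for pairwise distinct $a_i$ and then pass to the limit, the apparent singularities cancelling exactly as they do in Proposition \ref{Proposition_Macdonald_difference}. I would present the generic case in detail and merely remark on the limiting one.
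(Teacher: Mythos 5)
Your proposal is correct and follows essentially the same route as the paper: apply $\mathcal D^r_N$ in the $x$--variables to the $(q,t)$--Cauchy identity, use the eigenrelation of Proposition \ref{Proposition_Macdonald_difference}, then divide by $\prod_i\Pi(x_i;\rho_2)$ and specialize $x_i=a_i$, recognizing the normalization $H_{q,t}(\rho_1;\rho_2)$. The convergence and coinciding-parameter caveats you add are reasonable extra care that the paper passes over silently.
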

{\bf Remark.} Clearly, we can replace ${\mathcal D}^r_N$ with any product of such operators,
similarly replacing $e_r(q^{\lambda_1} t^{N-1},\dots,q^{\lambda_N})$ with the corresponding
product, and the statement will still be valid.

\medskip

The remarkable property of Proposition \ref{Proposition_observable} is that while the Macdonald
polynomials themselves are fairly mysterious objects with complicated definition, the right--hand
side of \eqref{eq_observable} is explicit. Therefore, we have the formulas for averages of certain
\emph{observables} of Macdonald measures. These formulas can be compactly written via contour
integrals. Let us present one particular case of such formulas.

\begin{theorem}
\label{theorem_Observable_contour}
 Assume that, in the notations of Proposition \ref{Proposition_observable}, $t=0$,
all parameters $a_i$ are equal to $1$, and $\rho_2$ is the Macdonald--positive specialization with
single non-zero parameter $\gamma=\tau$. In other words, we deal with probability measure
$$
 e^{-N\tau} P_\lambda(1,\dots,1;q,0) Q_\lambda((0;0;\tau);q,0).
$$
Then
$$
 \E \Bigl(q^{k\lambda_N}\Bigr)= \frac{(-1)^k
q^{\frac{k(k-1)}{2}}}{(2\pi \i)^k} \oint\dots\oint \prod_{A<B}
 \frac{z_A-z_B}{z_A-q z_B} \prod_{j=1}^k \frac{e^{(q-1)\tau z_j}}{(1-z_j)^N} \frac{d z_j}{z_j},
$$
where the integration is over the \emph{nested} contours such that $z_j$--contour contains $1$ and
also $qz_{j+1},\dots,qz_k$, and no other singularities of the integrand, see Figure
\ref{Fig_nested}.
\end{theorem}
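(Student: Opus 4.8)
The plan is to express $\E\big(q^{k\lambda_N}\big)$ through the action of a power of the first Macdonald difference operator on an explicit generating function, and then to evaluate that action as a $k$-fold nested contour integral. By Proposition~\ref{Proposition_observable} together with the Remark following it (which permits arbitrary products of the operators $\mathcal D_N^r$), for the Macdonald measure with $\rho_1$ carrying $\alpha$-parameters $a_1,\dots,a_N$ one has, for every $k\ge1$,
\[
 \E\Big(\big(e_1(q^{\lambda_1}t^{N-1},\dots,q^{\lambda_N})\big)^{k}\Big)=\frac{(\mathcal D_N^1)^{k}\big(\prod_{i=1}^N\Pi(x_i;\rho_2)\big)}{\prod_{i=1}^N\Pi(x_i;\rho_2)}\Big|_{x_i=a_i}.
\]
Setting $t=0$ collapses every argument $q^{\lambda_i}t^{N-i}$ except the last, so $e_1(q^{\lambda_1}t^{N-1},\dots,q^{\lambda_N})\big|_{t=0}=q^{\lambda_N}$ (equivalently, $\mathcal D_N^1$ has eigenvalue $q^{\lambda_N}$ on $P_\lambda$ by Proposition~\ref{Proposition_Macdonald_difference}), so the left side becomes $\E(q^{k\lambda_N})$. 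With $a_i=1$ it then remains to identify $\Pi(z;\rho_2)$: Conjecture~\ref{Conjecture_Kerov} with $\gamma=\tau$ and $t=0$ gives $p_1(\rho_2)=(1-q)\tau$ and $p_m(\rho_2)=0$ for $m\ge2$, hence $\Pi(z;\rho_2)=\exp\!\big(\tfrac1{1-q}p_1(\rho_2)z\big)=e^{\tau z}$ and $\Pi(qz;\rho_2)/\Pi(z;\rho_2)=e^{(q-1)\tau z}$.

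Next I would record the single-operator identity as a residue computation: for $G=\prod_{i=1}^N F(x_i)$ with $F$ holomorphic and non-vanishing near $x_1,\dots,x_N$,
\[
 \frac{\mathcal D_N^1G}{G}=\frac{1}{(t-1)\,2\pi\i}\oint\Big(\prod_{m=1}^N\frac{tz-x_m}{z-x_m}\Big)\frac{F(qz)}{F(z)}\frac{dz}{z},
\]
the contour encircling $x_1,\dots,x_N$ but not $z=0$; indeed $\mathrm{Res}_{z=x_i}\prod_m\tfrac{tz-x_m}{z-x_m}=x_i(t-1)\prod_{j\ne i}\tfrac{tx_i-x_j}{x_i-x_j}$, which reproduces the $i$th summand of $\mathcal D_N^1G$. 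For $t=0$, $x_m=1$, $F=e^{\tau z}$ the prefactor $\prod_m\tfrac{-1}{z-1}$ equals $(1-z)^{-N}$ and $1/(t-1)=-1$, which is exactly the $k=1$ case of the theorem.

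The main step is to prove, by induction on $k$ (following \cite{BC}), that for $G=\prod_i\Pi(x_i;\rho_2)$
\[
 \frac{(\mathcal D_N^1)^{k}G}{G}=\frac{1}{(t-1)^{k}(2\pi\i)^{k}}\oint\!\cdots\!\oint \prod_{1\le A<B\le k}\frac{(z_A-z_B)(z_A-tz_B)}{(z_A-qz_B)(z_A-qtz_B)}\ \prod_{j=1}^{k}\Big(\prod_{m=1}^N\frac{tz_j-x_m}{z_j-x_m}\Big)\frac{\Pi(qz_j)}{\Pi(z_j)}\frac{dz_j}{z_j},
\]
with the $z_j$-contour enclosing $x_1,\dots,x_N$ and $qz_{j+1},\dots,qz_k$ and no other singularity. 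Applying $\mathcal D_N^1$ to $G$ times the $(k-1)$-fold integral, one commutes the $q$-shifts $T_i$ past the integrand: a factor $\tfrac{tz_\ell-x_i}{z_\ell-x_i}$ turns into $\tfrac{tz_\ell-qx_i}{z_\ell-qx_i}$, and re-summing $\sum_i$ as a new $z_k$-integral via the single-operator identity produces the additional cross-factors $\prod_{j<k}\tfrac{(z_k-z_j)(z_k-tz_j)}{(z_k-qz_j)(z_k-qtz_j)}$, while the poles at $z_\ell=qz_k$ created by the shift force the new contour to be nested inside. I expect this residue bookkeeping — tracking exactly how the shifts act on the rational prefactors and which poles land inside which contour — to be the main obstacle.

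Finally I would specialize: with $t=0$, $x_m=1$, $\Pi(z;\rho_2)=e^{\tau z}$ one has $1/(t-1)^k=(-1)^k$, $\prod_m\tfrac{tz_j-x_m}{z_j-x_m}=(1-z_j)^{-N}$, $\Pi(qz_j)/\Pi(z_j)=e^{(q-1)\tau z_j}$, and $\prod_{A<B}\tfrac{(z_A-z_B)(z_A-tz_B)}{(z_A-qz_B)(z_A-qtz_B)}\to\prod_{A<B}\tfrac{z_A-z_B}{z_A-qz_B}$, while the $q$-shifts acting on the numerators $tz_\ell-x_m$ collect, in the $t=0$ limit, the remaining scalar $q^{k(k-1)/2}$. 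Assembling these ingredients yields exactly the claimed formula with the nested contours of Figure~\ref{Fig_nested}. (Convergence of $\sum_\lambda P_\lambda(1^N)Q_\lambda(\rho_2)$, so that the measure is well defined, follows from Proposition~\ref{Proposition_Macdonald_Cauchy}.)
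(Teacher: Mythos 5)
Your route is the intended one: the paper itself gives no proof of this theorem (it establishes Proposition \ref{Proposition_observable} and defers the contour-integral consequence to \cite{BC}), and your plan — interpret $\E(q^{k\lambda_N})$ as $(\mathcal D_N^1)^k$ acting on $\prod_i\Pi(x_i;\rho_2)$ via the remark after Proposition \ref{Proposition_observable}, note that at $t=0$ the eigenvalue of $\mathcal D_N^1$ is $q^{\lambda_N}$, identify $\Pi(z;\rho_2)=e^{\tau z}$, and evaluate the iterated operator by nested contour integrals — is exactly the Borodin--Corwin argument. Your $k=1$ computation is correct.

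However, the central identity you assert for $(\mathcal D_N^1)^kG/G$ at general $t$ is wrong, and the error is not cosmetic. When you commute $T_i$ past the $z_j$-dependent prefactor, the factor $\frac{tz_j-x_i}{z_j-x_i}$ gets multiplied by $\frac{(tz_j-qx_i)(z_j-x_i)}{(z_j-qx_i)(tz_j-x_i)}$, so re-summing over $i$ as a $z_k$-integral produces the cross factor
$\prod_{A<B}\frac{(z_A-z_B)(tz_A-qz_B)}{(z_A-qz_B)(tz_A-z_B)}$
(equivalently $\frac{(z_A-z_B)(z_A-qt^{-1}z_B)}{(z_A-qz_B)(z_A-t^{-1}z_B)}$), not your $\prod_{A<B}\frac{(z_A-z_B)(z_A-tz_B)}{(z_A-qz_B)(z_A-qtz_B)}$. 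A concrete test: for $N=1$ one has $\mathcal D_1^1=T_1$, hence $(\mathcal D_1^1)^2G/G=f(q^2x_1)/f(x_1)$, whereas evaluating your double integral by residues (with your stated contours) gives $\frac{(1-qt)(q-t)}{q^2(1-t)^2}\,f(q^2x_1)/f(x_1)$, which is false for every $t$ and equals $q^{-1}$ times the truth at $t=0$. In general, the correct cross factor tends, as $t\to0$, to $q\,\frac{z_A-z_B}{z_A-qz_B}$ while yours tends to $\frac{z_A-z_B}{z_A-qz_B}$, so your formula is missing exactly the prefactor $q^{k(k-1)/2}$ of the theorem; the claim that this scalar is later ``collected from the $q$-shifts acting on the numerators'' has no basis, since your displayed identity is supposed to have already absorbed those shifts. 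The fix is either to use the correct general-$t$ cross factor above, or to run the induction directly at $t=0$, where the shift $x_i\mapsto qx_i$ multiplies the factor $\frac{-x_i}{z_j-x_i}$ by $q\,\frac{z_j-x_i}{z_j-qx_i}$, and these explicit $q$'s (one per pair $A<B$) are precisely the source of $q^{k(k-1)/2}$. Relatedly, the contour bookkeeping you postponed is exactly the statement that at each step the previously introduced contours must be enlarged to contain $q$ times the new one (the poles $z_A=qz_B$ created by the shift), while the new contour encircles only the points $x_m$; this is what produces the nesting of Figure \ref{Fig_nested}, but it cannot rescue the incorrect integrand.
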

{\bf Remark 1.} The measure of Theorem \ref{theorem_Observable_contour} via the identification of
Young diagrams with $N$--rows and $N$--point particle configurations coincides with the
distribution of the vector $Z_1^N(\tau),\dots, Z_N^N(\tau)$ of the stochastic dynamics of Section
\ref{Section_Macdonald_probability}.

{\bf Remark 2.} Theorem \ref{theorem_Observable_contour} admits various generalizations: $t$ can
be non-zero, both specializations can be arbitrary, we can compute expectations related to the
higher order Macdonald operators and also apply it to the joint distributions of various Young
diagrams of the Macdonald process. See \cite{BC}, and \cite{BCGS} for details.

\medskip

\begin{figure}[h]
\begin{center}
\noindent{\scalebox{1.0}{\includegraphics{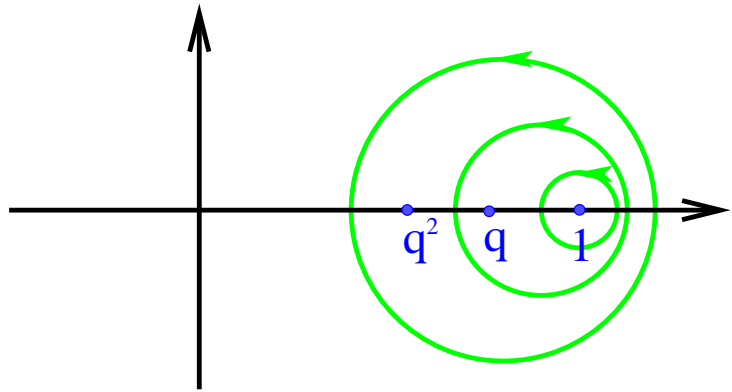}}}  \caption{Nested
contours of
integration. Here $N=3$, $z_1$ is integrated over the largest contour and $z_3$ is integrated over
the smallest one. \label{Fig_nested}}
\end{center}
\end{figure}

The moments of $q^{\lambda_N}$ can be combined into a $q$--Laplace transform for which we can also
get a neat expression in the form of a \emph{Fredholm determinant}.
\begin{theorem} \label{Theorem_Fredholm}In the notations of Theorem \ref{theorem_Observable_contour}, for all $\zeta\in \mathbb C\setminus \mathbb R_+$
we have
$$
 \E\left(\frac{1}{(\zeta q^{\lambda_N}; q)_\infty}\right) =\det(\mathbf I+
 K_\zeta)_{L^2(C_\omega)},
$$
where $C_\omega$ is a positively oriented small circle around $1$, and the operator $K_\zeta$ is
an integral operator defined in terms of its kernel
$$
 K_\zeta(w,w')=\frac{1}{2\pi \i} \int_{-\i\infty+1/2}^{\i\infty+1/2} \Gamma(-s)
\Gamma(1+s)
 (-\zeta)^s g_{w,w'}(q^s)ds,
$$
where
$$
 g_{w,w'}(q^s)=\frac{1}{q^s w-w'} \left(\frac{(q^sw;q)_\infty}{(w;q)_\infty}\right)^N \exp\bigg(\tau w
 (q^s-1)\bigg).
$$
The operator $K_\zeta$ is trace-class for all $\zeta\in \mathbb C\setminus \mathbb R_+$.
\end{theorem}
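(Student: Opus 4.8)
The plan is to convert the moment formulas of Theorem \ref{theorem_Observable_contour} into a generating-function identity and then recognize the resulting series as a Fredholm determinant expansion. First I would recall that $1/(\zeta q^{\lambda_N};q)_\infty = \sum_{k\ge 0} \frac{\zeta^k q^{k\lambda_N}}{(q;q)_k}$ by the $q$-binomial theorem, so that
$$
 \E\left(\frac{1}{(\zeta q^{\lambda_N};q)_\infty}\right)=\sum_{k\ge 0} \frac{\zeta^k}{(q;q)_k}\,\E\!\left(q^{k\lambda_N}\right).
$$
Substituting the nested-contour expression for $\E(q^{k\lambda_N})$ from Theorem \ref{theorem_Observable_contour}, one gets a sum over $k$ of $k$-fold contour integrals. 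The next step is a standard but delicate manipulation: deform all the $z_j$-contours to a common small circle $C_\omega$ around $1$, picking up residues at the poles $z_A=qz_B$; the Cauchy-type factor $\prod_{A<B}\frac{z_A-z_B}{z_A-qz_B}$ is exactly what is needed so that, after this deformation, the $k$-fold integral collapses (via a residue/symmetrization computation, cf. the Heine/Cauchy-determinant trick) into a $k\times k$ determinant of a two-variable kernel integrated over $C_\omega^k$. Concretely one expects
$$
 \frac{(-1)^k q^{k(k-1)/2}}{(2\pi\i)^k}\oint\cdots\oint \prod_{A<B}\frac{z_A-z_B}{z_A-qz_B}\prod_j \frac{e^{(q-1)\tau z_j}}{(1-z_j)^N}\frac{dz_j}{z_j}
 = \frac{(q;q)_k}{k!}\int_{C_\omega^k}\det\!\big[\widetilde K(w_i,w_j)\big]_{i,j=1}^k \prod_i \frac{dw_i}{2\pi\i},
$$
for an explicit kernel $\widetilde K$ built from $\big((q^s w;q)_\infty/(w;q)_\infty\big)^N e^{\tau w(q^s-1)}$; the factor $(q;q)_k$ is precisely what cancels the $1/(q;q)_k$ from the $q$-binomial expansion, leaving $\frac{1}{k!}$, which is the signature of a Fredholm determinant expansion $\det(\mathbf I+K_\zeta)=\sum_k \frac{1}{k!}\int \det[K_\zeta(w_i,w_j)]\prod dw_i$.

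The representation of the kernel as a Mellin--Barnes integral is obtained by using the reflection formula $\Gamma(-s)\Gamma(1+s)=-\pi/\sin(\pi s)$: the integral $\frac{1}{2\pi\i}\int_{1/2-\i\infty}^{1/2+\i\infty}\Gamma(-s)\Gamma(1+s)(-\zeta)^s g_{w,w'}(q^s)\,ds$, evaluated by closing the contour to the right and summing residues at $s=1,2,3,\dots$, reproduces $\sum_{n\ge 1}\zeta^n g_{w,w'}(q^n)$, i.e. exactly the $\zeta$-dependent geometric-type sum that assembles the $k$-th term above into the kernel $K_\zeta$. So the Mellin--Barnes form is just a resummed, $\zeta$-analytic packaging of the per-$k$ kernels; checking that the residue sum converges and that the contour may be placed at $\Re s = 1/2$ requires $\zeta\notin\mathbb R_+$ (so that $(-\zeta)^s$ has no sign ambiguity and the series converges) and uses the decay of $g_{w,w'}(q^s)$ as $|\Im s|\to\infty$ coming from the $\Gamma$-factors.

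Finally I would verify the trace-class claim: on the compact circle $C_\omega$ the kernel $K_\zeta(w,w')$ is jointly continuous and analytic in $w,w'$ (the only potential singularity $q^s w = w'$ is avoided by choosing $C_\omega$ small enough that $|q^s w|\neq |w'|$ along the Mellin contour, uniformly), hence $K_\zeta$ is an integral operator with smooth kernel on a finite-measure contour, which is automatically trace-class; uniformity in $\zeta$ on compacts of $\mathbb C\setminus\mathbb R_+$ follows from the same estimates, and one must separately argue that the Fredholm series converges to the analytic continuation of the left-hand side, which is legitimate because both sides are analytic in $\zeta$ on $\mathbb C\setminus\mathbb R_+$ and agree for $|\zeta|$ small where the moment expansion converges absolutely.

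\textbf{Main obstacle.} The crux is the contour-deformation/residue computation that turns the nested $k$-fold integral with the $\prod_{A<B}\frac{z_A-z_B}{z_A-qz_B}$ factor into a genuine determinant $\det[\widetilde K(w_i,w_j)]$ over a single common contour. Getting the residues, the combinatorial bookkeeping of which poles are crossed, and the symmetrization over the $k!$ orderings to line up so that the cross-terms reorganize into a determinant (rather than just a permanent or something messier) is the technical heart of the argument; this is where the special structure of the $q$-deformed Cauchy kernel is essential, and it is the step I expect to consume most of the work. Controlling convergence and the analytic continuation in $\zeta$ is comparatively routine once the kernel is identified.
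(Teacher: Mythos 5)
Your overall route -- expand $1/(\zeta q^{\lambda_N};q)_\infty$ by the $q$-binomial theorem, insert the nested-contour moment formulas, deform contours with residue bookkeeping, and package the $\zeta$-dependence via the Mellin--Barnes representation $\frac{1}{2\pi\i}\int\Gamma(-s)\Gamma(1+s)(-\zeta)^s g(q^s)\,ds=\sum_{n\ge1}\zeta^n g(q^n)$ to get analyticity on $\mathbb C\setminus\mathbb R_+$ -- is indeed the strategy of the proof the paper points to (the paper itself gives no proof; it cites \cite{BC} and describes exactly this ``contour modification and careful residue book-keeping''). Your treatment of the Mellin--Barnes step and of the trace-class/analytic-continuation issues is essentially right.

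However, the identity you place at the heart of the argument is false, and this is a genuine gap rather than a technicality. When you shrink the nested $z_j$-contours to the common small circle $C_\omega$ around $1$, you cross the poles at $z_A=qz_B$, and iterating these residues produces \emph{strings} of variables in geometric progression $w,qw,\dots,q^{n-1}w$. The outcome is not a single $k$-fold integral over $C_\omega^k$ of a $k\times k$ determinant with prefactor $(q;q)_k/k!$: already for $k=2$ the deformation yields a two-fold integral over $C_\omega^2$ \emph{plus} a leftover one-fold residue integral, which no expression of the form you wrote can reproduce. The correct organization (this is the missing idea) is a sum over partitions $\lambda\vdash k$, each term being an $\ell(\lambda)$-fold integral over $C_\omega$ whose integrand contains a Cauchy-type determinant $\det\bigl[1/(q^{\lambda_i}w_i-w_j)\bigr]$ of size $\ell(\lambda)$; the Fredholm structure appears only after summing over $k$ and regrouping by the number of parts, at which point the sum over the part size $n\ge1$ of $\zeta^n g_{w,w'}(q^n)$ is absorbed into each kernel entry and is then rewritten as the $s$-integral, giving exactly $K_\zeta$. (A per-$k$ determinantal collapse of the kind you describe does exist, but only after symmetrization over a \emph{common} contour with no crossed poles, i.e.\ a large contour enclosing $0$ and $1$; that route leads to the different, ``Cauchy-type'' Fredholm determinant of \cite{BC}, not to the Mellin--Barnes kernel stated here, and requires separate handling of the pole at $z=0$.) So the step you flagged as the main obstacle is not just bookkeeping to be pushed through: as formulated it would fail, and the cluster/partition expansion must replace it.
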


The appearance of the Fredholm determinant in Theorem \ref{Theorem_Fredholm} is unexpected. As we
already mentioned, there is no known structure of determinantal point process for the Macdonald
measure. The computation which leads to this determinant (see \cite{BC} for the proof) turns out
to be parallel to the ``shift contour argument'' in the harmonic analysis on Riemannian symmetric
spaces, which goes back to Helgason  and Heckman--Opdam, see \cite{HO} and references therein. The
main step of the proof is the modification of contours of integration in Theorem
\ref{theorem_Observable_contour} and careful residue book-keeping. In the end all the residues
miraculously combine into a single Fredholm determinant, which should be viewed as a manifestation
of the fact that we are working in the representation--theoretic framework.

\bigskip

Through the limit transition described in Section \ref{Section_Macdonald_probability} we can now
get some information about the partition function of the semi--discrete O'Connell--Yor polymer.
Namely, the limit of Theorem \ref{theorem_Observable_contour} gives moments of this partition
function and the limit of Theorem \ref{Theorem_Fredholm} gives the expectation of its Laplace
transform, see \cite{BC} for an exact statement.

Interestingly enough, the argument relating the Laplace transform to the moments no longer works
after the limit transition. The desired relation between the moments and the Laplace transform of
a random variable $\xi$ is
\begin{equation}
\label{eq_x12} \E \exp(u \xi) =\E \left(\sum_{k=0}^{\infty} \frac{ u^k
\xi^k}{k!}\right)=\sum_{k=0}^{\infty} \frac{u^k}{k!} \E \xi^k.
\end{equation}

However, the moments of the partition function of the O'Connell--Yor directed polymer grow rapidly
(as $e^{k^2}$, cf.\ \cite{BC-a}) and the series in the right side of \eqref{eq_x12} does not
converge for any $u\ne 0$. This is caused by the \emph{intermittency} which we discussed in
Section \ref{Section_Intro}. Similar things happen when one considers fully continuous polymer
which we briefly mentioned in Section \ref{Section_Intro}, i.e.\ when one integrated $2d$ white
noise over the paths of Brownian bridges. Nevertheless, physicists tried to overcome these
difficulties and find the Laplace transform (and the distribution itself after that) using the
moments (the latter could be computed in this model using the so--called Bethe ansatz for the
delta quantum Bose--gas). A non-rigorous argument used here is known as \emph{replica trick} and
it has a long history (first applied to directed polymers in random media in \cite{Kar}); this is
some sort of an analytic continuation argument for the function with specified values at integer
points. However, the first answers to this problem obtained via the replica trick were
\emph{incorrect}. (They were later corrected though, cf.\ \cite{Do}, \cite{CDR}.) A correct
mathematical computation of the Laplace transform for the continuous directed polymer appeared
concurrently in the work of Amir--Corwin--Quastel \cite{ACQ} (which is fully rigorous) and,
independently, of Sasamoto--Spohn \cite{SS}; it was based on previous work of Tracy--Widom on
ASEP, see \cite{TW-ASEP} and references therein.

Therefore, the manipulations with contours which deduce Theorem \ref{Theorem_Fredholm} from
Theorem \ref{theorem_Observable_contour} can be viewed as a fully rigorous $q$--incarnation (put
it otherwise, as a mathematical justification) of the non-rigorous replica trick. See \cite{BCS}
for further developments in this direction.

The asymptotic analysis of the operator $K$ in the Fredholm determinant of Theorem
\ref{Theorem_Fredholm} based on the steepest descent ideas that we discussed in Section
\ref{Section_Schur_measures}, leads to the Tracy--Widom distribution $F_2(s)$ (which is also given
by Fredholm determinant, as we remarked after Theorem \ref{Theorem_Edge}). Along these lines one
proves the KPZ--universality for the partition function of the O'Connell--Yor directed polymer,
and also of certain integrable discrete polymers (Theorem \ref{Theorem_polymer_intro}), see
\cite{BC}, \cite{BCF}, \cite{BCR}, \cite{BCS}.

\end{document}